\newcommand{\Hom}       {\operatorname{Hom}}
\newcommand{\Mod}	{\operatorname{Mod}}
\newcommand{\InjMod}	{\operatorname{InjMod}}
\newcommand{\ann}       {\operatorname{ann}}
\newcommand{\cok}       {\operatorname{cok}}
\newcommand{\img}       {\operatorname{image}}
\newcommand{\opp}	{{\operatorname{op}}}
\newcommand{\soc}	{\operatorname{soc}}
\newcommand{\supp}	{\operatorname{supp}}
\newcommand{\wgt}	{\operatorname{weight}}
\newcommand{\CC}        {{\mathcal{C}}}
\newcommand{\CF}        {{\mathcal{F}}}
\newcommand{\CN}        {{\mathcal{N}}}
\newcommand{\CP}        {{\mathcal{P}}}
\newcommand{\CR}        {{\mathcal{R}}}
\newcommand{\CU}        {{\mathcal{U}}}
\newcommand{\N}         {{\mathbb{N}}}
\newcommand{\Q}         {{\mathbb{Q}}}
\newcommand{\Z}         {{\mathbb{Z}}}
\newcommand{\Qp}        {{\mathbb{Q}_p}}          
\newcommand{\Zp}        {{\mathbb{Z}_p}}          
\newcommand{\Zpl}       {{\mathbb{Z}_{(p)}}}      
\newcommand{\R}         {{\mathbb{R}}}
\newcommand{\F}         {{\mathbb{F}}}          
\newcommand{\al}        {\alpha}
\newcommand{\bt}        {\beta} 
\newcommand{\gm}        {\gamma}
\newcommand{\dl}        {\delta}
\newcommand{\ep}        {\epsilon}
\newcommand{\zt}        {\zeta}
\newcommand{\tht}       {\theta}
\newcommand{\kp}        {\kappa}
\newcommand{\om}        {\omega}
\newcommand{\Gm}        {\Gamma}
\newcommand{\Dl}        {\Delta}
\newcommand{\Sg}        {\Sigma}
\newcommand{\Smash}     {\wedge}
\newcommand{\ot}        {\otimes}
\newcommand{\mxi}       {\mathfrak{m}}
\newcommand{\sse}       {\subseteq}
\newcommand{\st}        {\;|\;}
\newcommand{\bC}	{\overline{C}}
\newcommand{\bK}	{\overline{K}}
\newcommand{\tA}        {\widetilde{A}}
\newcommand{\tC}        {\widetilde{C}}
\newcommand{\hJ}	{\widehat{J}}
\newcommand{\bsm}       {\left[\begin{smallmatrix}}
\newcommand{\esm}       {\end{smallmatrix}\right]}
\newcommand{\sm}        {\setminus}
\newcommand{\tm}        {\times}
\newcommand{\xla}       {\xleftarrow}
\newcommand{\xra}       {\xrightarrow}
\newcommand{\lm}        {\lambda}
\newcommand{\un}[1]     {\underline{#1}}
\newcommand{\ov}[1]     {\overline{#1}}
\newcommand{\colim}  {\operatornamewithlimits{\underset{\longrightarrow}{lim}}}
\newcommand{\invlim} {\operatornamewithlimits{\underset{\longleftarrow}{lim}}}
\renewcommand{\:}{\colon}
\newtheorem{theorem}{Theorem}[section]
\newtheorem{conjecture}[theorem]{Conjecture}
\newtheorem{lemma}[theorem]{Lemma}
\newtheorem{proposition}[theorem]{Proposition}
\newtheorem{corollary}[theorem]{Corollary}
\theoremstyle{definition}
\newtheorem{remark}[theorem]{Remark}
\newtheorem{definition}[theorem]{Definition}
\newtheorem{example}[theorem]{Example}
\begin{document}
\title{Large self-injective rings and the generating hypothesis}
\author{Leigh Shepperson and Neil Strickland}

\maketitle

\begin{abstract}
 We construct a number of different examples of non-Noetherian graded
 rings that are injective as modules over themselves (or have some
 related but weaker properties).  We discuss how these are related to
 the theory of triangulated categories, and to Freyd's Generating
 Hypothesis in stable homotopy theory.
\end{abstract}

\section{Introduction}

In this paper we study graded commutative rings $R$ that are large
in various senses (in particular, not Noetherian) and self-injective
(meaning that $R$ is injective as an $R$-module).  We use graded
rings because they are relevant for our applications, but ungraded
rings are covered as well because they can be regarded as graded rings
concentrated in degree zero.  The graded setting is assumed
everywhere, so ``element'' means ``homogeneous element'' and ``ideal''
means ``homogeneous ideal'' and so on.  Our rings will be commutative
in the graded sense, so that $ba=(-1)^{|a||b|}ab$.

It is not hard to prove that any Noetherian self-injective ring is
Artinian.  In particular, if $R$ is a finitely-generated algebra over
a field $K$ that is self-injective then we must have
$\dim_K(R)<\infty$ and it turns out that $R\simeq\Hom(R,K)$ as
$R$-modules.  Examples of this situation include
$R=K[x_1,\dotsc,x_n]/(r_1,\dotsc,r_n)$ for any regular sequence
$r_1,\dotsc,r_n$, or the cohomology ring $R=H^*(M;K)$ for any closed
orientable manifold $M$.  These are the most familiar examples of
self-injective rings, and they are all very small.  We will be looking
for examples that are much larger.

Our motivation comes from a question in stable homotopy theory, which
we briefly recall.  In stable homotopy theory we study a certain
triangulated category $\CF$, the Spanier-Whitehead category of finite
spectra.  The objects can be taken to be pairs $X=(n,A)$ where $n\in\Z$
and $A$ is a finite simplicial complex.  The morphism set
$\Hom_{\CF}((n,A),(m,B))$ is the set of homotopy classes of maps from
$(\R^{N+n}\tm A)\cup\{\infty\}$ to $(\R^{N+m}\tm B)\cup\{\infty\}$,
which is essentially independent of $N$ when $N$ is sufficiently
large.  More details are given in~\cite{ra:nps}, for example.
For any $X,Y\in\CF$ the set $\Hom_{\CF}(X,Y)$ is a finitely
generated abelian group.  It turns out that most methods for studying
$\Hom_{\CF}(X,Y)$ treat the $p$-primary parts separately for different
primes $p$.  We will thus fix a prime $p$ and define
$[X,Y]=\Zp\ot\Hom_{\CF}(X,Y)$, where $\Zp$ is the ring of $p$-adic
integers.  These are the morphism sets in a new triangulated category
which we call $\CF_p$.  This has a canonical tensor structure, with
the tensor product of $X$ and $Y$ written as $X\Smash Y$.  The unit
for this structure is called $S$, so $S\Smash X\simeq X$.  As part of
the triangulated structure we have a suspension functor
$\Sg\:\CF_p\to\CF_p$, and we write $S^n$ for $\Sg^nS$.  We put
$R_n=[S^n,S]$.  These sets form a graded commutative ring, whose
structure is extremely intricate.  A great deal of partial information
is known, but it seems clear that there will never be a usable
complete description.  Some highlights are as follows.
\begin{itemize}
 \item $R_n=0$ for $n<0$, and $R_0=\Zp$, and $R_n$ is a finite abelian
  $p$-group for $n>0$.
 \item Both the ranks and the exponents of the groups $R_n$ can be
  arbitrarily large.
 \item All elements in $R_n$ with $n>0$ are nilpotent.  Thus, the
  reduced quotient is $R/\sqrt{0}=\Zp$.
 \item Various results are available describing most or all of the
  structure of $R_n$ for $n<f(p)$, where $f(x)$ is a polynomial of
  degree at most three.  The simplest of these says that $R_n=0$ for
  $0<n<2p-3$, and $R_{2p-3}=\Z/p$.  
\end{itemize}

Now consider an arbitrary object $X\in\CF_p$.  We define
$\pi_n(X)=[S^n,X]$ for all $n\in\Z$.  This defines a graded abelian
group $\pi_*(X)$, which has a natural structure as an $R$-module.  

\begin{conjecture}[Freyd's Generating Hypothesis]
 The functor $\pi_*\:\CF_p\to\Mod_R$ is faithful.
\end{conjecture}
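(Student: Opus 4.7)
This is Freyd's Generating Hypothesis, a famous unsolved conjecture, so what I can offer is a strategy rather than a completed proof. The natural reformulation is in terms of ghosts: call $f\colon X\to Y$ in $\CF_p$ a \emph{ghost} if $\pi_*(f)=0$, and the claim is that the only ghosts are zero maps. Since $\CF_p$ is generated under cofibres and retracts by the sphere $S$, every object admits a finite filtration whose subquotients are wedges of shifted spheres; a classical observation of Freyd is that the composite of $d+1$ ghosts out of an object of thick-subcategory length $d$ must vanish. The conjecture therefore reduces to showing that any nonzero ghost can itself be written as a composite of two ghosts, so that iteration against the length filtration forces it to zero.

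The second step, which motivates this paper, is to promote the condition $\pi_*(f)=0$ to an algebraic factorization via homological algebra over $R$. Suppose we knew (i) that $R$ is graded self-injective and (ii) that $\pi_*$ is full onto the finitely presented $R$-modules. Given a ghost $f\colon X\to Y$, take a partial projective resolution $P_1\xrightarrow{\partial}P_0\to\pi_*(X)\to 0$ in $\Mod_R$, realize it by a cofibre sequence $W\to V\to X$ in $\CF_p$ with $V$ a wedge of shifted spheres, and use injectivity of $R$ together with the vanishing of $\pi_*(f)$ to lift the obstruction through $\partial$, producing a nontrivial factorization of $f$ through $W$. The new factors are themselves ghosts, and iterating enough times to exceed the thick-subcategory length of $X$ would complete the argument.

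The principal obstacle, and the reason this conjecture has resisted decades of effort, is that we have essentially no independent handle on whether $R=\pi_*(S)$ is self-injective as a graded ring: the question is, in practice, roughly equivalent in depth to the Generating Hypothesis itself, and no computation in a finite range of stems can settle it. As I read the paper, its role is not to prove this equivalence but to establish, through the construction of a rich supply of large non-Noetherian self-injective rings, that the algebraic hypothesis on $R$ is not ruled out on abstract grounds; recognizing $R$ itself among such rings remains well beyond current techniques, and so I would not expect this section of the paper to close out the conjecture.
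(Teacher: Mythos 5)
You are right on the essential point: the statement is a \emph{conjecture}, not a theorem, and the paper offers no proof of it --- indeed it explicitly remarks that nearly half a century on there is still no hint of a proof or a counterexample. Recognising that and declining to claim a proof is exactly the correct response, and your closing paragraph accurately characterises the paper's actual aim, which is to construct large non-Noetherian self-injective rings so as to show that the algebraic consequences of the conjecture (self-injectivity, total incoherence of $R$, etc.) are not absurd on their face.

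Two small cautions about the strategy sketch, none of which affect your verdict. First, the invariant governing the ghost lemma is the length of a cell filtration of $X$ (the number of cofibre steps needed to build $X$ from shifted spheres), not the thick-subcategory level; these do not coincide in general, and conflating them would weaken the bound. Second, the proposed reduction --- ``every nonzero ghost factors as a composite of two ghosts, hence iterating exceeds the cell length and the ghost must vanish'' --- already presupposes the two consequences (self-injectivity of $R$ and fullness of $\pi_*$) that the paper records as \emph{conclusions} of the Generating Hypothesis, so it is circular as a route to a proof; it is better thought of as a consistency check that these consequences fit together than as a line of attack. Since you flag this yourself and present it as a strategy rather than an argument, your proposal is appropriate for what is, in the paper, an introductory statement of an open problem rather than a proved result.
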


This is actually a technical modification of Freyd's
conjecture~\cite{fr:sh}, because Freyd did not tensor with the
$p$-adics.  This causes various trouble in the development of the
theory, which Freyd avoided in \emph{ad hoc} ways.  Much later Hovey
redeveloped the theory in the $p$-adic setting~\cite{ho:fgh}, which
involves only minor modifications to Freyd's arguments but works much
more smoothly.

Nearly half a century after Freyd made his conjecture, there is still
no hint of a proof or a counterexample.  However, there has been a
certain amount of indirect progress; for example, various authors have
settled the analogous questions in other triangulated categories where
computations are easier~\cites{cach:fgh,holopu:ghd,bccm:ghs,lo:ghd}.

On the other hand, it is known that the Generating Hypothesis would
have some very strong and surprising consequences, as we now explain.

\begin{definition}\label{defn-coherent}\ \\
 \begin{itemize}
  \item[(a)] A graded ring $R$ is \emph{coherent} if every finitely
   generated ideal is finitely presented.
  \item[(b)] A graded ring $R$ is \emph{totally incoherent} if the only
   finitely presented ideals are $0$ and $R$.
 \end{itemize}
\end{definition}

\begin{theorem}[Freyd~\cite{fr:sh}, Hovey~\cite{ho:fgh}]
 Suppose that the Generating Hypothesis is true.
 \begin{itemize}
  \item[(a)] The functor $\pi_*\:\CF_p\to\Mod_R$ is automatically
   full as well as being faithful, so it is an embedding of categories.
  \item[(b)] For every object $X\in\CF_p$, the image $\pi_*(X)$ is an
   injective $R$-module.  In particular (by taking $X=S$) the ring
   $R$ is self-injective.
  \item[(c)] The ring $R$ is totally incoherent.
 \end{itemize}
\end{theorem}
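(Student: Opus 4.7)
The strategy is to prove the three parts in order: (b) will bootstrap from (a) via Baer's criterion, while (c) requires more delicate work.

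For (a), I would use a cellular approximation argument. Given a finite $X$ and an $R$-module map $\phi: \pi_*(X) \to \pi_*(Y)$, choose finitely many homogeneous generators of $\pi_*(X)$; these assemble into a map $g: P \to X$ with $P = \bigvee_i S^{n_i}$, inducing a surjection on $\pi_*$. Since $P$ is a wedge of spheres, any $R$-map out of $\pi_*(P)$ is trivially realised, so $\phi \circ g_*$ comes from some $f_0: P \to Y$. In the cofibre triangle $\Sg^{-1}C \to P \xra{g} X$, the image of $\pi_*(\Sg^{-1}C)$ in $\pi_*(P)$ equals $\ker g_*$, so the composite $\Sg^{-1}C \to P \xra{f_0} Y$ vanishes on $\pi_*$; by the Generating Hypothesis it already vanishes in $\CF_p$. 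Thus $f_0$ factors through $g$, producing $f: X \to Y$ with $f_* = \phi$ by surjectivity of $g_*$.

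For (b), I would verify Baer's criterion. Given a finitely generated ideal $I \subseteq R$ realised as $\img(\beta_*)$ for some $\beta: P \to S$ with $P$ a wedge of spheres, and an $R$-module map $\phi: I \to \pi_*(X)$, the composite $\phi \circ \beta_*: \pi_*(P) \to \pi_*(X)$ is realised by a map $f: P \to X$. The same exactness argument as in (a) shows that $\Sg^{-1}C(\beta) \to P \xra{f} X$ vanishes on $\pi_*$, hence in $\CF_p$ by the Generating Hypothesis, so $f$ factors as $g \circ \beta$ for some $g: S \to X$. The resulting element $g \in \pi_*(X)$ defines an $R$-module map $R \to \pi_*(X)$ extending $\phi$. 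Specialising to $X = S$ gives self-injectivity of $R$.

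For (c), suppose $I \subseteq R$ is finitely presented with $0 \ne I \ne R$. Realise a finite presentation by maps $\bigvee S^{s_j} \xra{\beta} \bigvee S^{|r_i|} \xra{\alpha} S$, where $\img \alpha_* = I$ and $\img \beta_* = \ker \alpha_*$. Then $\alpha \beta$ is null on $\pi_*$ by construction, hence null in $\CF_p$ by the Generating Hypothesis, so $\alpha$ factors through the cofibre $D$ of $\beta$ as $g \circ h$ with $h: \bigvee S^{|r_i|} \to D$ and $g: D \to S$. A long-exact-sequence chase identifies $I$ as a submodule of $\pi_*(D)$, with $g_*$ restricting to the inclusion $I \hookrightarrow R$; note that $\pi_*(D)$ is injective by (b). The plan is then to exploit the exactness of $\Hom_R(-, R)$ on finitely presented modules (which holds by self-injectivity from (b)) together with the graded-local structure of $R$ — $R_0 = \Zp$ and all elements of $R_{>0}$ are nilpotent, so $R$ has no nontrivial idempotents — to force $I$ to split off of $R$, which gives $I = 0$ or $I = R$ and the desired contradiction.

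The main obstacle is part (c). Parts (a) and (b) are fairly mechanical applications of cellular approximation plus faithfulness, but converting the homotopical fact $\alpha\beta = 0$ in (c) into an algebraic splitting of $0 \to I \to R \to R/I \to 0$ genuinely requires the finite-presentation hypothesis. I expect the key step to be showing that $I$ must itself be injective — perhaps by realising it as its injective hull inside $\pi_*(D)$ — after which $I$ becomes a summand of the injective $R$, and the local-ring observation forces the summand to be trivial.
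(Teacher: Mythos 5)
The paper does not prove this theorem at all --- it is stated as a background result with citations to Freyd and Hovey, so there is no in-paper argument against which to match your proposal. Judged on its own terms, your sketch has one correct part and two gaps.

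Part (a) is correct and is essentially Freyd's argument: a finite spectrum $X$ has $\pi_*(X)$ finitely generated over $R$, giving a $\pi_*$-surjection $g\: P\to X$ from a finite wedge of spheres; freeness of $\pi_*(P)$ realises $\phi\circ g_*$ as a map $f_0\: P\to Y$; the composite $\Sg^{-1}C(g)\to P\xra{f_0}Y$ vanishes on $\pi_*$ and hence (by GH) is null, so $f_0$ factors through $X$. This is fine.

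Part (b) has a genuine gap. Your argument verifies that $\pi_*(X)$ satisfies the \emph{finite} Baer condition (extending $\phi$ off finitely generated ideals), but Baer's criterion requires all graded ideals. The paper's Proposition~\ref{prop-finite-baer} upgrades finite Baer to full Baer only when each graded piece is finite, and that hypothesis fails here because $R_0=\Zp$ and $\pi_0(X)$ are infinite. The upgrade can be rescued --- each $\pi_n(X)$ is a finitely generated $\Zp$-module, hence compact in the $p$-adic topology, and the nonempty closed sets
\[
 M(K)=\{\,m\in\pi_*(X)_d \st \phi(a)=am \text{ for all }a\in K\,\}
\]
indexed by finitely generated $K\sse J$ have the finite intersection property, so $\bigcap_K M(K)\neq\emptyset$ --- but this compactness step is exactly what distinguishes Hovey's $p$-adic variant from a careless reading of Freyd, and it is missing from your write-up.

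Part (c) is not a proof; you say so yourself. The correct observations are in place: a finite presentation is realised by $\bigvee S^{s_j}\xra{\beta}\bigvee S^{|r_i|}\xra{\alpha}S$, GH kills $\alpha\beta$, $\alpha$ factors as $g\circ h$ through the finite spectrum $D=C(\beta)$, and a diagram chase identifies $I\simeq\img h_*$ inside the injective module $\pi_*(D)$ with $g_*$ restricting to the inclusion $I\hookrightarrow R$. What is missing is the reason $I$ itself is injective. The suggested mechanism --- ``realising it as its injective hull inside $\pi_*(D)$'' --- does not work as stated: the image of $h_*$ need not be its own injective hull, and an image of a map between injectives is not injective in general. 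To close the argument one must show something like $\img g_*=I$, so that $g_*$ retracts $\pi_*(D)$ onto $I$; but $g$ is only determined up to the indeterminacy coming from maps $\Sg\bigvee S^{s_j}\to S$, and controlling $\img g_*$ is precisely where the real work lies. Without this, the reduction to ``$I$ is a summand of $R$, hence $I\in\{0,R\}$ by locality of $R$'' is a conditional conclusion, not a proof.
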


Note in particular that~(a) gives a full subcategory of $\Mod_R$
that has a natural triangulation.  This is very unusual; in almost all 
known triangulated categories, the morphisms are equivalence classes
of homomorphisms under some nontrivial equivalence relation, and this
equivalence structure is tightly connected to the definition of the
triangulation.  

Our aim in this paper is to shed light on the Generating Hypothesis by
finding examples of self-injective rings that share some of the known
or conjectured properties of the stable homotopy ring $R$.

Our main results are as follows.  Firstly, one cannot disprove
self-injectivity by looking only in a finite range of degrees:
\begin{theorem}\label{thm-adjust-intro}
 Let $R$ be a graded-commutative ring such that
 \begin{itemize}
  \item[(a)] $R_k=0$ for $k<0$
  \item[(b)] $R_0=\Z/2$
  \item[(c)] $R_k$ is finite for all $k\geq 0$.
 \end{itemize}
 Suppose given $N>0$.  Then there is an injective map
 $\phi\:R\to R'$ of graded rings such that
 \begin{itemize}
  \item[(1)] $R'$ also has properties~(a) to (c).
  \item[(2)] $\phi\:R_k\to R'_k$ is an isomorphism for $k<N$
  \item[(3)] $R'$ is self-injective.
 \end{itemize}
\end{theorem}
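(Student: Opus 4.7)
I plan to build $R'$ as the union of an ascending chain $R = R^{(0)} \subset R^{(1)} \subset R^{(2)} \subset \cdots$ of graded rings, where each $R^{(i)}$ is obtained from its predecessor by adjoining elements that resolve obstructions to graded self-injectivity. The guiding principle is a graded Baer criterion: $R'$ is self-injective if and only if every graded $R'$-linear map $f\colon J \to R'$ from a graded ideal $J$ equals multiplication by some element of $R'$.

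The steps are as follows. First, since each $R_k$ is finite and each $R^{(i)}_k$ will remain finite, the collection of pairs $(J,f)$ with $J$ a finitely generated graded ideal of $R^{(i)}$ and $f$ a graded $R^{(i)}$-linear map $J \to R^{(i)}$ of some degree $d$ is countable at each stage; by a standard diagonal trick, I enumerate all such problems uniformly across all stages. To resolve a problem $(J,f)$ of degree $d$, I set $R^{(i+1)} = R^{(i)}[\rho]/(\rho x - f(x) : x \in J)$, where $\rho$ is a formal variable of degree $d$. The verifications are: (i) $R^{(i)} \hookrightarrow R^{(i+1)}$ remains injective; (ii) each $R^{(i+1)}_k$ is finite, which follows because the defining relations involve only finitely many equations per degree; and (iii) the low-degree part is preserved, which forces the constraint $d \geq N$. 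With $R' = \bigcup_i R^{(i)}$, any extension problem in $R'$ descends to some $R^{(i)}$ by finite generation of the ideal and is solved in $R^{(i+1)}$, yielding self-injectivity.

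The main obstacle is precisely this forced restriction $d \geq N$: what about extension problems of degree $d < N$? Because $R'_d = R_d$ for $d < N$, any such $f\colon J \to R'$ would need to be realized by an element of $R_d$ that already exists. The resolution, and the technical heart of the argument, is a structural observation showing that every such low-degree extension problem is either already resolved in $R$ or becomes vacuous in the colimit. Concretely, I would argue that the relations imposed by the high-degree adjunctions in successive $R^{(i)}$'s suffice to force any degree-$d$ module map from a graded ideal (for $d < N$) to coincide with multiplication by an existing element of $R_d$; the hypothesis that $R_0 = \Z/2$ is a field likely enters here, since it simplifies the structure of low-degree annihilator relations. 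Making this precise—probably by tracking how the growing ideal $J \cdot R^{(i)}$ acquires new relations that constrain $f$—is the delicate step I expect to dominate the technical write-up.
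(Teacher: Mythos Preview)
Your approach has a genuine gap, and it lies precisely where you yourself flag the ``technical heart'': the treatment of extension problems of degree $d<N$.

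You propose to adjoin \emph{transporters}: for each problematic map $f\colon J\to R^{(i)}$ of degree $d$, you add a new element $\rho$ of degree $d$ with $\rho x=f(x)$.  But a transporter is forced to live in degree exactly $d$, so whenever $d<N$ you cannot add one without changing $R_d$.  You then hope that the high-degree adjunctions you \emph{are} allowed to make will somehow force the low-degree problems to resolve themselves.  There is no mechanism for this in your construction.  Adjoining $\rho$ with relations $\rho z=f'(z)$ (for some high-degree problem $(J',f')$) introduces no new syzygies among the original low-degree generators of a bad test pair, so a degree-$0$ problem such as $u=(x,y)$, $v=(x,0)$ in $R=\F[x,y]/(xy)$ remains bad in every $R^{(i)}$ and hence in the colimit.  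Your final paragraph is a statement of what you would need, not an argument for it.

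The paper's insight is to do the opposite: rather than adjoining an element that \emph{solves} the extension problem, one adjoins elements that \emph{destroy} it.  A bad extension problem corresponds to a test pair $(u,v)$ with no block and no transporter; instead of creating a transporter (degree $d$, possibly small), one creates a \emph{block}: new variables $b_j$ satisfying $\sum_j b_ju_j=0$ but $\sum_j b_jv_j\neq 0$.  The point is that a block can be placed in arbitrarily high degree---one simply declares $|b_j|=m+d_t-|u_j|$ for any large $m$---so the low-degree part of the ring is untouched.  After this, the map $f$ is no longer well-defined on the ideal generated by the $u_j$, so the obstruction vanishes.  This block-versus-transporter dichotomy is exactly the missing idea.
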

This result was a great surprise to the authors at least, although the
proof is not too hard.  We will restate and prove it as
Theorem~\ref{thm-adjust}.  We conjecture that the theorem remains true
if we allow $R_0$ to be $\Zp$, but we have not proved this.

Most of our remaining results relate to specific examples.  We have
aimed to give a wide spread of examples, rather than formulating each
example with maximum possible generality.  We will write $\F$ for
$\Z/2$.

One of the simplest examples of a finite-dimensional self-injective
ring is the exterior algebra
\[ \F[x_0,\dotsc,x_n]/(x_0^2,\dotsc,x_n^2). \] 
Our first infinite-dimensional example is just an obvious
generalisation of this. 
\begin{proposition}\label{prop-exterior-intro}
 Let $E$ be the exterior algebra over $\F$ with a generator
 $x_i\in E_{2^i}$ for all $i\in\N$.  Then $E$ is self-injective and
 coherent.  The reduced quotient is $E/\sqrt{0}=\F$.
\end{proposition}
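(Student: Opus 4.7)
The plan is to exploit the very special shape of $E$. By uniqueness of binary expansions, the monomials $x_{i_1} \cdots x_{i_r}$ (with $i_1 < \cdots < i_r$) have pairwise distinct degrees $2^{i_1} + \cdots + 2^{i_r}$, so each degree $k \geq 0$ contains a unique basis monomial $m_k$, and $\dim_\F E_k = 1$. The product satisfies $m_j m_k = m_{j+k}$ when the binary representations of $j$ and $k$ are disjoint, and $m_j m_k = 0$ otherwise. The reduced quotient is then immediate: any homogeneous element of $E$ lies in the finite exterior subalgebra $E_{(n)} = \F[x_0,\dotsc,x_{n-1}]/(x_i^2)$ for some $n$, which is local Artinian with nilpotent maximal ideal, so every element of the augmentation ideal is nilpotent and $E/\sqrt{0} = \F$.

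For coherence, the key observation is that $E$ is a free module over each $E_{(n)}$, with basis given by the monomials in $x_n, x_{n+1}, \dotsc$, and is therefore flat. Any finitely generated ideal $I \sse E$ is generated by finitely many homogeneous elements, all lying in some $E_{(n)}$; letting $I_{(n)}$ denote the ideal they generate in the Noetherian ring $E_{(n)}$, flat base change yields $I \cong E \ot_{E_{(n)}} I_{(n)}$, so tensoring a finite presentation of $I_{(n)}$ over $E_{(n)}$ produces a finite presentation of $I$ over $E$.

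For self-injectivity I would apply Baer's criterion in the graded setting: every graded $E$-linear map $f\colon J \to E$ of some internal degree $d$ from a graded ideal $J$ should extend to $E \to E$. Since each $E_k$ is one-dimensional, every graded ideal is a monomial ideal determined by its support $\sigma = \supp(J) \sse \N$, which must be an up-set in the bit-containment partial order on $\N$. Writing $f(m_k) = \ep_k m_{k+d}$ with $\ep_k \in \F$, the relation $f(m_i m_k) = m_i f(m_k)$ gives very sharp constraints. Taking $i = 2^j$ for any $j \in B(k) \setminus B(k+d)$ (where $B(n)$ is the set of bits of $n$) forces $\ep_k = 0$, so nonzero $\ep_k$ requires $B(k) \sse B(k+d)$; a small arithmetic check then forces $d \geq 0$ and $B(k) \cap B(d) = \emptyset$, ruling out all $f$ of negative internal degree. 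For $d \geq 0$, the remaining constraints say that $\sigma_f = \{k : \ep_k = 1\}$ is closed under adding or removing any bit disjoint from both the current $k$ and from $d$, while staying in $\sigma$. The key connectivity lemma is that any two $k, k'$ in the allowable set $\sigma \cap \{k : B(k) \cap B(d) = \emptyset\}$ are linked by such moves, because the bitwise OR $k \vee k'$ again lies in $\sigma$ (as $\sigma$ is an up-set) and is disjoint from $B(d)$, giving a zig-zag $k \to k \vee k' \leftarrow k'$. Hence $\sigma_f$ is either empty or the entire allowable set, and these are precisely the restrictions to $J$ of the two candidate global extensions $\bar f = 0$ and $\bar f = m_d \cdot (-)$.

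The main obstacle is the combinatorial bookkeeping for this linearity analysis across internal degrees, where carries in the binary addition of $k$ and $d$ could a priori confuse matters; the clean resolution is that nonzero $\ep_k$ already forces $k$ and $d$ to have disjoint binary representations, so no carries arise in the cases that matter, and the problem reduces to the connectivity of a single up-set under a transparent combinatorial relation.
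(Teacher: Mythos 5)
Your argument is correct on all three counts, and for self-injectivity it takes a genuinely different route from the paper. The reduced-quotient claim is handled the same way (and the paper treats it as obvious). Your coherence proof is, in substance, identical to the paper's Proposition~\ref{prop-exterior-coherent}: the paper writes $E=E(n)\ot_{\F}E'(n)$ and computes $\ker(g)=\ker(g')\ot E'(n)$, which is exactly the flat (indeed free) base-change computation you invoke, phrased without the words ``flat base change.''

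For self-injectivity, however, the paper's route (Example~\ref{eg-exterior}) is to observe that each $E(n)$ is a Poincar\'e duality algebra, hence self-injective by Proposition~\ref{prop-poincare}, and then apply the colimit criterion of Corollary~\ref{cor-limit-selfinj} (whose proof runs through the block/transporter language of Section~\ref{sec-criteria} together with Proposition~\ref{prop-finite-baer}). You instead exploit the thinness of $E$ in a much more hands-on way: since $\dim_\F E_k = 1$ for all $k\geq 0$, every graded ideal is a monomial ideal given by an up-set in the bit-containment order, and you verify the full graded Baer condition of Proposition~\ref{prop-baer-graded} directly, with no reduction to the finite Baer condition. The combinatorial core is clean and correct: $\ep_k\neq 0$ forces $B(k)\sse B(k+d)$, hence $d\geq 0$ and $B(d)=B(k+d)\sm B(k)$ disjoint from $B(k)$; and the bitwise-OR zig-zag $k\to k\vee k'\la k'$ shows the $\ep_k$ are constant on the allowable set, pinning $f$ down as $0$ or $m_d\cdot(-)$. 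The trade-off is that the paper's argument is modular and re-usable (it applies verbatim to any increasing union of finite Poincar\'e duality algebras with finite degree pieces, and in particular to the cube algebra after more work), while yours is tailored to $E$ but more elementary, avoiding both the block/transporter formalism and the finiteness reduction, and it makes visible the precise reason why every map extends uniquely up to the two obvious candidates.
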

Self-injectivity is proved by combining
Corollary~\ref{cor-limit-selfinj} and Proposition~\ref{prop-poincare},
as will be explained in Example~\ref{eg-exterior}.  The same
ingredients cover many other examples, but we will not give the
relevant definitions in this introduction.  Coherence is proved in
Proposition~\ref{prop-exterior-coherent}, and the reduced quotient is
clear.  We have chosen the degrees of the generators for compatibility
with our other examples, but in fact the statement would remain valid
if we merely assumed that $|x_i|\to\infty$ as $i\to\infty$.

Our next example arose by applying Theorem~\ref{thm-adjust-intro} to
the ring $\F[x,y]/xy$ and studying the result in low dimensions.  The
result is very complicated and irregular, but after studying various
recurring patterns and key features we were led to the definition
below. 
\begin{theorem}\label{thm-cube-intro}
 Consider the ring
 \[ C = \F[y_0,y_1,\dotsc]/(y_i^3+y_iy_{i+1}\st i\geq 0), \]
 with the grading given by $|y_i|=2^i$.  Then $C$ is self-injective
 and coherent.  The reduced quotient is
 \[ C/\sqrt{0} = \F[x_0,x_1,\dotsc]/(x_ix_j\st i\neq j)
     = \F \oplus \bigoplus_{n>0} x_n\F[x_n]
 \]
 where $x_n=\sum_{i=0}^ny_{n-i}^{2^i}$.
\end{theorem}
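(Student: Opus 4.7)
My plan is to reduce everything to analysing the tower of finite-dimensional quotients $C_n := C/(y_{n+1},y_{n+2},\dotsc)$. A rewriting argument based on the relation $y_i^3 = y_iy_{i+1}$, namely the rule $y_i^{a_i}y_{i+1}^{a_{i+1}} \to y_i^{a_i-2}y_{i+1}^{a_{i+1}+1}$ applied whenever $a_i\geq 3$ (which terminates because each step strictly raises the reverse-lex order on exponents in each fixed total degree, with local confluence giving unique normal forms), shows that a basis for $C_n$ is given by the monomials $y_0^{a_0}\cdots y_n^{a_n}$ with $a_i\in\{0,1,2\}$. In particular $\dim C_n = 3^{n+1}$, the Hilbert series is palindromic with top degree $d_n = 2^{n+2}-2$, and $C = \invlim_n C_n$ with the tower stabilising in each fixed degree. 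I would then prove that each $C_n$ is a Poincar\'e duality algebra with top class $y_0^2 y_1^2 \cdots y_n^2$: palindromy is visible from the normal-form count, while non-degeneracy of the pairing $C_n^k \otimes C_n^{d_n-k} \to C_n^{d_n}$ I would establish by induction on $n$, writing $C_{n+1}$ as a finite extension of $C_n$ adapted to the new generator $y_{n+1}$ and bootstrapping from $C_0 = \F[y_0]/(y_0^3)$. Proposition~\ref{prop-poincare} upgrades PD to self-injectivity of each $C_n$, and Corollary~\ref{cor-limit-selfinj} transfers this to $C$ via the tower.

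For the reduced quotient, I would construct for each $n\geq 0$ a graded ring map $\pi_n\: C \to \F[z_n]$ with $|z_n| = 2^n$, defined by $\pi_n(y_i) = 0$ for $i<n$ and $\pi_n(y_i) = z_n^{2^{i-n}}$ for $i\geq n$; the defining relations of $C$ are immediately satisfied. A short analysis of graded prime ideals identifies the $P_n := \ker(\pi_n)$ as exactly the minimal graded primes of $C$, yielding $\sqrt{0} = \bigcap_n P_n$ and an embedding $C/\sqrt{0} \hookrightarrow \prod_n \F[z_n]$. Direct computation of $\pi_m(x_n)$ from the defining formula then gives orthogonality of the $x_n$ modulo nilpotents, with $\pi_n(x_n) = z_n$ and $\pi_m(x_n) \in \sqrt{0}$ for $m \neq n$; this implies $x_i x_j \in \sqrt{0}$ for $i \neq j$ and identifies the image of $C/\sqrt{0}$ with $\F \oplus \bigoplus_n x_n\F[x_n] \cong \F[x_0,x_1,\dotsc]/(x_i x_j \st i \neq j)$.

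The main obstacle is the Poincar\'e-duality step for the tower $\{C_n\}$: although palindromy of the Hilbert series follows immediately from the normal-form count, the cascading nature of the rewrite rule makes the multiplication table intricate, and explicitly producing dual monomials for each basis element---together with tracking in the inductive step how products involving $y_{n+1}$ interact with the top class inherited from $C_n$---is where the real work lies. Coherence I would handle separately, bounding the syzygy module of a finite generating set of an ideal by using the stabilisation of the tower $\{C_n\}$ together with the PD structure at each finite stage.
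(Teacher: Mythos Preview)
Your finite quotients $C_n=C/(y_{n+1},y_{n+2},\dotsc)$ coincide with the paper's $\bC[0,n+1]$, and showing they are Poincar\'e duality algebras is indeed the first step (the paper does this via Lemma~\ref{lem-monic-extension}, essentially your inductive extension argument). The genuine gap is in the next step. Corollary~\ref{cor-limit-selfinj} requires a chain of \emph{subrings} $R(0)\leq R(1)\leq\dotsb$ with $R=\bigcup_nR(n)$; your $C_n$ are \emph{quotients} of $C$, and $C=\invlim_nC_n$ is an inverse limit, not a direct limit. A transporter in $C_n$ lifts to $C$ once the tower has stabilised in the relevant degree, but a block does not: if $\bar b\in C_n^r$ satisfies $\bar b\cdot u=0$ in $C_n$, then any lift $b\in C^r$ only satisfies $b\cdot u\in(y_{n+1},y_{n+2},\dotsc)$, and this ideal is nonzero already in degree $2^{n+1}$, whereas a block in a PD algebra of top degree $2^{n+2}-2$ can easily produce $b\cdot u$ of degree exceeding $2^{n+1}$. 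Moreover the kernel $(y_{n+1},y_{n+2},\dotsc)$ is not principal (for instance $y_{n+2}\notin(y_{n+1})$, since $y_{n+2}-y_{n+1}^2=x_{n+2}$ is non-nilpotent), so there is no single multiplier that kills it.

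The paper repairs this by working instead with the \emph{subrings} $C[0,m]\leq C$ and their quotients $\bC[0,m]=C[0,m]/y_m$. The key extra ingredient is Proposition~\ref{prop-ann-x}: $\ann_C(x_{m+1})=Cy_m$. Given a block $\bar b$ modulo $y_m$, one has $b\cdot u\in Cy_m$ and $b\cdot v\notin Cy_m$, hence $x_{m+1}b$ is an honest block in $C$. This annihilator computation is the missing idea in your outline; without it the transfer of self-injectivity fails. The same issue affects your coherence sketch: syzygies computed in the quotient $C_n$ need not lift to syzygies in $C$, and the paper instead proves the stability statement $K(u,p+1)=C[0,p+1]\cdot K(u,p)$ for the subrings (Corollary~\ref{cor-K-step}), which again requires careful control of how elements divisible by $y_p$ interact with $x_{p+1}$.

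Your approach to the reduced quotient via the maps $\pi_n\:C\to\F[z_n]$ is different from the paper's direct double induction (Proposition~\ref{prop-cube-reduced}) and is perfectly viable; note that $\pi_m(x_n)=0$ on the nose for $m\neq n$ (the target is reduced), and the one point needing care is that $\bigcap_nP_n=\sqrt{0}$, i.e.\ that the $P_n$ exhaust the minimal primes.
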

This will be proved as Propositions~\ref{prop-cube-selfinj},
\ref{prop-cube-coherent} and~\ref{prop-cube-reduced}.  The statement
can be generalised by adjusting the degrees and the relations
slightly, but this just leads to additional bookkeeping without much
extra insight, so we have omitted it.  It is probably also possible to
generalise in more conceptual ways, but that would be a substantial
project, so we leave it for future work.

For the next example, we give an axiomatic statement and then explain
a special case that is relevant in chromatic homotopy theory.
\begin{definition}\label{defn-pontrjagin}
 For any prime $p$, we recall that 
 \[ \Z[1/p]/\Z\simeq \Q/\Z_{(p)} \simeq \Qp/\Zp \simeq 
      \lim_{n\to\infty} \Z/p^n.
 \]
 For any module $M$ over $\Zp$, we write
 $M^\vee=\Hom_{\Zp}(M,\Qp/\Zp)$, and call this the \emph{Pontrjagin
 dual} of $M$.  One can check that $\Zp^\vee\simeq\Qp/\Zp$ and
 $(\Qp/\Zp)^\vee\simeq\Zp$ and $(\Z/p^n)^\vee\simeq\Z/p^n$.  Now
 consider a graded $\Zp$-algebra $R$ with a specified isomorphism
 $\zt\:R_d\to\Qp/\Zp$ for some $d$.  This gives maps
 $\zt^\#\:R_{d-k}\to R_k^\vee$ by $\zt^\#(a)(b)=\zt(ab)$.  We say that
 $R$ is \emph{Pontrjagin self-dual} if all these maps are
 isomorphisms. 
\end{definition}

\begin{proposition}\label{prop-pontrjagin-intro}
 If $R$ is Pontrjagin self-dual, then it is self-injective.
\end{proposition}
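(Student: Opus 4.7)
The plan is to identify $R$, up to a shift in grading, with its graded Pontrjagin dual, and then observe that this graded dual is automatically an injective graded $R$-module by a change-of-rings argument.

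First, I would assemble the given maps into a graded $R$-module isomorphism. Define a graded $R$-module $D$ with $D_{-k} := R_k^\vee$ for each $k$, and with $R$-action $(r\phi)(b) := \phi(rb)$. The maps $\zt^\#\: R_{d-k}\to R_k^\vee = D_{-k}$ assemble into a graded $\Zp$-linear map $R\to D$ of degree $-d$, and this is a graded isomorphism precisely by the Pontrjagin self-duality hypothesis. A short check verifies that the combined map is $R$-linear: for $a\in R_{d-k}$, $r\in R_m$, and $b\in R_{k-m}$,
\[
 \zt^\#(ra)(b) = \zt(rab) = \zt^\#(a)(rb) = (r\cdot\zt^\#(a))(b).
\]

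Second, I would verify that $D$ is an injective graded $R$-module. The module $\Qp/\Zp$ is divisible and hence injective over $\Zp$, in both the graded and ungraded senses. The graded tensor--hom adjunction supplies a natural isomorphism $\Hom_R(M,D)\cong\Hom_{\Zp}(M,\Qp/\Zp)$ for every graded $R$-module $M$, so $\Hom_R(-,D)$ factors as restriction of scalars to $\Zp$ followed by Pontrjagin duality. Both of these functors are exact, so $\Hom_R(-,D)$ is exact, which is exactly the statement that $D$ is an injective graded $R$-module. Combining the two steps, $R$ is graded-isomorphic (up to a shift by $d$) to the injective module $D$, hence is itself self-injective.

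The main subtlety is just the graded bookkeeping: one has to grade $D$ correctly so that the combined map $\zt^\#$ really is a morphism of graded $R$-modules, and one has to confirm that both injectivity of $\Qp/\Zp$ over $\Zp$ and the tensor--hom adjunction pass verbatim to the graded category. Both are routine extensions of the ungraded story, so no genuine obstacle arises; the content of the proposition is really just packaging the standard fact that $\Hom_{\Zp}(R,\Qp/\Zp)$ is an injective $R$-module, combined with the self-duality hypothesis.
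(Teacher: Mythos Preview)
Your proposal is correct and follows essentially the same approach as the paper. The paper proves a lemma identifying $\Hom_R(M,R)\simeq\Hom_{\Zp}(M_d,\Qp/\Zp)$ and then observes that the right-hand side is exact because $\Qp/\Zp$ is divisible; your version packages the same identification as a graded isomorphism $R\simeq\Sg^dD$ together with the tensor--hom adjunction $\Hom_R(M,D)\cong\Hom_{\Zp}(M,\Qp/\Zp)$, which is just the graded form of the same computation.
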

This will be proved as Proposition~\ref{prop-pontrjagin}.

Now fix a prime $p$, and assume that $p>2$ for simplicity.  Recall
that $\CF$ denotes the Spanier-Whitehead category of finite spectra.
One can construct another triangulated category $\CF'$, called the
\emph{Bousfield localisation of $\CF$ with respect to $p$-local
 $K$-theory}.  Roughly speaking this is the closest possible
approximation to $\CF$ that can be analysed using topological
$K$-theory, and it is computationally much more tractable than $\CF$
itself.  Ravenel's paper~\cite{ra:lrc} is a good introduction to both
the conceptual framework and specific calculations, with references to
original sources.  Devinatz has shown~\cite{de:kgh} that the most
obvious analogue of the Generating Hypothesis for $\CF'$ is false (his
Remark~1.7), but that a related statement is true (his Theorem~1).
The analogue of the stable homotopy ring for $\CF'$ is the ring $J$
described below.
\begin{definition}\label{defn-J-ring}
 Let $p$ be an odd prime, and define a graded ring $J$ as follows.
 We put $J_0=\Zpl$ and $J_{-2}=\Qp/\Zp$; for notational convenience
 we use the symbol $\eta$ for the identity map $\Zpl\to J_0$, and
 $\zt$ for the identity map $J_{-2}\to\Qp/\Zp$.  Next, for each nonzero
 integer $k$ there is a generator $\al_k\in J_{2(p-1)k-1}$ generating
 a cyclic group of order $p^{v_p(k)+1}$, where $v_p(k)$ is the
 $p$-adic valuation of $k$.  For the product structure, we have
 \begin{itemize}
  \item $\eta(a)\eta(b)=\eta(ab)$ and
   $\eta(a)\zt^{-1}(b)=\zt^{-1}(ab)$ and 
   $\eta(a)\al_k=a\,\al_k$. 
  \item $\zt^{-1}(a)\zt^{-1}(b)=0$ and $\zt^{-1}(a)\al_k=0$ for all $k$.
  \item If $k>0$ we have 
   \[ \al_k\al_{-k}=-\al_{-k}\al_k=
       \zt^{-1}\left(p^{-1-v_p(k)}+\Zpl\right).
   \]
  \item $\al_j\al_k=0$ whenever $j+k\neq 0$.
 \end{itemize}
\end{definition}

\begin{theorem}\label{thm-J-intro}
 The ring $\hJ=\Zp\ot J$ is Pontrjagin self-dual and therefore
 self-injective.  It is also totally incoherent, and the reduced
 quotient is $\hJ/\sqrt{0}=\Zp$.
\end{theorem}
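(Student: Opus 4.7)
The plan is to treat the four claims in turn, with total incoherence as the main obstacle. For Pontrjagin self-duality, the nonzero graded pieces of $\hJ$ live in degrees $0$, $-2$, and $2(p-1)k-1$ for $k\neq 0$, and the involution $m\mapsto -2-m$ on $\Z$ permutes these degrees (swapping $0\leftrightarrow -2$ and $2(p-1)k-1\leftrightarrow 2(p-1)(-k)-1$), so it suffices to check $\zt^\#$ in these two kinds of pairs. The pair $(\hJ_0,\hJ_{-2})$ reduces to the standard identification $\Zp\simeq\Hom_{\Zp}(\Qp/\Zp,\Qp/\Zp)$. In the pair $(\hJ_{2(p-1)k-1},\hJ_{2(p-1)(-k)-1})$, both sides are cyclic of order $p^{v_p(k)+1}$, and the relation $\al_{-k}\al_k=\zt^{-1}(p^{-1-v_p(k)}+\Zpl)$ produces a generator of the $p^{v_p(k)+1}$-torsion subgroup of $\Qp/\Zp$, so the pairing is perfect. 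Self-injectivity is then immediate from Proposition~\ref{prop-pontrjagin-intro}. The reduced quotient is equally direct: every homogeneous element of $\mxi$ outside $p\Zp$ squares to zero (using $\zt^{-1}(*)\zt^{-1}(*)=0$, $\zt^{-1}(*)\al_k=0$, and $\al_k^2\in\hJ_{4(p-1)k-2}=0$ for $p>2$), while $\Zp$ is torsion-free; hence $\sqrt{0}$ is the non-degree-zero part of $\hJ$ and $\hJ/\sqrt{0}=\Zp$.

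For total incoherence, let $I=(a_1,\dotsc,a_n)\sse\hJ$ be a proper nonzero finitely generated graded ideal; by iteratively discarding redundant generators I may take $\{a_1,\dotsc,a_n\}$ to be a minimal homogeneous generating set, and then every $a_i$ lies in the graded maximal ideal $\mxi=p\Zp\oplus\hJ_{-2}\oplus\bigoplus_{k\neq 0}\hJ_{2(p-1)k-1}$. A short case analysis on the shape of $a_1$ (whether it has the form $p^mu$, $\zt^{-1}(x)$, or $c\al_l$) shows that $S=\{k\in\Z\st\al_k\,a_1=0\}$ is infinite, so the relation module $K:=\ker(\hJ^n\to I)$ contains $\al_k\,e_1$ for every $k\in S$.

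The heart of the argument is to show that $K$ is not finitely generated. Suppose for contradiction that $K$ is generated by homogeneous elements $v_1,\dotsc,v_m$ of degrees $D_1,\dotsc,D_m$, and write $\al_k\,e_1=\sum_j r_j v_j$. Projecting to the first coordinate gives $\al_k=\sum_j r_j(v_j)_1$ in $\hJ_{2(p-1)k-1}\cong\Z/p^{v_p(k)+1}$. Because for $p>2$ any nonzero product landing in an odd degree of $\hJ$ must pair $\hJ_0$ with that odd-degree piece (the remaining candidate $\hJ_{-2}\cdot\hJ_{2(p-1)k+1}$ vanishes), the only $v_j$ that can contribute are those with $D_j=2(p-1)k-1$ (a condition that holds for at most one $k$ per $j$, hence for only finitely many $k$ in all) or $D_j=0$. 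In the latter case $v_j\in\Zp^n$, and minimality of $\{a_1,\dotsc,a_n\}$ forces $(v_j)_1\in p\Zp$, since otherwise the degree-$|a_1|$ component of the relation $\sum_l (v_j)_l\,a_l=0$ would express $a_1$ as a combination of the remaining generators; hence each such contribution lies in $p\cdot\Z/p^{v_p(k)+1}$. Choosing $k\in S$ outside the finite exceptional set, the sum on the right is trapped in $p\cdot\Z/p^{v_p(k)+1}$, which cannot equal the generator $\al_k$. This contradiction shows $K$ is not finitely generated, so $I$ is not finitely presented.
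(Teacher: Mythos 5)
Your proof is correct, and the duality and reduced-quotient parts match the paper's route. The Pontrjagin self-duality check via the involution $m\mapsto -2-m$ on degrees is exactly what Lemma~\ref{lem-J-perfect} does (the paper singles out the degree $-2$ case and disposes of the rest as a ``straightforward calculation''), and the nilradical observation is the same one the paper leaves as clear.

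Where you genuinely diverge is total incoherence. The paper routes this through the general-purpose Corollary~\ref{cor-incoherent}, whose engine is Lemma~\ref{lem-ann-fg}: for a finitely presented ideal $I$ in a local graded ring and $u\in I\setminus\mxi I$, the image of $\ann(u)$ in $\mxi/\mxi^2$ is finite-dimensional. The paper then simply notes that $V=\{\al_k\st k\not\equiv 0\pmod p\}$ is infinite, linearly independent in $\mxi/\mxi^2$, and annihilated (with at most one exception) by every non-unit, and is done. You instead pick a minimal homogeneous generating set, exhibit infinitely many syzygies $\al_k e_1$ in the kernel $K$, and rule out a finite generating set for $K$ by a degree bookkeeping argument together with the Nakayama-type constraint that $(v_j)_1\in p\Zp$ when $(v_j)_1$ has degree zero. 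That is essentially a hands-on re-derivation of the special case of Lemma~\ref{lem-ann-fg} that is needed here; it is self-contained but longer, whereas the paper's packaging is reusable (it is applied again to the Rado algebra and the $\ep_0$-algebra). One small inaccuracy in your write-up: from $(v_j)_1\in\hJ_0$ you may not conclude $v_j\in\Zp^n$, since $v_j$ lives in the shifted free module $\bigoplus_i\Sg^{|a_i|}\hJ$ and the other coordinates $(v_j)_i$ sit in degrees $|a_1|-|a_i|$, not necessarily zero. Fortunately the argument only uses that $(v_j)_1\in\Zp$ together with the homogeneous relation $\sum_l(v_j)_l a_l=0$ in degree $|a_1|$, so the conclusion $(v_j)_1\in p\Zp$ and everything downstream still stands.
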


Self-duality is proved as Lemma~\ref{lem-J-perfect}, and incoherence
as Proposition~\ref{prop-hJ-incoherent}.  The reduced quotient is
clear. 

\begin{remark}\label{rem-J-not-selfinj}
 Tensoring with $\Zp$ here just has the effect of replacing $\Zpl$ in
 degree zero with $\Zp$.  Note that this is not the same as the
 $p$-completion of $J$, because $(\Qp/\Zp)_p=0$.  Moreover, a
 derived version of $p$-completion would replace $\Qp/\Zp$ by a copy
 of $\Zp$ shifted by one degree, which is different again.  
 The ring $J$ itself is not self-injective.  However, this does not
 account for Devinatz's example showing the failure of the generating
 hypothesis in $\CF'$; that has a deeper topological origin.
\end{remark}

We now note that the ring $\F[x]/x^N$ is another easy example of a
finite-dimensional self-injective ring.  Our next example arose by
trying to generalise this.  An obvious possibility is to consider the
ring $\bigcup_{n>0}\F[x^{1/n}]$ modulo the ideal generated by $x$.
Any element of this ring can be expressed as $\sum_qa(q)x^q$, for some
function $a\:\Q\cap[0,1)\to\F$ with finite support.  However,
this ring needs to be adjusted to make it self-injective.  Firstly, it
turns out to be better not to kill $x$ itself, but just the powers
$x^q$ with $q>1$.  Next, self-injectivity forces certain modules to be
isomorphic to their double duals and thus to have strong completeness
properties.  To handle this, we must allow some infinite sums, or
equivalently weaken the condition that $a$ has finite support.  It is
also convenient (but not strictly necessary) to include powers $x^q$
where $q$ is irrational.  This leads us to the following definition.  

\begin{definition}\label{defn-root}
 Let $K$ be a field.  For any map $a\:[0,1]\to K$ we put
 $\supp(a)=\{q\in[0,1]\st a(q)\neq 0\}$.  We say that $a$ is an
 \emph{infinite root series} if every nonempty subset of $\supp(a)$
 has a smallest element (so $\supp(a)$ is well-ordered).  We let $P$
 denote the set of infinite root series, and call this the
 \emph{infinite root algebra}. 
\end{definition}
\begin{theorem}\label{thm-root-intro}
 The formula
 \[ (ab)(q) = \sum_{0\leq r\leq q} a(r) \, b(q-r) \]
 gives a well-defined ring structure on $P$.  With this structure, $P$
 is self-injective and totally incoherent.  The reduced quotient is
 $P/\sqrt{0}=K$. 
\end{theorem}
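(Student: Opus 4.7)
The plan is to treat the four claims (well-definedness of the product, the reduced quotient, total incoherence, self-injectivity) separately. Well-definedness is a standard application of the Hahn--Mal'cev--Neumann lemma: if $A, B \subseteq [0,1]$ are well-ordered then $A+B \subseteq [0,2]$ is well-ordered and each $q \in A+B$ admits only finitely many representations $q = r+r'$ with $r \in A$, $r' \in B$. Truncating to $[0,1]$ preserves well-ordering, so the convolution $(ab)(q) = \sum_r a(r) b(q-r)$ is a finite sum and $\supp(ab) \subseteq [0,1]$ is well-ordered. Associativity, commutativity and distributivity are then routine.

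For the reduced quotient I would show that $a \in P$ is a unit iff $a(0) \neq 0$: the inverse is $a(0)^{-1} \sum_{n \geq 0}(1 - a/a(0))^n$, which at each fixed $q$ is a finite sum since $\min\supp(1 - a/a(0)) > 0$ forces $(1 - a/a(0))^n$ to vanish below $q$ for all large $n$. Conversely, if $a(0) = 0$ and $q_0 := \min\supp(a) > 0$ then $\min\supp(a^n) \geq n q_0$, so $a^n = 0$ once $n q_0 > 1$. Hence $\sqrt{0}$ is the kernel of $a \mapsto a(0)$ and $P/\sqrt{0} \cong K$. The same calculation drives the ideal classification: any $0 \neq a \in P$ factors as $x^{q_0} u$ with $u$ a unit, so $x^{q_0} \in (a)$. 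Consequently any ideal $I \neq P$ is determined by the upper set $T_I := \{s \in (0,1] : x^s \in I\}$, which is either empty (giving $I = 0$), of the form $[q,1]$ (giving $I = (x^q) = \{a : \supp a \subseteq [q,1]\}$), or of the form $(q,1]$ (giving $I = \bigcup_{q' > q}(x^{q'}) = \{a : \supp a \subseteq (q,1]\}$). The finitely generated ideals are therefore $0$, $P$, and the principals $(x^q)$, and since $\ann(x^q) = \{z : \supp z \subseteq (1-q,1]\}$ is never finitely generated for $q \in (0,1]$, no $(x^q)$ is finitely presented. So $P$ is totally incoherent.

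For self-injectivity I apply Baer's criterion to each ideal from the classification; $I = 0$ and $I = P$ are trivial. For $I = (x^q)$, set $c := \phi(x^q)$; $R$-linearity forces $z \cdot c = 0$ for every $z \in \ann(x^q)$, and testing against $z = x^t$ as $t$ decreases towards $1-q$ forces $c(u) = 0$ for all $u < q$, so $c = x^q \tilde c$ for a unique $\tilde c \in P$ and $\phi(a) = \tilde c \cdot a$. For the non-principal case $I = \{a : \supp a \subseteq (q,1]\}$, restricting $\phi$ to each $(x^{q'}) \subseteq I$ with $q' > q$ produces $c_{q'} := \phi(x^{q'})$ with $\supp(c_{q'}) \subseteq [q',1]$, and $R$-linearity yields the coherence relation $c_{q''} = x^{q''-q'} c_{q'}$ for $q'' > q' > q$. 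I then define $\tilde c$ by $\tilde c(u) := c_{q'}(u + q')$ for any $q' \in (q, 1 - u)$ when $u < 1 - q$ (and $\tilde c(u) := 0$ otherwise); the coherence relations make this independent of $q'$, and an initial-segment argument shows that the well-ordering of each $\supp(c_{q'})$ transports to the well-ordering of $\supp(\tilde c)$. Then $\tilde c \cdot x^{q'} = c_{q'}$ for every $q' > q$, so $\phi = \tilde c \cdot (-)$ on all of $I$.

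The main obstacle is this last gluing: assembling the coherent family $\{c_{q'}\}_{q' > q}$ into a single element $\tilde c \in P$ and verifying that $\supp(\tilde c)$ is genuinely well-ordered, not merely bounded below. This is exactly where the strength of Definition~\ref{defn-root} is needed, and it is what allows the non-finitely-generated ideals $\{a : \supp a \subseteq (q,1]\}$ to behave, for the purposes of $\Hom_R(-, P)$, as if they were principal.
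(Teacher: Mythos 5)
Your proposal follows essentially the same route as the paper: the convolution/well-ordering lemmas for closure under multiplication (the paper's Lemmas~\ref{lem-root-finite} and~\ref{lem-convolution}), the unit/nilpotent dichotomy for the reduced quotient (Lemma~\ref{lem-root-units}), the classification of ideals into $\ov{J}_t=(x^t)$ and $J_t$ (Proposition~\ref{prop-root-ideals}) feeding both the incoherence argument via $\ann(x^t)=J_{1-t}$ and Baer's criterion, and for the non-principal ideals $J_t$ the same gluing of the compatible family $\{\lm_{q'}(\phi(x^{q'}))\}_{q'>t}$ together with the observation that well-orderedness of the support of the glued element can be checked on the initial segments $[0,1-q']$. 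You have also correctly identified and disposed of the one genuinely delicate point, namely that the glued support is well-ordered and not merely bounded below.
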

This will be proved in Propositions~\ref{prop-root-selfinj}
and~\ref{prop-root-incoherent}, and Corollary~\ref{cor-root-reduced}.

We will also discuss two rings that are not self-injective, but have
a related property that we now explain.
\begin{definition}\label{defn-dac}
 Let $R$ be a graded commutative ring, and let $J$ be an ideal in
 $R$.  We put $\ann_R(J)=\{a\in R\st aJ=0\}$.  It is tautological that
 the ideal $\ann_R^2(J)=\ann_R(\ann_R(J))$ contains $J$.  We say that
 $R$ satisfies the \emph{double annihilator condition} if
 $\ann_R^2(J)=J$ for all finitely generated ideals $J$.
\end{definition}

\begin{proposition}\label{prop-dac-intro}
 If $R$ is self-injective then it satisfies the double annihilator
 condition.  Conversely, if $R$ is Noetherian and satisfies the double
 annihilator condition, then it is self-injective.
\end{proposition}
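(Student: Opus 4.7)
The plan is to prove the two implications separately, with Baer's criterion as the main technical tool in each direction.

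For the forward direction, suppose $R$ is self-injective and $I=(j_1,\dots,j_n)$ is a finitely generated ideal with $a\in\ann^2(I)$, so $\ann(I)\sse\ann(a)$. The key observation is that the cyclic submodule $M\sse R^n$ generated by the tuple $(j_1,\dots,j_n)$ (with appropriate degree shifts in the target, in the graded case) has annihilator exactly $\ann(I)$, so sending the generator to $a$ defines a well-defined $R$-map $\phi\:M\to R$. Since a finite direct sum of shifts of the injective module $R$ is again injective, $\phi$ extends to some $\tilde\phi\:R^n\to R$, necessarily multiplication by a tuple $(b_1,\dots,b_n)$; evaluating on the generator yields $a=\sum b_ij_i\in I$.

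For the converse, assume $R$ is Noetherian and satisfies DAC. By Baer's criterion, it suffices to show that any $R$-linear map $\phi\:I\to R$ from a (necessarily finitely generated) ideal is multiplication by some $b\in R$. I induct on the minimal number of generators of $I$. The base case $I=(j)$ is immediate: well-definedness of $\phi$ forces $\ann(j)\sse\ann(\phi(j))$, so $\phi(j)\in\ann^2((j))=(j)$ by DAC, giving $\phi(j)=bj$.

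In the inductive step, write $I=I'+(j_n)$ with $I'=(j_1,\dots,j_{n-1})$ and use induction on $I'$ to find $b_1$ with $\phi|_{I'}=b_1\cdot(-)$; after subtracting, it suffices to handle the case $\phi|_{I'}=0$ and produce some $c\in\ann(I')$ with $cj_n=a$, where $a:=\phi(j_n)$, since then $b:=b_1+c$ does the job. Well-definedness says $sj_n\in I'$ implies $sa=\phi(sj_n)=0$, so $(I':j_n)\sse\ann(a)$. Now apply DAC twice: DAC on $I'$ yields the rewriting $(I':j_n)=\{s\st sj_n\in\ann^2(I')\}=\ann(\ann(I')\cdot j_n)$, so taking $\ann$ of the inclusion $\ann(\ann(I')\cdot j_n)\sse\ann(a)$ gives $\ann(\ann(a))\sse\ann^2(\ann(I')\cdot j_n)$; then DAC on the finitely generated ideal $\ann(I')\cdot j_n$ (finitely generated because $\ann(I')$ is, by Noetherianness) simplifies the right side to $\ann(I')\cdot j_n$, and since $a\in\ann(\ann(a))$ tautologically, we conclude $a\in\ann(I')\cdot j_n$ as required.

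The main obstacle is the inductive step of the converse: one must spot the intermediate ideal $\ann(I')\cdot j_n$ and thread two separate applications of DAC through it. This is also where the Noetherian hypothesis earns its keep, guaranteeing that both $I'$ and $\ann(I')\cdot j_n$ are finitely generated so that DAC is available on each.
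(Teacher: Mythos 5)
Your proof is correct in both directions, and the converse direction takes a genuinely different route from the paper.

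For the forward implication, you unfold a Baer extension for the cyclic module $R\cdot(j_1,\dots,j_n)\cong R/\ann(I)$ inside a shifted free module and extract $a=\sum b_i j_i$. The paper instead works abstractly with the exact functor $D^2=\Hom_R(\Hom_R(-,R),R)$: it shows the full subcategory $\CU$ of modules on which $\kp\:M\to D^2M$ is an isomorphism contains $R$ and is closed under images, hence contains every finitely generated ideal, and that $D^2(J)=\ann_R^2(J)$ for ideals. These are the same idea at two levels of abstraction, and your version is a perfectly good concrete rendering.

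For the converse, your argument diverges substantially from the paper's. The paper (Theorem~\ref{thm-noeth}) proceeds via structure theory: it shows DAC plus Noetherian forces $R$ to be Artinian (by dualizing descending chains of ideals into ascending ones), reduces to the local case, deduces that the socle is one-dimensional, and then builds the Baer extension inductively along a composition series (Corollary~\ref{cor-dac-selfinj}). You instead induct directly on the minimal number of generators of $I$ and close the inductive step with a double application of DAC: first to translate $(I':j_n)$ into $\ann(\ann(I')\,j_n)$, then to pass from $\ann^2(\ann(I')\,j_n)$ back to $\ann(I')\,j_n$, using Noetherianness to guarantee that $\ann(I')\,j_n$ is finitely generated so DAC is available. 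Your route is shorter and more self-contained, avoiding the Artinian classification entirely; the paper's route pays for the detour by also establishing the structural characterization (Artinian with one-dimensional socles of local factors), which it needs elsewhere, e.g.\ in the proof of Proposition~\ref{prop-poincare}. It is also instructive to compare your inductive step with the paper's Theorem~\ref{thm-selfinj-ann}, which handles the non-Noetherian (but degreewise-finite) case by a similar test-pair induction but must additionally assume the annihilator sum identity $\ann(J\cap K)=\ann(J)+\ann(K)$; your argument shows that Noetherianness lets one dispense with that extra hypothesis, precisely because $\ann(I')$ and hence $\ann(I')\,j_n$ are guaranteed finitely generated so DAC can be applied to them. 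Two small bookkeeping points you implicitly skip over but which are harmless: you should invoke the graded Baer criterion (Proposition~\ref{prop-baer-graded}) rather than the ungraded one, and in the final step a homogeneous $c\in\ann(I')$ with $cj_n=a$ is extracted by taking the relevant homogeneous component of the expression for $a$ in the homogeneous ideal $\ann(I')\,j_n$.
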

This is proved in Remark~\ref{rem-ideal} and Theorem~\ref{thm-noeth}.

\begin{definition}\label{defn-rado}
 For any integer $n$ we let $B(n)$ be the set of exponents $i$ such
 that $2^i$ occurs in the binary expansion of $n$, so $B(n)$ is the
 unique finite subset of $\N$ such that $n=\sum_{i\in B(n)}2^i$.

 The \emph{Rado graph} has vertex set $\N$, with an edge from $i$ to
 $j$ if ($i\in B(j)$ or $j\in B(i)$).  The \emph{Rado ideal} in the
 exterior algebra $E$ has a generator $x_ix_j$ for each pair $(i,j)$
 such that there is no edge from $i$ to $j$ in the Rado graph.  The
 \emph{Rado algebra} $Q$ is the quotient of $E$ by the Rado ideal.
\end{definition}
\begin{remark}\label{rem-rado-generic}
 See~\cites{ra:ugu,ca:rgr} for discussion of the Rado graph.  Although
 the definition looks very specialised, the appearance is deceptive.
 Roughly speaking, any countable random graph is isomorphic to the
 Rado graph with probability one.  The proof of this uses a kind of
 injectivity property of the Rado graph, which is what suggested it to
 us as being potentially relevant for the present project.
\end{remark}

\begin{theorem}\label{thm-rado-intro}
 The Rado algebra is totally incoherent (and in particular, not
 Noetherian).  It satisfies the double annihilator condition, but is
 not self-injective. The reduced quotient is $Q/\sqrt{0}=\F$.
\end{theorem}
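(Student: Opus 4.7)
My plan relies throughout on the \emph{extension property} of the Rado graph: for any disjoint finite sets $U,V\subseteq\N$ there are infinitely many $k\in\N\setminus(U\cup V)$ that are adjacent to every vertex of $U$ and to no vertex of $V$. Recall that $Q$ has an $\F$-basis consisting of the clique monomials $x_S$ indexed by finite cliques $S\subseteq\N$, and $Q_0=\F$. The claim about the reduced quotient is immediate: each $x_i$ is nilpotent (since $x_i^2=0$), so $\mxi\subseteq\sqrt{0}$, and since $Q/\mxi\cong\F$ is a field we have $\sqrt{0}\subseteq\mxi$ as well, giving $Q/\sqrt{0}=\F$.

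For total incoherence, the plan is to show that $\ann(a)$ is not finitely generated for every nonzero $a\in\mxi$; the same strategy then handles general finitely generated ideals. Expand $a$ in the clique basis as a finite sum of $x_T$'s with nonzero $\F$-coefficients, and pick a vertex $t_T\in T$ from each such $T$. The extension property (applied with $V=\{t_T\}$ together with the finite set of all vertices appearing in $a$) produces infinitely many $k$ non-adjacent to every $t_T$, so that $x_kx_T=0$ in $Q$ for every $T$ and hence $x_k\in\ann(a)$. If one had $\ann(a)=(b_1,\dotsc,b_n)$, all the $b_j$ would involve only vertices in some finite set $V'$; for any $k\notin V'$ with $x_k\in\ann(a)$, a purported expression $x_k=\sum_jr_jb_j$ would give zero coefficient at the monomial $x_{\{k\}}$ on the right (each $b_j\in\mxi$ since $\ann(a)\subseteq\mxi$ when $a\neq 0$, and no $b_j$ involves $x_k$), contradicting the $1$ on the left.

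For the double annihilator condition, I must show $\ann^2(J)=J$ for every finitely generated $J=(g_1,\dotsc,g_n)$. Given $a\notin J$, the goal is to produce $b\in\ann(J)$ with $ba\neq 0$, and the plan is to take $b=x_k$ for a carefully chosen $k$: adjacent to every vertex of some surviving clique $T^*$ appearing in $a$ (so $x_kx_{T^*}\neq 0$ contributes to $x_ka$) while being non-adjacent to a chosen vertex of each clique appearing in each $g_i$ (so $x_kg_i=0$). The principal case illustrates the idea cleanly: to show $\ann^2((x_0))=(x_0)$, note that for any clique $S$ with $0\notin S$, extension with $U=S$ and $V=\{0\}$ produces an even $k\in N(S)$, giving $x_k\in\ann(x_0)$ while $x_Sx_k\neq 0$, so no such $x_S$ lies in $\ann^2((x_0))$. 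I expect the principal obstacle in the whole proof to lie in extending this argument to general $J$: balancing the extension-property constraints on $k$ so that the surviving contribution of $x_kx_{T^*}$ to $x_ka$ is not cancelled by other monomials will require a careful combinatorial analysis, perhaps via a normal form for $J$ or induction on $n$.

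For the failure of self-injectivity, I exhibit a map from a finitely generated ideal that cannot be extended. Pick two non-adjacent vertices of the Rado graph, for instance $i_1=0$ and $i_2=2$ (since $B(2)=\{1\}$ does not contain $0$). Then $x_0x_2=0$ in $Q$, so no clique contains both $0$ and $2$; therefore $x_0Q\cap x_2Q=0$ and the ideal $I=(x_0,x_2)$ is the internal direct sum $x_0Q\oplus x_2Q$ of $Q$-modules. Define the graded $Q$-linear map $f\:I\to Q$ of degree zero by $f(y_0+y_2)=y_0$ for $y_0\in x_0Q$, $y_2\in x_2Q$. Any graded extension $\tilde f\:Q\to Q$ must be multiplication by $b=\tilde f(1)\in Q_0=\F$, but $f(x_0)=x_0$ forces $b=1$ while $f(x_2)=0$ forces $b=0$: a contradiction. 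Hence $f$ does not extend and $Q$ is not self-injective.
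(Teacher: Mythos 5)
Your argument for the failure of self-injectivity is correct and essentially the paper's (the paper packages it as a test pair $u=(x_0,x_2)$, $v=(0,x_2)$ with neither block nor transporter, which amounts to your $f$: well-defined because $x_0Q\cap x_2Q=0$, not extendable by any $b\in Q_0$). The reduced quotient is trivial. The substantive issues are in the other two parts.

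You have missed the structural fact recorded in Remark~\ref{rem-rado-thin}: since $\dim_\F E_k\leq 1$ for all $k\geq 0$ (binary expansions are unique), the quotient $Q$ also satisfies $\dim_\F Q_k\leq 1$ for all $k$. Consequently every nonzero homogeneous element of $Q$ is a single clique monomial $x_T$, and every finitely generated ideal is already a finitely generated \emph{monomial} ideal $I=(x_{A_1},\dotsc,x_{A_r})$. The cancellation problem you flag as ``the principal obstacle'' therefore never arises: your principal-case extension-property construction, applied one monomial at a time, is in fact the entire proof of the double annihilator condition. Given a clique $T$ with $A_i\not\subseteq T$ for all $i$, pick $k_i\in A_i\setminus T$ and use the extension property to find $n$ adjacent to all of $T$ and to none of the $k_i$ (the paper takes $n=2^N+\sum_{t\in T}2^t$ for $N$ large); then $x_n\in\ann(I)$ but $x_nx_T\neq 0$, so $x_T\notin\ann^2(I)$.

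For total incoherence your construction is sound but the logical bridge is missing. Showing $\ann(a)$ is not finitely generated proves that the \emph{principal} ideal $(a)\simeq\Sg^{|a|}Q/\ann(a)$ is not finitely presented, but ``the same strategy then handles general finitely generated ideals'' is an assertion, not an argument. What your extension-property construction actually proves is that the image of $\ann(a)$ in $\mxi/\mxi^2$ is infinite-dimensional (the classes of the $x_k$ you produce are linearly independent there), and that is exactly the hypothesis of Lemma~\ref{lem-ann-fg} and Corollary~\ref{cor-incoherent} in the paper: if $I$ is finitely presented and $u\in I\setminus\mxi I$, then $\ann(u)$ has finite image in $\mxi/\mxi^2$, forcing $u$ to be invertible. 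You need to invoke (or reprove) this Chase-type criterion to pass from the annihilator computation to total incoherence; alternatively, the one-monomial-per-degree observation above lets you reduce directly to monomial ideals.
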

This will be proved as Propositions~\ref{prop-rado-dac},
\ref{prop-rado-not-selfinj} and~\ref{prop-rado-incoherent} (apart from
the fact that $Q/\sqrt{0}=\F$, which is clear).

One major difference between the Rado algebra and the stable homotopy
ring is that the former has Krull dimension zero (because all elements
in the maximal ideal square to zero) whereas the latter is $\Z_2$ in
degree $0$ and so has Krull dimension one.  Our final example aims to
do something similar to the Rado construction but without making all
the generators nilpotent.  To do this we must work in base $\om$
rather than base $2$; this involves some theory of ordinals, which we
briefly recall (the book~\cite{jo:nls} is an admirably concise
reference).  There is an exponentiation operation for ordinals
(different from the usual one for cardinals).  There is a countable
ordinal called $\ep_0$ such that $\ep_0=\om^{\ep_0}$, and no ordinal
$\al<\ep_0$ satisfies $\al=\om^\al$.  Any ordinal $\al<\ep_0$ has a
unique Cantor normal form
\[ \al = \om^{\bt_1}n_1 + \dotsb + \om^{\bt_r}n_r \]
where the $n_i$ are positive integers and $\al>\bt_1>\dotsb>\bt_r$.  

\begin{definition}\label{defn-epsilon-intro}
 We write $\mu_0(\al,\bt)$ for the coefficient of $\om^\bt$ in the
 Cantor normal form of $\al$.  We then put 
 \[ \mu(\al,\bt) = \max(\mu_0(\al,\bt),\mu_0(\bt,\al)), \]
 and
 \[ A = \F[x_\al\st \al<\ep_0]/
          (x_\al x_\bt^{1+\mu(\al,\bt)}\st \al,\bt<\ep_0,\al\neq\bt).
 \]
 We call $A$ the \emph{$\ep_0$-algebra}.  
\end{definition}

Given any function $\dl\:\ep_0\to\N$, we can give $A$ a grading such
that $|x_\al|=\dl(\al)$.  In Section~\ref{sec-epsilon} we will
describe a particular function $\dl$ with the property that
$\dl(\al)>0$ for all $\al$, and all the sets $\dl^{-1}\{n\}$ are
finite.  This will ensure that the homogeneous pieces $A_d$ are finite
for all $d$.

\begin{theorem}\label{thm-epsilon-intro}
 If $J$ is any ideal in $A$ that is generated by a finite set of
 monomials, then $J=\ann_A^2(J)$.  However, there are non-monomial
 ideals $J$ with $J\neq\ann_A^2(J)$, so $A$ does not satisfy the
 double annihilator condition, and is not self-injective.  Moreover,
 $A$ is totally incoherent, and the reduced quotient is 
 \[ A/\sqrt{0} = \F[x_\al\st \al<\ep_0]/
         (x_\al x_\bt\st \al\neq\bt).
 \]
\end{theorem}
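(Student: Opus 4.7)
My plan is to dispatch the four claims in turn, using throughout the $\F$-basis of $A$ consisting of \emph{admissible monomials}: products $m = \prod_{\al\in S} x_\al^{n(\al)}$ with $S$ finite, each $n(\al)\geq 1$, and no distinct $\al,\bt\in S$ satisfying $n(\bt)\geq 1+\mu(\al,\bt)$; every other monomial is killed by the defining relations. The reduced quotient is immediate: for $\al\neq\bt$ the relation $x_\al x_\bt^{1+\mu(\al,\bt)}=0$ yields $(x_\al x_\bt)^{1+\mu(\al,\bt)}=0$, so $x_\al x_\bt\in\sqrt{0}$, while $A/(x_\al x_\bt\st\al\neq\bt)$ is manifestly the coproduct in commutative $\F$-algebras of the polynomial rings $\F[x_\al]$ and hence reduced; so $\sqrt{0}$ coincides with this ideal.

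For the monomial double-annihilator identity, the key observation is that an admissible monomial $m'$ annihilates an admissible monomial $m$ precisely when the sum of their exponent functions fails admissibility on $\supp(m)\cup\supp(m')$, a purely combinatorial condition. Given a finite monomial ideal $J = (m_1,\dotsc,m_k)$ and an admissible $m\notin J$, I would construct an admissible $r$ with $rm_i = 0$ for every $i$ and $rm\neq 0$. For each individual $m_i$, the failure of $m\in(m_i)$ is witnessed either by a variable of $\supp(m_i)$ that is missing or underweight in $m$, or by an exponent in $m$ falling short of what $m_i$ demands; this asymmetry is exploited to pick exponents in $r$ that trigger an admissibility violation in $rm_i$ without creating one in $rm$. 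The hard part will be amalgamating these local prescriptions across all $k$ generators into a single admissible $r$; I expect to carry this out by induction on $k$, using the unboundedness of $\ep_0$ under $\om$-exponentiation to supply new ordinal indices at each inductive step.

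For the non-monomial counterexample, I would seek a principal ideal $(f)$ whose symmetry inflates $\ann(f)$ enough to shrink $\ann^2(f)$, forcing $(f)\subsetneq\ann^2(f)$. A natural candidate is $f = x_\al + x_\bt$ for a pair with $\mu(\al,\bt)$ positive but with asymmetric Cantor data (say $\al = 0$ and $\bt = 1$, so $\mu(0,1)=1$); direct bookkeeping in the admissible basis should then produce an explicit element of $\ann^2(f)\setminus(f)$. Once this is in hand, Proposition~\ref{prop-dac-intro} immediately yields failure of self-injectivity.

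For total incoherence, given a nonzero proper finitely generated homogeneous ideal $J$, I would treat the principal case $J = (a)$ first: it suffices to show $\ann(a)$ is not finitely generated. Properness forces $a$ to have positive degree, and taking any homogeneous $r\in\ann(a)$ and comparing degrees in $ra=0$ forces $r$ to have positive degree too, so every monomial of every such $r$ has nonempty support. Now suppose $\ann(a) = (r_1,\dotsc,r_n)$ with each $r_i$ homogeneous, and let $\Xi'$ be the finite union of ordinal indices occurring in any monomial of any $r_i$ together with those in $\supp(a)$. For $\bt = \om^{\bt_1}$ with $\bt_1$ strictly greater than every member of $\Xi'$, both $\mu(\bt,\xi)$ and $\mu(\xi,\bt)$ vanish for $\xi\in\Xi'$, so $x_\bt$ annihilates $a$ and hence lies in $\ann(a)$; yet in any expansion $x_\bt = \sum_i f_i r_i$, every nonzero term (monomial of $f_i$) times (monomial of $r_i$) has support containing an element of $\Xi'$, so no such term can equal $x_\bt$, a contradiction. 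The non-principal case reduces to the principal one by projecting the kernel of $A^k\to J$ onto individual coordinates and running the same fresh-ordinal argument.
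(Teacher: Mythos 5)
Your plan for the reduced quotient matches the paper's proof (Proposition~\ref{prop-epsilon-reduced}), and your strategy for the monomial double-annihilator identity is the same as the paper's (Proposition~\ref{prop-epsilon-dac}), though you leave the amalgamation step as a sketch; the paper executes it cleanly in one pass using the extension property (Proposition~\ref{prop-mu-generic}) to pick $r$ distinct fresh indices $k_t$ with three prescribed families of $\mu$-values simultaneously, rather than by induction on $k$. Your incoherence argument for a principal ideal is sound, but the ``project the kernel of $A^k\to J$ onto coordinates'' reduction for the non-principal case is not obviously correct: the $i$-th projection of the syzygy module is a conductor ideal, not an annihilator, and the fresh-ordinal argument as phrased does not directly apply to it. The paper avoids this by invoking Lemma~\ref{lem-ann-fg} (finite presentation forces $\ann(u)$ to have finite image in $\mxi/\mxi^2$ for any $u\in I\sm\mxi I$), which uniformly handles all finitely presented ideals.

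The real problem is your non-monomial counterexample. You propose $f=x_\al+x_\bt$ with $\al=0$, $\bt=1$, so that $\mu(\al,\bt)=1>0$. This fails twice over. First, $|x_0|=\dl(0)=1$ and $|x_1|=\dl(1)=2$, so $x_0+x_1$ is not homogeneous; since $A$ is a graded ring and we only consider homogeneous ideals, this is not a legitimate element to start from. Second, and more structurally, your intuition that a \emph{positive} $\mu$ with ``asymmetric Cantor data'' is what you want is the opposite of what works. The paper's Lemma~\ref{lem-epsilon-not-dac} requires a pair $(i,j)$ with $|x_i|=|x_j|$ and $\mu(i,j)=0$: then $x_ix_j=0$, hence $(x_i)\cap(x_j)=0$, which is precisely what makes $\ann(x_i+x_j)=\ann(x_i)\cap\ann(x_j)=\ann(x_i,x_j)$ and therefore $\ann^2(x_i+x_j)=(x_i,x_j)\supsetneq(x_i+x_j)$. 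If $\mu(i,j)>0$ then $x_ix_j\neq 0$, $(x_i)\cap(x_j)\neq 0$, and the computation collapses. In fact your desired situation cannot occur at all: if $\mu_0(\al,\bt)=n>0$ then $\om^\bt n$ appears in the Cantor form of $\al$, so $\dl(\al)\geq(\dl(\bt)+2)n-1>\dl(\bt)$, and therefore $|x_\al|\neq|x_\bt|$; so every homogeneous pair $x_\al,x_\bt$ of equal degree automatically has $\mu(\al,\bt)=0$. The paper finds an explicit such pair via Example~\ref{eg-delta}: $\dl(\om^2)=\dl(\om+1)=6$ with $\mu(\om^2,\om+1)=0$, giving the counterexample $J=(x_{\om^2}+x_{\om+1})$.
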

This will be proved as Propositions~\ref{prop-epsilon-dac},
\ref{prop-epsilon-incoherent} and~\ref{prop-epsilon-reduced}, and
Corollary~\ref{cor-epsilon-not-selfinj}. 

\section{General theory of self-injective rings}

Let $R$ be a graded commutative ring, and let $\Mod_R$ be the
category of graded $R$-modules.  Suppose that $R$ is self-injective.
For $M\in\Mod_R$ we put $DM=\Hom_R(M,R)$ (regarded as a graded
$R$-module in the usual way).  This construction defines a functor
$D\:\Mod_R\to\Mod_R^\opp$, which is exact because $R$ is
self-injective.  It follows that $D^2$ gives an exact covariant
functor from $\Mod_R$ to itself.  There is a natural map $\kp\:M\to
D^2M$ given by $\kp(m)(u)=u(m)$.  Properties of $D^2$ are studied
under different technical hypotheses 
in~\cite{brhe:cmr}*{Theorem 3.2.13}, for example. 

\begin{definition}\label{defn-CU}
 We let $\CU=\CU_R$ denote the full subcategory of $\Mod_R$
 consisting of the modules $M$ for which $\kp\:M\xra{}D^2M$ is
 an isomorphism.
\end{definition}

\begin{proposition}\label{prop-CU-closure}
 The category $\CU$ is closed under finite direct sums, suspensions
 and desuspensions, kernels, cokernels, images and extensions.  It
 also contains $R$ itself.
\end{proposition}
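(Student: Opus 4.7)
The plan is to exploit that self-injectivity of $R$ makes $D$ exact, so $D^2$ is an exact covariant endofunctor on $\Mod_R$, and to combine this with the naturality of $\kp\:\mathrm{id}\to D^2$. Almost all the closure properties then follow from naturality squares together with the four/five lemma.

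First I would dispatch the easy cases. For $R$ itself, the canonical isomorphism $DR\simeq R$ sending $u\mapsto u(1)$ identifies $\kp_R$ with the identity under the resulting identification $D^2R\simeq R$, so $R\in\CU$. The functor $D$ converts finite direct sums into finite direct products, which coincide with finite direct sums in $\Mod_R$, and $\kp$ respects this splitting; hence $\CU$ is closed under finite direct sums. A short degree computation shows $D\Sg\simeq\Sg^{-1}D$, and therefore $D^2\Sg\simeq\Sg D^2$, with $\kp$ commuting with $\Sg^{\pm 1}$; this gives closure under suspensions and desuspensions.

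For the remaining properties the main tool is the four/five lemma. Given $f\:M\to N$ with $M,N\in\CU$, the exact sequence $0\to\ker f\to M\to N$ yields, after applying the exact functor $D^2$, an exact sequence $0\to D^2\ker f\to D^2M\to D^2N$; the naturality ladder connecting these, with $\kp_M$ and $\kp_N$ isomorphisms, forces $\kp_{\ker f}$ to be an isomorphism by the four lemma (equivalently, by the universal property of kernels). Dually, applying $D^2$ to $M\to N\to\cok f\to 0$ gives $\cok f\in\CU$. The image $\img f$ is both a cokernel (of $\ker f\hookrightarrow M$) and a kernel (of $N\twoheadrightarrow\cok f$), so $\img f\in\CU$ by either case. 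Finally, for an extension $0\to A\to B\to C\to 0$ with $A,C\in\CU$, exactness of $D^2$ gives a short exact sequence $0\to D^2A\to D^2B\to D^2C\to 0$, and the five lemma applied to the naturality ladder forces $\kp_B$ to be an isomorphism.

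There is no serious obstacle here; the whole argument instantiates the general principle that, for any exact covariant endofunctor equipped with a natural transformation from the identity, the class of objects on which the transformation is an isomorphism is closed under kernels, cokernels, images, and extensions. The only points needing explicit verification are the degree conventions behind the isomorphism $D\Sg\simeq\Sg^{-1}D$ and the identification $\kp_R=\mathrm{id}_R$ under $D^2R\simeq R$, both of which are routine.
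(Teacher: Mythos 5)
Your proof is correct and follows exactly the approach the paper indicates; the paper's own proof is the one-line remark ``This is clear from the exactness of the functor $D^2$ and the five lemma,'' and your write-up simply unpacks that remark, handling $R$, direct sums, and $\Sg^{\pm 1}$ by direct inspection and the remaining cases via naturality ladders and the four/five lemma.
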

\begin{proof}
 This is clear from the exactness of the functor $D^2$ and the five
 lemma. 
\end{proof}

\begin{corollary}
 If $J\leq R$ is a finitely generated ideal, then $J$ and
 $R/J$ lie in $\CU$.
\end{corollary}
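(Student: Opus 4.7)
The plan is to deduce both claims directly from Proposition~\ref{prop-CU-closure} by constructing $J$ and $R/J$ via the allowed operations starting from $R$. Since $J$ is finitely generated, pick homogeneous generators $a_1,\dotsc,a_n$ with $|a_i|=d_i$, and form the graded free module $F=\bigoplus_{i=1}^n \Sg^{d_i} R$ together with the evident surjection $\pi\:F\to R$ sending the $i$-th generator to $a_i$. By Proposition~\ref{prop-CU-closure}, $R\in\CU$, and $\CU$ is closed under suspensions/desuspensions and finite direct sums, so $F\in\CU$.

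Next I would observe that $J=\img(\pi\:F\to R)$, regarded as a map between objects of $\CU$. Since $\CU$ is closed under images, this gives $J\in\CU$. Finally, $R/J=\cok(J\hookrightarrow R)$ is the cokernel of a map between two objects of $\CU$ (namely $J$ and $R$), and $\CU$ is closed under cokernels, so $R/J\in\CU$ as well.

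There is really no obstacle here: the whole content is bookkeeping to express $J$ and $R/J$ as images and cokernels of maps between objects already known to lie in $\CU$. The one small point to be careful about is that we need the \emph{image} closure (not just kernels and cokernels) to get $J$ itself, because the kernel of $\pi$ need not be finitely generated and hence need not lie in $\CU$ \emph{a priori}; once we have $J\in\CU$, the short exact sequence $0\to J\to R\to R/J\to 0$ then places $R/J$ in $\CU$ either by cokernel closure or by extension closure applied in reverse.
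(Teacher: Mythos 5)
Your argument is correct and is essentially identical to the paper's one-line proof: both express $J$ and $R/J$ as the image and cokernel of a map $\bigoplus_{i=1}^n\Sg^{d_i}R\to R$ and invoke Proposition~\ref{prop-CU-closure}. The extra remark about why image-closure (rather than kernel-closure) is the right tool for $J$ is a sensible clarification but adds nothing the paper needed to say.
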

\begin{proof}
 They are the image and cokernel of some map
 $\bigoplus_{i=1}^n\Sg^{d_i}R\xra{}R$.
\end{proof}

\begin{remark}\label{rem-ideal}
 If $J$ is an ideal in $R$ then 
 \[ D(R/J) \simeq \{a\in R \st aJ=0\} = \ann_R(J).
 \]
 By dualising the sequence $J\xra{}R\xra{}R/J$, we see that
 $D(J)=R/\ann_R(J)$.  It follows that
 $D^2(J)=\ann_R(\ann_R(J))=\ann_R^2(J)$.  Thus, we
 have $J\in\CU$ iff $J=\ann_R^2(J)$.  In particular, if
 $J$ is finitely generated then $J=\ann^2_R(J)$.
\end{remark}

\begin{lemma}\label{lem-principal-dual}
 For any $a\in R_d$ there is an isomorphism $D(Ra)\simeq\Sg^{-d}Ra$.
\end{lemma}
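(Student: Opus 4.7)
The plan is to realise $Ra$ as a suspension of a cyclic module $R/\ann_R(a)$ and then apply the duality machinery of Remark~\ref{rem-ideal}.

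First I would write down the surjection $\mu_a\: R \to Ra$ given by $\mu_a(r)=ra$. Since $a\in R_d$, this raises degree by $d$, i.e.\ it is a degree-zero map $\Sg^dR\to Ra$. Its kernel is the annihilator ideal $\ann_R(a)=\{r\in R\st ra=0\}$ (appropriately suspended), so I obtain a short exact sequence
\[ 0 \to \Sg^d\ann_R(a) \to \Sg^dR \to Ra \to 0, \]
which exhibits $Ra\simeq \Sg^d(R/\ann_R(a))$.

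Next I would apply the contravariant functor $D$. Using the standard identification $D(\Sg^kM)\simeq \Sg^{-k}D(M)$ together with Remark~\ref{rem-ideal}, which tells us that $D(R/I)\simeq \ann_R(I)$ for any ideal $I$, I obtain
\[ D(Ra) \simeq \Sg^{-d}\, D(R/\ann_R(a)) \simeq \Sg^{-d}\,\ann_R(\ann_R(a)) = \Sg^{-d}\,\ann_R^2(Ra), \]
where the last equality uses that $\ann_R(Ra)=\ann_R(a)$ (since $Ra$ is the principal ideal generated by $a$).

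Finally, the principal ideal $Ra$ is finitely generated, so the ``in particular'' clause of Remark~\ref{rem-ideal} applies and gives $\ann_R^2(Ra)=Ra$. Substituting this into the previous display yields $D(Ra)\simeq \Sg^{-d}Ra$, as required. No step is really an obstacle here; the only subtlety is bookkeeping of the degree shift, which is why it is important to keep track of the fact that $\mu_a$ has degree $d$ rather than degree $0$, and that $D$ reverses the sign of suspensions.
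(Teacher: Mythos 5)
Your proof is correct, but it follows a genuinely different route from the paper's. The paper constructs an explicit map $\al\:D(Ra)\to\Sg^{-d}R$ by evaluation at $a$, namely $\al(u)=u(a)$, argues directly that $\al$ is injective, and then verifies that its image is exactly $\Sg^{-d}Ra$ by a two-sided containment argument (one inclusion via $\al(u)\in\ann_R^2(Ra)=Ra$, the other via the elements $\mu_m(x)=mx$). You instead take the homological route: realize $Ra$ as $\Sg^d(R/\ann_R(a))$ via the kernel--cokernel sequence of multiplication by $a$, then push everything through $D$ using the two computations already recorded in Remark~\ref{rem-ideal}, namely $D(R/I)\simeq\ann_R(I)$ and $\ann_R^2(J)=J$ for finitely generated $J$. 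Both proofs hinge on the same key fact $\ann_R^2(Ra)=Ra$ (which, as you note, is the ``in particular'' clause of the remark, valid because $Ra$ is principal and $R$ is self-injective). Your version is cleaner and makes the dependence on that fact more transparent; the paper's version has the advantage of exhibiting the isomorphism by an explicit formula rather than as a composite of abstract identifications. A small observation worth making: the remark already asserts $D(J)=R/\ann_R(J)$ without tracking degrees, so the content of the lemma is really just the degree bookkeeping that comes from $Ra\simeq\Sg^d(R/\ann_R(a))$, which you have handled carefully.
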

\begin{proof}
 Given $u\in D(Ra)_e$ we put $\al(u)=u(a)\in R_{d+e}$.  This defines a
 map $\al\:D(Ra)\to\Sg^{-d}R$, which is clearly injective.  Note that if
 $b\in\ann_R(a)$ then $\al(a)b=\al(ab)=\al(0)=0$.  This proves
 that $\al(a)\in\ann_R^2(Ra)_{d+e}=(Ra)_{d+e}$.  In the opposite
 direction, if $c\in(Ra)_{d+e}$ then we have $c=ma$ for some $m\in
 R_e$, and the rule $\mu_m(x)=mx$ defines an element
 $\mu_m\in D(Ra)_e$ with $\al(\mu_m)=c$.  This proves that the image
 of $\al$ is $\Sg^{-d}Ra$, as required.
\end{proof}

\begin{proposition}\label{prop-quotient}
 If $R$ is self-injective and $a\in R$ then $R/\ann(a)$ is also
 self-injective.
\end{proposition}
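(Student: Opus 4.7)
Write $d = |a|$ and $S = R/\ann(a)$. The plan is to identify $D(S) = \Hom_R(S, R)$ with a suspension of $S$, and then invoke a standard change-of-rings principle to conclude that $S$ is injective as a module over itself.

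First I would observe that multiplication by $a$ fits into a short exact sequence of $R$-modules
\[ 0 \to \ann(a) \to R \to \Sg^{-d}(Ra) \to 0, \]
yielding $S \cong \Sg^{-d}(Ra)$, or equivalently $Ra \cong \Sg^d S$. Since $\ann(a)$ annihilates $Ra$, the $R$-actions factor through $S$, and this is also an isomorphism of $S$-modules.

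Next, $D(S) = \Hom_R(R/\ann(a), R) = \ann_R(\ann(a)) = \ann_R^2(Ra)$. Because $Ra$ is finitely generated, Remark \ref{rem-ideal} gives $\ann_R^2(Ra) = Ra$, and combining this with the previous step gives $D(S) \cong \Sg^d S$ as $S$-modules. The final ingredient is the standard change-of-rings principle: for any ring homomorphism $R \to S$, the functor $\Hom_R(S, -) \: \Mod_R \to \Mod_S$ is right adjoint to the (exact) restriction of scalars, and therefore preserves injectives. Applied to the self-injective $R$-module $R$, this shows that $D(S) = \Hom_R(S, R)$ is injective as an $S$-module; combining with $D(S) \cong \Sg^d S$ and desuspending, $S$ is injective over itself.

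I do not anticipate any serious obstacle. The main things to keep straight are the degree-shift bookkeeping in $Ra \cong \Sg^d S$ and the verification that the change-of-rings adjunction works as expected in the graded setting, both of which are routine. If one prefers a hands-on approach, one can instead translate lifting problems for ideals of $S$ into lifting problems over $R$ via Baer's criterion.
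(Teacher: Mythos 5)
Your proof is correct, and it takes a genuinely different (if closely related) route from the paper's. The paper works directly with the functor $D_Q=\Hom_Q(-,Q)$: it shows that for every $Q$-module $M$ the natural monomorphism $\Hom_Q(M,Q)\to\Hom_R(M,R)$ induced by $i\:Q\hookrightarrow R$ (with image $Ra$) is actually surjective, precisely because any $\phi\:M\to R$ lands in $\ann_R^2(a)=Ra$; exactness of $D_Q$ then follows from exactness of $D_R$. You instead observe that $\Hom_R(Q,-)\:\Mod_R\to\Mod_Q$, being right adjoint to the exact restriction functor, automatically carries $R$-injectives to $Q$-injectives, so $\Hom_R(Q,R)$ is $Q$-injective; the remaining work is the identification $\Hom_R(Q,R)=\ann_R^2(Ra)=Ra\cong\Sg^d Q$ as $Q$-modules. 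Both arguments rest on the same double-annihilator identity $\ann_R^2(Ra)=Ra$ from Remark~\ref{rem-ideal}, but the paper does the exactness transfer by hand, whereas you delegate it to the standard adjunction-preserves-injectives principle; in exchange you get the additional module identification $D(Q)\cong\Sg^d Q$, which is essentially Lemma~\ref{lem-principal-dual} in disguise. The only thing to watch is that the isomorphism $\Hom_R(Q,R)\cong Ra$ (via $\phi\mapsto\phi(1)$) is one of $Q$-modules, not merely of $R$-modules, since $\ann(a)$ acts trivially on both sides; you note this and it is correct.
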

\begin{proof}
 Put $Q=R/\ann(a)$, and let $i\:Q\xra{}R$ be induced by $x\mapsto xa$,
 so $i$ is injective, with image $Ra$.  For $M\in\Mod_Q$ we write
 $D_Q(M)=\Hom_Q(M,Q)=\Hom_R(M,Q)$ and $D_R(M)=\Hom_R(M,R)$.  We are
 given that $D_R$ is exact, and we must show that $D_Q$ is exact.  The
 map $i\:Q\xra{}R$ gives a natural monomorphism $i\:D_Q(M)\to D_R(M)$,
 and it will suffice to show that this is also an epimorphism.  For
 any $\phi\:M\xra{}R$ we see that
 $\ann(a).\phi(M)=\phi(\ann(a)M)=\phi(0)=0$, so
 $\phi(M)\leq\ann_R^2(a)=Ra$, and $i\:Q\xra{}Ra$ is an isomorphism, so
 $\phi=i(\psi)$ for some $\psi\in D_Q(M)$, as required.
\end{proof}

\begin{proposition}\label{prop-ann-prop}
 If $R$ is self-injective and $I$ and $J$ are ideals in $R$ then
 $\ann_R(I+J)=\ann_R(I)\cap\ann_R(J)$ and
 $\ann_R(I\cap J)=\ann_R(I)+\ann_R(J)$.
\end{proposition}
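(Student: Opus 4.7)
The first identity is purely formal and requires no hypothesis on $R$: an element $a$ annihilates every sum $i+j$ with $i\in I$, $j\in J$, and specialising $j=0$ or $i=0$ gives $\ann_R(I+J) \sse \ann_R(I) \cap \ann_R(J)$, while the reverse inclusion is clear by $R$-linearity. The inclusion $\ann_R(I) + \ann_R(J) \sse \ann_R(I \cap J)$ in the second identity is equally immediate, since anything killing $I$ or $J$ individually kills the subideal $I \cap J$. Only the reverse inclusion $\ann_R(I \cap J) \sse \ann_R(I) + \ann_R(J)$ requires any real work.

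For that inclusion I would invoke self-injectivity through the graded Baer criterion, that is, the property that every graded homomorphism from a homogeneous ideal of $R$ into $R$ extends over $R$. Fix a homogeneous $a \in \ann_R(I \cap J)$ of degree $d$; since $\ann_R(I \cap J)$ is a homogeneous ideal we may reduce to this case by treating one component at a time. Define a graded $R$-linear map
\[
 \phi \: I + J \xra{} \Sg^{-d} R, \qquad \phi(i + j) = aj.
\]
The one computation to perform is well-definedness: if $i + j = i' + j'$ then $i - i' = j' - j$ lies in $I \cap J$, so $a(j' - j) = 0$, forcing $aj = aj'$. By self-injectivity of $R$, $\phi$ extends to a graded map $\tilde\phi \: R \xra{} \Sg^{-d} R$, which is necessarily multiplication by some homogeneous element $c \in R_d$. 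Restricting back to $I$ gives $ci = \phi(i) = 0$ for all $i \in I$, so $c \in \ann_R(I)$, while restricting to $J$ gives $cj = \phi(j) = aj$, so $(a - c)j = 0$ for all $j \in J$ and thus $a - c \in \ann_R(J)$. The decomposition $a = c + (a - c)$ exhibits $a$ in $\ann_R(I) + \ann_R(J)$, as required.

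The only step that uses any hypothesis on $R$ is the extension of $\phi$, so self-injectivity enters exactly once. The real conceptual content of the argument is the choice of $\phi$ itself: it is tailored so that the ``multiplication constant'' $c$ of any extension is automatically a simultaneous witness to both halves of the sum $\ann_R(I) + \ann_R(J)$, and the hypothesis $a \in \ann_R(I \cap J)$ is exactly the consistency condition that makes $\phi$ definable on the (non-direct) sum $I + J$. I do not anticipate a genuine obstacle; once one sees the formula for $\phi$, the rest is mechanical.
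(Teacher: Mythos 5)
Your proof is correct, but takes a genuinely different route from the paper's.  The paper applies the exact dualising functor $D=\Hom_R(-,R)$ to the Mayer--Vietoris short exact sequence
\[
 0 \to R/(I\cap J) \xra{\bsm 1\\1\esm} R/I\oplus R/J
   \xra{\bsm 1 & -1\esm} R/(I+J) \to 0,
\]
and, using the identification $D(R/K)\simeq\ann_R(K)$ from Remark~\ref{rem-ideal}, reads off both identities simultaneously from exactness of the dual sequence.  Your argument is a hands-on unwinding of the same fact: you observe that the first identity and one inclusion of the second are purely formal, isolate the one nontrivial step $\ann_R(I\cap J)\sse\ann_R(I)+\ann_R(J)$, and produce the decomposition $a=c+(a-c)$ directly by extending the carefully chosen map $\phi(i+j)=aj$ over $R$ via the graded Baer criterion.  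The paper's version is shorter and handles both equalities in one stroke; yours makes transparent which single inclusion actually consumes self-injectivity, and exhibits the splitting element $c$ explicitly.  Both arguments ultimately rest on the same single application of injectivity, namely the surjectivity of the restriction map $\Hom_R(R,R)\to\Hom_R(I+J,R)$, just packaged differently.

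Two small cautions worth recording.  First, as the degrees here live in a graded-commutative ring, you should double-check that the interpretation of ``multiplication by $c$'' as the extension $R\to\Sg^{-d}R$ is compatible with the sign conventions of the paper; since you only ever right-multiply by $c$ or $a$ and never commute odd-degree factors, no sign actually arises, but it is worth saying so.  Second, the Baer criterion as stated in Proposition~\ref{prop-baer-graded} is phrased for ideals of $R$, so you are implicitly using that $I+J$ is itself an ideal --- true, but worth a word.
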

\begin{proof}
 There is a short exact sequence
 \[ R/(I\cap J) \xra{\bsm 1\\1\esm} R/I\oplus R/J
     \xra{\bsm 1 & -1 \esm} R/(I+J).
 \]
 By applying the exact functor $D$, we get a short exact
 sequence 
 \[ \ann_R(I\cap J) \xla{\bsm 1 &1\esm} \ann_R(I)\oplus\ann_R(J)
     \xla{\bsm 1 \\ -1 \esm} \ann_R(I+J).
 \]
 The claim follows.
\end{proof}

\begin{corollary}\label{cor-ideal-intersection}
 If $R$ is local and self-injective and $I$ and $J$ are nontrivial
 ideals, then $I\cap J$ is also nontrivial.
\end{corollary}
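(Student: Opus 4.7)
The plan is to reduce the statement to Proposition~\ref{prop-ann-prop} and then use the defining property of a local ring (that every proper ideal is contained in the unique maximal ideal). Suppose for contradiction that $I\cap J=0$. Applying Proposition~\ref{prop-ann-prop} to this equation gives
\[ R = \ann_R(0) = \ann_R(I\cap J) = \ann_R(I) + \ann_R(J). \]
So it will suffice to show that both $\ann_R(I)$ and $\ann_R(J)$ are proper ideals, since then by locality they both sit inside the unique maximal ideal $\mxi$, and their sum cannot exhaust~$R$.

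To see that $\ann_R(I)$ is proper when $I\neq 0$, pick any nonzero homogeneous element $a\in I$. Clearly $\ann_R(I)\sse\ann_R(a)=\ann_R(Ra)$, and $1\notin\ann_R(a)$ because $1\cdot a=a\neq 0$, so $\ann_R(a)$ (and hence $\ann_R(I)$) is a proper ideal. The same argument applies to $J$.

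Hence $\ann_R(I)+\ann_R(J)\sse\mxi\subsetneq R$, which contradicts the displayed equation above. The only step that uses self-injectivity is the invocation of Proposition~\ref{prop-ann-prop}, and there is no real obstacle here; the argument is essentially a direct translation between ideals and their annihilators under the duality $D$, combined with the trivial observation that annihilators of nonzero ideals are proper.
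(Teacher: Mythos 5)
Your proof is correct and follows the same route as the paper: invoke Proposition~\ref{prop-ann-prop} to rewrite $\ann_R(I\cap J)$ as $\ann_R(I)+\ann_R(J)$, then use that annihilators of nontrivial ideals are proper and hence lie in the maximal ideal. The only cosmetic difference is that you argue by contradiction whereas the paper phrases it directly.
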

\begin{proof}
 Let $\mxi$ be the maximal ideal.  As $I$ and $J$ are nontrivial we
 have $\ann(I)<R$ and $\ann(J)<R$, so $\ann(I)\leq\mxi$ and
 $\ann(J)\leq\mxi$, so $\ann(I\cap J)=\ann(I)+\ann(J)\leq\mxi<R$, so
 $I\cap J$ is nontrivial.
\end{proof}

\section{Criteria for self-injectivity}
\label{sec-criteria}

We first record a graded version of the standard Baer criterion for
injectivity. 
\begin{definition}
 Let $R$ be a graded ring, and let $I$ be a graded $R$-module.
 We say that $I$ satisfies the \emph{Baer condition} if  for every
 graded ideal $J\leq R$, every integer $d$ and every $R$-module
 homomorphism $\phi\:\Sg^dJ\to I$, there exists $m\in I_d$ such
 that $\phi(a)=am$ for all $a\in I$.  We say that $I$ satisfies
 the \emph{finite Baer condition} if the same condition holds for all
 finitely generated graded ideals $J$.
\end{definition}

\begin{proposition}\label{prop-baer-graded}
 In the above context, the module $I$ is injective if and only if it
 satisifes the Baer condition.
\end{proposition}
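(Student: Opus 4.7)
The plan is to adapt the classical ungraded Baer criterion, the only real work being the bookkeeping of degrees. For the easy direction (injectivity implies the Baer condition), I would take $\phi\:\Sg^d J\to I$ and use injectivity of $I$ to extend it along the inclusion $\Sg^d J\hookrightarrow\Sg^d R$, obtaining a graded homomorphism $\tilde\phi\:\Sg^d R\to I$. The unit $1\in R$ sits in the degree-$d$ piece of $\Sg^d R$, so $m:=\tilde\phi(1)$ lies in $I_d$, and $R$-linearity yields $\phi(a)=\tilde\phi(a\cdot 1)=am$ for every $a\in J$.

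The converse is a standard Zorn's lemma argument. Given a graded monomorphism $M\hookrightarrow N$ and a graded homomorphism $f\:M\to I$, I would consider the poset of pairs $(M',f')$ where $M\leq M'\leq N$ is a graded submodule and $f'\:M'\to I$ is a graded extension of $f$, ordered by extension. This poset is nonempty (containing $(M,f)$) and chain-complete (by taking directed unions), so Zorn provides a maximal element $(M',f')$, and the task reduces to showing $M'=N$. Suppose for contradiction that $M'\neq N$; I would then pick a homogeneous $x\in N\sm M'$ of some degree $e$ and form the ideal $J=\{a\in R\st ax\in M'\}$. The map $\psi\:J\to I$ given by $\psi(a)=f'(ax)$ raises internal degree by $e$, i.e.\ is a degree-zero homomorphism $\Sg^e J\to I$, so the Baer condition supplies $m\in I_e$ with $\psi(a)=am$ for all $a\in J$. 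Defining $f''\:M'+Rx\to I$ by $f''(m'+ax)=f'(m')+am$ gives a strict extension of $(M',f')$ once well-definedness is checked, contradicting maximality.

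The crux is invoking the Baer condition to manufacture $m$; well-definedness of $f''$ then reduces to the identity $f'(rx)=\psi(r)=rm$ whenever $r\in J$, i.e.\ whenever $m'+rx=0$ in $M''$. The only mildly delicate point is the insistence that $x$ be chosen homogeneous: this is what guarantees that $J$ is a (homogeneous) ideal and that the formula defining $f''$ respects gradings. Beyond this and the suspension bookkeeping, there is no substantive obstacle, since the argument is a faithful transcription of the ungraded proof.
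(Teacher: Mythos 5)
Your proof is correct, and it is exactly the argument the paper has in mind: the paper simply cites Baer's original ungraded argument (and Lam's textbook treatment) and remarks that it "can be modified in an obvious way to keep track of gradings"; your write-up is that modification spelled out, with the degree bookkeeping (choosing $x$ homogeneous, viewing $\psi$ as a map $\Sg^e J\to I$, taking $m\in I_e$) handled correctly. No substantive difference from the paper's intended proof.
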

\begin{proof}
 This was originally done in the ungraded context in~\cite{ba:agd}, as
 an application of Zorn's Lemma.  The proof is also given in many
 textbooks such as~\cite{la:lmr}*{page 63}.  It can be modified in an
 obvious way to keep track of gradings, which gives our statement
 above.  
\end{proof}

\begin{proposition}\label{prop-finite-baer}
 Suppose that $I_d$ is finite for all $d$, and that $I$ satisfies
 the finite Baer condition.  Then $I$ also satisfies the full Baer
 condition and so is injective.
\end{proposition}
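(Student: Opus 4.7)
The plan is to reduce from arbitrary ideals to finitely generated ones via a compactness argument that uses the finiteness of $I_d$.

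Suppose $J\leq R$ is an arbitrary graded ideal and $\phi\:\Sg^dJ\to I$ is a homomorphism. Write $J$ as the directed union of its finitely generated subideals $J_\al$ (for instance, the ideals generated by finite subsets of a fixed generating set). For each $\al$, the restriction $\phi|_{\Sg^dJ_\al}\:\Sg^dJ_\al\to I$ can, by the finite Baer condition, be represented by multiplication by some element of $I_d$. The set of all such ``witnesses'' is
\[ S_\al = \{m\in I_d\st \phi(a)=am \text{ for all } a\in J_\al\}. \]
By hypothesis each $S_\al$ is nonempty, and $S_\al$ is a subset of the finite set $I_d$.

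Next I would observe that the family $\{S_\al\}$ has the finite intersection property: given finitely many indices $\al_1,\dotsc,\al_n$, the ideal $J_{\al_1}+\dotsb+J_{\al_n}$ is also finitely generated, so it appears as some $J_\bt$ in our family, and clearly $S_\bt\sse S_{\al_1}\cap\dotsb\cap S_{\al_n}$; hence this intersection is nonempty.

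Since $I_d$ is finite, the collection of all intersections of finitely many $S_\al$'s is a finite collection of nonempty subsets of $I_d$, so it has a minimal element $S^*$. Intersecting $S^*$ with any further $S_\al$ cannot shrink it (by minimality), so $S^*\sse S_\al$ for all $\al$, whence $\bigcap_\al S_\al\supseteq S^*\neq\emptyset$. Choose any $m\in\bigcap_\al S_\al$; then every $a\in J$ lies in some $J_\al$, so $\phi(a)=am$. Thus $\phi$ is represented by $m$, verifying the full Baer condition for $J$, and injectivity follows from Proposition~\ref{prop-baer-graded}.

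There is no real obstacle here; the only point requiring care is the compactness step, which is just the fact that a descending chain in a finite set stabilizes, applied to the downward-directed poset of finite intersections in $I_d$.
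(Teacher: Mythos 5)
Your proof is correct and matches the paper's argument in all essentials: the paper likewise defines $M(K)=\{m\in I_d\st\phi(a)=am\text{ for all }a\in K\}$ for finitely generated $K\sse J$, picks $K$ with $|M(K)|$ minimal, and uses that $M(K+Ra)\sse M(K)$ to conclude $M(K+Ra)=M(K)$ for all $a\in J$, giving a global witness $m$. Your framing in terms of the finite intersection property of the $S_\al$ and a minimal finite intersection $S^*$ is the same compactness argument, just phrased with $\sse$-minimality rather than minimal cardinality.
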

\begin{proof}
 Consider a graded ideal $J\leq R$ and a homomorphism
 $\phi\:\Sg^dJ\to I$.  For each finitely generated ideal
 $K\sse J$ we put 
 \[ M(K) = \{m\in I_d\st \phi(a)=am \text{ for all } a\in K\}. \]
 The finite Baer condition means that this is a nonempty subset of the
 finite set $I_d$.  Choose $K$ such that $|M(K)|$ is as small as
 possible, and choose $m\in M(K)$.  For $a\in J$ it is clear that
 $M(K+Ra)\sse M(K)$, so by the minimality property we must
 have $M(K+Ra)=M(K)$, so $m\in M(K+Ra)$, so $\phi(a)=am$.
 This proves the full Baer condition. 
\end{proof}

\begin{definition}\label{defn-block}
 Let $R$ be a graded ring, and let $I$ be an $R$-module.  A
 \emph{test pair} of length $r$ and degree $d$ is a pair $(u,v)$ where
 $u\in R^r$ and $v\in I^r$ such that the entries $u_i$ and $v_i$ are
 homogeneous with $|v_i|=|u_i|+d$ for all $i$.  A \emph{block} for
 such a pair is a vector $b\in R^r$ such that $b.u=0$ but $b.v\neq 0$
 (where $b.x=\sum_ib_ix_i$).  A \emph{transporter} is an element
 $m\in I_d$ such that $v_i=mu_i$ for all $i$.
\end{definition}
\begin{remark}\label{rem-separate-zeros}
 We implicitly formulate the theory of graded groups in such a way that
 the zero elements in different degrees are distinct.  Thus, the
 notation $|u|$ is meaningful even if $u=0$.
\end{remark}

\begin{proposition}\label{prop-block}
 The module $I$ satisfies the finite Baer condition iff every test
 pair has either a block or a transporter.
\end{proposition}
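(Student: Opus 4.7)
The plan is to read the two conditions as direct translations of one another via the dictionary sending a homomorphism $\phi\:\Sg^dJ\to I$ (with $J=(u_1,\dotsc,u_r)$ finitely generated) to the test pair $(u,v)$ defined by $v_i=\phi(u_i)$, and vice versa.

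For the forward direction I would assume the finite Baer condition and take a test pair $(u,v)$ of length $r$ and degree $d$ that has no block. Setting $J=Ru_1+\dotsb+Ru_r$, I would attempt to define
\[ \phi\:\Sg^dJ\to I,\qquad \phi\Bigl(\sum_i b_iu_i\Bigr)=\sum_i b_iv_i. \]
The absence of a block is exactly what is needed to make $\phi$ well-defined: if $\sum b_iu_i=\sum b'_iu_i$ then $(b-b')\cdot u=0$, so $(b-b')\cdot v=0$ by hypothesis, whence $\sum b_iv_i=\sum b'_iv_i$. $R$-linearity is immediate from the formula, and the degree constraint $|v_i|=|u_i|+d$ built into the definition of a test pair is precisely what makes $\phi$ a degree-zero map out of $\Sg^dJ$. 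Applying the finite Baer condition produces $m\in I_d$ with $\phi(a)=am$ for all $a\in J$, and specialising to $a=u_i$ gives $v_i=u_im$, so $m$ is a transporter (up to the graded-commutative sign rule relating $u_im$ and $mu_i$, which is harmless).

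For the reverse direction I would take a finitely generated ideal $J=(u_1,\dotsc,u_r)$ together with a map $\phi\:\Sg^dJ\to I$, set $v_i=\phi(u_i)$, and note that $(u,v)$ is a test pair. No block $b$ can exist, because $R$-linearity of $\phi$ forces $b\cdot v=\phi(b\cdot u)=0$ whenever $b\cdot u=0$. The hypothesis therefore supplies a transporter $m\in I_d$ with $v_i=mu_i$ for each $i$, and for an arbitrary element $a=\sum c_iu_i\in J$ a direct computation gives $\phi(a)=\sum c_iv_i=\sum c_imu_i=ma$, verifying the finite Baer condition.

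The translation in each direction is essentially tautological once one extracts the correct ideal and homomorphism from a test pair (and vice versa), so no step should present a serious obstacle; the only real content is the observation that the ``no block'' hypothesis is exactly what is required to make $\phi$ well-defined in the forward argument.
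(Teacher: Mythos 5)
Your proof is correct and follows essentially the same approach as the paper: translate between a test pair $(u,v)$ and a homomorphism $\phi$ on the ideal generated by $u$, with the absence of a block corresponding exactly to well-definedness of $\phi$, and a transporter corresponding to the element $m$ supplied by the finite Baer condition. The only difference is that you present the two implications in the opposite order and spell out the well-definedness argument a bit more explicitly; the content is identical.
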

\begin{proof}
 Suppose that every test pair has either a block or a transporter.
 Consider a finitely generated graded ideal $J\leq R$, and a
 homomorphism $\phi\:\Sg^dJ\to R$.  Choose a list
 $u=(u_1,\dotsc,u_r)$ of homogeneous elements that generates $J$,
 and put $v_i=\phi(u_i)\in I$.  Note that if $b\in R^r$ with $b.u=0$
 then we can apply $\phi$ to see that $b.v=0$.  It follows that the
 pair $(u,v)$ has no block, so it must have a transporter.  This means
 that there is an element $m\in I_d$ with $\phi(u_i)=u_im$ for all
 $i$, and it follows easily that $\phi(a)=am$ for all $a\in J$, as
 required.  

 Conversely, suppose that $I$ satisfies the finite Baer condition.
 Consider a test pair $(u,v)$ of degree $d$ with no block, and let
 $J$ be the ideal generated by the entries $u_i$.  Define
 $\phi\:\Sg^dJ\to I$ by $\phi(\sum_ib_iu_i)=\sum_ib_iv_i$ (the
 absence of a block means that this is well-defined).  The finite Baer
 condition means that there is an element $m\in I_d$ with $\phi(a)=am$
 for all $a\in J$, and this $m$ is clearly a transporter for
 $(u,v)$. 
\end{proof}

\begin{corollary}\label{cor-limit-selfinj}
 Let $R$ be a graded commutative ring such that $R_k$ is finite for
 all $k$.  Suppose also that there are subrings
 \[ R(0) \leq R(1) \leq R(2) \leq \dotsb \leq R \]
 such that each $R(n)$ is self-injective and $R=\bigcup_nR(n)$.
 Then $R$ is self-injective. 
\end{corollary}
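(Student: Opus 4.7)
The plan is to reduce self-injectivity of $R$ to the combinatorial block/transporter criterion already established in this section, and then import it one finitely generated piece at a time from the subrings $R(n)$. Since $R_k$ is finite for all $k$, Proposition~\ref{prop-finite-baer} tells us that $R$ is self-injective provided it satisfies the finite Baer condition as a module over itself, and Proposition~\ref{prop-block} reformulates this as the statement that every test pair $(u,v)$ with $u,v\in R^r$ admits either a block or a transporter.

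So I would start with an arbitrary test pair $(u,v)$ of length $r$ and degree $d$ in $R$. Since each $u_i$ and $v_i$ is a homogeneous element of $R=\bigcup_n R(n)$ and the list is finite, there exists $n$ large enough that all $u_i$ and $v_i$ lie in $R(n)$. Then $(u,v)$ is a test pair for $R(n)$ viewed as a module over itself. Because $R(n)$ is self-injective, it satisfies the Baer condition, hence the finite Baer condition, and so by Proposition~\ref{prop-block} applied to $R(n)$ there is either a block $b\in R(n)^r$ or a transporter $m\in R(n)_d$ for the pair $(u,v)$ inside $R(n)$.

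The final step is to observe that both conclusions transfer verbatim along the inclusion $R(n)\leq R$: the equations $b.u=0$ and $b.v\neq 0$ involve only products and sums that are computed identically in $R(n)$ and in $R$, as do the equations $v_i=m u_i$. Therefore a block or transporter found inside $R(n)$ is automatically a block or transporter for $(u,v)$ in $R$. This verifies the block/transporter criterion for every test pair, completing the proof.

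There is no serious obstacle here; the only point that requires a moment's thought is the asymmetry just noted, namely that we are not trying to match arbitrary blocks in $R$ by blocks in $R(n)$ (which could fail, since a block in $R$ need not live in $R(n)$), but rather to exhibit at least one of the two alternatives inside $R(n)$ and then transport it up. The finiteness of $R_k$ is used only to invoke Proposition~\ref{prop-finite-baer}, and the ascending-union hypothesis is used only to place any finite list of elements in a common $R(n)$.
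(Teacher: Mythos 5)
Your argument is correct and follows essentially the same route as the paper's proof: reduce to the block/transporter criterion via Propositions~\ref{prop-finite-baer} and~\ref{prop-block}, place each test pair inside some $R(n)$, use self-injectivity of $R(n)$ to produce a block or transporter there, and note that it transports up to $R$. The concluding remark about the asymmetry of the transfer is a sensible sanity check but not a new step.
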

\begin{proof}
 Any test pair $(u,v)\in R^r\tm R^r$ can be regarded as a test
 pair over $R(n)$ for sufficiently large $n$.  As $R(n)$ is
 self-injective, there must be a block in $R(n)^r$ or a transporter
 in $R(n)$.  It is clear from the definitions that such a block or
 transporter still qualifies as a block or transporter over $R$, so
 we see that $R$ satisfies the finite Baer condition.  As we have
 assumed that $R_k$ is finite for all $k$, we can use
 Proposition~\ref{prop-finite-baer} to see that $R$ is injective as
 an $R$-module.
\end{proof}

\begin{theorem}\label{thm-selfinj-ann}
Let $R$ be a graded commutative ring such that $R_k$ is finite for
 all $k$. Then the following are equivalent:
  \begin{itemize}
  \item[(a)] $R$ is self-injective.
  \item[(b)] For all finitely generated ideals $J,K\leq R$ we have
   $\ann_R^2(J)=J$ and 
   \[\ann_R(J\cap K) = \ann_R(J)+\ann_R(K).
   \]
  \item[(c)] For all elements $a \in R$ and every finitely generated
   ideal $J \leq R$ we have $\ann_R^2(a)=Ra$ and
   \[\ann_R(J \cap Ra) =  \ann_R(J) + \ann_R(a).\]
 \end{itemize}
\end{theorem}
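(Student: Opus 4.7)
The plan is to establish the cyclic implications (a) $\Rightarrow$ (b) $\Rightarrow$ (c) $\Rightarrow$ (a). The first two are essentially formal. For (a) $\Rightarrow$ (b), I would cite Remark~\ref{rem-ideal} (giving $\ann_R^2(J)=J$ for finitely generated $J$ when $R$ is self-injective) and Proposition~\ref{prop-ann-prop} (giving the intersection identity). For (b) $\Rightarrow$ (c), the identity in (c) is just (b) specialised to $K=Ra$, and $\ann_R^2(a)=\ann_R^2(Ra)=Ra$ is (b) applied to the principal ideal.

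The main content is (c) $\Rightarrow$ (a). Using the finiteness of each $R_k$ together with Propositions~\ref{prop-finite-baer} and~\ref{prop-block}, this reduces to showing that every test pair $(u,v)\in R^r\tm R^r$ without a block admits a transporter. I would argue by induction on $r$. The case $r=1$ is immediate: no block means $\ann_R(u_1)\sse\ann_R(v_1)$, so $v_1\in\ann_R^2(v_1)\sse\ann_R^2(u_1)=Ru_1$ by (c), and any $m$ with $v_1=mu_1$ serves as a transporter.

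For the inductive step, let $J_0=(u_1,\dotsc,u_{r-1})$. The sub-pair on the first $r-1$ generators is blockless (a block for it, padded with a zero in position $r$, would block the full pair), so induction produces $m_0$ with $m_0u_i=v_i$ for $i<r$. Similarly the base case applied to $(u_r,v_r)$ produces $c$ with $cu_r=v_r$. The idea is to construct the transporter in the form $m=c-e$, where $e\in\ann_R(u_r)$ is chosen so that $c-e-m_0\in\ann_R(J_0)$; then $mu_r=cu_r=v_r$ and $mu_i=m_0u_i=v_i$ for $i<r$. Such an $e$ exists precisely when
\[
 c-m_0 \in \ann_R(u_r)+\ann_R(J_0) = \ann_R(J_0\cap Ru_r),
\]
where the second equality is the intersection identity in (c).

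The main obstacle is therefore verifying that $c-m_0$ annihilates every element of $J_0\cap Ru_r$. For any such $x=su_r=\sum_{i<r}b_iu_i$, the tuple $(-b_1,\dotsc,-b_{r-1},s)$ satisfies $b\cdot u=0$, so the no-block condition yields $b\cdot v=0$, i.e.\ $sv_r=\sum b_iv_i$. Substituting $v_r=cu_r$ and $v_i=m_0u_i$ and using associativity should rearrange to $(c-m_0)x=0$. I expect this calculation, together with bookkeeping for the signs introduced by graded commutativity, to be the only fiddly part of the argument; the underlying logic is direct.
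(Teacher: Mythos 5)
Your proposal is correct and follows essentially the same approach as the paper: reduce to the block/transporter characterisation via the finite Baer criterion, prove the base length-one case using $\ann_R^2(a)=Ra$, and in the inductive step combine the two partial transporters (your $m_0$ and $c$, the paper's $m$ and $n$) via the decomposition $\ann_R(J_0\cap Ru_r)=\ann_R(J_0)+\ann_R(u_r)$. The only cosmetic difference is that you construct the combined transporter $m=c-e$ directly rather than phrasing the step as a proof by contradiction, and that your blockless-subpair observation streamlines the paper's case split on whether the subpairs have blocks.
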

\begin{proof}
 It follows from Remark ~\ref{rem-ideal} and
 Proposition~\ref{prop-ann-prop} that~(a) implies~(b). If ~(b) holds,
 then ~(c) follows immediately. Now suppose ~(c) holds. As we have
 assumed that $R_k$ is finite for all $k$, we may use the theory of
 blocks and transporters. We proceed by induction on the length of a
 test pair to show that every test pair over the ring $R$ has either a
 block or a transporter. Let $(u;v)$ be a test pair of length $1$ and
 degree $d$. Suppose this test pair has neither block nor
 transporter. Then $\ann_R(u) \leq \ann_R(v)$ and by assumption we
 have $Rv = \ann^2_R(v) \leq \ann^2_R(u) = Ru$, that is, $v = um$ for
 some $m \in R_d$.  Since $m$ is a transporter for this test pair, we
 have a contradiction.

 Now suppose each test pair of length $ \leq k$ and arbitrary degree
 has either a block or a transporter. A test pair of length $k+1$ and
 degree $d$ takes the form $ (u,u_{k+1};v,v_{k+1})$ where $(u;v)$ is a
 test pair of length $k$ and degree $d$ and $(u_{k+1},v_{k+1})$ is a
 test pair of length $1$ and degree $d$.  By the inductive hypothesis,
 both the test pairs $(u;v)$ and $(u_{k+1},v_{k+1})$ have either a
 block or a transporter. If $(u;v)$ has block $r$, then $(r,0)$ is a
 block for the test pair $(u,u_{k+1};v,v_{k+1})$. Similarly, if
 $(u_{k+1},v_{k+1})$ has block $r_{k+1}$, then $(0,\ldots, 0,
 r_{k+1})$ is a block for the test pair
 $(u,u_{k+1};v,v_{k+1})$. Otherwise, $(u;v)$ must have transporter $m
 \in R_d$ and $(u_{k+1},v_{k+1})$ must have transporter $n \in R_d$.
 In this situation, suppose the test pair $(u,u_{k+1};v,v_{k+1})$ has
 neither block nor transporter and let $J$ be the ideal generated by
 the entries of $u$.  The absence of a block implies that there is a
 well defined map $\phi: \Sigma^d (J + Ru_{k+1}) \to R$ defined by
 $\phi(\sum^{k+1}_{i=1}b_iu_i)=\sum^{k+1}_{i=1}b_iv_i$.  Now let $s$
 be an element in the intersection $J \cap Ru_{k+1}$. Then we must
 have $s= \sum_{i=1}^k s_iu_i=s_{k+1}u_{k+1}$ for elements $s_i \in R$
 for each $i$. Applying the map $\phi$ to the zero element
 $(\sum_{i=1}^k s_iu_i) - s_{k+1}u_{k+1}$ gives
 \[
  0=\left(\sum_{i=1}^k s_iv_i \right)- s_{k+1}v_{k+1} =
    \left(\sum_{i=1}^k s_iu_im \right) - s_{k+1}u_{k+1}n 
   = s(m - n).
 \]
 Thus it follows that the element $m-n$ is in the annihilator ideal
 $\ann_R(J \cap Ru_{k+1})$. By assumption, we have $\ann_R(J \cap
 Ru_{k+1}) = \ann_R(J) + \ann_R(u_{k+1})$. Now let $m-n = x -y$ where
 $x \in \ann_R(J)$ and $y \in \ann_R(u_{k+1})$ and put $z = m-x = n -
 y$. Since $u_iz = u_i(m-x) = u_im = v_i$ for each $i \leq k$ and
 $u_{k+1}z = u_{k+1}(n-y)= u_{k+1}n = v_{k+1}$ it follows that $z$ is
 a transporter for the test pair $(u,u_{k+1};v,v_{k+1})$. As this
 gives a contradiction, it follows that every test pair of length
 $k+1$ and arbitrary degree must have either a block or
 transporter. We deduce that every test pair in the ring $R$ must have
 either a block or transporter, and since $R_k$ is finite for each
 $k$, we can use Proposition ~\ref{prop-block} to show that $R$ is
 injective as an $R$-module.
\end{proof}

\section{The Noetherian case}
\label{sec-noetherian}

\begin{theorem}\label{thm-noeth}
 Let $R$ be a Noetherian graded commutative ring.  Then the
 following are equivalent:
 \begin{itemize}
  \item[(a)] $R$ is self-injective.
  \item[(b)] For every ideal $J\leq R$ we have
   $\ann_R^2(J)=J$.
  \item[(c)] $R$ is Artinian (and thus is a finite product of
   Artinian local rings), and each of the local factors has
   one-dimensional socle.    
 \end{itemize}
\end{theorem}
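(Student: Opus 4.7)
The implication (a)$\Rightarrow$(b) is immediate from Remark~\ref{rem-ideal}, since every ideal of the Noetherian ring $R$ is finitely generated.

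For (b)$\Rightarrow$(c), I would first deduce that $R$ is Artinian. Under the double annihilator condition, the map $J\mapsto\ann_R(J)$ is an order-reversing involution of the lattice of graded ideals, so ACC and DCC for this lattice are equivalent; since $R$ is Noetherian it is therefore Artinian. The standard graded structure theorem then writes $R=\prod_{i=1}^nR_i$ with each $R_i$ graded local Artinian, and a direct check (ideals of a product decompose, and both $\ann_R$ and $\ann^2_R$ compute coordinatewise) shows that DAC descends to each factor $R_i$. It therefore suffices to prove that a graded local Artinian ring with DAC has one-dimensional socle.

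For this last step I would first upgrade DAC to the full intersection formula $\ann_R(I\cap J)=\ann_R(I)+\ann_R(J)$: applying $\ann_R$ to $\ann_R(I)+\ann_R(J)$ gives $\ann_R^2(I)\cap\ann_R^2(J)=I\cap J$ by the universal identity $\ann(A+B)=\ann(A)\cap\ann(B)$ together with DAC, and applying $\ann_R$ once more and invoking DAC on the ideal $\ann_R(I)+\ann_R(J)$ yields the formula. With this in hand, the argument of Corollary~\ref{cor-ideal-intersection} goes through verbatim: in a local ring with DAC, two nonzero graded ideals $I,J$ force $\ann_R(I),\ann_R(J)\subsetneq R$ (since DAC makes $\ann_R(I)=R$ equivalent to $I=0$), hence both lie in $\mxi$, and so $\ann_R(I\cap J)\sse\mxi\ne R$, i.e.\ $I\cap J\ne 0$. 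If $\soc(R)$ had dimension $\geq 2$ over the graded residue field $R/\mxi$, two homogeneous $R/\mxi$-independent elements would generate distinct minimal ideals with trivial intersection, a contradiction.

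For (c)$\Rightarrow$(a), a product of rings is self-injective iff each factor is, so I reduce to the case of $R$ graded local Artinian with $\soc(R)$ a simple $R$-module. I would argue that $R$ is already the injective hull of its socle: any nonzero ideal $I\leq R$ contains the nonzero module $\mxi^{n-1}I$ (with $n$ chosen minimal with $\mxi^nI=0$), and this lies inside $\soc(R)$, so $\soc(R)\hookrightarrow R$ is essential. A graded version of Matlis duality over the Artinian local ring $R$ then identifies the length of $E(\soc R)$ with that of $R$, so the essential embedding $R\hookrightarrow E(\soc R)$ must be an equality, making $R$ injective. The hardest step is this final Matlis-duality length comparison in the graded setting; an alternative, if one wishes to avoid setting up graded Matlis duality, is to verify the Baer criterion directly by inducting on the length of $R/J$, using the simple-socle hypothesis to produce a transporter at each stage.
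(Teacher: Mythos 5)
Your proposal is correct and follows the same overall strategy as the paper—reduce to a graded local Artinian ring, identify a one-dimensional socle, then prove self-injectivity—but two of the steps use genuinely different tools. For (b)$\Rightarrow$(c), your derivation of $\ann_R(I\cap J)=\ann_R(I)+\ann_R(J)$ purely from the double annihilator condition (via $\ann(A+B)=\ann(A)\cap\ann(B)$ and two applications of DAC) is a nice self-contained observation the paper never states in this form; the paper instead gets the one-dimensional socle by a shorter direct argument: pick $0\neq u\in\soc(R)$, note $\ann(Ru)$ is a proper ideal containing $\mxi$ hence equal to $\mxi$, then apply DAC to get $Ru=\ann^2(Ru)=\ann(\mxi)=\soc(R)$. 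Your route via the intersection formula and the Corollary~\ref{cor-ideal-intersection}--style argument is a little longer but has the merit of isolating a reusable structural fact about DAC rings. For (c)$\Rightarrow$(a), your primary argument uses graded Matlis duality (essentiality of $\soc(R)$ in $R$ plus the length comparison $\ell(E(\soc R))=\ell(R)$ to force $R=E(\soc R)$), whereas the paper's Corollary~\ref{cor-dac-selfinj} does exactly the elementary alternative you mention at the end: a composition-series induction verifying Baer's criterion directly, using the simple-socle hypothesis together with Lemma~\ref{lem-soc-min} to produce the correction term $y_i$ at each stage. The Matlis approach is slicker but imports more machinery; the Baer induction is self-contained, which is why the paper chooses it.

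One small point worth being careful about in your (b)$\Rightarrow$(c) socle argument: your ``two homogeneous $R/\mxi$-independent elements generate minimal ideals with trivial intersection'' step is fine when both lie in the same degree (then $Ku\cap Kv=0$ by linear independence) and also when they lie in different degrees (then the intersection is automatically zero by gradedness), so the argument does close up; I mention it only because stating this dichotomy explicitly would make the proof airtight, and it is the kind of graded subtlety the paper sidesteps with its more direct $\ann(Ku)=\mxi$ computation.
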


Statements similar to this are certainly well-known (see for
example~\cite{brhe:cmr}*{Exercise 3.2.15}), but we do not know a
reference for this precise formulation.  For completeness we will give
a self-contained proof after some lemmas.

\begin{lemma}\label{lem-soc-min}
 Let $R$ be an Artinian local graded ring, with maximal ideal $\mxi$,
 and put $K=R/\mxi$.  Suppose that the socle $\soc(R)=\ann_R(\mxi)$
 has dimension one over $K$.  Then every nonzero ideal in $R$ contains
 $\soc(R)$.  
\end{lemma}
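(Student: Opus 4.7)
The plan is to exploit the nilpotence of $\mxi$ that comes from the Artinian hypothesis, filtering a given nonzero ideal $J$ by successive multiplication by $\mxi$ until the result lands inside the socle.

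First I would observe that, since $R$ is Artinian and local, the maximal ideal $\mxi$ is nilpotent, so $\mxi^N=0$ for some $N\geq 1$. In the descending chain
\[ J \supseteq \mxi J \supseteq \mxi^2 J \supseteq \dotsb \]
we therefore reach zero by the $N$th term. Because $J\neq 0$ while $\mxi^N J=0$, there is a largest integer $k\geq 0$ such that $\mxi^k J\neq 0$.

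Next I would argue that in fact $\mxi^k J=\soc(R)$. By the maximality of $k$ we have $\mxi\cdot\mxi^k J=\mxi^{k+1}J=0$, so $\mxi^k J$ is contained in $\ann_R(\mxi)=\soc(R)$. Being annihilated by $\mxi$, the ideal $\mxi^k J$ is naturally a graded module over $K=R/\mxi$, that is, a graded $K$-subspace of $\soc(R)$. Since $\soc(R)$ has dimension one over $K$ and $\mxi^k J$ is a nonzero $K$-subspace of it, it must equal the whole socle. Combining this with $\mxi^k J\sse J$ gives $\soc(R)\sse J$, as required.

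I do not expect any serious obstacle: the argument uses only the standard fact that the maximal ideal of an Artinian local ring is nilpotent, together with the one-dimensionality hypothesis on $\soc(R)$. The graded setting causes no additional difficulty because $\mxi$, $J$, and $\soc(R)$ are all graded and each step of the filtration argument respects the grading.
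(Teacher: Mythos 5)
Your proof is correct, and it takes a slightly different route from the one in the paper. The paper picks a minimal nonzero ideal $J$ contained in the given ideal (using the descending chain condition), then applies Nakayama's Lemma to conclude $\mxi J = 0$, so $J$ is a nonzero $K$-subspace of $\soc(R)$ and hence equals it. You instead invoke the nilpotence of $\mxi$ (itself a standard consequence of the Artinian local hypothesis) and filter $J$ by powers $\mxi^k J$, locating the last nonzero stage, which is then necessarily the socle. Both arguments are short and elementary; the paper's version keeps the explicit appeal to Nakayama up front, while yours packages that dependence into the auxiliary fact that $\mxi$ is nilpotent in an Artinian local ring. The practical content is the same: isolate a nonzero $\mxi$-annihilated subideal sitting inside $J$ and use one-dimensionality of the socle to pin it down.
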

\begin{proof}
 Let $I$ be a nonzero ideal.  By the Artinian condition, we can choose
 an ideal $J$ that is minimal among nonzero ideals contained in $I$.
 Recall that every Artinian ring is Noetherian (see for
 example~\cite{ma:crt}*{Theorem 3.2}), so we can use Nakayama's Lemma
 to see that $\mxi J<J$ and thus (by minimality) that $\mxi J=0$.
 This means that $J$ is a nontrivial $K$-subspace of $\soc(R)$, but
 $\soc(R)$ has dimension one, so $J=\soc(R)$, so $\soc(R)\leq I$.
\end{proof}

\begin{lemma}\label{lem-soc-dac}
 Suppose that $R$ is as in Lemma~\ref{lem-soc-min}.  Then for all
 ideals $J\leq R$ we have $\ann_R^2(J)=J$. 
\end{lemma}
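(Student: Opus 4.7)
The plan is to reduce the lemma to a length formula. Since $R$ is Artinian (and hence Noetherian), every ideal $J\leq R$ and every quotient $R/J$ has finite length; write $\ell$ for $R$-module length. I claim that
\[ \ell(J) + \ell(\ann_R(J)) = \ell(R) \]
for every ideal $J\leq R$. Granting this, apply it with $\ann_R(J)$ in place of $J$ to get $\ell(\ann_R^2(J)) = \ell(R) - \ell(\ann_R(J)) = \ell(J)$; combined with the tautological inclusion $J\subseteq\ann_R^2(J)$, finite length forces the equality $\ann_R^2(J)=J$.

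To prove the length formula, set $D(M)=\Hom_R(M,R)$ for any finitely generated $R$-module $M$. I first show by induction on $\ell(M)$ that $\ell(D(M))\leq\ell(M)$. The base case $M=K=R/\mxi$ is the heart of the matter: because $R$ is local, every simple $R$-module is isomorphic to $K$, and
\[ D(K) = \Hom_R(R/\mxi,R) = \ann_R(\mxi) = \soc(R), \]
which by hypothesis has $K$-dimension one, so $\ell(D(K))=1=\ell(K)$. For the inductive step, choose a simple submodule $K'\sse M$ (using that $M$ has finite length), apply the contravariant left exact functor $D$ to the short exact sequence $0\to K'\to M\to M/K'\to 0$ to obtain $0\to D(M/K')\to D(M)\to D(K')$, and combine subadditivity of length with the inductive hypothesis to get $\ell(D(M))\leq\ell(D(M/K'))+\ell(D(K'))\leq(\ell(M)-1)+1=\ell(M)$.

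Now apply $D$ to $0\to J\to R\to R/J\to 0$ to obtain the left exact sequence $0\to D(R/J)\to D(R)\to D(J)$. Since $D(R)=R$, this yields
\[ \ell(R) \leq \ell(D(R/J)) + \ell(D(J)) \leq \ell(R/J) + \ell(J) = \ell(R), \]
forcing equality throughout. In particular $\ell(D(R/J))=\ell(R/J)$; but $D(R/J)=\{r\in R\st rJ=0\}=\ann_R(J)$, so $\ell(\ann_R(J))=\ell(R)-\ell(J)$, which is the length formula. The main obstacle is the base case $\ell(D(K))=1$, which is precisely where the hypothesis $\dim_K\soc(R)=1$ enters; everything else is formal bookkeeping with lengths and left exact sequences. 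Note that we do not need $D$ to be exact (which would essentially be the self-injectivity we are trying to establish later): the one-sided inequality $\ell(D(M))\leq\ell(M)$ together with the accounting applied to the single sequence $0\to J\to R\to R/J\to 0$ suffices.
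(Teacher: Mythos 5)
Your proof is correct. Checking the chain: from the left exact sequence $0\to D(R/J)\to D(R)\to D(J)$ you get $\ell(R)\leq\ell(D(R/J))+\ell(D(J))$, the inductive bound $\ell(D(M))\leq\ell(M)$ squeezes this against $\ell(R/J)+\ell(J)=\ell(R)$, and the forced equality $\ell(D(R/J))=\ell(R/J)$ translates (via $D(R/J)=\ann_R(J)$) into the length formula $\ell(\ann_R(J))=\ell(R)-\ell(J)$; applying it twice and using $J\subseteq\ann_R^2(J)$ finishes. The base case of the induction, $D(K)=\ann_R(\mxi)=\soc(R)$ having length one, is indeed exactly where the socle hypothesis is consumed. (In the graded setting the simple objects are all the shifts $\Sg^d K$, but $D(\Sg^d K)\simeq\Sg^{-d}\soc(R)$ still has length one, so nothing changes.)

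This is the same underlying idea as the paper but packaged differently. The paper fixes a composition series $0=I_0<\dotsb<I_r=R$ through $J$, passes to the descending chain of annihilators $A_i=\ann_R(I_i)$, and shows step-by-step that each $A_i/A_{i+1}$ embeds in $\Hom_K(I_{i+1}/I_i,\soc(R))\simeq K$; counting then forces each quotient to be exactly $K$ and hence $\ell(\ann_R(I_i))=r-i$. Your argument abstracts that "$\soc(R)$ one-dimensional" bound into the single statement $\ell(D(M))\leq\ell(M)$ and lets left exactness of $D$ do the bookkeeping, applied only to $0\to J\to R\to R/J\to 0$ rather than to a whole filtration. What you gain is a cleaner, functorial formulation that avoids choosing and manipulating a composition series; what the paper's version gives, as a byproduct, is the sharper structural fact that each successive annihilator quotient $A_i/A_{i+1}$ is exactly $K$, which is mildly more information though not needed for the statement being proved.
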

\begin{proof}
 First, it is standard that we can fit together a composition series
 for $J$ with a composition series for $R/J$ to get a chain
 \[ 0=I_0<I_1 < \dotsb < I_r = R \]
 with $I_i/I_{i-1}\simeq K$ for all $i$, and $J=I_t$ for some $t$.
 Now let $A_j$ be the annihilator of $I_j$, so we have 
 \[ R = A_0 \geq A_1 \geq \dotsb \geq A_r = 0. \]
 Now $\mxi A_iI_{i+1}=A_i(\mxi I_{i+1})\leq A_iI_i=0$, so
 $A_iI_{i+1}\leq\soc(R)$.  On the other hand, we have $A_iI_i=0$ and
 $A_{i+1}I_{i+1}=0$.  We therefore have a natural map 
 \[ \xi_i \: A_i/A_{i+1} \to \Hom_K(I_{i+1}/I_i,\soc(R)) \]
 given by $\xi_i(a+A_{i+1})(b+I_i)=ab$.  It is clear from the
 definitions that this is injective, and the codomain is isomorphic to
 $K$, so $A_i/A_{i+1}$ is either $0$ or $K$.  It is standard that any
 two composition series have the same length, so we must have
 $A_i/A_{i+1}\simeq K$ for all $i$, so $A_i$ has length $r-i$.  After
 applying the same logic to the composition series
 $\{A_{r-i}\}_{i=0}^r$ we see that the ideal $\ann(A_i)=\ann^2(I_i)$
 has length $i$.  We also know that $I_i\leq\ann^2(I_i)$ and that
 $I_i$ also has length $i$; it follows that $I_i=\ann^2(I_i)$, as
 required. 
\end{proof}

\begin{corollary}\label{cor-dac-selfinj}
 Suppose that $R$ is as in Lemma~\ref{lem-soc-dac}.  Then $R$ is
 self-injective. 
\end{corollary}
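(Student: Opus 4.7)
The plan is to combine the double annihilator condition supplied by Lemma~\ref{lem-soc-dac} with the inductive block-and-transporter argument used in the proof of Theorem~\ref{thm-selfinj-ann}. Since $R$ is Artinian it is Noetherian, so every graded ideal is finitely generated; consequently the finite Baer condition of Proposition~\ref{prop-block} coincides with the full Baer condition of Proposition~\ref{prop-baer-graded}, and it suffices to produce a block or a transporter for every test pair over $R$.

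Before running the induction I would first upgrade Lemma~\ref{lem-soc-dac} to yield the second annihilator identity appearing in Theorem~\ref{thm-selfinj-ann}(c).  For ideals $J,K\leq R$ the inclusion $\ann(J)+\ann(K)\sse\ann(J\cap K)$ is tautological.  For the reverse, the ideal $\ann(J)+\ann(K)$ equals its own double annihilator by Lemma~\ref{lem-soc-dac}, so
\[ \ann(J)+\ann(K) = \ann^2\bigl(\ann(J)+\ann(K)\bigr) = \ann\bigl(\ann^2(J)\cap\ann^2(K)\bigr) = \ann(J\cap K), \]
where the middle equality uses the elementary identity $\ann(I_1+I_2)=\ann(I_1)\cap\ann(I_2)$ and the outer equality uses the DAC twice.

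With both annihilator identities in hand for arbitrary ideals, I would transfer the induction from the proof of Theorem~\ref{thm-selfinj-ann} verbatim.  The length-one base case requires only $\ann^2(a)=Ra$; the inductive step for length $k+1$ requires only the identity $\ann(J\cap Ru_{k+1})=\ann(J)+\ann(u_{k+1})$; both are instances of what we have just established.  The key observation is that finiteness of $R_k$ is \emph{not} used anywhere in the induction itself in Theorem~\ref{thm-selfinj-ann}: it is invoked only at the very end, to promote the finite Baer condition to the full Baer condition via Proposition~\ref{prop-finite-baer}, and that promotion is unnecessary here because every ideal is already finitely generated.  So every test pair has a block or a transporter, and Proposition~\ref{prop-baer-graded} then delivers self-injectivity.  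There is no substantive obstacle in this plan; the only subtlety is the bookkeeping remark that the inductive portion of Theorem~\ref{thm-selfinj-ann}'s proof genuinely does not rely on the finiteness of $R_k$.
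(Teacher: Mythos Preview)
Your argument is correct, and it takes a genuinely different route from the paper's own proof.

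The paper argues directly via a composition series.  Given an ideal $I$ and a homomorphism $f\:I\to R$, it builds a chain $0=J_0<J_1<\dotsb<J_r=I$ with simple quotients and then, step by step, constructs elements $x_i\in R$ with $f(a)=ax_i$ on $J_i$.  The inductive step uses Lemma~\ref{lem-soc-min} (every nonzero ideal contains the socle) to show that the ``defect'' $f(a_i)-x_{i-1}a_i$, which lies in the socle, can be absorbed into a correction term coming from $\ann(I_{i-1})a_i$; the double annihilator condition enters only to guarantee that $\ann(I_{i-1})a_i\neq 0$.  The final $x_r$ then witnesses Baer's criterion.

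Your approach instead recycles the block--transporter induction of Theorem~\ref{thm-selfinj-ann}, after first deriving the identity $\ann(J\cap K)=\ann(J)+\ann(K)$ purely from the DAC established in Lemma~\ref{lem-soc-dac}.  You correctly observe that finiteness of $R_k$ is used in that theorem's proof only at the very end (via Proposition~\ref{prop-finite-baer}) and can be replaced here by Noetherianity, which makes the finite Baer condition coincide with the full one.  This is cleaner in one respect: it never appeals to the socle structure or to Lemma~\ref{lem-soc-min}, and in fact what you have really proved is the implication ``Noetherian plus DAC implies self-injective'' directly, which is the content of $(b)\Rightarrow(a)$ in Theorem~\ref{thm-noeth}.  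The paper's proof, by contrast, is more hands-on and shows concretely how the one-dimensional socle drives the extension; it is also self-contained in the sense that it does not lean on the Section~\ref{sec-criteria} machinery.
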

\begin{proof}
 Consider an ideal $I\leq R$ and an $R$-module map $f\:I\to R$.
 Choose a composition series $0=J_0<J_1<\dotsb<J_r=I$.  We have
 $J_i/J_{i-1}\simeq K$ so we can find $a_i\in J_i\sm J_{i-1}$ such
 that $J_i=J_{i-1}+Ra_i$ with $\mxi a_i\leq J_{i-1}$.
 
 We will construct elements $x_0,\dotsc,x_r\in R$ such that
 $f(a)=ax_i$ for all $a\in J_i$.  We start with $x_0=0$.   Now suppose
 we have found $x_{i-1}$.  Put $u_i=f(a_i)-x_{i-1}a_i$.  Using the
 fact that $\mxi a_i\leq I_{i-1}$ we find that $\mxi u_i=0$, so
 $u_i\in\soc(R)$.  Next, we have $a_i\not\in I_{i-1}=\ann^2(I_{i-1})$,
 so $\ann(I_{i-1})a_i\neq 0$.  As every nontrivial ideal contains the
 socle, we see that $u_i\in\ann(I_{i-1})a_i$, so we can write
 $u_i=y_ia_i$ for some $y_i$ with $y_iI_{i-1}=0$.  We now put
 $x_i=x_{i-1}+y_i$.  By construction we have $f(a)=ax_i$ for
 $a\in I_{i-1}$ or for $a=a_i$, and it follows that this equation
 holds for all $a\in I_i$ as required.  At the end of the induction we
 have an element $x_r$ which fulfils Baer's criterion.
\end{proof}

\begin{proof}[Proof of Theorem~\ref{thm-noeth}]
 It follows from Remark~\ref{rem-ideal} that~(a) implies~(b).  Now
 suppose that~(b) holds.  Consider a descending chain of ideals
 $I_0\geq I_1\geq I_2\geq \dotsb$ in $R$.  The ideals $\ann(I_k)$ then
 form an ascending chain, which must eventually stabilise because $R$
 is Noetherian.  We can thus take annihilators again to see that the
 original chain also stabilises.  This shows that $R$ is Artinian.  It
 follows in a standard way that there are only finitely many maximal
 ideals, and that $R$ is the product of its maximal localisations.  We
 thus have a splitting $R=\prod_{i=1}^nR_i$ say, where each factor
 $R_i$ an Artinian local ring.  It follows that the lattice of ideals
 in $R$ is the product of the corresponding lattices for the factors
 $R_i$, and thus that each $R_i$ satisfies condition~(b).  We can thus
 reduce to the case where $R$ is local, with maximal ideal $\mxi$ say.  
 Recall that the socle is
 $\soc(R)=\{a\in R\st a\mxi=0\}=\ann_R(\mxi)$, which is naturally a
 vector space over the field $K=R/\mxi$.  If $\soc(R)$ were zero we
 would have $\mxi=\ann^2(\mxi)=\ann(\soc(R))=\ann(0)=R$, which is a
 contradiction.  We can therefore choose a nonzero element
 $u\in\soc(R)$.  We find that $Ku=Ru$ is a nonzero ideal in $R$, so
 $\ann(Ku)$ is a proper ideal containing $\ann(\soc(R))=\mxi$, so
 $\ann(Ku)=\mxi$ by maximality.  We can now take annihilators again to
 see that $Ku=\ann(\mxi)=\soc(R)$, so $\soc(R)$ is one-dimensional.
 This proves~(c).  

 Finally, we will assume~(c) and prove~(a).  It is again easy to
 reduce to the case where $R$ is local, and the local case is covered
 by Corollary~\ref{cor-dac-selfinj}.
\end{proof}

\begin{definition}
 Let $K$ be a field.  A \emph{Poincar\'e duality algebra} over $K$ is
 a graded commutative $K$-algebra $R$ equipped with a $K$-linear map
 $\tht\:R_d\to K$ for some $d\geq 0$ such that
 \begin{itemize}
  \item For $i<0$ or $i>d$ we have $R_i=0$
  \item $R_0=K$.
  \item For $0\leq i\leq d$ we have $\dim_K(R_i)<\infty$, and the map
   $(a,b)\mapsto\tht(ab)$ defines a perfect pairing between $R_i$ and
   $R_{d-i}$. 
 \end{itemize}
\end{definition}

\begin{proposition}\label{prop-poincare}
 Every Poincar\'e duality algebra is self-injective.
\end{proposition}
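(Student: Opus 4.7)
The plan is to identify $R$ with a shift of the $K$-linear dual $\Hom_K(R,K)$ and then to observe that any such dual is an injective $R$-module. First I would define
\[ \Phi \: R \to \Sg^{-d}\Hom_K(R,K), \qquad \Phi(a)(b) = \tht(ab). \]
A degree check (using that $\tht$ is supported in degree $d$) shows that $\Phi$ sends $R_i$ into the degree-$i$ component $\Hom_K(R_{d-i},K)$ of the target, and the $R$-linearity $\Phi(ca)(b)=\tht(cab)=(c\cdot\Phi(a))(b)$ is immediate from the definitions.

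Next I would verify that $\Phi$ is an isomorphism. For each $i\in\{0,\dotsc,d\}$ the restricted map $\Phi\:R_i\to\Hom_K(R_{d-i},K)$ is precisely the adjoint of the pairing $(a,b)\mapsto\tht(ab)$; by hypothesis this pairing is perfect between finite-dimensional $K$-vector spaces, so $\Phi$ is an isomorphism in these degrees. In the remaining degrees the source and the target both vanish, so $\Phi$ is an isomorphism of graded $R$-modules.

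It then remains to show that $\Hom_K(R,K)$ is injective as an $R$-module. For this I would invoke the natural adjunction
\[ \Hom_R(M,\Hom_K(R,K)) \;\cong\; \Hom_K(M,K), \]
valid for any graded $R$-module $M$, with inverse given by evaluation at $1\in R$. Because $K$ is a field, $\Hom_K(-,K)$ is exact, hence so is $\Hom_R(-,\Hom_K(R,K))$, which is exactly the statement that $\Hom_K(R,K)$ is injective in $\Mod_R$. Transporting along $\Phi$ then gives that $R$ itself is injective as an $R$-module.

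There is no serious obstacle in this argument; the only thing requiring care is matching the grading conventions so that a perfect pairing in each pair of complementary degrees really does assemble into an isomorphism of graded $R$-modules, rather than merely a family of $K$-linear isomorphisms degree by degree. A more self-contained alternative, avoiding the adjunction altogether, would instead use Theorem~\ref{thm-noeth}: $R$ is finite-dimensional over $K$ hence Artinian; the unique maximal graded ideal is $\mxi=R_{>0}$; and the $i=0$ case of Poincar\'e duality forces $\dim_K R_d=1$, while for $i<d$ the perfectness of the pairing shows $R_i\cdot R_{d-i}\neq 0$, so $\soc(R)=R_d$ is one-dimensional, giving condition~(c) of that theorem.
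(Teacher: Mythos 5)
Your proposal is correct and matches the paper, which in fact gives both of your arguments: its primary proof is exactly your ``self-contained alternative'' via Theorem~\ref{thm-noeth} (Artinian plus $\soc(R)=R_d$ one-dimensional), and it then sketches an alternative that is your main argument, phrased as a natural isomorphism $\Hom_R(M,R)\cong\Hom_K(M_d,K)$ rather than factored through $R\simeq\Sg^{-d}\Hom_K(R,K)$ and the tensor-hom adjunction, but the content is the same. The only thing worth flagging in your write-up is the sign bookkeeping in checking $R$-linearity of $\Phi$ over a graded-commutative ring (you wrote $\Phi(ca)(b)=\tht(cab)=(c\cdot\Phi(a))(b)$, which tacitly uses $cab=acb$); this is harmless once the $R$-action on $\Hom_K(R,K)$ is defined with the usual Koszul signs, and is invisible in characteristic $2$, but deserves a word in a careful version.
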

\begin{proof}
 Let $R$ be a Poincar\'e duality algebra of top dimension $d$, and put
 $\mxi=\bigoplus_{i>0}R_i$.  It is clear that $R/\mxi=K$ and
 $\mxi^{d+1}=0$, and it follows that $\mxi$ is the unique maximal
 ideal.  As $R$ has finite total dimension over $K$ it is clearly
 Artinian.  The perfect pairing condition implies that $\soc(R)=R_d$
 and that this has dimension one.  It follows by
 Theorem~\ref{thm-noeth} that $R$ is self-injective.

 Alternatively, for any $R$-module $M$ we can define a natural map
 \[ \tau \: \Hom_R(M,R) \to \Hom_K(M_d,K) \]
 by $\tau(\phi)=\tht\circ\phi_d$.  Using the perfectness of the
 pairing we see that this is an isomorphism.  As $K$ is a field, the
 functor $M\mapsto\Hom_K(M_d,K)$ is exact, and it follows that the
 functor $M\mapsto\Hom_R(R,R)$ is also exact, or in other words that
 $R$ is injective as an $R$-module.
\end{proof}

\begin{example}\label{eg-exterior}
 Put 
 \[ E = \F[x_0,x_1,x_2,\dotsc]/(x_i^2\st i\geq 0), \]
 with $|x_i|=2^i$.  For any finite set $I\subset\N$ we put
 $x_I=\prod_{i\in I}x_i$, so $|x_I|=\sum_{i\in I}2^i$ and the elements
 $x_I$ form a basis for $E$ over $\F$.  It follows that
 $E_k\simeq\F$ for all $k\geq 0$, and $E_k=0$ for $k<0$.  Let $E(n)$
 be the subalgebra of $E$ generated by $x_0,\dotsc,x_{n-1}$.  This
 is a Poincar\'e duality algebra, with socle generated by the element
 $\prod_{i<n}x_i$, and it is clear that $E=\bigcup_nE(n)$.
 Corollary~\ref{cor-limit-selfinj} therefore tells us that $E$ is
 self-injective. 
\end{example}

\section{Coherence}
\label{sec-coherence}

We now briefly recall some standard ideas about finite presentation.
\begin{definition}\label{defn-fp}
 Let $R$ be a graded commutative ring, and let $M$ be a graded
 $R$-module.  Then we see from~\cite{la:lmr}*{Section 4D} the
 following are equivalent: 
 \begin{itemize}
  \item[(a)] There exists an exact sequence 
   \[ \xymatrix{
     P_1 \ar[r]^f & P_0 \ar@{->}[r]^g & M \ar[r] & 0,
   } \]
   where $P_0$ and $P_1$ are finitely generated free modules.  
  \item[(b)] $M$ is finitely generated, and for every epimorphism
   $g\:P_0\to M$ (with $P_0$ a finitely generated free module) the
   module $\ker(g)$ is also finitely generated.
 \end{itemize}
 If these conditions hold, we say that $M$ is \emph{finitely
  presented}. 
\end{definition}
\begin{remark}
 By a \emph{finitely generated free module} we mean one of the form
 $\bigoplus_{i=1}^r\Sg^{d_i}R$; we do not assume that the degree shift
 $d_i$ is zero.
\end{remark}
% \begin{proof}
%  First suppose that~(b) holds.  Then any finite system of generators
%  $(m_1,\dotsc,m_r)$ for $M$ will give rise to an epimorphism
%  $g\:P_0\to M$, where $P_0=\bigoplus_i\Sg^{|m_i|}R$.  By assumption
%  $\ker(g)$ will be finitely generated, and any finite system of
%  generators will give an epimorphism $f\:P_1\to\ker(g)\leq P_0$ with
%  $P_1$ free and finitely generated, and this proves~(a).

%  Conversely, suppose that~(a) holds.  The existence of $g$ means that
%  $M$ is finitely generated.  Consider another epimorphism
%  $g'\:P'_0\to M$, where $P'_0$ is free and finitely generated.  As
%  $P_0$ and $P'_0$ are projective, we can choose maps
%  $P_0\xra{p}P'_0\xra{q}P_0$ with $g'p=g$ and $gq=g'$.  Put 
%  \[ K = \img(P'_0\xra{1-pq}P'_0) + \img(pf\:P_1\to P'_0). \]
%  It is easy to see that this is finitely generated and contained in
%  $\ker(g')$, so it will suffice to show that $\ker(g')\leq K$.
%  Consider an element $x\in\ker(g')$.  We then have $gq(x)=g'(x)=0$,
%  but $\ker(g)=\img(f)$ by assumption, so $q(x)=f(y)$ for some
%  $y\in P_1$.  We now have $x=(1-pq)(x)+pf(y)\in K$ as required.
% \end{proof}

\begin{corollary}
 If $R$ is Noetherian, then every finitely generated ideal is finitely
 presented. 
\end{corollary}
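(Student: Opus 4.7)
The plan is to invoke condition (b) of Definition~\ref{defn-fp}: given a finitely generated ideal $I$, choose an epimorphism $g\:P_0\to I$ with $P_0$ a finitely generated free module, and show that $\ker(g)$ is finitely generated.

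First I would recall that over a Noetherian ring $R$, every finitely generated module is Noetherian, in the sense that all of its submodules are finitely generated. This is immediate from the graded version of the standard fact that finite direct sums of Noetherian modules are Noetherian, applied to $R$ itself (which is Noetherian by hypothesis) and then to $P_0=\bigoplus_{i=1}^r\Sg^{d_i}R$. The degree shifts $\Sg^{d_i}$ present no issue, since submodules of $\Sg^{d_i}R$ correspond bijectively to graded ideals of $R$.

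Then, given a finitely generated ideal $I\leq R$ with generators $a_1,\dotsc,a_r$ of degrees $d_1,\dotsc,d_r$, the map $g\:P_0=\bigoplus_{i=1}^r\Sg^{d_i}R\to I$ sending the $i$th generator to $a_i$ is an epimorphism. Its kernel $\ker(g)$ is a graded submodule of the Noetherian module $P_0$, and is therefore finitely generated. Choosing finitely many generators for $\ker(g)$ gives a finitely generated free module $P_1$ and an exact sequence $P_1\to P_0\to I\to 0$, which exhibits $I$ as finitely presented.

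There is no real obstacle here; the entire content is the passage from Noetherian ring to Noetherian free module of finite rank, which is routine in the graded setting.
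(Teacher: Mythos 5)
Your argument is essentially the same as the paper's, which simply notes that condition (b) of Definition~\ref{defn-fp} is clearly satisfied. You have just unpacked the word ``clearly'' into the standard Noetherian-module reasoning, which is fine but not a different route.
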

\begin{proof}
 Condition~(b) is clearly satisfied.
\end{proof}

As we stated in Definition~\ref{defn-coherent}, a graded ring $R$ is
said to be \emph{coherent} if every finitely generated ideal is
finitely presented, and \emph{totally incoherent} if the only finitely
presented ideals are $0$ and $R$.  It is clear that every Noetherian
ring is coherent.  We mention as background that if $R$ is coherent,
then the category of finitely generated modules is closed under
images, kernels, cokernels and extensions, so it is an abelien
category.  The following example is standard:

\begin{proposition}\label{prop-exterior-coherent}
 The infinite exterior algebra $E$ (as in Example~\ref{eg-exterior})
 is coherent. 
\end{proposition}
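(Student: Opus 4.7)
The plan is to reduce to the finite-dimensional exterior subalgebras $E(n) = \F[x_0,\dotsc,x_{n-1}]/(x_i^2\st i<n)$ from Example~\ref{eg-exterior}. Each $E(n)$ is a finite-dimensional $\F$-algebra (of total dimension $2^n$), hence Artinian and Noetherian, so every finitely generated $E(n)$-module is automatically finitely presented. The key structural observation is that $E$ is a \emph{free} module over $E(n)$: the $\F$-basis $\{x_I\st I\sse\N\text{ finite}\}$ of $E$ factors as a product of the $E(n)$-basis $\{x_I\st I\sse\{0,\dotsc,n-1\}\}$ and the set $B_n=\{x_I\st I\sse\{n,n+1,\dotsc\} \text{ finite}\}$, so $B_n$ is an $E(n)$-basis for $E$. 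In particular $E$ is flat over $E(n)$.

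Now let $J\leq E$ be a finitely generated ideal, with generators $a_1,\dotsc,a_r$. Each $a_i$ involves only finitely many of the $x_j$, so we may choose $n$ large enough that $a_1,\dotsc,a_r\in E(n)$; let $J(n)\leq E(n)$ denote the $E(n)$-ideal they generate. By Noetherianity of $E(n)$ there is a finite presentation $E(n)^s\to E(n)^r\to J(n)\to 0$ (with appropriate degree shifts). Applying $E\ot_{E(n)}(-)$, which is exact by flatness, yields an exact sequence $E^s\to E^r\to E\ot_{E(n)}J(n)\to 0$. The natural map $E\ot_{E(n)}J(n)\to J$, $e\ot a\mapsto ea$, is surjective because $J=E\cdot J(n)$; it is injective because tensoring the short exact sequence $0\to J(n)\to E(n)\to E(n)/J(n)\to 0$ with the flat module $E$ preserves exactness, and the middle term $E\ot_{E(n)}E(n)$ is naturally $E$. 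This exhibits $J$ as the cokernel of a map of finitely generated free $E$-modules, i.e.\ as finitely presented.

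The only point that requires real verification is the freeness of $E$ over $E(n)$, which falls out immediately from the monomial factorisation above. Everything else is formal base change along the flat inclusion $E(n)\hookrightarrow E$, so there is no substantive obstacle.
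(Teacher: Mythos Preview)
Your proof is correct and follows essentially the same approach as the paper: both pass to a sufficiently large Noetherian subalgebra $E(n)$, use that $E$ is free over $E(n)$ (the paper phrases this as the tensor splitting $E=E(n)\ot_{\F}E'(n)$), and then lift a finite presentation over $E(n)$ to one over $E$. The only cosmetic difference is that you package the lifting step as flat base change, whereas the paper writes the kernel explicitly as $K'\ot E'(n)$.
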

\begin{proof}
 Let $E(n)$ be the subalgebra generated by $x_0,\dotsc,x_{n-1}$, and
 let $E'(n)$ be generated by the remaining variables, so
 $E=E(n)\ot_{\F}E'(n)$.  Any finitely generated ideal is the image of
 some $E$-linear map $g\:E^r\to E$, which will have the form
 $g(u)=u.v$ for some vector $v\in E^r$.  We must show that the module
 $K=\ker(g)$ is finitely generated.  Choose $n$ large enough that
 $v_i\in E(n)$ for all $i$.  Now $v$ gives a map $g'\:E(n)^r\to E(n)$
 of $E(n)$-modules, and $E(n)$ is Noetherian, so the module
 $K'=\ker(g')$ is finitely generated over $E(n)$.  We can identify $g$
 with $g'\ot 1$ with respect to the splitting $E=E(n)\ot E'(n)$, and
 it follows that $K=K'\ot E(n)'$, and thus that any finite generating
 set for $K'$ over $E(n)$ also generates $K$ over $E$.
\end{proof}

The following result will be our main tool for proving incoherence
results. 

\begin{lemma}\label{lem-ann-fg}
 Let $A$ be a local graded ring, with maximal ideal $\mxi$, and let
 $I$ be a finitely presented ideal in $A$.  Then for each
 $u\in I\sm \mxi I$, the image of $\ann_A(u)$ in $\mxi/\mxi^2$ has
 finite dimension over $A/\mxi$.  
\end{lemma}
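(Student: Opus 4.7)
The plan is to reduce the statement to a direct computation using a finite presentation, exploiting that $u$ can be taken as part of a minimal generating set for $I$.

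First, I would fix notation by choosing a minimal generating set for $I$ extending $u$. Since $u \notin \mxi I$, its image $\bar u$ in $I/\mxi I$ is nonzero, and extending $\bar u$ to a $K$-basis (where $K = A/\mxi$) of the finite-dimensional vector space $I/\mxi I$ and lifting via graded Nakayama, we get a minimal generating set $u = u_1, u_2, \ldots, u_r$ of $I$. Let $g\:A^r \to I$ be defined by $g(e_i) = u_i$. Since $I$ is finitely presented and $g$ is a surjection from a finitely generated free module, $\ker(g)$ is finitely generated, say by $k_1, \ldots, k_s$, where $k_j = (k_{j,1}, \ldots, k_{j,r}) \in A^r$.

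Next, I would establish the crucial containment $\ker(g) \sse \mxi \cdot A^r$. If some element $\sum_i a_i e_i \in \ker(g)$ had a coordinate $a_i$ not in $\mxi$, then $a_i$ would be a unit, expressing $u_i$ as an $A$-linear combination of the other $u_j$'s and contradicting the minimality of the generating set. In particular, every $k_{j,l}$ lies in $\mxi$.

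The main step is then the computation itself. Given $a \in \ann_A(u)$, we have $(a, 0, \ldots, 0) \in \ker(g)$, so there exist $b_1, \ldots, b_s \in A$ with $(a, 0, \ldots, 0) = \sum_j b_j k_j$. Reading off the first coordinate gives $a = \sum_j b_j k_{j,1}$. Now reduce modulo $\mxi^2$: since each $k_{j,1} \in \mxi$, the product $b_j k_{j,1} \pmod{\mxi^2}$ depends only on the class $\bar b_j \in A/\mxi = K$, because replacing $b_j$ by $b_j + m$ with $m \in \mxi$ changes the product by $m k_{j,1} \in \mxi^2$. Hence in $\mxi/\mxi^2$ we have $\bar a = \sum_j \bar b_j \cdot \bar k_{j,1}$, where $\bar k_{j,1}$ denotes the image of $k_{j,1}$ in $\mxi/\mxi^2$. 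This shows that the image of $\ann_A(u)$ in $\mxi/\mxi^2$ lies in the $K$-span of $\bar k_{1,1}, \ldots, \bar k_{s,1}$, hence has dimension at most $s$.

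There is no serious obstacle: the only subtlety worth flagging is the use of graded Nakayama to extend $u$ to a minimal generating set of $I$ and to deduce $\ker(g) \sse \mxi A^r$, but in the local graded setting this is routine. The finite presentation hypothesis enters precisely through the finiteness of the list $k_1, \ldots, k_s$, which directly bounds the dimension of the image.
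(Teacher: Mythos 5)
Your proof is correct and takes essentially the same approach as the paper's. Both arguments extend $u$ to a minimal homogeneous generating set of $I$, note that the kernel of the resulting free presentation lies in $\mxi A^r$, and use the finite generation of that kernel to bound the image of $\ann_A(u)$ in $\mxi/\mxi^2$ by a finite-dimensional span.
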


Note here that as $u\not\in\mxi I$ we have $u\neq 0$, so
$\ann_A(u)\leq\mxi$ and it is meaningful to talk about the image in
$\mxi/\mxi^2$.  

\begin{proof}
 As $I$ is finitely generated over $A$, we see that $I/\mxi I$ is a
 finite-dimensional vector space over $A/\mxi$.  We can choose a basis
 for this space containing the image of $u$, and then choose elements
 of $I$ lifting these basis elements.  This gives a list
 $v_1,\dotsc,v_n\in I$ with $v_1=u$ such that the corresponding map
 $g\:A^n\to I$ induces an isomorphism
 $\ov{g}\:(A/\mxi)^n\to I/\mxi I$.  Now $\cok(g)$ is a finitely
 generated module with $\mxi.\cok(g)=\cok(g)$, so $\cok(g)=0$ by
 Nakayama's Lemma, so $g$ is an epimorphism.  As $I$ is assumed to be
 finitely presented, we see that $\ker(g)$ is also finitely generated
 over $A$.  Moreover, as $\ov{g}$ is an isomorphism we see that
 $\ker(g)\leq\mxi^n$.  It follows that the image of $\ker(g)$ in
 $(\mxi/\mxi^2)^n$ is finite-dimensional.  The intersection of
 $\ker(g)$ with the first copy of $A$ in $A^n$ is just the annihilator
 of $u$, so we see that the image of $\ann_A(u)$ in $\mxi/\mxi^2$ is
 finite-dimensional.  
\end{proof}

\begin{corollary}\label{cor-incoherent}
 Let $A$ be a local graded ring, with maximal ideal $\mxi$.  Suppose
 that for all $u\in A$ we have either 
 \begin{itemize}
  \item[(a)] $u=0$; or
  \item[(b)] the image of $\ann_A(u)$ in $\mxi/\mxi^2$ has infinite
   dimension; or
  \item[(c)] $u$ is invertible.
 \end{itemize}
 Then $A$ is totally incoherent.
\end{corollary}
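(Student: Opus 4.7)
The plan is to deduce the corollary as a near-immediate consequence of Lemma~\ref{lem-ann-fg}, using graded Nakayama to produce an element of $I \sm \mxi I$ and then the trichotomy in the hypothesis to force such an element to be a unit.

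First I would take a finitely presented ideal $I \leq A$ and assume $I \neq 0$; the goal is to show $I = A$. Since $A$ is local and $I$ is finitely generated, graded Nakayama applied to $I/\mxi I$ gives $I \neq \mxi I$, so I can pick a homogeneous element $u \in I \sm \mxi I$. In particular $u \neq 0$, ruling out option~(a).

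Next I would apply Lemma~\ref{lem-ann-fg} to this $u$: it tells us that the image of $\ann_A(u)$ in $\mxi/\mxi^2$ has finite dimension over $A/\mxi$. This rules out option~(b). By the trichotomy in the hypothesis, the only remaining possibility is~(c), that $u$ is invertible. But then $I \ni u$ implies $I = A$, completing the argument.

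There is no real obstacle here; the lemma already does the hard work, and the trichotomy is tailored exactly to the conclusion. The only mildly delicate point is the appeal to Nakayama's lemma in the graded-local setting, but this is standard: if $I$ is a finitely generated graded ideal in a graded local ring $A$ with maximal graded ideal $\mxi$, then $\mxi I = I$ forces $I = 0$, so $I \neq 0$ indeed yields a homogeneous generator lying outside $\mxi I$.
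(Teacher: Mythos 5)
Your proof is correct and follows essentially the same argument as the paper: pick $u\in I\sm\mxi I$ via Nakayama, apply Lemma~\ref{lem-ann-fg} to rule out option~(b), and conclude that $u$ is a unit so $I=A$. The only cosmetic difference is that you split off the case $I=0$ at the start, while the paper reaches it via the case $\mxi I=I$.
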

\begin{proof}
 Let $I$ be a finitely presented ideal.  If $\mxi I=I$ then $I=0$ by
 Nakayama's Lemma.  Otherwise, we can choose $u\in I\sm \mxi I$.  As
 $u\not\in\mxi I$ we have $u\neq 0$.  By the lemma, the image of
 $\ann_A(u)$ in $\mxi/\mxi^2$ must have finite dimension.  Thus,
 possibilities~(a) and~(b) are excluded, so $u$ must be invertible.
 As $u\in I$ we conclude that $I=A$.
\end{proof}
Next we record a graded version of Chase's Theorem for
coherent rings. 

\begin{theorem}\label{thm-chase-graded}
Let $R$ be a graded commutative ring. Then the following are equivalent:
  \begin{itemize}
  \item[(a)] $R$ is coherent.
  \item[(b)] For all elements $a\in R$ and for every finitely generated ideal $J \leq R$, the conductor ideal
  \[
  (J:a) = \{r \in R \st ra \in J\}
  \] is finitely generated.
  \item[(c)] For all elements  $a \in R$, the annihilator ideal $\ann_R(a)$ is finitely generated, and for all finitely generated ideals $J,K \leq R$,  
  the intersection $J \cap K$ is finitely generated.
 \end{itemize}
 \end{theorem}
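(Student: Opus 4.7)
The plan is to prove the cyclic implications $(a) \Rightarrow (c) \Rightarrow (b) \Rightarrow (a)$, using throughout the standard short exact sequence lemmas for finite generation and finite presentation. In particular I will need: if $0 \to A \to B \to C \to 0$ is exact with $B$ finitely presented and $A$ finitely generated then $C$ is finitely presented; and if $B$ is finitely generated and $C$ is finitely presented then $A$ is finitely generated. Suitable degree shifts $\Sg^d$ must be inserted throughout, but they do not interact with any of the logic and I will suppress them.

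For $(a) \Rightarrow (c)$: given a homogeneous $a \in R$, the principal ideal $Ra$ is finitely generated and hence finitely presented by coherence, so the short exact sequence
\[
0 \to \ann_R(a) \to R \xra{\cdot a} Ra \to 0
\]
forces $\ann_R(a)$ to be finitely generated. For two finitely generated ideals $J, K$ the sum $J + K$ is finitely generated and hence finitely presented, so the standard sequence $0 \to J \cap K \to J \oplus K \to J + K \to 0$ together with the finite generation of $J \oplus K$ shows that $J \cap K$ is finitely generated.

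For $(c) \Rightarrow (b)$: the key identity is that the conductor $(J : a)$ is the preimage of $J \cap Ra$ under the multiplication-by-$a$ map $R \to R$. By~(c), $J \cap Ra$ is finitely generated, and since all its generators lie in $Ra$ we may choose them of the form $j_1 a, \dotsc, j_n a$ with $j_i \in R$. A direct check then gives $(J : a) = \ann_R(a) + (j_1, \dotsc, j_n)$, and both summands are finitely generated (the annihilator by~(c) again).

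For $(b) \Rightarrow (a)$: induct on the number of generators of $J$. The principal case $J = Ra$ is handled by the short exact sequence above, since $\ann_R(a) = (0 : a)$ is finitely generated by~(b). For the inductive step, write $J = J' + Ra$ with $J'$ finitely presented by the inductive hypothesis, and consider
\[
0 \to J' \cap Ra \to J' \oplus Ra \to J \to 0.
\]
The conductor $(J' : a)$ is finitely generated by~(b), and multiplication by $a$ sends a generating set for $(J' : a)$ onto one for $J' \cap Ra$, so the kernel term is finitely generated; since $J' \oplus Ra$ is finitely presented, the extension lemma above then yields finite presentation of $J$. The only step requiring any real insight beyond bookkeeping is recognising that $J' \cap Ra$ can be controlled through the conductor ideal $(J' : a)$, which is the bridge that makes both the $(c) \Rightarrow (b)$ step and the induction in $(b) \Rightarrow (a)$ go through.
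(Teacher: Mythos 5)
Your proposal is correct, and it fills in precisely the argument that the paper delegates to Lam (page~142, Chase's theorem), adapted for gradings in the way the paper indicates. The cyclic implications $(a)\Rightarrow(c)\Rightarrow(b)\Rightarrow(a)$ are each handled by the standard short exact sequence lemmas for finite generation and presentation: $0\to\ann_R(a)\to R\to Ra\to 0$ in the principal case, $0\to J\cap K\to J\oplus K\to J+K\to 0$ for the intersection, and the translation between $(J:a)$ and $J\cap Ra$ via multiplication by $a$. The key observations — that generators of $J\cap Ra$ can be chosen in $aR$, that $(J:a)=\ann_R(a)+(j_1,\dotsc,j_n)$, and that the base case of the induction in $(b)\Rightarrow(a)$ requires $(0:a)$ finitely generated to establish finite presentation of $Ra$ before the inductive step can use $J'\oplus Ra$ — are all present and correctly justified. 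This is the textbook proof; the paper's own "proof" is a citation, so you have done exactly what the authors intended the reader to do.
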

\begin{proof}
 The ungraded version of the proof is given in many
 textbooks such as~\cite{la:lmr}*{page 142}.  It can be modified in an
 obvious way to keep track of gradings, which gives our statement
 above.  
\end{proof}

\begin{theorem}
Let $R$ be a graded commutative ring such that $R_k$ is finite for
 all $k$. Then the following are equivalent:
  \begin{itemize}
  \item[(a)] $R$ is coherent and self-injective.
  \item[(b)] $R$ is coherent and for all finitely generated ideals $J\leq R$ we have
   $\ann_R^2(J)=J$.
  \item[(c)] For every finitely generated ideal $J \leq R$, the ideal $\ann_R(J)$ is finitely generated and  
  $\ann^2_R(J)=J$.
  \item[(d)] $R$ is self injective and for all finitely generated ideals $J \leq R$, the ideal $\ann_R(J)$ is finitely generated.
 \end{itemize}
\end{theorem}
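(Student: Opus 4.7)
The plan is to prove the equivalences in the cyclic order $(a) \Rightarrow (b) \Rightarrow (c) \Rightarrow (d) \Rightarrow (a)$, using the previously established characterisations of self-injectivity (Theorem~\ref{thm-selfinj-ann}) and coherence (Theorem~\ref{thm-chase-graded}) together with the basic annihilator identities from Remark~\ref{rem-ideal} and Proposition~\ref{prop-ann-prop}.

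The implication $(a) \Rightarrow (b)$ is immediate from Remark~\ref{rem-ideal}, which gives $\ann_R^2(J)=J$ for every finitely generated ideal in a self-injective ring. For $(b) \Rightarrow (c)$, I would invoke the graded Chase theorem: coherence gives that each $\ann_R(a_i)$ is finitely generated and that the intersection of two finitely generated ideals is finitely generated. If $J=(a_1,\dotsc,a_n)$, then $\ann_R(J)=\bigcap_i\ann_R(a_i)$, so an easy induction on $n$ shows $\ann_R(J)$ is finitely generated. Combined with the hypothesis $\ann_R^2(J)=J$, this gives~(c).

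For $(c)\Rightarrow(d)$, I would apply Theorem~\ref{thm-selfinj-ann}: we already have $\ann_R^2(J)=J$, so it remains to establish $\ann_R(J\cap K)=\ann_R(J)+\ann_R(K)$ for finitely generated $J,K$. The inclusion $\supseteq$ is formal. For $\subseteq$, set $I=\ann_R(J)+\ann_R(K)$; by~(c) both summands are finitely generated, so $I$ is as well, and the hypothesis gives $\ann_R^2(I)=I$. Using the identity $\ann_R(I_1+I_2)=\ann_R(I_1)\cap\ann_R(I_2)$, we compute
\[
 \ann_R(I) = \ann_R^2(J)\cap\ann_R^2(K) = J\cap K,
\]
and taking annihilators once more yields $\ann_R(J\cap K)=\ann_R^2(I)=I$, as required. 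The finite generation hypothesis on $\ann_R(J)$ then passes trivially to give~(d).

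The implication $(d)\Rightarrow(a)$ is the step that needs the most care, and is really the core of the theorem. Self-injectivity is given, so by Chase's theorem it suffices to show that $\ann_R(a)$ is finitely generated (which is just~(d) applied to $J=Ra$) and that $J\cap K$ is finitely generated whenever $J$ and $K$ are. For the latter, I would again set $I=\ann_R(J)+\ann_R(K)$, which is finitely generated by~(d). Using Remark~\ref{rem-ideal} applied to $J$ and $K$ individually, the same calculation as above gives $\ann_R(I)=J\cap K$, and then a second application of the hypothesis in~(d), now to the finitely generated ideal $I$, shows that $\ann_R(I)$ — hence $J\cap K$ — is finitely generated. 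The only subtle point is keeping track of which ideals are known to be finitely generated at each stage, so that both the hypothesis of~(d) and the annihilator identities of Proposition~\ref{prop-ann-prop} and Remark~\ref{rem-ideal} can be legitimately invoked; once this bookkeeping is set up, no new ideas beyond the earlier sections are needed.
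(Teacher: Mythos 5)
Your proposal is correct and follows essentially the same route as the paper: the same cyclic chain $(a)\Rightarrow(b)\Rightarrow(c)\Rightarrow(d)\Rightarrow(a)$, the same use of the graded Chase theorem and Theorem~\ref{thm-selfinj-ann}(b), and the same trick of applying the double-annihilator hypothesis to $I=\ann_R(J)+\ann_R(K)$ to convert between $\ann_R(J\cap K)$ and $\ann_R(J)+\ann_R(K)$. The bookkeeping you flag as the ``only subtle point'' is indeed the content of the argument, and you handle it correctly.
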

\begin{proof}

 It follows from Remark ~\ref{rem-ideal} that ~(a) implies ~(b). To
 show that ~(b) implies ~(c) we need to show that the ideal
 $\ann_R(J)$ is finitely generated for each finitely generated ideal
 $J \leq R$. If we let $(r_1, \ldots, r_n)$ be generators for the
 ideal $J$, then we can take the annihilator of $J$ to give
 $\ann_R(J) = \bigcap_i \ann_R(r_i)$. Since $R$ is assumed to be
 coherent, it follows from part ~(c) of Theorem \ref{thm-chase-graded}
 that $\ann_R(r_i)$ is finitely generated for each $i$ and that a
 finite intersection of finitely generated ideals is also finitely
 generated. Thus $\ann_R(J)$ is finitely generated as claimed. Now
 suppose that part ~(c) holds. To prove that ~(c) implies ~(d), we
 need to show that $R$ is injective as an $R$-module.  For all ideals
 $J,K \leq R$ we have
 \[
  \ann_R(\ann_R(J) + \ann_R(K)) = \ann^2_R(J) \cap \ann^2_R(K) = J \cap K.
 \]
 By assumption, the ideal sum $\ann_R(J) + \ann_R(K)$ must be finitely
 generated. Thus we can take double annihilators to give 
 \[ \ann_R(J) + \ann_R(K) = \ann_R(J \cap K). \]
 Since $R_k$ is finite for each $k$, we can use part ~(b) of Theorem
 \ref{thm-selfinj-ann} to complete the claim. We now conclude by
 showing that ~(d) implies ~(a). By assumption, the annihilator ideal
 $\ann_R(a)$ is finitely generated for all elements $a \in R$. Then
 for all ideals $J, K \leq R$ we know that the ideal sum $\ann_R(J) +
 \ann_R(K)$ is finitely generated by assumption. By taking
 annihilators we then have
 \[ \ann_R(\ann_R(J) + \ann_R(K)) =
     \ann^2_R(J) \cap \ann^2_R(K) = J \cap K
 \]
 where the double annihilator condition holds by
 Remark~\ref{rem-ideal}. However, by assumption, the annihilator of a
 finitely generated ideal is also finitely generated. Thus the
 intersection $J \cap K$ must be finitely generated.  It follows from
 part ~(c) of Theorem \ref{thm-chase-graded} that the ring $R$ is
 coherent as claimed. 
\end{proof}

\section{Self-injective adjustment}
\label{sec-adjust}

\begin{definition}
 We write $\CR$ for the category of commutative graded
 $\F$-algebras such that 
 \begin{itemize}
  \item[(a)] $R_k=0$ for all $k<0$.
  \item[(b)] $R_0=\F$.
  \item[(c)] $R_k$ is finite for all $k>0$.
 \end{itemize}
\end{definition}

\begin{proposition}\label{prop-adjoin-block}
 Let $R$ be a ring in $\CR$, and let $\CP$ be a finite set of test
 pairs in $R$ that have no transporters.  Let $m$ be a positive
 integer.  Then there is an extension $R'\geq R$ of graded
 rings such that 
 \begin{itemize}
  \item[(a)] $R'$ is also in $\CR$.
  \item[(b)] $R'_k=R_k$ for all $k<m$.
  \item[(c)] Each test pair in $\CP$ has a block in $R'$.
 \end{itemize}
\end{proposition}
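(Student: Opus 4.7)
The plan is to reduce to the case of a single test pair and then use a trivial-extension construction. For the reduction, I build $R'$ as an iterated extension $R = R^{(0)} \leq R^{(1)} \leq \dotsb \leq R^{(n)} = R'$, adjoining a block for one test pair of $\CP$ at each stage. A block produced at an earlier stage persists in any later extension because the relations $b \cdot u = 0$ and $b \cdot v \neq 0$ are preserved by injective ring maps. It must also be checked that a test pair from $\CP$ still lacking a block at stage $i$ continues to have no transporter in $R^{(i)}$. This holds because each construction below will have the form $R^{(i)} = R^{(i-1)} \oplus M^{(i)}$ with $M^{(i)}$ a square-zero ideal concentrated in degrees $\geq m$: any putative transporter $r + m' \in R^{(i)}_d$ would satisfy $r u_j + m' u_j = v_j \in R$, so by the direct-sum decomposition $r u_j = v_j$ and $m' u_j = 0$ for each $j$, making $r \in R^{(i-1)}$ a transporter at the previous stage; inductively this produces a transporter in $R$, contradicting the hypothesis.

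For a single test pair $(u, v)$ of length $r$ and degree $d$ with no transporter in $R$, I choose an integer $e$ with $e - |u_i| \geq m$ for every $i$. Let $F$ be the free graded $R$-module on symbols $b_1, \dotsc, b_r$ with $|b_i| = e - |u_i|$, let $N \leq F$ be the cyclic $R$-submodule generated by the single element $\sum_i u_i b_i \in F_e$, put $M = F/N$, and take $R' = R \oplus M$ with the trivial-extension multiplication
\[ (r_1 + m_1)(r_2 + m_2) = r_1 r_2 + r_1 m_2 + r_2 m_1, \]
so that $M$ becomes a square-zero ideal of $R'$. Associativity and graded commutativity are routine (signs are irrelevant in characteristic two). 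Since each $|b_i| \geq m > 0$ we have $M_k = 0$ for $k < m$, so $R'_k = R_k$ for $k < m$ and in particular $R'_0 = \F$; and each $M_k$ is a quotient of the finite group $F_k = \bigoplus_i R_{k - |b_i|}$, so $R' \in \CR$.

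By construction the vector $b = (b_1, \dotsc, b_r) \in (R')^r$ satisfies $\sum_i b_i u_i = 0$. The main obstacle, and the only place the no-transporter hypothesis enters, is to show $\sum_i b_i v_i \neq 0$ in $M$, so that $b$ is a genuine block. If instead $\sum_i v_i b_i$ lay in $N$, then by the very definition of $N$ we would have $\sum_i v_i b_i = c \cdot \sum_i u_i b_i$ in $F$ for some $c \in R_d$, and the freeness of $F$ on $b_1, \dotsc, b_r$ would force $v_i = c u_i$ for every $i$, exhibiting $c$ as a transporter and contradicting the hypothesis. Hence $b$ is a block for $(u, v)$ in $R'$, completing the single-pair case and, via the reduction above, the general statement.
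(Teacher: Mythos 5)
Your proof is correct, but it takes a genuinely different route from the paper's. The paper adjoins polynomial generators $b_{t,j}$ (one family for each test pair in $\CP$) to $R$, forming the polynomial ring $P$, and then sets $R' = P/(w_0,\dotsc,w_{p-1})$ where $w_t = \sum_j b_{t,j}u_{t,j}$; this handles all of $\CP$ at once, and the verification that $b_t.v_t \neq 0$ requires examining the $b_{t,j}$-coefficient (and in particular isolating the \emph{constant term} of $c_t$, since the $c_s$ may contain $b$-monomials of higher degree). You instead proceed one test pair at a time, building each $R^{(i)}$ from $R^{(i-1)}$ as a trivial (square-zero) extension $R^{(i-1)}\oplus M^{(i)}$, where $M^{(i)}$ is a free module modulo a single relation $\sum_i u_i b_i = 0$. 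The square-zero structure makes the verification of $b.v\neq 0$ immediate from freeness (there are no higher-degree $b$-terms to worry about), and your auxiliary lemma that ``no transporter'' propagates through the tower is exactly the price of iterating rather than doing everything at once; you prove it cleanly using the $R^{(i-1)}$-module splitting of the trivial extension. Both constructions land in $\CR$ and yield extensions that agree with $R$ in degrees $<m$, for the same reason (all new generators sit in degrees $\geq m$). What the paper's approach buys is a single-step construction; what yours buys is a more tightly controlled extension ring and a somewhat more elementary key step.
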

\begin{proof}
 List the elements of $\CP$ as $(u_0,v_0),\dotsc,(u_{p-1},v_{p-1})$
 say.  Suppose that $(u_t,v_t)$ has length $r_t$, and let $d_t$ be the
 maximum of the degrees of the entries $u_{t,j}$ for $0\leq j<r_t$.
 Let $P$ be the polynomial ring obtained from $R$ by adjoining
 variables $b_{t,j}$ for $0\leq t<p$ and $0\leq j<r_t$, with
 $|b_{t,j}|=m+d_t-|u_{t,j}|\geq m>0$.  Put
 $w_t=\sum_{j=0}^{r_t-1}b_{t,j}u_{t,j}\in P$ and 
 $R'=P/(w_0,\dotsc,w_{p-1})$.  There is an evident ring map
 $\eta\:R\to R'$, and also a ring map $\pi\:R'\to R$ given by
 $\pi(b_{t,j})=0$ for all $t$ and $j$.  It is clear that $\pi\eta=1$,
 so $\eta$ is injective, and we can use it to regard $R'$ as an
 extension of $R$.  As $|b_{t,j}|\geq m>0$, it is easy to see that
 $R'\in\CR$ and that the map $R_k\to R'_k$ is surjective (and
 therefore bijective) for $k<m$.  By construction we have $b_t.u_t=0$
 in $R'$.  We claim that $b_t.v_t\neq 0$ in $R'$, or equivalently
 that $b_t.v_t$ cannot be written as $\sum_s c_sw_s$ in $P$.  To see
 this, let $c^*$ denote the constant term in the polynomial $c_t$.
 By examining the coefficient of $b_{t,j}$ in the equation
 $b_t.v_t=\sum_sc_sw_s$ we obtain $v_{t,j}=c^*u_{t,j}$ for all $j$,
 which means that $c^*$ is a transporter for $(u_t,v_t)$, contrary to
 assumption.  Thus, $b_t$ is a block for $(u_t,v_t)$ in $R'$, as
 required. 
\end{proof}

\begin{definition}\label{defn-good}
 Let $R$ be a ring in $\CR$, and let $(u,v)$ be a test pair for
 $R$.  We say that $(u,v)$ is \emph{good} if it has either a block
 or a transporter, and \emph{bad} otherwise.  We say that $(u,v)$ is
 \emph{nondegenerate} if $u_i\neq 0$ for all $i$.  For any homogeneous
 element $x\in R$ we put $|x|_+=\max(0,|x|)$.  The \emph{weight}
 of $(u,v)$ is $\sum_i(1+|u_i|_++|v_i|_+)$.
\end{definition}

\begin{lemma}\label{lem-nondegenerate}
 Let $R$ be a ring in $\CR$, and suppose that all nondegenerate
 test pairs are good.  Then $R$ is self-injective. 
\end{lemma}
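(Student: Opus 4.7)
The plan is to invoke Propositions~\ref{prop-block} and~\ref{prop-finite-baer}: since $R_k$ is finite for all $k$, it suffices to verify that \emph{every} test pair over $R$ (not merely the nondegenerate ones) is good. So the whole task is a reduction from arbitrary test pairs to nondegenerate ones.

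Fix a test pair $(u,v)$ of length $r$ and degree $d$. I would distinguish two cases according to the pattern of zeros among the $u_i$. First, suppose there exists an index $i$ with $u_i=0$ but $v_i\neq 0$. Then the vector $e_i\in R^r$ with $1$ in position $i$ and $0$ elsewhere satisfies $e_i\cdot u=u_i=0$ and $e_i\cdot v=v_i\neq 0$, so $e_i$ is a block and $(u,v)$ is good.

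Otherwise, for every index $i$ with $u_i=0$ we also have $v_i=0$. Let $S=\{i\st u_i\neq 0\}$ and let $(u',v')$ be the test pair obtained from $(u,v)$ by restricting to the coordinates in $S$; this pair is nondegenerate and has the same degree $d$. By hypothesis it is good, so it has either a block $b'$ or a transporter $m$. In the first case, extending $b'$ by zeros in the deleted coordinates gives a vector $b\in R^r$ with $b\cdot u=b'\cdot u'=0$ and $b\cdot v=b'\cdot v'\neq 0$, which is a block for $(u,v)$. In the second case, $m\in R_d$ satisfies $mu_i=v_i$ for $i\in S$, and for $i\notin S$ we have $mu_i=0=v_i$, so $m$ is also a transporter for $(u,v)$. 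Either way $(u,v)$ is good.

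Thus every test pair over $R$ is good, so by Proposition~\ref{prop-block} the module $R$ satisfies the finite Baer condition, and by Proposition~\ref{prop-finite-baer} (using that each $R_k$ is finite) it satisfies the full Baer condition, hence is injective as an $R$-module. The only real subtlety is the bookkeeping around zero entries, and Remark~\ref{rem-separate-zeros} ensures that degrees remain well-defined throughout; there is no deeper obstacle.
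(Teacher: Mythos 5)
Your proof is correct and follows essentially the same route as the paper's: split on whether some $u_i=0$ with $v_i\neq 0$ (giving a standard basis vector as block), otherwise pass to the nondegenerate subpair and transfer its block or transporter back, then conclude via Propositions~\ref{prop-block} and~\ref{prop-finite-baer}. No meaningful differences.
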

\begin{proof}
 Consider an arbitrary test pair $(u,v)\in R^r\tm R^r$.  If there
 exists $i$ such that $u_i=0$ but $v_i\neq 0$, then the basis vector
 $e_i\in R^r$ is a block for $(u,v)$.  Otherwise, let $(u',v')$ be
 the test pair obtained by removing all zeros from $u$ and the
 corresponding zeros from $v$.  This is nondegenerate, so it has a
 block or a transporter.  If $b'$ is a block for $(u',v')$, then we
 can construct a block for $(u,v)$ by inserting some zeros.  If $m'$
 is a transporter for $(u',v')$, then it is also a transporter for
 $(u,v)$.  We therefore see that all test pairs for $R$ are good, so
 $R$ is self-injective.
\end{proof}

\begin{lemma}\label{lem-count-bad}
 There are only finitely many nondegenerate bad test pairs of any
 given weight. 
\end{lemma}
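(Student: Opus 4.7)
The plan is to bound, in terms of $w$, all the discrete data that determines a nondegenerate bad test pair of weight $w$: its length $r$, the degrees $|u_i|$, the common degree shift $d$, and the degrees $|v_i|$. Once these are controlled, the finiteness of the individual groups $R_k$ (part of the definition of $\CR$) cuts the set of possibilities down to a finite one.

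First I would extract the easy bounds straight from the weight formula. Each summand $1+|u_i|_+ + |v_i|_+$ is at least $1$, so $r\leq w$. Nondegeneracy means $u_i\neq 0$ for each $i$, and since $R_k=0$ for $k<0$ we have $|u_i|\geq 0$, whence $|u_i|_+=|u_i|\leq w$.

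The delicate step is bounding the test-pair degree $d$, which a priori can be any integer even with $r$ and the $|u_i|$ fixed. Here I plan to use the badness hypothesis. The key observation is that if every $v_i$ is zero, then the zero element of $R_d$ is a transporter (it satisfies $v_i=0\cdot u_i$ trivially), so $(u,v)$ would be good. Therefore in a bad nondegenerate test pair one can select some $i_0$ with $v_{i_0}\neq 0$; then $|v_{i_0}|\geq 0$ combined with $|v_{i_0}|_+\leq w$ forces $d=|v_{i_0}|-|u_{i_0}|$ into the interval $[-w,w]$.

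With $r$, the degrees $|u_i|$, and $d$ all controlled, each $u_i$ lies in the finite group $R_{|u_i|}$, and each $v_i$ lies in $R_{|u_i|+d}$, which is either finite or (when $|u_i|+d<0$) forced to consist of the single element $0$. Multiplying these finitely many choices yields only finitely many nondegenerate bad test pairs of weight $w$. I expect the main obstacle to be precisely the $d$-bound: without invoking badness one could push $d$ arbitrarily negative by taking $v=0$, so that part genuinely relies on noticing the existence of a zero transporter; the remaining estimates are just bookkeeping against the weight formula.
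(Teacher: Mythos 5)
Your proof is correct and follows essentially the same line as the paper's: bound the length and the degrees $|u_i|$ by the weight, then use badness (via the observation that $0$ would be a transporter if all $v_i$ vanished) to confine the degree shift $d$ to a bounded range, and finish by the degreewise finiteness of $R$. The only cosmetic difference is that you extract an explicit index $i_0$ with $v_{i_0}\neq 0$ to get $d\in[-w,w]$, whereas the paper simply notes that $d$ cannot be too negative; the content is identical.
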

\begin{proof}
 Consider an integer $N\geq 0$.  Any nondegenerate bad test pair
 $(u,v)$ of weight $N$ must have length at most $N$.  Moreover, as
 $(u,v)$ is nondegenerate we must have $u_i\neq 0$ for all $i$, and as
 $R\in\CR$ this means that $|u_i|\geq 0$.  We also have
 $\sum_i|u_i|\leq\wgt(u,v)=N$.  It is clear from this (and the
 finiteness of $R_k$) that there are only finitely many possibilities
 for $u$.  Next, let $d$ be the degree of $(u,v)$, so
 $|v_i|=|u_i|+d$.  From this it is clear that $d\leq N$.  If $d$ is
 sufficiently negative then we will have $v_i=0$ for all $i$, so $0$
 is a transporter for $(u,v)$, contradicting the assumption that
 $(u,v)$ is bad.  We therefore see that there are only finitely many
 possibilities for $d$.  Given $u$ and $d$, it is clear that there are
 only finitely many possibilities for $v$.
\end{proof}

\begin{theorem}\label{thm-adjust}
 Suppose that $R\in\CR$, and that $m\geq 0$.  Then there is an
 extension $R'\geq R$ such that 
 \begin{itemize}
  \item[(a)] $R'$ is also in $R$.
  \item[(b)] $R'_k=R_k$ for all $k<m$.
  \item[(c)] $R'$ is self-injective.
 \end{itemize}
\end{theorem}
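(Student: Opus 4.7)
The plan is to produce $R'$ as the union of an increasing tower $R = R^{(0)} \leq R^{(1)} \leq R^{(2)} \leq \dotsb$ of rings in $\CR$, where at stage $n$ I apply Proposition~\ref{prop-adjoin-block} to the set $\CP_n$ of all nondegenerate bad test pairs in $R^{(n)}$ of weight at most $n$. By Lemma~\ref{lem-count-bad} this set is finite, and since a bad test pair has no transporter it satisfies the hypothesis of Proposition~\ref{prop-adjoin-block}. The critical input is the threshold parameter, for which I take $m_n = \max(m, n+1)$: the bound $m_n \geq m$ ensures that property~(b) of the theorem is preserved at every stage, while $m_n > n$ is the device that prevents low-weight test pairs from being spuriously introduced by the new generators.

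Setting $R' = \bigcup_n R^{(n)}$, properties~(a) and~(b) of the theorem are immediate from the corresponding properties of each $R^{(n)}$. For the finiteness condition on $R'_k$, once $n \geq k$ the inclusion $R^{(n)} \hookrightarrow R^{(n+1)}$ is an isomorphism in degree $k$ (because $m_n > n \geq k$); so $R'_k$ stabilises to the finite group $R^{(k)}_k$, and $R' \in \CR$.

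For self-injectivity I invoke Lemma~\ref{lem-nondegenerate}, which reduces the problem to showing that every nondegenerate test pair $(u,v)$ in $R'$ is good. Any such pair has some finite weight $w$ and entries lying in some $R^{(n_0)}$; setting $N = \max(n_0, w)$, the pair qualifies as a nondegenerate test pair of weight at most $N$ in $R^{(N)}$. If it is already good in $R^{(N)}$, the relevant block or transporter persists in $R'$ since $R^{(N)} \hookrightarrow R'$ is an injection of graded rings. Otherwise the pair lies in $\CP_N$ and so has a block adjoined in $R^{(N+1)}$, which again persists in $R'$.

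The main point that needs care is the assertion implicit above that the pool of weight-$\leq n$ nondegenerate test pairs does not grow when passing from $R^{(n)}$ to $R^{(n+1)}$; this is what makes the induction close. For a nondegenerate pair $(u,v)$ of weight at most $n$, each entry $u_i$ is nonzero and hence has $|u_i| = |u_i|_+ \leq n-1$, and similarly any nonzero $v_i$ satisfies $|v_i| \leq n-1$. Since $n - 1 < m_n$ and $R^{(n+1)}_k = R^{(n)}_k$ for $k < m_n$, such pairs in $R^{(n+1)}$ are exactly the pairs in $R^{(n)}$ of weight at most $n$; those in $\CP_n$ have blocks adjoined by construction, and the remainder were already good. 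Without this bookkeeping, freshly adjoined generators $b_{t,j}$ of small positive degree could combine with old elements to produce genuinely new bad pairs of small weight, and the induction would have no purchase.
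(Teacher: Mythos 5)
Your proof is correct and follows essentially the same strategy as the paper's own argument: build an increasing tower by repeatedly applying Proposition~\ref{prop-adjoin-block} to the (finitely many, by Lemma~\ref{lem-count-bad}) nondegenerate bad test pairs up to a growing weight bound, then take the colimit and finish with Lemma~\ref{lem-nondegenerate}. The only formal difference is your threshold $m_n = \max(m, n+1)$ versus the paper's $n + m$; both tend to infinity and stay $\geq m$, and yours is in fact a touch safer, since the paper's value is not a positive integer when $m = n = 0$ and so cannot literally be fed into Proposition~\ref{prop-adjoin-block} at that step --- a trivial slip, but one your choice avoids automatically.

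One comment on your final paragraph: the concern raised there is not real, and the bookkeeping is not needed. Your third paragraph is already a complete proof of self-injectivity, and it is not an induction on $n$; given an arbitrary nondegenerate test pair $(u,v)$ in $R'$, you locate a single stage $N \geq \max(n_0, w)$ at which it is caught. A bad pair involving generators adjoined at stage $n+1$ simply has $n_0 \geq n+1$ and is therefore caught at stage $N \geq n+1$ in exactly the same way as any other pair; you never need to know whether it was already visible at stage $n$, nor whether the weight-$\leq n$ pool is stable under the inclusion $R^{(n)} \hookrightarrow R^{(n+1)}$. That invariance is a true consequence of your threshold choice, but nothing in the argument uses it. The requirement $m_n > n$ is only earning its keep in the verification that each $R'_k$ stabilises and is finite (for which any sequence $m_n \to \infty$ bounded below by $\max(m,1)$ would serve), not in the block-adjoining step.
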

\begin{proof}
 We define rings $R'(0)\leq R'(1)\leq \dotsb$ as follows.  We start
 with $R'(0)=R$.  For each $k\geq 0$, we let $R'(k+1)$ be an
 extension of $R'(k)$ that agrees with $R'(k)$ in degrees less than
 $k+m$, such that every nondegenerate bad test pair of weight at most
 $k$ in $R'(k)$ has a block in $R'(k+1)$.  This can be constructed by
 Proposition~\ref{prop-adjoin-block} and Lemma~\ref{lem-count-bad}.
 Now take $R'$ to be the colimit of the rings $R'(k)$.  By
 construction we have $R'_i=R'(k)_i$ for sufficiently large $k$, and
 using this it is clear that $R'\in\CR$.  Consider a nondegenerate
 test pair $(u,v)\in R'$.  For sufficiently large $k$ we can assume
 that $k\geq\wgt(u,v)$ and that $u_i,v_i\in R'(k)$ for all $i$.
 If $(u,v)$ is good in $R'(k)$ then it is good in $R'$.  If it is
 bad in $R'(k)$ then by construction it becomes good in $R'(k+1)$
 and therefore in $R'$.   
\end{proof}

\section{The cube algebra}
\label{sec-cube}

Recall that in the statement of Theorem~\ref{thm-cube-intro} we
introduced the ring
\[ C = \F[y_0,y_1,\dotsc]/(y_i^3+y_iy_{i+1}\st i\geq 0), \]
with the grading given by $|y_i|=2^i$.  We now investigate the
structure of this ring (which we call the \emph{cube algebra}).

\begin{definition}\label{defn-Cn}
 We also put
 \begin{align*}
  C[n,\infty] &= \F[y_n,y_{n+1},\dotsc]/
   (y_i^3+y_iy_{i+1}| n\leq i<\infty) \\
  C[n,m] &= \F[y_n,\dotsc,y_m]/
   (y_i^3+y_iy_{i+1}| n\leq i< m) \\
  \bC[n,m] &= C[n,m]/y_m.
 \end{align*}
\end{definition}

\begin{lemma}\label{lem-Cn-subring}
 The evident maps
 \[ \xymatrix{
  C[n+1,m] \ar[r] \ar[d] &
  C[n+1,m+1] \ar[r] \ar[d] &
  C[n+1,\infty] \ar[d] \\
  C[n,m] \ar[r] \ar[d] &
  C[n,m+1] \ar[r] \ar[d] &
  C[n,\infty] \ar[d] \\
  C[0,m] \ar[r] &
  C[0,m+1] \ar[r] &
  C[0,\infty] = C
 } \]
 are all split injective, so all the rings mentioned can be considered
 as subrings of $C$.
\end{lemma}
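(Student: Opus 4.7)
The plan is to exhibit explicit $\F$-algebra retractions for each arrow in the diagram; since split injectivity is preserved under composition and every ring in the diagram admits a composite path to $C$, this shows that all of them embed as subrings of $C$.

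There are essentially two kinds of elementary arrows to treat. First, each single-step vertical arrow $\phi\:C[n+1,\star]\hookrightarrow C[n,\star]$ (for $\star\in\{m,m+1,\infty\}$) admits a retraction $\rho$ defined by $y_n\mapsto 0$ and $y_i\mapsto y_i$ for $i>n$. The only defining relation of $C[n,\star]$ that involves $y_n$ is $y_n^3+y_ny_{n+1}$, and both monomials vanish under $\rho$, so $\rho$ descends to a well-defined ring map, and it clearly satisfies $\rho\phi=1$. Second, each horizontal step $\phi\:C[n,m]\hookrightarrow C[n,m+1]$ admits a retraction $\rho$ defined by $y_i\mapsto y_i$ for $n\leq i\leq m$ and $y_{m+1}\mapsto y_m^2$; the one new relation $y_m^3+y_my_{m+1}$ maps to $y_m^3+y_m\cdot y_m^2=2y_m^3=0$ since $\F$ has characteristic $2$.

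This handles the top two vertical arrows and the four finite horizontal arrows. The two bottom vertical arrows $C[n,\star]\hookrightarrow C[0,\star]$ factor as compositions of single-step vertical arrows and so are split injective as well. For the three horizontal arrows landing in an infinite ring $C[\cdot,\infty]$, I use a direct retraction $y_i\mapsto y_i$ for indices below $m$, together with $y_{m+k}\mapsto y_m^{2^k}$ for $k\geq 1$ (extending the finite horizontal formula). The relation $y_{m+k}^3+y_{m+k}y_{m+k+1}$ then maps to $y_m^{3\cdot 2^k}+y_m^{2^k}y_m^{2^{k+1}}=2y_m^{3\cdot 2^k}=0$, again by characteristic $2$, so this is well-defined.

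There is no real obstacle here: the whole argument hinges on spotting the characteristic-$2$ identity $y^3+y\cdot y^2=0$, which is what makes $y_m^{2^k}$ an acceptable image for the adjoined variable $y_{m+k}$. Everything else is routine bookkeeping, and the final sentence of the lemma follows because split injective ring maps are in particular injective.
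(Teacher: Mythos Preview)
Your proof is correct and uses the same retraction formulas as the paper (low-index generators sent to $0$, high-index generators $y_{m+k}$ sent to $y_m^{2^k}$); the paper simply packages all of these into a single retraction $\tau\:C\to C[n,m]$ defined by $\tau(y_i)=0$ for $i<n$, $\tau(y_i)=y_i$ for $n\leq i\leq m$, and $\tau(y_i)=y_m^{2^{i-m}}$ for $i\geq m$, rather than treating each arrow separately. Watch a couple of minor indexing slips: each family in the diagram contains three arrows, not two or four; and for the arrows $C[\cdot,m+1]\to C[\cdot,\infty]$ your retraction should fix $y_{m+1}$ and send $y_{(m+1)+k}\mapsto y_{m+1}^{2^k}$ --- as written, your formula lands in $C[\cdot,m]$ and so retracts the composite $C[\cdot,m]\to C[\cdot,\infty]$ rather than the second arrow alone.
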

\begin{proof}
 There is a graded ring map $\tau_0\:\F[y_0,y_1,\dotsc]\to C[n,m]$
 given by  
 \[ \tau_0(y_i) = \begin{cases}
                 0 & \text{ if } i < n \\
                 y_i & \text{ if } n \leq i \leq m \\
                 y_m^{2^{i-m}} & \text{ if } m \leq i.
                \end{cases}
 \]
 It is straightforward to check that $\tau_0(y_i^3+y_iy_{i+1})=0$ for
 all $i\geq 0$, so there is an induced map $\tau\:C\to C[n,m]$.
 It is clear that the composite $C[n,m]\to C\xra{\tau} C[n,m]$ is the
 identity, so the map $C[n,m]\to C$ is injective for all $m$ and
 $n$.  The other claims follow from this.
\end{proof}

\begin{definition}\label{defn-multiindex}
 We write $P$ for the polynomial ring $\F[y_0,y_1,\dotsc]$, so
 that $C$ is a quotient of $P$.  A \emph{multiindex} is a sequence
 $\al=(\al_0,\al_1,\dotsc)$ of natural numbers with $\al_i=0$ for
 $i\gg 0$.  We write $MP$ for the set of all multiindices.  Given
 $\al\in MP$ we write $y^\al=\prod_iy_i^{\al_i}$ and
 $|\al|=|y^\al|=\sum_i\al_i2^i$.  It is clear that the set
 $BP=\{y^\al\st\al\in MP\}$ is a basis for $P$ over $\F$. 
\end{definition}

\begin{definition}
 We put 
 \begin{align*}
  M'C[n,m] &= \{\al\in MP\st \al_i=0 \text{ for } i<n \text{ or }
                i>m \text{ and } \al_i<3 \text{ for } n\leq i<m\} \\
  M\bC[n,m] &= \{\al\in MP\st \al_i=0 \text{ for } i<n \text{ or }
                i\geq m\} \\
  B'C[n,m] &= \{y^\al\st \al\in M'C[n,m]\} \\
  B\bC[n,m] &= \{y^\al\st \al\in M\bC[n,m]\}.
 \end{align*}
\end{definition}

Note that in the definition of $M'C[n,m]$ the constraint $\al_i<3$
does not apply when $i=m$, so in particular $M'C[n,m]$ is infinite.

\begin{proposition}\label{prop-cube-basis-i}
 $B'C[n,m]$ is a basis for $C[n,m]$, and $B\bC[n,m]$ is a basis for
 $\bC[n,m]$.  Moreover, $\bC[n,m]$ is a Poincar\'e duality algebra
 over $\F$. 
\end{proposition}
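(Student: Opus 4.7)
The plan is to establish the basis claims by induction on $m - n$ and to derive Poincar\'e duality for $\bC[n,m]$ by direct inspection in the resulting basis.

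For the base case $m = n$, we have $C[n,n] = \F[y_n]$ with basis $\{y_n^k \st k \geq 0\} = B'C[n,n]$, and $\bC[n,n] = \F$, which is trivially a Poincar\'e duality algebra. For the inductive step, identify $C[n,m]$ with the ring $C[n+1,m][y_n]/(y_n^3 + y_ny_{n+1})$ and show that it is free of rank $3$ over $C[n+1,m]$ on $\{1, y_n, y_n^2\}$. Surjectivity of the evident map $\bigoplus_{k=0}^{2} C[n+1,m]\cdot y_n^k \to C[n,m]$ follows from the reduction $y_n^3 \to y_ny_{n+1}$, which strictly decreases $\al_n + \al_{n+1}$ whenever $\al_n \geq 3$, so that every monomial reduces to one with $\al_n \leq 2$. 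For injectivity, construct an auxiliary $C[n+1,m]$-algebra $M$ that is free of rank $3$ over $C[n+1,m]$ on formal symbols $1, \xi, \xi^2$, with multiplication determined by $\xi\cdot\xi = \xi^2$ and $\xi\cdot\xi^2 = y_{n+1}\xi$; a short multiplication-table computation verifies associativity, and the obvious maps $C[n,m] \to M$ (sending $y_n \mapsto \xi$) and $M \to C[n,m]$ (sending $\xi^k \mapsto y_n^k$) are mutually inverse ring homomorphisms. Combined with the inductive hypothesis, $B'C[n,m] = \{y_n^k \cdot y^\beta \st 0 \leq k \leq 2,\, y^\beta \in B'C[n+1,m]\}$ is a basis for $C[n,m]$; setting $y_m = 0$ throughout (which commutes with the module decomposition) gives that $\bC[n,m]$ is free of rank $3$ over $\bC[n+1,m]$ with basis $B\bC[n,m]$.

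For Poincar\'e duality, set $d = 2^{m+1} - 2^{n+1}$. Any $\al \in M\bC[n,m]$ satisfies $|y^\al| \leq \sum_{i=n}^{m-1} 2\cdot 2^i = d$, with equality iff $\al_i = 2$ for all $n \leq i < m$; hence $\bC[n,m]_d = \F\cdot y_n^2\dotsb y_{m-1}^2$ and $\bC[n,m]_i = 0$ for $i > d$, and we take $\tht\:\bC[n,m]_d \to \F$ to be the corresponding isomorphism. The involution $\al \mapsto (2-\al_n,\dotsc,2-\al_{m-1})$ is a degree-reversing self-bijection of $M\bC[n,m]$, giving $\dim_\F\bC[n,m]_i = \dim_\F\bC[n,m]_{d-i}$. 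A direct case analysis in $B\bC[n,m]$ shows that the only $y^\al$ annihilated by every $y_j$ (for $n \leq j < m$) is $y_n^2\dotsb y_{m-1}^2$ itself: if $\al_j < 2$ then $y_jy^\al$ remains a basis element, and if $\al_j = 2$ the relation $y_j^3 = y_jy_{j+1}$ only produces zero when the cascade $\al_{j+1} = \dotsb = \al_{m-1} = 2$ propagates all the way to $y_{m-1}^3 = 0$; a weight-filtration argument (comparing to the associated graded $\bigotimes_i \F[y_i]/y_i^3$, whose socle is manifestly one-dimensional) rules out non-monomial socle elements. Thus $\soc(\bC[n,m])$ is the one-dimensional top degree. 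Lemma~\ref{lem-soc-min} then yields, for any nonzero $a \in \bC[n,m]_i$, some $r$ with $ra = y_n^2\dotsb y_{m-1}^2$, and extracting the degree-$(d-i)$ component gives $r_{d-i} \in \bC[n,m]_{d-i}$ with $r_{d-i}a \neq 0$. Combined with the dimension equality this makes the pairing perfect, so $\bC[n,m]$ is a Poincar\'e duality algebra.

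The main obstacle is the freeness step in the induction: one must check that the defining relation $y_n^3 + y_ny_{n+1}$ does not introduce any unexpected $C[n+1,m]$-linear dependence among $\{1, y_n, y_n^2\}$. The auxiliary ring $M$ construction sidesteps this neatly but requires the explicit associativity verification (which comes down to the identities $\xi\cdot\xi^2 = \xi^2\cdot\xi$ and $\xi^2\cdot\xi^2 = y_{n+1}\xi^2$). Once freeness is in place, the remainder of the argument --- the spanning reduction, the degree bound, and the socle calculation --- is essentially bookkeeping.
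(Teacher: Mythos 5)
Your inductive framework for the basis claim matches the paper's: identify $C[n,m]$ with $C[n+1,m][y_n]/(y_n^3+y_ny_{n+1})$ and induct downward on $n$. But the execution differs, and one step has a real gap.

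For the freeness of $\{1,y_n,y_n^2\}$ over $C[n+1,m]$, your auxiliary-ring construction $M$ with an explicit multiplication table is correct but reinvents something standard. The point is simply that for any commutative ring $A$ and monic $f\in A[t]$ of degree $d$, the quotient $A[x]/(f)$ is free of rank $d$ over $A$, by the Euclidean division algorithm (which applies because $f$ is monic). That is exactly Lemma~\ref{lem-monic-extension} in the paper, and it also handles the Poincar\'e duality step in the same breath by building the dual basis $s_{ni+j}=x^iu_j$, $t_{ni+j}=x^{d-1-i}v_j$ from dual bases $\{u_j\},\{v_j\}$ of $A$; the resulting Gram matrix is unitriangular. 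Your socle-plus-dimension-count route to Poincar\'e duality is a genuinely different argument, and it is conceptually appealing: once you know $\soc(\bC[n,m])$ is one-dimensional and $\dim_\F\bC[n,m]_i=\dim_\F\bC[n,m]_{d-i}$ (your involution handles the latter cleanly), Lemma~\ref{lem-soc-min} does make the pairing perfect. The trade is that you now owe a proof that the socle has dimension one.

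That is where the gap is. You invoke ``a weight-filtration argument (comparing to the associated graded $\bigotimes_i\F[y_i]/y_i^3$).'' This associated graded does not arise from any of the natural filtrations. Under the given grading $|y_i|=2^i$, the two monomials in $y_i^3+y_iy_{i+1}$ have the \emph{same} degree $3\cdot 2^i$, so there is no leading term to extract. Under the $\mxi$-adic filtration (equivalently, weighting every $y_i$ equal to $1$), the lowest-order part of $y_i(y_i^2+y_{i+1})$ is $y_iy_{i+1}$, so the degeneration goes to $\F[y_n,\dotsc,y_{m-1}]/(y_iy_{i+1}:i<m-1;\,y_{m-1}^3)$ --- which is not $\bigotimes\F[y_i]/y_i^3$ and is not even finite-dimensional (e.g.\ $\bC[0,2]$ would degenerate to $\F[y_0,y_1]/(y_0y_1,y_1^3)$, which contains all powers of $y_0$). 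To make $y_i^3$ the initial form one needs an increasing weight with $\wgt(y_{i+1})>2\,\wgt(y_i)$, such as $\wgt(y_i)=3^i$, together with the lowest-weight convention for initial forms, and one then uses the already-established equality $\dim\bC[n,m]=3^{m-n}$ to conclude the degeneration really is $\bigotimes\F[y_i]/y_i^3$. None of this is stated, and a reader following your hint with the standard $\mxi$-adic filtration would get the wrong ring. Either spell out a suitable weight, or replace this step by the paper's observation in Remark~\ref{rem-duality} that $\bC[n,m]$ is a finite-dimensional complete intersection (equal numbers of generators and relators), which automatically forces the socle to be one-dimensional, or just use Lemma~\ref{lem-monic-extension} as the paper does.

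Two small remarks: the set $M\bC[n,m]$ as printed in the paper omits the constraint $\al_i<3$, which you (correctly) assume when you write your involution $\al\mapsto(2-\al_n,\dotsc,2-\al_{m-1})$; and your claim that $r_{d-i}a\neq 0$ because it is the degree-$d$ component of $ra=y_n^2\dotsm y_{m-1}^2$ is fine, but it relies on $r$ being decomposed into homogeneous components with respect to the grading $|y_i|=2^i$, which deserves a word.
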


The proof depends on the following result:

\begin{lemma}\label{lem-monic-extension}
 Let $A$ be a commutative algebra over $\F$, let $f(t)\in A[t]$ be a
 monic polynomial of degree $d$, and put $B=A[x]/f(x)$.  Then
 $\{1,x,\dotsc,x^{d-1}\}$ is a basis for $B$ over $A$.  Moreover, if
 $A$ is finite-dimensional over $\F$ and has Poincar\'e duality, then
 the same is true of $B$.
\end{lemma}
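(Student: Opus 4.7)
My plan splits according to the two claims of the lemma.

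\medskip

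For the first claim, the argument is routine polynomial division. Since $f(t)$ is monic of degree $d$, every $p(t)\in A[t]$ can be written uniquely as $p(t)=q(t)f(t)+r(t)$ with $\deg_t r<d$. Hence every element of $B$ is an $A$-linear combination of $\{1,x,\dotsc,x^{d-1}\}$, and these are $A$-linearly independent because a nonzero $A$-linear combination has degree $<d$ in $t$ and so cannot be a nonzero multiple of $f$. Finite-dimensionality then follows: $\dim_\F B=d\cdot\dim_\F A<\infty$ when $A$ is finite-dimensional.

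\medskip

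For the Poincar\'e duality claim, write $r=|x|$ and let $e$ be the top degree of $A$ with pairing map $\tht_A\:A_e\to\F$. Note that $f$ is necessarily homogeneous of degree $dr$, so its lower coefficients $g_i$ have $|g_i|=(d-i)r$, and the relation $x^d=g(x)$ is degree-preserving. I would define a \emph{trace} $\tau\:B\to A$ by extracting the coefficient of $x^{d-1}$, namely $\tau(\sum_{k=0}^{d-1}a_k x^k)=a_{d-1}$. This is $A$-linear and lowers degree by $(d-1)r$, so the composite $\tht_B=\tht_A\circ\tau$ is a well-defined $\F$-linear map $B_{e+(d-1)r}\to\F$. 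The candidate pairing on $B$ is $(u,v)\mapsto\tht_B(uv)$ between $B_i$ and $B_{e+(d-1)r-i}$.

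\medskip

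The key step is to verify perfectness of this pairing; this is where the proof really lives. Given a nonzero $u\in B_i$, write $u=\sum_{k=0}^{d-1}u_k x^k$ with $u_k\in A_{i-kr}$, and let $K$ be the largest index with $u_K\neq 0$. Poincar\'e duality in $A$ supplies $w\in A_{e-i+Kr}$ with $\tht_A(u_K w)\neq 0$. Then $v=wx^{d-1-K}$ lies in $B_{e+(d-1)r-i}$, and
\[
 uv=\sum_{k=0}^{K}u_k w\, x^{k+d-1-K}.
\]
The crucial observation is that every exponent appearing satisfies $k+d-1-K\leq d-1$, so the product is already in the basis $\{1,x,\dotsc,x^{d-1}\}$ and no reduction modulo $f$ is needed. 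Only $k=K$ contributes to the coefficient of $x^{d-1}$, giving $\tau(uv)=u_K w$ and hence $\tht_B(uv)=\tht_A(u_K w)\neq 0$. This shows the map $B_i\to\Hom_\F(B_{e+(d-1)r-i},\F)$ is injective. Finally, a short index shift using $\dim_\F A_j=\dim_\F A_{e-j}$ gives
\[
 \dim_\F B_i=\sum_{k=0}^{d-1}\dim_\F A_{i-kr}=\sum_{k=0}^{d-1}\dim_\F A_{e-i+kr}=\dim_\F B_{e+(d-1)r-i},
\]
so injectivity upgrades to bijectivity and the pairing is perfect.

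\medskip

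The main obstacle I anticipate is the possibility that higher powers $x^{d},x^{d+1},\dotsc$ in the product $uv$ could, after reduction via $x^d=g(x)$, contribute uncontrolled terms to the coefficient of $x^{d-1}$. The whole point of choosing $v$ with the \emph{smallest} admissible power of $x$, namely $x^{d-1-K}$ where $K$ is the \emph{largest} index in the support of $u$, is exactly to sidestep this reduction entirely; this is the one clever choice on which the argument hinges.
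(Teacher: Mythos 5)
Your proof is correct and rests on the same key observation as the paper's: pairing the top term $x^K$ in $u$ against $x^{d-1-K}$ keeps the product in $x$-degree at most $d-1$, so no reduction modulo $f$ intervenes. The paper packages this via dual bases for $A$ and a triangular Gram matrix for $B$, whereas you show injectivity of $u\mapsto\tht_B(u\cdot{-})$ directly and finish with a dimension count; these are the same argument in different clothing (with your version making the graded bookkeeping a bit more explicit).
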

\begin{proof}
 We first claim that any polynomial $g(x)\in A[x]$ can be expressed
 uniquely in the form $g(x)=q(x)f(x)+r(x)$ with $\deg(r(x))<d$.  This
 can easily be proved by induction on the degree of $g(x)$, and it
 follows directly that $\{1,\dotsc,x^{d-1}\}$ is a basis for $B$ over
 $A$.  Now suppose that $A$ has Poincar\'e duality, so there is a
 linear map $\tht\:A\to\F$ such that the bilinear form
 $(u,v)\mapsto\tht(u,v)$ is perfect.  This means that there exist
 bases $\{u_0,\dotsc,u_{n-1}\}$ and $\{v_0,\dotsc,v_{n-1}\}$ for $A$
 such that $\tht(u_iv_j)=\dl_{ij}$.  Now define $\phi\:B\to\F$ by
 $\phi(\sum_{i=0}^{d-1}a_ix^i)=\tht(a_{d-1})$.  We define bases
 $\{s_0,\dotsc,s_{nd-1}\}$ and $\{t_0,\dotsc,t_{nd-1}\}$ for $B$ by
 $s_{ni+j}=x^iu_j$ and $t_{ni+j}=x^{d-1-i}v_j$ for $0\leq i<d$ and
 $0\leq j<n$.  It is clear that $\phi(s_kt_k)=1$.  Suppose we have
 $0\leq k<k'<nd$.  Write $k=ni+j$ and $k'=ni'+j'$ as before; we must
 have either $i<i'$, or ($i=i'$ and $j<j'$).  In either case, we find
 that $\phi(s_it_j)=0$.  Thus, the matrix of $\phi$ with respect to
 our bases is triangular, with ones on the diagonal, proving that
 $\phi$ gives a perfect pairing on $B$.
\end{proof}

\begin{proof}[Proof of Proposition~\ref{prop-cube-basis-i}]
 From the definitions we have $C[m,m]=\F[y_n]$ and
 $B'C[m,m]=\{y_n^{\al_n}\st\al_n\in\N\}$ so it is clear that
 $B'C[m,m]$ is a basis for $C[m,m]$.  Similarly, it is clear that the
 set $\bC[m,m]=\{1\}$ is a basis for the ring
 $\bC[m,m]=C[m,m]/y_m=\F$, and that this has Poincar\'e duality.  

 Next, $C[n,m]$ can be described as $C[n+1,m][y_n]/f(y_n)$, where
 $f(t)=t^3+y_{n+1}t$ is a monic polynomial of degree three with
 coefficients in $C[n+1,m]$.  It also follows that
 $\bC[n,m]=\bC[n+1,m][y_n]/f(y_n)$.  All claims in the proposition now
 follow by downwards induction on $n$ using
 Lemma~\ref{lem-monic-extension}.
\end{proof}

\begin{remark}\label{rem-duality}
 Note that the algebra
 \[ \bC[n,m] =
     \frac{\F[y_n,y_{n+1},\dotsc,y_{m-1}]}{
          (y_n^3+y_ny_{n+1},\dotsc,y_{m-1}^3)}
 \] 
 has the same number of relations as generators, and has finite
 dimension over $\F$.  It is known that in this situation the sequence
 of relations is necessarily regular, and that the algebra
 automatically has Poincar\'e duality.  (This can be extracted
 from~\cite{ma:crt}*{Section 17}, for example.)  This would give
 another approach to Proposition~\ref{prop-cube-basis-i}.
\end{remark}

\begin{definition}
 Let $\al$ be a multiindex.  We say that
 \begin{itemize}
  \item[(a)] $\al$ is \emph{flat} if $\al_i<3$ for all $i$;
  \item[(b)] $\al$ is \emph{$n$-truncated} if $\al_i=0$ for all $i<n$;
  \item[(c)] $\al$ is \emph{$m$-solid} if it is flat and whenever
   $m\leq p\leq q$ and $\al_q>0$ we also have $\al_p>0$.
 \end{itemize}
 We consider all flat multiindices to be $\infty$-solid.  For
 $0\leq n\leq m\leq\infty$ we put  
 \[ MC[n,m] = \{\al\in MP\st \al
        \text{ is $n$-truncated and  $m$-solid } \},
 \]
 and $BC[n,m]=\{y^\al\st\al\in MC[n,m]\}$.  We also write $MC$ for the
 set $MC[0,\infty]$ of all flat multiindices.
\end{definition}

\begin{proposition}\label{prop-cube-basis-ii}
 $BC[n,\infty]$ is a basis for $C[n,\infty]$.
\end{proposition}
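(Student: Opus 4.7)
The plan is to combine Proposition~\ref{prop-cube-basis-i} (the basis statement for the finite truncations $C[n,m]$) with the description of $C[n,\infty]$ as the increasing union $\bigcup_m C[n,m]$ from Lemma~\ref{lem-Cn-subring}. The two directions---linear independence and spanning---are handled separately, and only the second requires any real work.

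Linear independence is immediate. Suppose we have a finite relation $\sum_\al c_\al y^\al = 0$ in $C[n,\infty]$ with each $\al \in MC[n,\infty]$. Choose $m$ large enough that every $\al$ appearing in the sum satisfies $\al_i = 0$ for $i > m$. Each such $\al$ is then flat, $n$-truncated, and supported in $[n,m]$, so it automatically lies in $M'C[n,m]$ (flatness supplies $\al_m < 3$, which is stronger than what $M'C[n,m]$ requires at the top slot). Because the inclusion $C[n,m]\hookrightarrow C[n,\infty]$ is injective, the relation already holds in $C[n,m]$, and since $B'C[n,m]$ is a basis there by Proposition~\ref{prop-cube-basis-i}, all $c_\al = 0$.

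Spanning is the more substantive step and the place where the main work happens. Let $x \in C[n,\infty]$ and choose $m$ with $x \in C[n,m]$. By Proposition~\ref{prop-cube-basis-i}, $x$ is an $\F$-linear combination of monomials $y^\al$ with $\al \in M'C[n,m]$; it suffices to rewrite each such monomial as a single element of $BC[n,\infty]$. If $\al_m < 3$ we are done. Otherwise, working in $C[n,m+1]$ and using the defining relation $y_m^3 = y_my_{m+1}$ (valid in characteristic $2$), induction on $\al_m$ gives $y_m^{\al_m} = y_m^r y_{m+1}^s$ with $r \in \{1,2\}$ and $r + 2s = \al_m$. If $s \geq 3$ we apply the same reduction to $y_{m+1}^s$ inside $C[n,m+2]$, and so on; since the exponent of the newly introduced variable is at most half of the previous one, the process terminates after $O(\log\al_m)$ steps, producing a single flat monomial $y^{\al'}$ with $\al' \in MC[n,\infty]$. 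The earlier coordinates $\al_n,\dotsc,\al_{m-1}$, already bounded by $2$, are untouched throughout. The one thing to check carefully is that this iterated reduction really does terminate and really does produce a multiindex satisfying the flatness condition at every index; both are transparent from the explicit formula $y_m^k = y_m^r y_{m+1}^{(k-r)/2}$, so no genuine obstacle arises.
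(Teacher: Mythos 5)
Your proof is correct, but it takes a genuinely different route from the paper's. The paper argues degree by degree: fix $d$ and take $m$ with $2^m>d$; then no monomial of degree $d$ can involve $y_i$ for $i\geq m$, so both $BC[n,\infty]_d = B'C[n,m]_d$ and $C[n,\infty]_d = C[n,m]_d$, and the whole claim drops out of Proposition~\ref{prop-cube-basis-i} in one line, with linear independence and spanning coming simultaneously and no rewriting needed. You instead handle the two directions separately. Your linear-independence step works by the same underlying mechanism (push the relation into a finite truncation $C[n,m]$ and use injectivity of the inclusion), but your spanning step is the substantive divergence: you run an explicit flattening of $y_m^{\al_m}$ via the relation $y_m^3 = y_m y_{m+1}$, iterated with exponents halving until they drop below $3$. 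This is in effect a proof of Proposition~\ref{prop-flatten}, which the paper states and proves immediately \emph{after} the result you are proving, so you are front-loading a later lemma. Both arguments are valid; the paper's degree trick is shorter and sidesteps the rewriting entirely, while yours is more constructive and makes explicit which flat multiindex a given $M'C[n,m]$-monomial reduces to. One small point worth making explicit in your spanning argument: when you pick $m$ with $x\in C[n,m]$, you are using that $C[n,\infty] = \bigcup_m C[n,m]$, which follows from Lemma~\ref{lem-Cn-subring} together with the observation that any element of $C[n,\infty]$ is represented by a polynomial in finitely many of the $y_i$.
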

\begin{proof}
 We must show that for each degree $d\in\N$, the set $BC[n,\infty]_d$
 is a basis for $C[n,\infty]_d$.  Choose $m>n$ such that $2^m>d$.  It
 is then clear that $BC[n,\infty]_d=B'C[n,m]_d$ and
 $C[n,\infty]_d=C[n,m]_d$ so the claim follows from
 Proposition~\ref{prop-cube-basis-i}. 
\end{proof}

It is also true that $BC[n,m]$ is a basis for $C[n,m]$ when
$m<\infty$, but it is convenient to leave the proof until later.

\begin{proposition}\label{prop-flatten}
 For any multiindex $\al\in MP$, there is a multiindex $\bt\in MC$
 such that $y^\al=y^\bt$.
\end{proposition}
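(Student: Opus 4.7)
The plan is to induct on the total exponent $|\al|_1 = \sum_i \al_i$, using the defining relation $y_i^3 = y_i y_{i+1}$ in $C$ to repeatedly reduce any exponent that exceeds $2$. Note that both sides of this relation have degree $3 \cdot 2^i = 2^i + 2^{i+1}$, so the degree $|\al|$ is unchanged, but the move $y_i^3 \mapsto y_i y_{i+1}$ decreases the total exponent $|\al|_1$ by one.

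More precisely, I would argue as follows. If $|\al|_1 = 0$ then $\al = 0$, which is flat, so we may take $\bt = \al$. Otherwise, assume inductively that every multiindex with strictly smaller total exponent has a flat representative. If $\al$ is already flat we are done. If not, there is some index $i$ with $\al_i \geq 3$; define $\al'$ by $\al'_i = \al_i - 2$, $\al'_{i+1} = \al_{i+1} + 1$, and $\al'_j = \al_j$ for $j \notin \{i, i+1\}$. Then in $C$ we have
\[
y^\al = y^{\al - 3e_i} \cdot y_i^3 = y^{\al - 3e_i} \cdot y_i y_{i+1} = y^{\al'},
\]
and $|\al'|_1 = |\al|_1 - 1$. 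By the inductive hypothesis applied to $\al'$, there is a flat $\bt \in MC$ with $y^\bt = y^{\al'} = y^\al$.

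The only thing to check is that this is a genuine induction, i.e.\ that $\al'$ really is a multiindex (finite support) and that the total exponent is a non-negative integer that actually decreases. Finiteness of support is immediate, since $\al'$ differs from $\al$ only at positions $i$ and $i+1$. The decrease of $|\cdot|_1$ by one per reduction is the crucial bookkeeping fact, and it is what forces termination even though the support of the iterates can drift arbitrarily far to the right (each rightward push of a $+1$ consumes one unit of total exponent). There is no real obstacle here; the only subtlety is choosing the correct well-founded quantity, and $\sum_i \al_i$ does the job because the relation $y_i^3 = y_i y_{i+1}$ is strictly monotone in this statistic.
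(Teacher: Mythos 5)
Your proof is correct, and it uses a genuinely different termination invariant from the paper. The paper applies the same local reduction (replacing $\al_i$ by $\al_i-2$ and $\al_{i+1}$ by $\al_{i+1}+1$) but organizes the induction via the lexicographic order on multiindices of a fixed degree, invoking the fact that there are only finitely many monomials in each degree to make lexicographic induction well-founded. You instead observe that the total exponent $\sum_i\al_i$ drops by exactly one per reduction and is a non-negative integer, which gives termination directly without any appeal to degree-wise finiteness or to the lexicographic order. This is arguably a cleaner bookkeeping device for this particular relation: it makes the point about "support drifting to the right" harmless in a transparent way, whereas the paper's argument bounds that drift implicitly via the finiteness of monomials of a given degree. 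The only thing you should check explicitly (and you implicitly do, since $\al_i\geq 3$ gives $\al_i-2\geq 1$) is that $\al'$ still has non-negative entries; with that noted, the argument is complete.
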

\begin{proof}
 If $\al\not\in MC$, we let $k$ denote the smallest index such that
 $\al_k>2$, and define $\al'\in MP$ by 
 \[ \al'_i = 
     \begin{cases}
      \al_i       & \text{ if } i < k \\
      \al_k-2     & \text{ if } i = k \\
      \al_{k+1}+1 & \text{ if } i = k+1 \\
      \al_i       & \text{ if } i > k+1.
     \end{cases}
 \]
 Because $y_k^3=y_ky_{k+1}$ we have $y^\al=y^{\al'}$.  Moreover,
 $\al'$ has the same degree as $\al$, and is lexicographically lower
 than $\al$.  There are only finitely many monomials of any given
 degree, so the claim follows by induction over the lexicographic
 order. 
\end{proof}

\begin{definition}\label{defn-x}\ \\
 \begin{itemize}
  \item[(a)] We put $x_0=y_0$, and $x_n=y_n+y_{n-1}^2$ for all $n>0$.
  \item[(b)] For $n\geq m\geq 0$ we put $x_{[m,n]}=\prod_{i=n}^mx_i$
   and $y_{[m,n]}=\prod_{i=n}^my_i$.
 \end{itemize}
\end{definition}

\begin{proposition}\label{prop-cube-x}
 For all $n\geq 0$ we have $y_n=\sum_{i=0}^nx_{n-i}^{2^i}$ and
 $y_nx_{n+1}=0$.  Thus, the ring $C$ can also be presented as 
 \[ C = \F[x_0,x_1,x_2,\dotsc]/
        (x_{n+1}\sum_{i=0}^nx_{n-i}^{2^i}\st n\geq 0).
 \]
\end{proposition}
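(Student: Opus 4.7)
The plan is to prove the two identities by straightforward induction and direct substitution, and then build mutually inverse ring maps between $C$ and the proposed presentation.

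First I would establish $y_n=\sum_{i=0}^n x_{n-i}^{2^i}$ by induction on $n$. The base case $n=0$ is immediate from $x_0=y_0$. For the inductive step, $x_n=y_n+y_{n-1}^2$ gives $y_n=x_n+y_{n-1}^2$; applying the inductive hypothesis and using that $a\mapsto a^2$ is additive over $\F=\Z/2$, we get
\[ y_n = x_n + \Bigl(\sum_{i=0}^{n-1} x_{n-1-i}^{2^i}\Bigr)^{\!2} = x_n + \sum_{i=0}^{n-1} x_{n-1-i}^{2^{i+1}} = \sum_{j=0}^n x_{n-j}^{2^j}, \]
after reindexing $j=i+1$. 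For the second identity, compute directly:
\[ y_n x_{n+1} = y_n(y_{n+1}+y_n^2) = y_ny_{n+1} + y_n^3 = 0, \]
where the last equality uses the defining relation $y_n^3+y_ny_{n+1}=0$ of $C$ together with characteristic $2$.

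Next I would prove the presentation. Let $S=\F[x_0,x_1,\dotsc]/(x_{n+1}\sum_{i=0}^n x_{n-i}^{2^i}\st n\geq 0)$. Define $\phi\:S\to C$ by sending $x_i$ to the element $x_i\in C$ of Definition~\ref{defn-x}. The first identity tells us that $\sum_{i=0}^n x_{n-i}^{2^i}$ maps to $y_n\in C$, and the second identity then says that the defining relations of $S$ map to $y_n x_{n+1}=0$, so $\phi$ is well defined. In the opposite direction, define $\widetilde\psi\:\F[y_0,y_1,\dotsc]\to S$ by $\widetilde\psi(y_n)=Y_n:=\sum_{i=0}^n x_{n-i}^{2^i}$. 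To see that $\widetilde\psi$ descends to $\psi\:C\to S$, note that $Y_{n+1}=x_{n+1}+Y_n^2$ in $S$ (again using that squaring is additive), so
\[ Y_n Y_{n+1} = Y_n x_{n+1} + Y_n^3 = Y_n^3 \]
in $S$ by the defining relations of $S$, and hence $Y_n^3+Y_n Y_{n+1}=2Y_n^3=0$ in characteristic $2$.

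Finally I would verify $\phi\psi=1$ and $\psi\phi=1$ on generators. For $\phi\psi$: $\phi\psi(y_n)=\phi(Y_n)=\sum x_{n-i}^{2^i}=y_n$ by the first identity. For $\psi\phi$: $\psi\phi(x_0)=\psi(y_0)=Y_0=x_0$, and for $n\geq 1$, $\psi\phi(x_n)=\psi(y_n+y_{n-1}^2)=Y_n+Y_{n-1}^2=x_n$, using $Y_n=x_n+Y_{n-1}^2$ and characteristic $2$. There is no substantial obstacle here; the entire argument is bookkeeping built on the Frobenius identity $(a+b)^2=a^2+b^2$ in $\F$, and the mild point to watch is simply that the two identities must be proved before one can even set up the ring maps that give the alternative presentation.
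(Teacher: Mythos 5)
Your proof is correct and in essence the same as the paper's: the two identities are established via the Frobenius identity $(a+b)^2=a^2+b^2$ in characteristic $2$, and the presentation then follows because the change of variables $x_n \leftrightarrow y_n$ is invertible. The paper is slightly more terse, noting that since $y_n=\sum_{i=0}^n x_{n-i}^{2^i}$ and $x_n=y_n+y_{n-1}^2$ already hold in the polynomial ring, one has $\F[x_0,x_1,\dotsc]=\F[y_0,y_1,\dotsc]$ as rings and then simply rewrites the ideal generators; your explicit construction of $\phi$ and $\psi$ is just an unpacking of that observation.
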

\begin{proof}
 Once we recall that $(a+b)^2=a^2+b^2\pmod{2}$, the equation
 $y_n=\sum_{i=0}^nx_{n-i}^{2^i}$ is easily checked by induction.  Note
 that this already holds in the polynomial ring $P$.  As the elements
 $x_i$ can be expressed in terms of the $y_j$ and vice-versa, we see
 that $P=\F[x_0,x_1,\dotsc]$.  The defining relations
 $y_n^3+y_ny_{n+1}=0$ for $C$ can clearly be rewritten as
 $y_nx_{n+1}=0$ and thus as $x_{n+1}\sum_{i=0}^nx_{n-i}^{2^i}=0$.
\end{proof}

\begin{lemma}\label{lem-flatten}
 Whenever $m\leq n$ we have $y_my_{[m,n]}^2=y_{[m,n+1]}$.
\end{lemma}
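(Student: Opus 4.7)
The plan is to prove this by induction on $n \geq m$, using the defining relation $y_i^3 = y_i y_{i+1}$, which holds in $C$ since we are working over $\F = \Z/2$.

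For the base case $n = m$, the claim reads $y_m \cdot y_m^2 = y_m y_{m+1}$, which is exactly the defining relation. For the inductive step, assume $y_m y_{[m,n]}^2 = y_{[m,n+1]}$. Then
\[
 y_m y_{[m,n+1]}^2 = y_m \bigl(y_{[m,n]} y_{n+1}\bigr)^2
    = \bigl(y_m y_{[m,n]}^2\bigr) y_{n+1}^2
    = y_{[m,n+1]} y_{n+1}^2
    = y_{[m,n]} y_{n+1}^3
    = y_{[m,n]} y_{n+1} y_{n+2}
    = y_{[m,n+2]},
\]
where the third equality is the inductive hypothesis and the fifth uses the defining relation $y_{n+1}^3 = y_{n+1} y_{n+2}$.

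No real obstacle is expected; this is a short telescoping calculation that simply exploits how the relations $y_i^3 = y_i y_{i+1}$ chain together to convert squared factors into the next index.
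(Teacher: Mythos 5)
Your proof is correct and is essentially identical to the paper's: the paper gives exactly the same telescoping chain of equalities for the inductive step (and leaves the base case $n=m$, which is just the defining relation $y_m^3 = y_m y_{m+1}$, implicit). No issues.
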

\begin{proof}
 The inductive step is
 \[ y_my_{[m,n+1]}^2 = 
    y_my_{[m,n]}^2 y_{n+1}^2 = 
    y_{[m,n+1]} y_{n+1}^2 = 
    y_{[m,n]} y_{n+1}^3 = 
    y_{[m,n]}y_{n+1}y_{n+2} = 
    y_{[m,n+2]}.
 \]
\end{proof}

\begin{corollary}\label{cor-y-powers-i}
 For $k\geq 0$ we have $y_m^{2^k-1}=y_{[m,m+k-1]}$.
\end{corollary}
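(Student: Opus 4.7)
The plan is to prove the identity by induction on $k$, using Lemma~\ref{lem-flatten} as the key algebraic input. The base cases are immediate: for $k=0$ both sides equal the empty product $1$, and for $k=1$ both sides equal $y_m$.

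For the inductive step, suppose $y_m^{2^k-1} = y_{[m,m+k-1]}$. Then I would compute
\[
 y_m^{2^{k+1}-1} = y_m \cdot y_m^{2^{k+1}-2} = y_m \cdot \left(y_m^{2^k-1}\right)^2 = y_m \cdot y_{[m,m+k-1]}^2,
\]
and apply Lemma~\ref{lem-flatten} with $n = m+k-1$ (so $n \geq m$ when $k \geq 1$) to rewrite the last expression as $y_{[m,m+k]}$, which is the desired right-hand side.

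There is no real obstacle here; the only thing to watch is that Lemma~\ref{lem-flatten} requires $m \leq n$, which holds as soon as $k \geq 1$, and the $k=0$ case is handled separately by the empty-product convention. The identity $2^{k+1} - 1 = 1 + 2(2^k - 1)$ is the arithmetic observation that makes the induction line up with the doubling in Lemma~\ref{lem-flatten}.
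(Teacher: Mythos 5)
Your proof is correct and is essentially the same as the paper's: both argue by induction on $k$ via the decomposition $y_m^{2^{k+1}-1}=y_m\bigl(y_m^{2^k-1}\bigr)^2$ and then apply Lemma~\ref{lem-flatten} with $n=m+k-1$. The paper simply records the inductive step; you have spelled out the base cases and the arithmetic check, but the content is identical.
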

\begin{proof}
 The induction step is 
 \[ y_m^{2^{k+1}-1} = 
    y_m \left(y_m^{2^k-1}\right)^2 = 
    y_m y_{[m,m+k-1]}^2 = 
    y_{[m,m+k]}.
 \]
\end{proof}

\begin{lemma}\label{lem-y-powers}
 Fix $m\in\N$, and put 
 \[ U =
    \{\al\in MC\st \al \text{ is $m$-solid and $\al_i=0$ for $i<m$}\}.
 \]
 Then there is a bijection $\N\to U$ written as $k\mapsto\tht[m,k]$
 such that $y^{\tht[m,k]}=y_m^k$ in $C$.
\end{lemma}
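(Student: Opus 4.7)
The plan is to realize $\tht[m,k]$ via the \emph{skew-binary} representation of $k$, in which every $k\in\N$ has a unique expansion $k=\sum_{i=0}^{r-1}c_i2^i$ with digits $c_i\in\{1,2\}$ (and $r=0$ iff $k=0$). I would prove this as a preliminary: existence and uniqueness follow from a short induction on $k$, since the parity of $k$ forces $c_0=1$ when $k$ is odd and $c_0=2$ when $k$ is positive and even, leaving $(k-c_0)/2<k$ to be expanded inductively.

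Given this, I define $\tht[m,k]_{m+i}=c_i$ for $0\leq i<r$, with all other entries zero. Then $\tht[m,k]\in U$ is immediate: entries lie in $\{0,1,2\}$ so the multiindex is flat; it vanishes below $m$; and its support, when nonempty, is the initial interval $\{m,m+1,\dotsc,m+r-1\}$, which satisfies the $m$-solid condition. Uniqueness of skew-binary makes $k\mapsto\tht[m,k]$ injective, and surjectivity is equally clear: any $\al\in U$ has support on an interval $\{m,\dotsc,m+r-1\}$ with entries $\al_{m+i}\in\{1,2\}$, and the value $k=\sum_{i=0}^{r-1}\al_{m+i}2^i$ then satisfies $\tht[m,k]=\al$.

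The only genuinely algebraic step is to verify $y^{\tht[m,k]}=y_m^k$ in $C$. I would first establish the auxiliary identity
\[ y_m^a\,y_{m+1}^j=y_m^{a+2j}\quad\text{in }C, \]
valid for all $a\geq 1$ and $j\geq 0$, by induction on $j$ using only the defining relation $y_m^3=y_my_{m+1}$ to rewrite $y_m^{a+2j-1}\cdot y_my_{m+1}$ as $y_m^{a+2j-1}\cdot y_m^3=y_m^{a+2j+2}$; the hypothesis $a\geq 1$ is what keeps the exponent $a+2j-1$ nonnegative. The main claim then follows by strong induction on $k$: the case $k=0$ is trivial, and for $k\geq 1$ with $c_0$ and $j=(k-c_0)/2$ as in the skew-binary expansion, the construction of $\tht[m,k]$ factors as
\[ y^{\tht[m,k]}=y_m^{c_0}\cdot y^{\tht[m+1,j]}=y_m^{c_0}\cdot y_{m+1}^j, \]
where the second equality applies the inductive hypothesis to the strictly smaller value $j<k$ at level $m+1$. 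Since $c_0\in\{1,2\}$ is at least $1$, the auxiliary identity collapses this to $y_m^{c_0+2j}=y_m^k$.

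The main obstacle is less depth than bookkeeping: one must recognize that the constraint $c_i\in\{1,2\}$ on skew-binary digits precisely mirrors the $m$-solid constraint defining $U$, and that the hypothesis $a\geq 1$ in the auxiliary identity mirrors the failure of $y_{m+1}$ to reduce to $y_m^2$ inside $C$—which is exactly why $0$ is not an allowed skew-binary digit.
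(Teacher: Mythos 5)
Your proof is correct, and it reaches the same multiindex $\tht[m,k]$ as the paper (since your digit $c_i\in\{1,2\}$ is exactly $1$ plus the indicator of $i\in J$ in the paper's parametrization), but the organization of the algebraic verification is genuinely different and somewhat cleaner. The paper proves $y^{\tht[m,k]}=y_m^k$ in one shot: setting $z=y_m^{2^r-1}=y_{[m,m+r-1]}$ (via Corollary~\ref{cor-y-powers-i}), it shows $y_{m+j}\equiv y_m^{2^j}\pmod{\ann(z)}$ and matches the two products modulo $\ann(z)$; it then establishes the bijection by computing a degree invariant $\dl(\al)=|\al|/2^m$. You instead set up a clean recursion one index at a time: the auxiliary identity $y_m^{a}y_{m+1}^{j}=y_m^{a+2j}$ for $a\geq 1$, proved directly from $y_m^3=y_my_{m+1}$, plays the role of the paper's modular calculation, and the bijection falls out immediately from uniqueness of the zeroless base-$2$ expansion rather than from the degree argument. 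Your route avoids introducing $\ann(z)$ and the separate Corollary~\ref{cor-y-powers-i}, at the modest cost of a strong induction running across the indices $m,m+1,\dotsc$ simultaneously. One small terminological correction: the representation of $k$ with all digits in $\{1,2\}$ is usually called \emph{bijective binary} (or zeroless base $2$); \emph{skew-binary} is a different system in which digits lie in $\{0,1,2\}$ with at most one $2$, placed at the least significant nonzero position. The mathematics is unaffected, but the name as written would mislead a reader who knows the standard usage.
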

\begin{proof}
 First, if $\al\in U$ it is clear that $|\al|$ is divisible by $2^m$,
 so we can define $\dl\:U\to\N$ by $\dl(\al)=|\al|/2^m$.  

 Now consider $k\in\N$.  There is a unique $r\in\N$ such that
 $2^r-1\leq k<2^{r+1}-1$.  This means that $0\leq k-(2^r-1)<2^r$, so
 there is a unique set $J\sse\{0,1,\dotsc,r-1\}$ with
 $k-(2^r-1)=\sum_{j\in J}2^j$.  We put
 \[ \tht[m,k]_i =
     \begin{cases}
      0 & \text{ if } i < m \\
      1 & \text{ if } m\leq i <m+r \text{ and } i-m\not\in J \\
      2 & \text{ if } m\leq i <m+r \text{ and } i-m\in J \\
      0 & \text{ if } m+r\leq i.
     \end{cases}
 \]
 This is clearly in $U$.  Next, we claim that $y^{\tht[m,k]}=y_m^k$.
 To see this, put $z=y_m^{2^r-1}$, which is the same
 as $y_{[m,m+r-1]}$ by Corollary~\ref{cor-y-powers-i}.  We have 
 \begin{align*}
  y^{\tht[m,k]} &= y_{[m,m+r-1]} \prod_{j\in J} y_{m+j}
                 = z \prod_{j\in J} y_{m+j} \\
  y_m^k &= y_m^{2^r-1+\sum_{j\in J}2^j}
         = z\prod_{j\in J}y_m^{2^j}
 \end{align*}
 Now, for $0\leq j<r$ we have $y_{m+j}(y_{m+j}^2+y_{m+j+1})=0$ and $z$
 is divisible by $y_{m+j}$ so $z(y_{m+j}^2+y_{m+j+1})=0$, so
 $y_{m+j+1}=y_{m+j}^2$ modulo $\ann(z)$.  It follows inductively that
 $y_{m+j}=y_m^{2^j}\pmod{\ann(z)}$, so
 $\prod_{j\in J}y_{m+j}=\prod_{j\in J}y_m^{2^j}\pmod{\ann(z)}$, so
 $y^{\tht[m,k]}=y_m^k$ as claimed.  It also follows that
 $\dl(\tht[m,k])=|y^{\tht[m,k]}|/2^m=|y_m^k|/2^m=k$.

 Now let $\al$ be an arbitrary element of $U$.  By the definition of
 solidity, there is an integer $s\geq 0$ such that when $m\leq i<m+s$
 we have $\al_i\in\{1,2\}$ and for $i\geq m+s$ we have $\al_i=0$.  It
 is then clear that 
 \[ \sum_{m\leq i<m+s}2^i\leq|\al|\leq 2\sum_{m\leq i<m+s}2^i, \]
 or in other words $2^s-1\leq\dl(\al)<2^{s+1}-1$.  It follows easily
 that $\al=\tht[m,\dl(\al)]$, so we have a bijection as claimed.
\end{proof}

\begin{proposition}\label{prop-cube-basis-iii}
 For $0\leq n\leq m\leq\infty$, the set $BC[n,m]$ is a basis for
 $C[n,m]$.  
\end{proposition}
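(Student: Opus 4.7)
The plan is to reduce this to Proposition~\ref{prop-cube-basis-i} by constructing a degree-preserving bijection $\Phi\:MC[n,m]\to M'C[n,m]$ that identifies the corresponding elements of $C$. The case $m=\infty$ is already handled by Proposition~\ref{prop-cube-basis-ii}, so I restrict to $m<\infty$, where $B'C[n,m]$ is known to be a basis for $C[n,m]$.

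Given $\al\in MC[n,m]$, I would split $\al = \bt+\gm$ where $\bt$ is supported in $\{n,\dotsc,m-1\}$ (and $\bt_i=\al_i$ there) while $\gm$ is supported in $\{m,m+1,\dotsc\}$ (with $\gm_i=\al_i$ there). Because $\al$ is $m$-solid, the tail $\gm$ lies in the set $U$ of Lemma~\ref{lem-y-powers} (it is flat with support forming a contiguous block starting at $m$). That lemma produces a unique $k\in\N$, namely $k=|\gm|/2^m$, such that $y^\gm = y_m^k$ in $C$. I then define $\Phi(\al)$ to equal $\bt$ in coordinates $<m$, to have $m$-th coordinate equal to $k$, and to vanish in coordinates $>m$; this evidently lands in $M'C[n,m]$ and satisfies
\[
 y^{\Phi(\al)} = y^\bt\cdot y_m^k = y^\bt\cdot y^\gm = y^\al
\]
in $C$, with $|\Phi(\al)|=|\bt|+k\cdot 2^m=|\bt|+|\gm|=|\al|$.

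The inverse map is equally direct: given $\al'\in M'C[n,m]$, read off $\bt$ from the coordinates $<m$, set $k=\al'_m$, and use Lemma~\ref{lem-y-powers} to replace $y_m^k$ by $y^{\tht[m,k]}$. This recipe gives an element of $MC[n,m]$ and is inverse to $\Phi$ by the bijection clause of Lemma~\ref{lem-y-powers}. Since $\Phi$ is a degree-preserving bijection under which $y^\al$ and $y^{\Phi(\al)}$ represent the same element of $C[n,m]$, it follows that $BC[n,m]$ and $B'C[n,m]$ are the same subset of $C[n,m]$, and so $BC[n,m]$ is a basis by Proposition~\ref{prop-cube-basis-i}.

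The step that requires the most care is checking that the tail $\gm$ is genuinely of the form considered in Lemma~\ref{lem-y-powers}; this is exactly where the solidity clause in the definition of $MC[n,m]$ is used. Everything else is essentially bookkeeping once the earlier lemmas are in hand.
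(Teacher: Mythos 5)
Your proposal is correct and follows essentially the same route as the paper: the paper likewise restricts to $m<\infty$, appeals to Proposition~\ref{prop-cube-basis-i} for $B'C[n,m]$, and then cites Lemma~\ref{lem-y-powers} to identify $B'C[n,m]$ and $BC[n,m]$ as the same system of elements in $C[n,m]$. You have simply made explicit the degree-preserving bijection $MC[n,m]\leftrightarrow M'C[n,m]$ that the paper leaves implicit in its one-line appeal to that lemma.
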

\begin{proof}
 The case $m=\infty$ was covered by Proposition~\ref{prop-cube-basis-ii},
 so we may assume that $m<\infty$, so $B'C[n,m]$ is a basis for
 $C[n,m]$ by Proposition~\ref{prop-cube-basis-i}.  However,
 Lemma~\ref{lem-y-powers} implies that $B'C[n,m]$, considered as a
 system of elements in $C[n,m]$, is just the same as $BC[n,m]$.
\end{proof}

\begin{proposition}\label{prop-ann-x}
 Suppose that $0\leq n<k\leq m\leq\infty$ and $k<\infty$.  Then
 $\ann_{C[n,m]}(x_k)=C[n,m]y_{k-1}$. 
\end{proposition}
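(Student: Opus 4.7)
The easy inclusion $C[n,m] y_{k-1} \subseteq \ann_{C[n,m]}(x_k)$ reduces to $y_{k-1} x_k = 0$, which is immediate from Proposition~\ref{prop-cube-x} (with the index $n$ there taken to be $k-1$; this makes sense because $n \leq k-1 < m$).

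For the reverse inclusion, I would use the basis $BC[n,m]$ provided by Proposition~\ref{prop-cube-basis-iii}. Partition its indexing set $MC[n,m]$ according to whether the $(k-1)$-st exponent $\alpha_{k-1}$ vanishes, and define $\F$-subspaces
\[
 W  = \F\text{-span}\{y^\alpha : \alpha \in MC[n,m],\ \alpha_{k-1}=0\},
 \quad
 W' = \F\text{-span}\{y^\alpha : \alpha \in MC[n,m],\ \alpha_{k-1}>0\},
\]
so that $C[n,m] = W \oplus W'$ as graded $\F$-vector spaces. Every basis monomial in $W'$ factors as $y_{k-1}$ times another basis monomial, giving $W' \subseteq C[n,m] y_{k-1}$. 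Conversely, a short computation (using the relation $y_{k-1}^3 = y_{k-1}y_k$ exactly once, in the case $\alpha_{k-1}=2$) shows that $y_{k-1}\cdot y^\alpha$ reduces to a single basis monomial whose $(k-1)$-st exponent lies in $\{1,2\}$. Thus $C[n,m] y_{k-1} \subseteq W'$, and so $W' = C[n,m] y_{k-1}$.

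The heart of the argument is to show that multiplication by $x_k$ is injective on $W$. For a basis monomial $y^\alpha \in W$, expand $x_k y^\alpha = y_k y^\alpha + y_{k-1}^2 y^\alpha$. Reducing $y_k y^\alpha$ to basis form involves only the relations $y_i^3 = y_iy_{i+1}$ for $i \geq k$, none of which alters the $(k-1)$-st exponent, so that reduction lies in $W$. Meanwhile $y_{k-1}^2 y^\alpha$ is already a basis monomial, namely the one obtained from $y^\alpha$ by adding $2$ to the $(k-1)$-st exponent, and it lies in $W'$. Since the assignment $\alpha \mapsto \alpha + 2 e_{k-1}$ is injective on $W$-indices and lands in $W'$-indices, the $W'$-component of $x_k f_0$ for $f_0 = \sum c_\alpha y^\alpha \in W$ is precisely $\sum c_\alpha y^{\alpha + 2e_{k-1}}$, whose vanishing forces every $c_\alpha = 0$. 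Hence $x_k|_W$ is injective.

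To conclude, given $f \in \ann_{C[n,m]}(x_k)$ write $f = f_0 + f_1$ with $f_0 \in W$ and $f_1 \in W' = C[n,m] y_{k-1}$; since $x_k$ annihilates $W'$ we obtain $x_k f_0 = 0$, so $f_0 = 0$, and thus $f = f_1 \in C[n,m] y_{k-1}$. The principal obstacle is just the bookkeeping for the forward cascade when $y_k y^\alpha$ has $\alpha_k = 2$ (and possibly $\alpha_{k+1} = \alpha_{k+2} = \cdots = 2$ as well), where one successively invokes $y_i^3 = y_i y_{i+1}$ for $i = k, k+1, \dots$; the crucial feature is that every relation used has index $\geq k$, so the $(k-1)$-st exponent is never disturbed and the decomposition $C[n,m] = W \oplus W'$ is compatible with $x_k$.
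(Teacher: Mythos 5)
Your proof is correct and follows essentially the same route as the paper's: both decompose $C[n,m]$ as the span of the basis monomials with $(k-1)$-exponent zero (your $W$, the paper's $Z$) plus its complement $C[n,m]y_{k-1}$, and both show that multiplication by $x_k = y_k + y_{k-1}^2$ lands each $W$-basis monomial in $W$ via the $y_k$ term (since the reduction cascade only moves upward in index) while the $y_{k-1}^2$ term contributes an injective assignment of basis monomials into $W'$. The paper packages the same cascade observation more implicitly by identifying $C[n,m]/(C[n,m]y_{k-1})$ with $\overline{C}[n,k-1]\otimes C[k,m]$ and citing the earlier basis results, but the underlying argument is the one you wrote out.
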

\begin{proof}
 The $m=\infty$ case will follow from the $m<\infty$ case, because 
 $C[n,m]_d=C[n,\infty]_d$ when $m$ is large relative to $d$.  We will
 thus assume that $m<\infty$.

 We have already observed that $x_ky_{k-1}=0$, so
 $\ann_{C[n,m]}(x_k)\geq C[n,m]y_{k-1}$, and multiplication by $x_k$
 gives a well-defined map $f\:C[n,m]/(C[n,m]y_{k-1})\to C[n,m]$.  It
 will suffice to show that $f$ is injective.

 For this, we put 
 \begin{align*}
  N &= \{\al\in MC[n,m] \st \al_{k-1} = 0\} \\
  A &= \{y^\al\st \al\in N\} \sse C[n,m] \\
  Z &= \text{span}(A) \leq C[n,m].
 \end{align*}
 By inspecting the generators and relations on both sides, we see that
 \[ C[n,m]/(C[n,m]y_{k-1})=\bC[n,k-1]\ot C[k,m]. \]
 Propositions~\ref{prop-cube-basis-i} and~\ref{prop-cube-basis-ii} show that
 $A$ also gives a basis for $C[n,m]/(C[n,m]y_{k-1})$, so
 $C[n,m]=Z\oplus(C[n,m]y_{k-1})$.  Now let $g$ denote the composite 
 \[ Z \xra{\simeq} C[n,m]/(C[n,m]y_{k-1}) \xra{f} C[n,m]
     \xra{\text{proj}} C[n,m]/Z.
 \]
 It will certainly be enough to show that $g$ is injective.  It is not
 hard to see that $y_kZ\leq Z$, and $x_k=y_{k-1}^2+y_k$, so
 $g(z)=x_kz+Z=y_{k-1}^2z+Z$, so $g$ gives an injective map from $A$ to
 $BC[n,m]\sm A$.  These sets are bases for the domain and codomain of
 $g$, so $g$ is injective as required.
\end{proof}

\begin{proposition}\label{prop-cube-selfinj}
 $C$ is self-injective.
\end{proposition}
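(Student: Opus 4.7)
The plan is to apply Theorem~\ref{thm-selfinj-ann}(c). Since $\dim_{\F} C_k < \infty$ for all $k$ by Proposition~\ref{prop-cube-basis-iii}, it suffices to verify that for each $a \in C$ and each finitely generated ideal $J \leq C$ we have
\[
 \ann_C^2(a) = Ca \quad\text{and}\quad \ann_C(J \cap Ca) = \ann_C(J) + \ann_C(a).
\]

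My first step would be to complement Proposition~\ref{prop-ann-x} by establishing $\ann_C(y_k) = C x_{k+1}$ for all $k \geq 0$. One inclusion is immediate from the identity $y_k x_{k+1} = 0$ of Proposition~\ref{prop-cube-x}. For the reverse inclusion one identifies a concrete basis of the quotient $C/C x_{k+1}$ using the flat-multiindex basis of Proposition~\ref{prop-cube-basis-iii} and checks that multiplication by $y_k$ is injective on it, by an argument parallel to that of Proposition~\ref{prop-ann-x} (with the splitting $C[0,m]/(C[0,m]x_{k+1})$ into a Poincar\'e-dual piece tensor a polynomial piece). Combined with Proposition~\ref{prop-ann-x}, this yields $\ann_C^2(x_k) = C x_k$ and $\ann_C^2(y_k) = C y_k$ simultaneously.

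For a general element $a \in C$, I would compute $\ann_C(a)$ via the expansion of $a$ in the flat-multiindex basis. The carry reductions imposed by $y_i^3 = y_i y_{i+1}$ govern how multiplication acts on this basis, and the $m$-solidity structure provides a natural filtration along which an inductive argument should extend $\ann_C^2(a) = Ca$ from generators to arbitrary elements.

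The main obstacle is the intersection formula in (ii). Once (i) is established, the inclusion $\ann_C(J) + \ann_C(a) \leq \ann_C(J \cap Ca)$ is formal. For the reverse inclusion the crux is a concrete description of $J \cap Ca$ in terms of flat monomials, which is delicate because of the carry-propagation in $C$: arbitrary finitely generated ideals interact non-trivially with the reductions $y_i^3 \mapsto y_i y_{i+1}$, so intersections of principal ideals do not have an obvious closed form. The structure, however, mirrors that of the Poincar\'e-dual subquotients $\bC[0,m]$, so I would try to reduce the verification to finite-dimensional computations in these subquotients, where self-injectivity is already known from Proposition~\ref{prop-poincare} and the corresponding intersection formula holds automatically. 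With both conditions verified, Theorem~\ref{thm-selfinj-ann} delivers self-injectivity of $C$.
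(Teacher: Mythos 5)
Your proposal routes through Theorem~\ref{thm-selfinj-ann}(c), which is a genuinely different strategy from the paper's, but as written it leaves the two conditions of that theorem unverified, and those verifications are essentially the whole content of the proposition. For the double-annihilator condition $\ann_C^2(a)=Ca$ you only establish it (modulo a claim ``parallel to'' Proposition~\ref{prop-ann-x}) for $a=y_k$ and $a=x_k$; for a general homogeneous $a$ you appeal to an induction along the $m$-solidity filtration that is not set up, and it is not clear what the inductive step would be since $\ann_C(a)$ does not decompose additively along such a filtration. For the intersection formula your own account is that ``intersections of principal ideals do not have an obvious closed form,'' and the proposed remedy --- reducing to $\bC[0,m]$ --- is stated as an intention rather than carried out; in particular you would need to explain how an equality of ideals in $C$ is detected after passing to the quotient $\bC[0,m]$, which is where the real work lies.

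The paper sidesteps both difficulties by never analyzing $\ann_C(a)$ or $J\cap Ca$ for general $a$. It works directly with the finite Baer condition via blocks and transporters (Propositions~\ref{prop-finite-baer} and~\ref{prop-block}): given a test pair of bounded degree, choose $m$ with $2^m$ larger than all degrees involved, project to the Poincar\'e duality algebra $\bC[0,m]$, obtain a block or transporter there, and lift it back to $C$. A transporter lifts because $\pi\:C[0,m]\to\bC[0,m]$ is an isomorphism in the relevant degrees; a block $\pi(b)$ lifts by multiplying by $x_{m+1}$, using precisely $\ann_C(x_{m+1})=Cy_m$ from Proposition~\ref{prop-ann-x}. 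So your instinct that both Proposition~\ref{prop-ann-x} and the self-injectivity of $\bC[0,m]$ should be the two key inputs is correct, but the machinery of Theorem~\ref{thm-selfinj-ann}(c) makes the needed verification harder rather than easier. If you want to complete your line of argument you would need, at minimum, an explicit description of $\ann_C(a)$ for arbitrary $a$ (not just monomials) and a proof of the intersection formula, neither of which follows from the ingredients you cite.
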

\begin{proof}
 As $C$ is finite in each degree, it will suffice (by
 Propositions~\ref{prop-finite-baer} and~\ref{prop-block}) to show
 that every test pair $(u,v)$ in $C$ has either a block or a
 transporter.  Let $d$ be the degree of $(u,v)$, so $|v_i|=|u_i|+d$.
 Note that some of the entries $u_i$ and $v_i$ may be zero, in which
 case $|u_i|$ or $|v_i|$ can be negative.  Choose $m$ such that
 $2^m>d$ and also $2^m>|u_i|$ and $2^m>|v_i|$ for all $i$.  Now
 $(u,v)$ can be regarded as a test pair in $C[n,m]$.  Let $\pi$ be the
 projection $C[n,m]\to\bC[n,m]=C[n,m]/y_m$.  As $\bC[n,m]$ has
 Poincar\'e duality, it is self-injective, so the test pair
 $(\pi(u),\pi(v))$ has either a block or a transporter.  First,
 suppose that there is a transporter $\pi(t)$, so $\pi(v_i)=\pi(tu_i)$
 for all $i$.  This is an equation between elements of degree
 $|v_i|<2^m$, and $\pi\:C[n,m]\to\bC[n,m]$ is an isomorphism in this
 degree, so $v_i=tu_i$, so we have a transporter for the original pair
 $(u,v)$.  

 Suppose instead that there is a block for $(\pi(u),\pi(v))$, say
 $\pi(b)$.  This means that $\pi(b.u)=0$ but $\pi(b.v)\neq 0$, so
 $b.u\in C[n,m]y_m$ but $b.v\not\in C[n,m]y_m$.  Using our bases for
 the various rings under consideration, we see that
 $C[n,m]y_m=(Cy_m)\cap C[n,m]$, and thus that $b.v\not\in Cy_m$.  It
 now follows from Proposition~\ref{prop-ann-x} that $(x_{m+1}b).u=0$
 and $(x_{m+1}b).v\neq 0$, so $x_{m+1}b$ is a block for the original
 pair $(u,v)$.
\end{proof}

We now wish to prove that $C$ is coherent, which turns out to involve
substantial work.  It will be convenient to regard the set
$B\bC[n,m]=\{y^\al\st\al\in M\bC[n,m]\}$ as a subset of $C[n,m]$
rather than a subset of $\bC[n,m]$.  We write $\tC[n,m]$ for the span
of this set, so the projection $C[n,m]\to\bC[n,m]$ restricts to give
an isomorphism $\tC[n,m]\to\bC[n,m]$.

\begin{lemma}\label{lem-yyy}
 For $p\geq 3$ we have 
 \[ y_{[0,p-3]}^2 y_{[0,p-1]}^2 y_1y_{p-1}y_p = y_{[0,p]}^2 \]
 (and in particular, this is nonzero modulo $y_{p+1}$).
\end{lemma}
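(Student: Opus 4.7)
The plan is to collapse the LHS to $y_{[0,p]}^2$ by extracting a single auxiliary rewrite, then invoking Lemma~\ref{lem-flatten} to absorb the factor $y_{[0,p-1]}^2$.

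First I would prove the auxiliary identity $y_{[0,p-3]}^2 \cdot y_1 = y_0 \cdot y_{[0,p-2]}$ in $C$ by iterating the defining relation $y_i^3 = y_i y_{i+1}$. The expression $y_{[0,p-3]}^2 y_1 = y_0^2 y_1^3 y_2^2 \cdots y_{p-3}^2$ contains a cube $y_1^3$; rewriting it as $y_1 y_2$ creates $y_2^3$ out of the adjacent $y_2^2$, and the cascade continues through $i = 1, 2, \dotsc, p-3$, terminating at $y_0^2 y_1 y_2 \cdots y_{p-2} = y_0 \cdot y_{[0,p-2]}$. The edge case $p = 3$ is immediate since no relations are needed.

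Granted this, Lemma~\ref{lem-flatten} (with $m=0$, $n=p-1$) gives $y_0 \cdot y_{[0,p-1]}^2 = y_{[0,p]}$, and I would then rearrange
\[
 y_{[0,p-3]}^2 y_{[0,p-1]}^2 y_1 y_{p-1} y_p
  = \bigl(y_{[0,p-3]}^2 y_1\bigr) \cdot y_{p-1} y_p \cdot y_{[0,p-1]}^2
  = y_{[0,p-2]} y_{p-1} y_p \cdot \bigl(y_0 \cdot y_{[0,p-1]}^2\bigr)
  = y_{[0,p]}^2,
\]
using the auxiliary identity, Lemma~\ref{lem-flatten}, and the trivial $y_{[0,p-2]} y_{p-1} y_p = y_{[0,p]}$.

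For nonvanishing modulo $y_{p+1}$, I would observe that $C/(y_{p+1})$ decouples as a graded $\F$-algebra tensor product $\bC[0,p+1] \ot_{\F} C[p+2,\infty]$: the only relation linking the two halves, $y_{p+1}^3 = y_{p+1} y_{p+2}$, becomes trivial, and $y_p^3 = y_p y_{p+1}$ simplifies to $y_p^3 = 0$. The element $y_{[0,p]}^2$ sits in the first tensor factor, which is a Poincar\'e duality algebra by Proposition~\ref{prop-cube-basis-i}. Since the basis of $\bC[0,p+1]$ consists of monomials with each $y_i$-exponent in $\{0,1,2\}$, the top degree $2^{p+2}-2$ is one-dimensional, spanned precisely by $y_0^2 y_1^2 \cdots y_p^2 = y_{[0,p]}^2$, which is therefore nonzero. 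The only delicate point is bookkeeping the cascade of cube-rewrites in the auxiliary identity, but since each step follows the same pattern and the closed form of the end result is dictated by the structure, this is mechanical rather than subtle.
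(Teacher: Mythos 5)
Your proof is correct, but it takes a genuinely different route from the paper's. The paper passes to the quotient $A=C[0,p]/\ann(y_{[0,p]})$, observes that there $y_{i+1}=y_i^2$ for $0\leq i<p$ so $A\simeq\F[y_0]$, and reduces the identity to a single degree count; multiplying by $y_{[0,p]}$ then recovers the stated equation in $C$. You instead manipulate the relations directly: you isolate the auxiliary identity $y_{[0,p-3]}^2\,y_1 = y_0\,y_{[0,p-2]}$ (obtained by cascading $y_i^3 = y_iy_{i+1}$ from $i=1$ up to $i=p-3$), combine it with Lemma~\ref{lem-flatten} in the form $y_0\,y_{[0,p-1]}^2 = y_{[0,p]}$, and finish by regrouping factors. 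I checked your cascade and the $p=3$ edge case and they hold; since the algebra has characteristic $2$ there are no sign issues in the regrouping. The paper's approach is shorter and sidesteps any explicit cascade, while yours is more elementary and shows concretely which defining relations are being used. You also spell out the nonvanishing claim more carefully than the paper, which asserts it parenthetically: the tensor decomposition $C/(y_{p+1})\simeq\bC[0,p+1]\ot_{\F}C[p+2,\infty]$ is correct, $y_{[0,p]}^2$ sits in the first factor, and it is the top class of the Poincar\'e duality algebra $\bC[0,p+1]$ (the unique monomial $y^\al$ with $\al_i=2$ for $0\leq i\leq p$, of degree $2^{p+2}-2$), hence nonzero. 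One small caveat: the paper's stated definition of $M\bC[n,m]$ omits the constraint $\al_i<3$, which appears to be a typo (the algebra $\bC[n,m]$ is finite-dimensional); your reading of the basis as exponents in $\{0,1,2\}$ is the correct one and is what Proposition~\ref{prop-cube-basis-i} actually establishes.
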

\begin{proof}
 Put $A=C[0,p]/\ann(y_{[0,p]})$.  We claim that in $A$ we have 
 \[ y_{[0,p-3]}^2 y_{[0,p-1]} y_1y_{p-1} = y_{[0,p]}. \]
 Assuming this, we can just multiply by $y_{[0,p]}$ to recover the
 statement in the lemma.

 For $0\leq i<p$ we have $y_i(y_i^2+y_{i+1})=0$ so
 $y_{[0,p]}(y_i^2+y_{i+1})=0$ so $y_{i+1}=y_i^2$ in $A$.  We thus have
 $y_k=y_0^{2^k}$ in $A$ for $0\leq k\leq p$, and so $A=\F[y_0]$.  It
 is thus enough to show that the two sides of the claimed equation
 have the same degree, which is a straightforward calculation.
\end{proof}

\begin{lemma}\label{lem-BB}
 For any $p\geq 3$ we have 
 \[ B\bC[0,p-2]\;B\bC[0,p] \sse
     \coprod_{i=0}^3 B\bC[0,p-1]y_{p-1}^i.
 \]
\end{lemma}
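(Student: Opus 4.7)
The plan is to work in the flat basis $BC[0,\infty]$ for $C$ from Proposition~\ref{prop-cube-basis-ii}.  I would write $u=y^\al$ and $v=y^\bt$ with $\al,\bt$ flat and $\supp(\al)\sse[0,p-3]$, $\supp(\bt)\sse[0,p-1]$.  Applying Proposition~\ref{prop-flatten} to $y^{\al+\bt}$, the product $uv$ equals a single flat monomial $y^\gm$ in $C$, obtained by repeatedly replacing $y_i^3$ with $y_iy_{i+1}$.  Unpacking the definitions shows that an element of $C$ lies in $\coprod_{i=0}^3 B\bC[0,p-1]y_{p-1}^i$ precisely when, in the flat basis, it is a single monomial $y^\gm$ whose support outside $\{p-1,p\}$ lies in $[0,p-2]$ and whose tail $(\gm_{p-1},\gm_p)$ belongs to $\{(0,0),(1,0),(2,0),(1,1)\}$.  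So it suffices to show $\gm$ has this form.

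For the main estimate, let $r_i$ be the number of reductions $y_i^3\to y_iy_{i+1}$ applied at position $i$ in passing from $\al+\bt$ to $\gm$, and set $s_i=(\al+\bt)_i+r_{i-1}$ with $r_{-1}=0$, so $\gm_i=s_i-2r_i\in\{0,1,2\}$ and confluence forces $r_i=\max(0,\lfloor(s_i-1)/2\rfloor)$.  Flatness of $\al,\bt$ and the support constraints give $(\al+\bt)_i\leq 4$ for $i\leq p-3$, $(\al+\bt)_{p-2},(\al+\bt)_{p-1}\leq 2$, and $(\al+\bt)_i=0$ for $i\geq p$.  I would then prove by induction that $r_0\leq 1$, $r_i\leq 2$ for $1\leq i\leq p-3$, $r_{p-2}\leq 1$, $r_{p-1}\leq 1$, and $r_p=0$.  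These give $\gm_p=r_{p-1}\leq 1$.  If $\gm_p=1$ then $r_{p-1}=1$, which forces $s_{p-1}\geq 3$; combined with $s_{p-1}\leq 3$ we conclude $s_{p-1}=3$ and hence $\gm_{p-1}=1$.  Therefore $(\gm_{p-1},\gm_p)\in\{(0,0),(1,0),(2,0),(1,1)\}$ in every case.

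The final step is to repackage.  In the three cases with $\gm_p=0$, we have $uv=w\,y_{p-1}^{\gm_{p-1}}$ with $w\in B\bC[0,p-1]$ and $\gm_{p-1}\in\{0,1,2\}$.  In the remaining case $(\gm_{p-1},\gm_p)=(1,1)$, the relation $y_{p-1}y_p=y_{p-1}^3$ lets us rewrite $uv=w\,y_{p-1}^3$ with $w\in B\bC[0,p-1]$ obtained by deleting one factor each of $y_{p-1}$ and $y_p$.  The main obstacle is purely the bookkeeping on the $r_i$: one must propagate the bounds carefully and observe that the only pair $(\gm_{p-1},\gm_p)\in\{0,1,2\}^2$ that is compatible with $r_{p-1}\leq 1$ but \emph{not} on the required list, namely $(2,1)$, is excluded precisely because the single reduction allowed at position $p-1$ must drain $s_{p-1}=3$ all the way down to $\gm_{p-1}=1$.
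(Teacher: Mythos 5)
Your proof is correct, and it takes a genuinely different route from the one in the paper.  The paper rewrites $y^\al y^\bt$ in the basis $B'C[0,p-1]$, notes that $y^\gm$ divides $y_{[0,p-3]}^2y_{[0,p-1]}^2$, and then invokes the identity of Lemma~\ref{lem-yyy} to conclude that $y^\gm y_{p-1}y_p\neq 0\pmod{y_{p+1}}$; since a short computation shows $y_{p-1}^4\cdot y_{p-1}y_p=0\pmod{y_{p+1}}$, the bound $\gm_{p-1}\leq 3$ follows.  You instead work entirely in the flat basis $BC[0,\infty]$ and track the reduction process directly: the support and flatness constraints on $\al$ and $\bt$ bound the carries $r_i$, and the key special feature is that $\al_{p-2}=\al_{p-1}=0$ caps $s_{p-2}$ at $4$, hence $r_{p-2}\leq 1$, hence $s_{p-1}\leq 3$, hence $r_{p-1}\leq 1$, forcing the tail $(\gm_{p-1},\gm_p)$ onto the four allowed values.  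This sidesteps Lemma~\ref{lem-yyy} entirely, isolates exactly where the constant $3$ arises (as the maximal carry-augmented exponent at position $p-1$), and is arguably more transparent, at the cost of some routine but careful bookkeeping.  One point worth making explicit: your appeal to ``confluence'' is really the statement that flat normal forms are unique (Proposition~\ref{prop-cube-basis-ii}), which is what guarantees that the left-to-right reduction with $r_i=\max(0,\lfloor(s_i-1)/2\rfloor)$ computes the correct $\gm$.
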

\begin{proof}
 Consider $\al\in M\bC[0,p-2]$ and $\bt\in M\bC[0,p]$.  We note that
 $y^\al,y^\bt\in C[0,p-1]$ so we can rewrite $y^{\al+\bt}$ as an
 element of the basis $B'C[0,p-1]$, which means
 $y^{\al+\bt}=y^\gm$ for some $\gm\in M'C[0,p-1]$.  It will be enough
 to show that $\gm_{p-1}\leq 3$.

 Note that $y^\al$ divides $y_{[0,p-3]}^2$ and $y^\bt$ divides
 $y_{[0,p-1]}^2$ so $y^\gm$ divides $y_{[0,p-3]}^2y_{[0,p-1]}^2$.
 It follows using Lemma~\ref{lem-yyy} that
 $y^\gm y_{p-1}y_p\neq 0\pmod{y_{p+1}}$.  However, 
 \[ y_{p-1}^4y_{p-1}y_p = 
    y_{p-1}^5 y_p = 
    y_{p-1}^3 y_p^2 = 
    y_{p-1} y_p^3 = 
    y_{p-1} y_p y_{p+1} = 0 \pmod{y_{p+1}},
 \]
 so $y^\gm$ cannot be divisible by $y_{p-1}^4$, as required.
\end{proof}

\begin{definition}\label{defn-Kup}
 For any vector $u\in C^n$ and $p\geq 0$, we put 
 \begin{align*}
  K(u,p) &= \{v\in C[0,p]^n\st u.v=0\} \\
  \bK(u,p) &=
    \{ \ov{v} \in \bC[0,p]^n \st \pi(u).\ov{v} = 0\}.
 \end{align*}
 More precisely, $K(u,p)$ is the graded group where 
 \[ K(u,p)_d = \{v\in C[0,p]^n\st |v_i|=d-|u_i|
     \text{ for all } i \text{ and } \sum_iu_iv_i=0\},
 \]
 and $\bK(u,p)$ is graded in a similar way.
\end{definition}

\begin{lemma}\label{lem-K-bK}
 If $u_i\in\tC[0,p-2]$ for all $i$, then the map
 $\pi\:K(u,p+1)\to\bK(u,p+1)$ is surjective.
\end{lemma}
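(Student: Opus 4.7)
The strategy is a standard lift-and-correct argument that reduces the problem to a non-zero-divisor statement in the subring $C[0, p]$.

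First, for each $\bar{v}_i \in \bC[0, p+1]$ use the isomorphism $\pi\:\tC[0, p+1]\to\bC[0, p+1]$ to obtain a unique naive lift $\tilde{v}_i \in \tC[0, p+1]$. Put $s := \sum_i u_i \tilde{v}_i$. Since $\pi(s)=\sum_i\pi(u_i)\bar{v}_i=0$, we have $s\in\ker(\pi)=y_{p+1}C[0,p+1]$. Multiplication by $y_{p+1}$ is injective on $C[0, p+1]$ (it just increments the $y_{p+1}$-exponent of each basis monomial), so we may write $s=y_{p+1}b$ for a unique $b\in C[0, p+1]$. If we can produce $v''_i\in C[0, p+1]$ with $\sum_i u_iv''_i=b$, then (working in characteristic~$2$) $v_i:=\tilde{v}_i+y_{p+1}v''_i$ satisfies $\pi(v_i)=\bar{v}_i$ and $\sum_i u_iv_i=s+y_{p+1}b=0$, giving the required element of $K(u, p+1)$. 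The whole problem therefore reduces to showing $b \in J$, where $J:=(u_1,\ldots,u_n)\cdot C[0, p+1]$, or equivalently to showing that $y_{p+1}$ is a non-zero-divisor on $C[0, p+1]/J$.

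To attack the non-zero-divisor property, I would exploit the substitution $y_{p+1} = x_{p+1} + y_p^2$, which (together with the relation $y_p x_{p+1} = 0$ from Proposition~\ref{prop-cube-x}) gives an isomorphism $C[0, p+1] \simeq C[0, p][x_{p+1}]/(y_px_{p+1})$. As a $C[0, p]$-module this produces the splitting
\[
C[0, p+1] = C[0, p] \oplus \bigoplus_{k \geq 1} (C[0, p]/y_p) \cdot x_{p+1}^k.
\]
Because each $u_i$ lies in $\tC[0, p-2] \subseteq C[0, p]$, the ideal $J$ decomposes compatibly, and a direct calculation of multiplication by $y_{p+1} = x_{p+1} + y_p^2$ through the splitting shows that the desired non-zero-divisor property follows from the analogous statement that $y_p$ is a non-zero-divisor on $C[0, p]/J_0$, where $J_0 := (u_1, \ldots, u_n) \cdot C[0, p]$.

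The main obstacle is this remaining claim about $y_p$. My approach would be to exploit the fact that $C[0, p]$ is free as an $\F[y_p]$-module on the basis $N := \{y^\alpha : \alpha_j < 3 \text{ for } j < p,\ \alpha_p = 0\}$, and then prove the combinatorial assertion that every $y_p^\ell$-coefficient of a product $u_i \cdot n$ (for $u_i \in \tC[0, p-2]$ and $n \in N$), obtained by the cascading reductions $y_j^3 = y_jy_{j+1}$, remains in the subspace $u_i \cdot N$. Granting this, $J_0$ inherits an $\F[y_p]$-graded structure with descending graded pieces, so multiplication by $y_p$ (which shifts the grading) is injective modulo $J_0$. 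Proving the combinatorial claim rigorously---exploiting the three-index gap between the support of each $u_i$ (which extends at most to $y_{p-3}$) and the variable $y_p$---would be the technical heart of the lemma.
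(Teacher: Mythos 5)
Your approach is a genuine alternative to the paper's.  The paper takes the naive lift $v\in\tC[0,p+1]^n$ with $\pi(v)=\ov{v}$ and directly computes $u.v$, using Lemma~\ref{lem-BB} to bound the $y_{p-1}$-exponents that can appear together with a degree-case analysis, and then makes an explicit correction to one coordinate of $v$.  You instead repackage the Lemma as a non-zero-divisor statement: since multiplication by $y_{p+1}$ is injective on $C[0,p+1]$, surjectivity of $\pi\:K(u,p+1)\to\bK(u,p+1)$ is in fact equivalent to $y_{p+1}$ being a non-zero-divisor on $C[0,p+1]/J$.  Your first reductions are correct, and the further step via the splitting of Lemma~\ref{lem-x-poly} to ``$y_p$ is a non-zero-divisor on $C[0,p]/J_0$'' is sound as an implication.

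The gap is in the route you propose for the final claim.  You assert that each $y_p^\ell$-coefficient of a product $u_i\cdot n$ ($n\in N$) lies back in $u_iN$, so that $J_0$ inherits an $\F[y_p]$-grading.  This is false when $u_i$ is not a monomial, and nothing forces the $u_i$ to be monomials (Proposition~\ref{prop-cube-coherent} applies the Lemma to arbitrary generators).  Take $p=4$ and $u_1=x_1=y_0^2+y_1\in\tC[0,2]$; then
\[ x_1\cdot y_1^2y_2^2y_3^2 \;=\; y_0^2y_1^2y_2^2y_3^2 \;+\; y_1y_2y_3y_4, \]
and the $y_4$-degree-zero coefficient is $y_0^2y_1^2y_2^2y_3^2$.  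But $y_0\cdot(x_1)=0$ by Proposition~\ref{prop-ann-x}, while $y_0\cdot y_0^2y_1^2y_2^2y_3^2=y_0y_1y_2y_3y_4\ne 0$, so $y_0^2y_1^2y_2^2y_3^2\notin(x_1)=J_0$.  Hence $J_0$ is not $\F[y_p]$-graded and the proposed argument collapses.  (Even granting the grading, one would need the graded pieces of $J_0$ to be constant in $\ell$, not merely ``descending,'' for multiplication by $y_p$ to be injective on the quotient.)

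There is also a structural warning: the step from ``$y_{p+1}$ non-zero-divisor on $C[0,p+1]/J$'' to ``$y_p$ non-zero-divisor on $C[0,p]/J_0$'' silently shrinks the gap between the indices supporting the $u_i$ (at most $p-3$) and the distinguished variable from three to two.  The paper's argument via Lemma~\ref{lem-BB} exploits the gap of three in an essential way, and the gap-two statement you are aiming for is strictly stronger than the Lemma itself.  What your reduction throws away---and what the paper's direct computation retains---is that the naive lift $v$ lies in $\tC[0,p+1]^n$, so its entries have $y_p$-degree at most $2$; that constraint, not a blanket non-zero-divisor claim over all of $C[0,p+1]$, is what makes the proof close.
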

\begin{proof}
 Consider an element $\ov{v}\in\bK(u,p+1)$.  This can be written as
 $\pi(v)$ for a unique element $v\in\tC[0,p+1]^n$, which must satisfy
 $u.v=0\pmod{y_{p+1}}$.  We can write $v$ as
 $\sum_{k=0}^2v_ky_p^k$ with $v_k\in\tC[0,p]^n$.  Using
 Lemma~\ref{lem-BB} we see that $u.v_k$ can be written as
 $\sum_{j=0}^3w_{jk}y_{p-1}^j$ for some elements $w_{jk}\in\tC[0,p-1]$.
 This gives $u.v=\sum_{j=0}^3\sum_{k=0}^2w_{jk}y_{p-1}^jy_p^k$.  After
 reducing the terms $y_{p-1}^jy_p^k$ using the defining relations for
 $C$, we obtain
 \begin{align*}
  u.v =& w_{00} + w_{01}y_p + w_{02}y_p^2 + w_{10}y_{p-1} +
         (w_{11}+w_{30})y_{p-1}y_p + (w_{12}+w_{31})y_{p-1}y_p^2 + \\
       & w_{20}y_{p-1}^2 + w_{21}y_{p-1}^2y_p + w_{22}y_{p-1}^2y_p^2 +
         w_{32}y_{p-1} y_p y_{p+1}.
 \end{align*}
 By hypothesis, this maps to zero in $\bC[0,p+1]=C[0,p+1]/y_{p+1}$.
 However, $\bC[0,p+1]$ splits as the direct sum of subgroups
 $\tC[0,p-1]y_{p-1}^iy_p^j$ for $0\leq i,j<3$, so we must have 
 \[ w_{00} = w_{01} = w_{02} = w_{10} = w_{20} = w_{21} = w_{22} = 0
 \]
 and $w_{11}=w_{30}$ and $w_{12}=w_{31}$, so
 $u.v=w_{32}y_{p-1}y_py_{p+1}$.  
 
 Now put $d=|u.v|$, so $|w_{jk}|=d-j2^{p-1}-k2^p$.  In particular, we
 have $|w_{32}|=d-2^{p-1}-2^p-2^{p+1}$.  If $d<2^{p-1}+2^p+2^{p+1}$
 then $|w_{32}|<0$ so $w_{32}=0$ so $u.v=0$.  This means that
 $v\in K(u,p+1)$ with $\pi(v)=\ov{v}$, as required.  Suppose instead
 that $d\geq 2^{p-1}+2^p+2^{p+1}$.  We have
 \begin{align*}
  |w_{11}| &= |w_{30}| = d - 2^{p-1} - 2^p \geq 2^{p+1} \\
  |w_{12}| &= |w_{31}| = d - 2^{p-1} - 2^{p+1} \geq 2^p.
 \end{align*}
 However, the elements $w_{jk}$ lie in $\tC[0,p-1]$, which is zero in
 degrees larger than $2^p-2$.  We therefore have
 $w_{11}=w_{12}=w_{30}=w_{31}=0$, which means that $u.v_0=0$ and
 $u.v_1=0$ and $u.v_2=w_{32}y_{p-1}^3=w_{32}y_{p-1}y_p$.  Put 
 \[ v' = v_0 + v_1 y_p + v_2 (y_p^2+y_{p+1}), \]
 so $\pi(v')=\pi(v)=\ov{v}$ and 
 \[ u.v' = u.v_0 + u.v_1 y_p + u.v_2(y_p^2+y_{p+1}) =
     w_{32}y_{p-1}y_p(y_p^2+y_{p+1}) = 0.
 \]
 Thus, $v'$ is the required lift of $\ov{v}$ in $\bK(u,p+1)$.
\end{proof}

\begin{lemma}\label{lem-x-poly}
 For all $p\geq 0$ we have a splitting 
 \[ C[0,p+1] = C[0,p] \oplus \bigoplus_{k>0} \bC[0,p]x_{p+1}^k. \]
\end{lemma}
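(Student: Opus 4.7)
The plan is to exhibit $C[0,p+1]$ as a quotient of a polynomial extension of $C[0,p]$ and read off the splitting after a change of variable.

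First I would establish the $C[0,p]$-algebra presentation
\[ C[0,p+1] = C[0,p][y_{p+1}]/(y_p^3 + y_p y_{p+1}). \]
By definition $C[0,p+1] = \F[y_0,\dotsc,y_{p+1}]/(y_i^3+y_iy_{i+1}\st 0\leq i\leq p)$. Among these relations, only the one for $i=p$ involves $y_{p+1}$, while the relations for $i<p$ live entirely in $\F[y_0,\dotsc,y_p]$ and yield $C[0,p]$ after quotienting. Hence we may form the quotient in two stages to obtain the presentation above.

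Next, since $x_{p+1}=y_{p+1}+y_p^2$ is of the form $y_{p+1}+(\text{element of }C[0,p])$, the substitution $y_{p+1}\leftrightarrow x_{p+1}+y_p^2$ is an isomorphism of polynomial rings $C[0,p][y_{p+1}]\cong C[0,p][x_{p+1}]$. Under this change of variable, working in characteristic two,
\[ y_p^3 + y_p y_{p+1} = y_p^3 + y_p(x_{p+1}+y_p^2) = 2y_p^3 + y_p x_{p+1} = y_p x_{p+1}. \]
This gives the much cleaner presentation $C[0,p+1] = C[0,p][x_{p+1}]/(y_p x_{p+1})$.

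Finally I read off the decomposition as $C[0,p]$-modules. The polynomial ring $C[0,p][x_{p+1}]$ splits as the free module $\bigoplus_{k\geq 0} C[0,p]\,x_{p+1}^k$, and the principal ideal generated by $y_p x_{p+1}$ is $\bigoplus_{k\geq 1} y_p C[0,p]\,x_{p+1}^k$, since $y_p x_{p+1}\cdot x_{p+1}^{k-1}$ sits in the $k$-th summand and these are all the nonzero contributions. Taking the quotient gives
\[ C[0,p+1] = C[0,p] \oplus \bigoplus_{k\geq 1} (C[0,p]/y_p C[0,p])\,x_{p+1}^k = C[0,p] \oplus \bigoplus_{k>0} \bC[0,p]\,x_{p+1}^k, \]
as desired; the first summand really is $C[0,p]$ by Lemma~\ref{lem-Cn-subring}. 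The main observation driving the proof is the change of variable step, which replaces the cubic defining relation with the simple monomial relation $y_p x_{p+1}=0$; after that, the splitting is immediate from the structure of a principal quotient of a polynomial ring.
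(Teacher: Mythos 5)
Your proof is correct and follows essentially the same route as the paper: pass to the presentation $C[0,p+1]=C[0,p][y_{p+1}]/(y_p x_{p+1})$, use the change of variable $y_{p+1}\mapsto x_{p+1}$ to split $C[0,p][x_{p+1}]$ into free $C[0,p]$-summands indexed by powers of $x_{p+1}$, observe that the principal ideal $(y_p x_{p+1})$ respects this splitting, and then pass to the quotient. The paper states the presentation $C[0,p+1]=C[0,p][y_{p+1}]/(x_{p+1}y_p)$ in a single step where you build it up more slowly, but the underlying argument is identical.
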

\begin{proof}
 By definition we have $C[0,p+1]=C[0,p][y_{p+1}]/(x_{p+1}y_p)$, where
 $x_{p+1}=y_{p+1}+y_p^2$ as usual.  From this it is clear that 
 \[ C[0,p][y_{p+1}] = C[0,p][x_{p+1}] = 
     C[0,p] \oplus\bigoplus_{k>0} C[0,p]x_{p+1}^k.
 \]
 The ideal generated by $y_px_{p+1}$ in this ring clearly has a
 compatible splitting
 \[ C[0,p][y_{p+1}].y_px_{p+1} = 
     \bigoplus_{k>0} C[0,p]y_p x_{p+1}^k.
 \]
 We can thus pass to the quotient to get 
 \[ C[0,p+1]
    = C[0,p] \oplus \bigoplus_{k>0} \frac{C[0,p]}{C[0,p]y_p} x_{p+1}^k
    = C[0,p] \oplus \bigoplus_{k>0} \bC[0,p] x_{p+1}^k
 \]
 as claimed.
\end{proof}

\begin{corollary}\label{cor-K-step}
 If $u_i\in\tC[0,p-2]$ for $i=0,\dotsc,n-1$, then
 $K(u,p+1)=C[0,p+1].K(u,p)$. 
\end{corollary}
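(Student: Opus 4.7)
The plan is to use the splitting from Lemma~\ref{lem-x-poly} to match the decompositions of $K(u, p+1)$ and $C[0, p+1] \cdot K(u, p)$ componentwise. Any $v \in C[0, p+1]^n$ decomposes uniquely as $v = v_0 + \sum_{k \geq 1} x_{p+1}^k \bar v_k$ with $v_0 \in C[0, p]^n$ and $\bar v_k \in \tC[0, p]^n$ (identified with $\bC[0, p]^n$). Since $y_p x_{p+1} = 0$ in $C[0, p+1]$ (used in the proof of Lemma~\ref{lem-x-poly}), we have $c \cdot x_{p+1}^k = \pi(c) \cdot x_{p+1}^k$ for any $c \in C[0, p]$, where $\pi : C[0, p] \to \bC[0, p]$ is the quotient. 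Thus
\[
 u \cdot v = u \cdot v_0 + \sum_{k \geq 1} \pi(u \cdot \bar v_k) \cdot x_{p+1}^k,
\]
and the direct sum of Lemma~\ref{lem-x-poly} forces $u \cdot v = 0$ iff $v_0 \in K(u, p)$ and $\bar v_k \in \bK(u, p)$ for every $k \geq 1$. Consequently $K(u, p+1) = K(u, p) \oplus \bigoplus_{k \geq 1} \bK(u, p) \cdot x_{p+1}^k$.

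A parallel calculation applied to an arbitrary element $\sum_i c_i w_i$ of $C[0, p+1] \cdot K(u, p)$ (with $c_i \in C[0, p+1]$ and $w_i \in K(u, p)$) gives $C[0, p+1] \cdot K(u, p) = K(u, p) \oplus \bigoplus_{k \geq 1} \pi(K(u, p)) \cdot x_{p+1}^k$, using that $\pi(K(u,p))$ is a $\bC[0,p]$-submodule of $\bC[0,p]^n$. Comparing, the corollary reduces to showing $\pi(K(u, p)) = \bK(u, p)$, i.e., that the map $\pi : K(u, p) \to \bK(u, p)$ is surjective.

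I expect this surjectivity to be the main obstacle. Given $\bar v \in \bK(u, p)$ lifted to $\tilde v \in \tC[0, p]^n$, Lemma~\ref{lem-BB} combined with the decomposition $C[0, p] = \bigoplus_j \tC[0, p] y_p^j$ forces $u \cdot \tilde v$ to have the form $E \cdot y_{p-1} y_p$ for some $E \in \tC[0, p-1]$. To produce an element of $K(u, p)$ lifting $\bar v$, we need $\xi \in C[0, p]^n$ with $u \cdot \xi = E y_{p-1}$; equivalently the ideal $(u_1, \ldots, u_n) \leq C[0, p]$ must be $y_p$-saturated, or $y_p$ must be a nonzerodivisor on $C[0, p] / (u_1, \ldots, u_n)$. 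I would verify this by applying Lemma~\ref{lem-x-poly} with $p$ replaced by $p-1$ to split $C[0, p]$ over $C[0, p-1]$ using powers of $x_p$, and then checking that multiplication by $y_p = y_{p-1}^2 + x_p$ stays injective on the resulting quotient, using that the generators $u_i$ lie in $C[0, p-2]$ and so do not interact with the $y_{p-1}, y_p$ variables.
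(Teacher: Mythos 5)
Your decomposition strategy is exactly the one the paper uses: apply Lemma~\ref{lem-x-poly} to split $v\in K(u,p+1)$ as $v_0+\sum_{k>0}\ov{v}_kx_{p+1}^k$, observe that the sum being direct forces $u.v_0=0$ and $u.\ov{v}_k=0$, and then lift each $\ov{v}_k\in\bK(u,p)$ to an element of $K(u,p)$. You have also correctly identified that the entire weight of the corollary rests on the surjectivity of $\pi\colon K(u,p)\to\bK(u,p)$. The paper packages precisely this step as Lemma~\ref{lem-K-bK}, and the corollary is essentially a formal consequence of it together with Lemma~\ref{lem-x-poly}.

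The problem is with your plan for that surjectivity. You propose to show that $y_p$ is a nonzerodivisor on $C[0,p]/(u_1,\dotsc,u_n)$, saying you would ``check that multiplication by $y_p$ stays injective on the resulting quotient.'' This is not something you verify, and it is a much stronger statement than what is needed, which is only that the particular element $u.\tilde v$ (which by the argument via Lemma~\ref{lem-BB} has the shape $Ey_{p-1}y_p$ with $E\in\tC[0,p-1]$) can be absorbed into $(u)\cdot y_p$. The actual proof in the paper (Lemma~\ref{lem-K-bK}) is considerably more delicate than ``$y_p$ is a nonzerodivisor mod $(u)$'': it writes the lift in powers of $y_p$ as $v=v_0+v_1y_p+v_2y_p^2$, applies Lemma~\ref{lem-BB} to each $u.v_k$, performs a case split on the degree of $u.v$, and in the large-degree case uses additional vanishing $w_{11}=w_{12}=w_{30}=w_{31}=0$ (forced by degree bounds on elements of $\tC[0,p-1]$) together with the replacement $y_p^2\mapsto y_p^2+y_{p+1}=x_{p+1}$ inside $C[0,p+1]$ to annihilate the obstruction via $y_px_{p+1}=0$. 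Note in particular that this uses the generator $y_{p+1}$: the surjectivity statement the paper actually proves is for $\pi\colon K(u,p+1)\to\bK(u,p+1)$, one degree up from where you are working, precisely because the modification needs room that $C[0,p]$ alone does not provide. You do not address the degree case analysis at all, and there is no indication that your nonzerodivisor claim is true. So the first half of your argument is sound and parallels the paper, but the second half is a genuine gap: the hard content of the corollary has been deferred to an unproved and likely false statement.
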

\begin{proof}
 It is clear that $C[0,p+1].K(u,p)\leq K(u,p+1)$.  For the converse,
 consider an element $v\in K(u,p+1)\leq C[0,p+1]^n$.  Using
 Lemma~\ref{lem-x-poly}, we can write $v$ as
 $v_0+\sum_{k>0}\ov{v}_kx_{p+1}^k$, with $v_0\in C[0,p]^n$ and
 $\ov{v}_k\in\bC[0,p]^n$ (with $\ov{v}_k=0$ for $k\gg 0$).  It follows
 that $u.v_0\in C[0,p]$ and $u.\ov{v}_k\in\bC[0,p]$ and
 \[ u.v_0 + \sum_{k>0} (u.\ov{v}_k) x_{p+1}^k = u.v = 0. \]
 As the sum in Lemma~\ref{lem-x-poly} is direct, we must have
 $u.v_0=0$ and $u.\ov{v}_k=0$, so $v_0\in K(u,p)$ and
 $\ov{v}_k\in\bK(u,p)$.  By Lemma~\ref{lem-K-bK}, we can choose
 $v_k\in K(u,p)$ for $k>0$ lifting $\ov{v}_k$.  If $\ov{v}_k=0$ we
 choose $v_k=0$; this ensures that $v_k=0$ for $k\gg 0$.  We now have
 $v=\sum_{k\geq 0}v_kx_{p+1}^k\in C[0,p+1].K(u,p)$, as required.
\end{proof}

\begin{proposition}\label{prop-cube-coherent}
 The ring $C$ is coherent.
\end{proposition}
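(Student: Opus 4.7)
The plan is to apply characterisation~(b) of finite presentation from Definition~\ref{defn-fp}. Any finitely generated ideal $J\leq C$ is the image of a $C$-linear map $g\:C^n\to C$ sending $v$ to $u.v$ for some fixed vector $u=(u_0,\dotsc,u_{n-1})\in C^n$. It therefore suffices to show that the module $K(u)=\{v\in C^n\st u.v=0\}$ is finitely generated over $C$. The strategy is to reduce the problem to a single finite subalgebra $C[0,p]$, which is Noetherian, and then use Corollary~\ref{cor-K-step} to lift a generating set from $K(u,p)$ to all of $K(u)$.

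First I would observe that each homogeneous entry $u_i$ is a finite $\F$-linear combination of basis monomials from $BC[0,\infty]$, each of which involves only finitely many of the $y_j$. Hence we may choose $p$ large enough that $u_i\in\tC[0,p-2]$ for all $i$. Since $\tC[0,p-2]\sse\tC[0,q-2]$ whenever $q\geq p$, the hypothesis of Corollary~\ref{cor-K-step} remains valid at every subsequent stage, so iterating yields
\[ K(u,q) = C[0,q]\cdot K(u,p) \quad\text{for all } q\geq p. \]
Any element of $K(u)$ is homogeneous, and the basis description of $C$ as $\bigcup_q C[0,q]$ shows that it actually lies in $K(u,q)$ for all sufficiently large $q$. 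Therefore
\[ K(u) = \bigcup_{q\geq p} K(u,q) = C\cdot K(u,p). \]

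To conclude, I would note that $C[0,p]=\F[y_0,\dotsc,y_p]/(y_i^3+y_iy_{i+1}\st 0\leq i<p)$ is a finitely generated $\F$-algebra, hence Noetherian by Hilbert's basis theorem. Consequently the submodule $K(u,p)\leq C[0,p]^n$ is finitely generated over $C[0,p]$, and any finite generating set for $K(u,p)$ over $C[0,p]$ is automatically a generating set for $K(u)=C\cdot K(u,p)$ over $C$.

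The proof is not hard once Corollary~\ref{cor-K-step} is in place; the real obstacle lies in its preparation, principally the kernel-lifting result of Lemma~\ref{lem-K-bK} and the splitting provided by Lemma~\ref{lem-x-poly}. With those tools established, coherence of $C$ reduces cleanly to the Noetherian property of the finite subrings $C[0,p]$.
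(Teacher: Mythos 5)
Your argument is correct and follows the same route as the paper: choose $p$ large enough that all $u_i\in\tC[0,p-2]$, use Noetherianness of $C[0,p]$ to get a finite generating set for $K(u,p)$, iterate Corollary~\ref{cor-K-step} to propagate this generating set to $K(u,q)$ for all $q\geq p$, and pass to the union $C=\bigcup_q C[0,q]$. The extra sentence confirming that $\tC[0,p-2]\sse\tC[0,q-2]$ (so the hypothesis of Corollary~\ref{cor-K-step} persists through the induction) is a useful clarification that the paper leaves implicit.
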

\begin{proof}
 Let $I\leq C$ be a finitely generated ideal.  Choose elements
 $u_0,\dotsc,u_{n-1}$ generating $I$.  These give an epimorphism
 $g\:\bigoplus_i\Sg^{|u_i|}C\to I$, with $\ker(g)=K(u,\infty)$, so it
 will suffice to show that $K(u,\infty)$ is finitely generated as a
 $C$-module.  Now choose $p$ large enough that $u_i\in\tC[0,p-2]$ for
 all $i$.  As $C[0,p]$ is Noetherian, we can choose a finite subset
 $T\sse C[0,p]^n$ that generates $K(u,p)$ as a $C[0,p]$-module.
 Corollary~\ref{cor-K-step} tells us that $T$ also generates
 $K(u,p+1)$ as a $C[0,p+1]$-module.  In fact, we can apply the same
 corollary inductively to see that $T$ generates $K(u,q)$ as a
 $C[0,q]$-module for all $q\geq p$.  As $C=\bigcup_qC[0,q]$ we
 conclude that $T$ generates $K(u,\infty)$ as required.
\end{proof}

\begin{proposition}\label{prop-cube-reduced}
 The reduced quotient of $C$ is 
 \[ C/\sqrt{0} = \F[x_i\st i\geq 0]/(x_ix_j\st i\neq j). \] 
\end{proposition}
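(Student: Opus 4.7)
The plan is to identify $C/\sqrt{0}$ with $Q := \F[x_0,x_1,\dotsc]/(x_ix_j \mid i\neq j)$ by showing both agree with $C/N$, where $N$ is the ideal of $C$ generated by $\{x_ix_j : i\neq j\}$. The cleanest route is to establish separately that (i) $C/N \cong Q$, (ii) $Q$ is reduced (giving $\sqrt{0}\subseteq N$), and (iii) each generator $x_ix_j$ of $N$ is nilpotent (giving $N\subseteq\sqrt{0}$).

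For (i), I would use the alternate presentation $C = \F[x_0,x_1,\dotsc]/(x_{n+1}\sum_{i=0}^n x_{n-i}^{2^i} \mid n \geq 0)$ from Proposition~\ref{prop-cube-x}. Each defining relation is a sum of products $x_{n+1}x_{n-i}^{2^i}$, and since $n+1 \neq n-i$ for $0\leq i\leq n$, every such product lies in $N$. Hence the relations become trivial modulo $N$, and the obvious map yields $C/N \cong \F[x_i]/(x_ix_j \mid i\neq j)=Q$. For (ii), note that $Q=\F\oplus\bigoplus_{i\geq 0} x_i\F[x_i]$ as an $\F$-vector space; squaring respects this decomposition because all cross products vanish in $Q$, and on each factor $\F[x_i]$ squaring is injective, so $Q$ has no nonzero nilpotent.

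The main obstacle is (iii): showing $x_ix_j$ is nilpotent whenever $i\neq j$. I would proceed by strong induction on $j$, treating $i<j$. The relation $y_{j-1}x_j=0$ from Proposition~\ref{prop-cube-x}, combined with $y_{j-1}=\sum_{k=0}^{j-1}x_k^{2^{j-1-k}}$, reads
\[
  \sum_{k=0}^{j-1} x_k^{2^{j-1-k}} x_j = 0.
\]
Isolating the $k=i$ term and multiplying both sides by $x_i$ gives
\[
  x_i^{2^{j-1-i}+1}x_j \;=\; \sum_{\substack{0\leq k\leq j-1\\ k\neq i}} x_k^{2^{j-1-k}-1}(x_k x_i)x_j.
\]
By the inductive hypothesis each factor $x_kx_i$ with $k\neq i$ and $k,i<j$ is nilpotent, so the right-hand side is a finite sum of nilpotents and hence nilpotent. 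Therefore $x_i^{2^{j-1-i}+1}x_j$ is nilpotent, which gives $x_i^a x_j^b = 0$ for some $a,b\geq 1$, and consequently $(x_ix_j)^{\max(a,b)}=0$. The base case $j=1$ is immediate from $y_0x_1=0$ and $x_0=y_0$. Combining (i)--(iii) gives $\sqrt{0}=N$ and $C/\sqrt{0} = C/N = Q$, as required.
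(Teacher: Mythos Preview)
Your proof is correct and follows essentially the same approach as the paper: both use the $x_i$-presentation from Proposition~\ref{prop-cube-x} to show each $x_ix_j$ is nilpotent, and then verify that $Q=\F[x_i]/(x_ix_j\mid i\neq j)$ is reduced via the splitting $\F\oplus\bigoplus_i x_i\F[x_i]$. The only difference is organisational: the paper runs a nested double induction (outer on $q$, inner downward on $p$) inside $C/\sqrt{0}$ so that all but one term of the key relation vanish outright, whereas you use a single strong induction on $j$ in $C$ and invoke the fact that the nilradical is closed under sums---a mild streamlining of the same idea.
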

\begin{proof}
 Put $C'=C/\sqrt{0}$.  We first claim that for all $p,q$ with
 $0\leq p<q$ we have $x_px_q=0$ in $C'$.  We may assume inductively
 that $x_ix_j=0$ in $C'$ whenever $0\leq i<j<q$.  By a nested downward
 induction over $p$, we may assume that $x_kx_q=0$ in $C'$ whenever
 $p<k<q$.  As in Proposition~\ref{prop-cube-x}, we have
 $x_q\sum_{k=0}^{q-1}x_k^{2^{q-1-k}}=0$.  We can multiply this by
 $x_p$ and use the inner and outer inductive assumptions to see that
 $x_px_qx_p^{2^{q-1-p}}=0$, or in other words $x_p^mx_q=0$ for some
 $m>0$.  This gives $(x_px_q)^m=0$ in $C'$, but $C'$ is reduced by
 construction so $x_px_q=0$ in $C'$ as claimed.

 Now put 
 \[ C'' = C/(x_ix_j\st i,j,\;i<j) =
     \F[x_i\st i\geq 0]/(x_ix_j\st i,j\geq 0,\;i<j).
 \] 
 We now see that $C''$ is a quotient of $C$ by nilpotent elements, so
 $C'$ can also be described as $C''/\sqrt{0}$.  However, there is an
 obvious splitting 
 \[ C'' = \F \oplus \bigoplus_{i\geq 0} x_i\F[x_i], \]
 and using this we see that $C''$ is reduced.  It follows that
 $C'=C''$ as claimed.
\end{proof}

\section{Pontrjagin self-dual rings}
\label{sec-J}

Let $R$ be a Pontrjagin self-dual ring, as in
Definition~\ref{defn-pontrjagin}.  Thus, $R$ is a graded
$\Zp$-algebra $R$ equipped with an isomorphism $\zt\:R_d\to\Qp/\Zp$
(for some $d$) such that the resulting maps
\[ \zt^\#\:R_{d-k}\to R_k^\vee=\Hom_{\Zp}(R_k,\Qp/\Zp) \]
are isomorphisms.  

\begin{lemma}\label{lem-two-duals}
 For graded $R$-modules $M$ there is a natural isomorphism
 \[ \Hom_R(M,R) \simeq \Hom_{\Zp}(M_d,\Qp/\Zp) = M_d^\vee. \]
\end{lemma}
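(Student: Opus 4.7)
The plan is to construct the candidate natural transformation explicitly and then verify it is an isomorphism by reducing to shifted copies of $R$ via a free presentation.

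First I would define the map $\Psi\:\Hom_R(M,R)\to M_d^\vee$ as follows. Given a homogeneous $R$-module map $\phi\:M\to R$ of degree $k$, the restriction $\phi|_{M_{d-k}}$ lands in $R_d$, so $\zt\circ \phi|_{M_{d-k}}\:M_{d-k}\to \Qp/\Zp$ is $\Zp$-linear; we let this be $\Psi(\phi)$, regarded (with the natural graded convention on the right-hand side, where the degree $k$ piece is $M_{d-k}^\vee$) as sitting in degree $k$. Naturality in $M$ is immediate from the formula, since for $f\:M\to N$ we have $\zt(\phi(f(m)))=\Psi(\phi)(f(m))$.

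Next I would check that $\Psi$ is an isomorphism when $M=\Sg^eR$. On the left, evaluation at the canonical generator identifies $\Hom_R(\Sg^eR,R)_k$ with $R_{e+k}$; on the right, $(\Sg^eR)_{d-k}^\vee = R_{d-e-k}^\vee$. Unwinding definitions, $\Psi$ is then precisely the map $\zt^\#\:R_{e+k}\to R_{d-e-k}^\vee$, which is an isomorphism by the assumption that $R$ is Pontrjagin self-dual. Since $\Hom_R(-,R)$ converts direct sums in the first argument to direct products, and the functor $N\mapsto N_d^\vee = \Hom_{\Zp}(N_d,\Qp/\Zp)$ does the same, the case of an arbitrary direct sum of shifted copies of $R$ follows formally.

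Finally, for a general graded $R$-module $M$, choose a free presentation
\[ F_1 \xra{\partial} F_0 \xra{\pi} M \xra{} 0, \]
where $F_0,F_1$ are direct sums of shifted copies of $R$. Apply both functors. The functor $\Hom_R(-,R)$ is left exact, and the functor $N\mapsto N_d^\vee$ is exact (since $N\mapsto N_d$ is exact and $\Qp/\Zp$ is divisible, hence an injective $\Zp$-module). This produces a commutative diagram with exact rows in which the vertical arrows over $F_0$ and $F_1$ are isomorphisms by the previous step; the five lemma (or simply taking kernels) then shows that $\Psi$ is an isomorphism on $M$.

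There is no serious obstacle; the main point to keep straight is the bookkeeping of degree shifts (so that the degree $k$ piece of $\Hom_R(M,R)$ is matched with $M_{d-k}^\vee$) and the observation that both functors convert direct sums to direct products in the same way, so that the reduction to the rank one free case really does suffice.
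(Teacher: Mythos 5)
Your proof is correct, but it follows a different strategy from the paper's. Where you argue categorically — check the natural transformation is an isomorphism on modules of the form $\Sg^e R$, observe both functors turn (arbitrary) direct sums into products, and then propagate to general $M$ via a free presentation, left exactness of $\Hom_R(-,R)$, exactness of $N\mapsto N_d^\vee$, and the four/five lemma — the paper instead constructs the inverse of $\tau$ explicitly: given $\psi\:M_d\to\Qp/\Zp$, it defines degree-by-degree maps $\phi'_k\:M_k\to R_{d-k}^\vee$ by $\phi'_k(m)(a)=(-1)^{k(d-k)}\psi(am)$, lifts them through the isomorphisms $\zt^\#$ to get $\phi_k\:M_k\to R_k$, and checks that these assemble into the unique $R$-linear $\phi$ with $\tau(\phi)=\psi$. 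Your route avoids constructing the inverse by hand and is the standard ``reduce to free modules'' argument, at the cost of invoking a free presentation and a diagram chase; the paper's route is more self-contained, produces the inverse explicitly, and makes the signs visible (which your writeup suppresses — they do not affect bijectivity but are needed for $\phi$ to be a graded $R$-module map in the graded-commutative sense). Both are sound; note that your argument only needs left exactness of $\Hom_R(-,R)$, which is automatic and does not presuppose the self-injectivity of $R$ that this lemma is being used to prove.
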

\begin{proof}
 Given $\phi\in\Hom_R(M,R)$, we put 
 \[ \tau(\phi) = \zt\circ\phi_d\:M_d \to \Qp/\Zp. \]
 This defines a map $\tau\:\Hom_R(M,R)\to M_d^\vee$.  

 Now suppose we have a map $\psi\:M_d\to\Qp/\Zp$.  For any $k\in\Z$
 we have a map 
 \[ \phi'_k\:M_k\to\Hom_{\Zp}(R_{d-k},\Qp/\Zp) \]
 given by $\phi'_k(m)(a)=(-1)^{k(d-k)}\psi(am)$.  As $R$ is assumed to
 be Pontrjagin self-dual, there is a unique element $\phi_k(m)\in R_k$
 such that  
 \[ \phi'_k(m)(a) = \zt(\phi_k(m)a) \]
 for all $a\in R_{d-k}$.  We leave it to the reader to check that
 this gives a map $\phi\:M\to R$ of $R$-modules, and that this is
 the unique such map with $\tau(\phi)=\psi$.
\end{proof}

\begin{proposition}\label{prop-pontrjagin}
 Any Pontrjagin self-dual ring is self-injective.
\end{proposition}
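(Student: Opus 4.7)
The plan is to deduce self-injectivity directly from Lemma~\ref{lem-two-duals} by showing that the functor $\Hom_R(-,R)$ is exact, which is the defining condition for $R$ to be injective as a module over itself.

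First I would invoke Lemma~\ref{lem-two-duals}, which provides a natural isomorphism of functors $\Hom_R(-,R) \simeq ((-)_d)^\vee$ on the category of graded $R$-modules. Naturality is the crucial point: it means that exactness of the right-hand side as a functor of $M$ implies exactness of the left-hand side. So it suffices to verify that the composite functor $M \mapsto M_d \mapsto \Hom_{\Zp}(M_d, \Qp/\Zp)$ is exact.

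Next I would factor this composite and check exactness of each factor. The functor $M \mapsto M_d$ (extraction of the degree-$d$ component) is exact on the category of graded $R$-modules, essentially by definition, since a sequence of graded modules is exact iff it is exact in each degree. The functor $N \mapsto N^\vee = \Hom_{\Zp}(N, \Qp/\Zp)$ on $\Zp$-modules is exact because $\Qp/\Zp$ is an injective $\Zp$-module; indeed $\Zp$ is a PID, so injective is equivalent to divisible, and $\Qp/\Zp$ is manifestly divisible. Composing these two exact functors gives an exact functor, and transferring across the natural isomorphism shows that $\Hom_R(-,R)$ is exact, so $R$ is self-injective.

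There is no serious obstacle here: once Lemma~\ref{lem-two-duals} is in hand, the proof reduces to the standard fact that $\Qp/\Zp$ is an injective cogenerator for $\Zp$-modules. The only thing worth a sentence of care is making sure the isomorphism of Lemma~\ref{lem-two-duals} is natural in $M$, which is clear from its explicit description $\phi \mapsto \zt \circ \phi_d$, since precomposition with an $R$-module map $M' \to M$ commutes with evaluation in degree $d$ and with the fixed map $\zt$.
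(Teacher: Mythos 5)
Your proof is correct and follows essentially the same route as the paper: both invoke Lemma~\ref{lem-two-duals} to identify $\Hom_R(-,R)$ with $M\mapsto\Hom_{\Zp}(M_d,\Qp/\Zp)$, and then conclude exactness from the fact that $\Qp/\Zp$ is divisible and hence injective over $\Zp$. Your extra remarks on naturality and the factoring of the composite are just a more explicit writeup of the same argument.
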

\begin{proof}
 We need to show that the functor $M\mapsto\Hom_R(M,R)$ is
 exact, but it is isomorphic to the functor
 $M\mapsto\Hom_{\Zp}(M_d,\Qp/\Zp)$, which is exact because
 $\Qp/\Zp$ is divisible and therefore injective as a $\Zp$-module.
\end{proof}

We now study the graded ring $J$ described by Definition~\ref{defn-J-ring},
and the tensor product $\hJ=\Zp\ot J$.  It is standard that
$\Zp\ot\Z/p^r=\Z/p^r$.  Moreover, the group $\Qp/\Zp$ can be written
as the colimit of the evident sequence
\[ \Z/p \xra{} \Z/p^2 \xra{} \Z/p^3 \xra{} \dotsc, \]
and we can tensor with $\Zp$ to get $\Zp\ot(\Qp/\Zp)=\Qp/\Zp$.
Thus, the only difference between $J$ and $\hJ$ is that $J_0=\Zpl$
whereas $\hJ_0=\Zp$.

\begin{lemma}\label{lem-J-perfect}
 The ring $\hJ$ is Pontrjagin self-dual, so
 $\hJ_{-2-k}\simeq\hJ_k^\vee$.  
\end{lemma}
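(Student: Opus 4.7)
The plan is to verify that $\zeta^{\#}\colon \hJ_{-2-k}\to \hJ_k^{\vee}$ is an isomorphism for every integer $k$ by a direct degree-by-degree inspection. The key observation is that the graded pieces of $\hJ$ are concentrated on a very sparse set of degrees: namely $\hJ_0=\Zp$, $\hJ_{-2}=\Qp/\Zp$, and $\hJ_{2(p-1)j-1}$ is cyclic of order $p^{v_p(j)+1}$ for each nonzero $j\in\Z$, while $\hJ_k=0$ otherwise. Since the degree map $k\mapsto -2-k$ is an involution on $\Z$, it suffices to check that this involution preserves the set of nonzero degrees, and that on each such pair the pairing is perfect.

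First I would dispose of the trivial cases. For any $k$ not of one of the three special forms above, both $\hJ_k$ and $\hJ_{-2-k}$ are zero (since $-2-k$ is also not of one of those forms, as one verifies by checking each case), so $\zeta^{\#}$ is vacuously an isomorphism. For $k=0$, the map $\zeta^{\#}\colon\hJ_{-2}\to\hJ_0^{\vee}$ is, under $\eta$ and $\zt$, just the standard identification $\Qp/\Zp\xrightarrow{\sim}\Hom_{\Zp}(\Zp,\Qp/\Zp)$ sending $x$ to the map $a\mapsto ax$. Similarly for $k=-2$, we get the standard isomorphism $\Zp\xrightarrow{\sim}(\Qp/\Zp)^{\vee}$. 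These are both well-known.

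The main case is $k=2(p-1)j-1$ for some nonzero $j\in\Z$. Here $-2-k=2(p-1)(-j)-1$, so $\hJ_{-2-k}$ is generated by $\al_{-j}$ and is also cyclic of order $p^n$ with $n=v_p(j)+1=v_p(-j)+1$. The map $\zeta^{\#}$ sends $\al_{-j}$ to the homomorphism $\al_j\mapsto \zt(\al_{-j}\al_j)$, and by the defining relation $\al_j\al_{-j}=\zt^{-1}(p^{-n}+\Zpl)$ (with a possible sign from graded commutativity, which does not affect whether a map is an isomorphism), this sends the generator of $\hJ_{-2-k}$ to the element of $\hJ_k^{\vee}$ taking the generator $\al_j$ to the element $p^{-n}+\Zp\in\Qp/\Zp$. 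Since $p^{-n}+\Zp$ generates the subgroup $(\Z/p^n)^{\vee}\subseteq\Qp/\Zp$, this is a homomorphism of cyclic groups of the same order $p^n$ sending a generator to a generator, hence an isomorphism.

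The only real bookkeeping obstacle is making sure the special degrees pair up correctly and that the graded-commutativity signs in the relation $\al_{-j}\al_j=-\al_j\al_{-j}$ are absorbed harmlessly; beyond that the verification is entirely mechanical. Combining the four cases gives that $\zeta^{\#}$ is an isomorphism in every degree, so $\hJ$ is Pontrjagin self-dual.
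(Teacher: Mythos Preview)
Your proof is correct and follows essentially the same approach as the paper: a direct degree-by-degree verification that $\zeta^{\#}$ is an isomorphism. The only minor difference is in emphasis---the paper declares all cases with $k\neq -2$ to be a ``straightforward calculation'' and spells out only the $k=-2$ case via the inverse-limit description $(\Qp/\Zp)^\vee=\invlim_j\Z/p^j=\Zp$, whereas you give more detail on the $\alpha_j$ cases and less on the $k=-2$ case; but these are complementary presentations of the same argument.
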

\begin{proof}
 For $k\neq -2$ this is a straightforward calculation.  For $k=-2$ we
 use the description $\Qp/\Zp=\colim_j\Z/p^j$ to get 
 \[ \Hom(\Qp/\Zp,\Qp/\Zp)
    = \invlim_j\Hom(\Z/p^j,\Qp/\Zp)
    = \invlim_j \Z/p^j 
    = \Zp,
 \]
 as required.
\end{proof}

\begin{corollary}\label{cor-hJ-selfinj}
 The ring $\hJ$ is self-injective. \qed
\end{corollary}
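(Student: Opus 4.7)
The plan is to combine the two results immediately preceding the corollary. Specifically, Lemma \ref{lem-J-perfect} established that $\hJ$ is Pontrjagin self-dual in the sense of Definition \ref{defn-pontrjagin}, with duality degree $d=-2$. Meanwhile, Proposition \ref{prop-pontrjagin} asserts that every Pontrjagin self-dual ring is self-injective. The corollary is therefore an immediate formal consequence of applying the proposition to the ring $\hJ$.

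Concretely, I would simply invoke Lemma \ref{lem-J-perfect} to furnish the isomorphisms $\zt^{\#}\colon \hJ_{-2-k}\to \hJ_k^{\vee}$ required by Definition \ref{defn-pontrjagin}, then quote Proposition \ref{prop-pontrjagin} to conclude. No additional computation is needed: the Pontrjagin hypothesis has already been verified in the preceding lemma, and the self-injectivity implication has already been proved in full generality.

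There is essentially no obstacle here; the corollary is a packaging statement rather than a result requiring new argument. The substantive content was split across Lemma \ref{lem-two-duals} (identifying $\Hom_R(M,R)$ with $M_d^{\vee}$), Proposition \ref{prop-pontrjagin} (using exactness of Pontrjagin duality over $\Zp$ to deduce injectivity), and Lemma \ref{lem-J-perfect} (the case-by-case verification that $\hJ$ is Pontrjagin self-dual, where the only nontrivial step is computing $\Hom(\Qp/\Zp,\Qp/\Zp)=\Zp$ via the colimit description of $\Qp/\Zp$). The corollary is thus appropriately marked with \qed alone.
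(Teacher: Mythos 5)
Your proposal is correct and matches the paper exactly: the corollary is marked \qed because it follows immediately by combining Lemma~\ref{lem-J-perfect} (which shows $\hJ$ is Pontrjagin self-dual) with Proposition~\ref{prop-pontrjagin} (which shows Pontrjagin self-dual rings are self-injective). Your summary of where the substantive work resides is also accurate.
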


\begin{remark}\label{rem-J-not-selfinj}
 The ring $J$ itself is not self-injective.  To see this, note that
 $J_{-2}$ is an ideal in $J$ and is a module over $\Zp$.  Choose any
 element $a\in\Zp\sm\Zpl$ and define $u\:J_{-2}\to J$ by $u(x)=ax$.
 This cannot be extended to give a $J$-linear endomorphism of $J$.
\end{remark}

\begin{lemma}\label{lem-hJ-local}
 The ring $\hJ$ is local (in the graded sense).  The unique
 maximal graded ideal is given by $\mxi_0=p\Zp$ and $\mxi_k=\hJ_k$
 for all $k\neq 0$.  Moreover, the elements $\al_k$ together with the
 element $p$ give a basis for $\mxi/\mxi^2$ over $\Z/p$.
\end{lemma}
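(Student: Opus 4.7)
The plan is to prove locality first by exhibiting the residue field, then compute $\mxi/\mxi^2$ by a direct degree-by-degree inspection.

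First I would define the $\Zp$-algebra map $\pi\:\hJ\to\F_p$ sending $\eta(a)$ to $a\pmod{p}$ and killing everything in degrees $\neq 0$. To see this is a ring homomorphism, one checks using Definition~\ref{defn-J-ring} that every product of two homogeneous elements in degrees $\neq 0$ lands in a degree $\neq 0$ (and so is killed by $\pi$): the only products that even land in degree $0$ would have to pair something in degree $2(p-1)k-1$ with something in degree $1-2(p-1)k$, which would require $k+m=1/(p-1)$ with no integer solution for odd $p$; pairings of $\hJ_{-2}$ with $\hJ_2$ are empty since $\hJ_2=0$. Then $\mxi:=\ker(\pi)$ is the graded ideal described in the statement, and since $\hJ/\mxi=\F_p$ is a field, $\mxi$ is a maximal graded ideal.

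Next, to see $\mxi$ is the \emph{unique} maximal graded ideal, I would observe that any homogeneous element outside $\mxi$ lies in $\hJ_0\sm p\Zp=\Zp^\tm$, hence is a unit in $\hJ_0\sse\hJ$. Any proper graded ideal is therefore contained in $\mxi$.

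Finally I would compute $\mxi/\mxi^2$ degree by degree, noting that the nonzero homogeneous pieces of $\hJ$ occur only in degrees $0$, $-2$, and the odd values $2(p-1)k-1$ with $k\neq 0$. In degree $0$, the only products of two elements of $\mxi$ landing there are from $\mxi_0\cdot\mxi_0=p^2\Zp$ (the analysis above rules out mixed pairings), so $\mxi_0/(\mxi^2)_0\simeq\F_p$ generated by $p$. In degree $-2$, the product $\mxi_0\cdot\mxi_{-2}$ equals $p\cdot(\Qp/\Zp)=\Qp/\Zp$ since multiplication by $p$ is surjective on $\Qp/\Zp$; thus $\mxi_{-2}=(\mxi^2)_{-2}$ and this degree contributes nothing to $\mxi/\mxi^2$. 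In an odd degree $2(p-1)k-1$, products of two elements in degrees $\neq 0$ are impossible (parity) or forced to be zero by $\zt^{-1}(a)\al_k=0$, so $(\mxi^2)_{2(p-1)k-1}=p\Zp\cdot\langle\al_k\rangle=\langle p\al_k\rangle$; since $\al_k$ has order $p^{v_p(k)+1}$, the quotient $\langle\al_k\rangle/\langle p\al_k\rangle$ is $\F_p$ generated by $\al_k$. Assembling these contributions gives the claimed $\F_p$-basis $\{p\}\cup\{\al_k\st k\neq 0\}$ of $\mxi/\mxi^2$.

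The only subtle step is the degree $-2$ calculation: one must recognise that although $\zt^{-1}$ looks like it ought to contribute a generator, the divisibility of $\Qp/\Zp$ forces $p\cdot\mxi_{-2}=\mxi_{-2}$, so $\zt^{-1}$ is entirely absorbed into $\mxi^2$. Once this is noted, the rest is routine bookkeeping with the product relations of Definition~\ref{defn-J-ring}.
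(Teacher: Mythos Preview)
Your proof is correct and follows essentially the same approach as the paper's, just with more detail supplied. The paper argues uniqueness by noting that every homogeneous element of nonzero degree squares to zero (hence lies in every maximal ideal), while you argue the dual way, observing that every homogeneous element outside $\mxi$ is a unit; for the $\mxi/\mxi^2$ computation the paper simply says it is a ``straightforward calculation'', which is exactly the degree-by-degree analysis you carry out.
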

\begin{proof}
 It is straightforward to check that the graded group $\mxi$ described
 above is an ideal in $\hJ$, and the quotient $\hJ/\mxi$ is the field
 $\Z/p$, so it is a maximal ideal.  Let $\mxi'$ be an arbitrary
 maximal graded ideal.  Put $\mathfrak{a}=\bigoplus_{k\neq 0}\hJ_k$.
 Every homogeneous element $a\in\mathfrak{a}$ satisfies $a^2=0$, and
 it follows that $\mathfrak{a}\leq\mxi'$  This means that $\mxi'$
 corresponds to a maximal ideal in the quotient
 $\hJ/\mathfrak{a}\simeq\Zp$, and the only such ideal is $p\Zp$.  It
 follows that $\mxi'=\mxi$ as claimed.  The description of
 $\mxi/\mxi^2$ is a straightforward calculation.
\end{proof}

\begin{proposition}\label{prop-hJ-incoherent}
 The ring $\hJ$ is totally incoherent.  
\end{proposition}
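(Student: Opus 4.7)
The plan is to apply Corollary~\ref{cor-incoherent}. Lemma~\ref{lem-hJ-local} has already established that $\hJ$ is local with maximal ideal $\mxi$, and it gives an explicit $\Z/p$-basis $\{p\}\cup\{\al_k\st k\neq 0\}$ for $\mxi/\mxi^2$. A homogeneous element $u\in\hJ$ is invertible precisely when $u\in\Zp\sm p\Zp$. The task therefore reduces to checking that every nonzero homogeneous $u\in\mxi$ has an annihilator whose image in $\mxi/\mxi^2$ is of infinite $\Z/p$-dimension. The cleanest route is to exhibit infinitely many of the basis elements $\al_k$ lying in $\ann_{\hJ}(u)$; since these sit in pairwise distinct odd degrees $2(p-1)k-1$, any infinite subcollection is automatically linearly independent in $\mxi/\mxi^2$.

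A case split by $|u|$ then handles everything using the multiplication rules in Definition~\ref{defn-J-ring}. In degree $0$, a non-unit has the form $u=cp^n$ with $c\in\Zp^\times$ and $n\geq 1$; then $u\al_k=0$ whenever $p^{v_p(k)+1}$ divides $p^n$, and in particular this holds for every $k$ coprime to $p$. In degree $-2$, we have $u=\zt^{-1}(b)$ for some nonzero $b$, and the rule $\zt^{-1}(b)\al_k=0$ kills \emph{every} $\al_k$. In degree $2(p-1)j-1$ for some nonzero $j$, we have $u=c\al_j$ with $c\neq 0$, and the rule $\al_j\al_k=0$ for $j+k\neq 0$ places every $\al_k$ with $k\neq -j$ into the annihilator. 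In each case we collect infinitely many basis elements, as required.

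The only thing needing care is that the three cases are exhaustive: one must observe that $\hJ_n=0$ unless $n=0$, $n=-2$, or $n=2(p-1)k-1$ for some nonzero $k$, and these three families of degrees are disjoint because $2(p-1)k-1$ is odd while $0$ and $-2$ are even. There is no real obstacle here; once exhaustiveness is noted, Corollary~\ref{cor-incoherent} delivers total incoherence immediately. The argument is essentially bookkeeping built on top of the structural input already provided by Lemma~\ref{lem-hJ-local} and Corollary~\ref{cor-incoherent}.
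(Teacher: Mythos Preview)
Your proof is correct and follows essentially the same approach as the paper: both apply Corollary~\ref{cor-incoherent} by exhibiting, for each noninvertible homogeneous $u$, infinitely many $\al_k$ in $\ann_{\hJ}(u)$ that remain independent in $\mxi/\mxi^2$. The paper packages this slightly more uniformly by fixing the single set $V=\{\al_k\st k\not\equiv 0\pmod p\}$ in advance and observing that every noninvertible element kills all of $V$ with at most one exception, whereas you do an explicit case split on $|u|$; the underlying verifications are identical.
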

\begin{proof}
 Put $V=\{\al_k\st k\neq 0\pmod{p}\}\subset J$, so $V$ is infinite and
 $pV=0$ and $V$ and remains linearly independent in $\mxi/\mxi^2$.  By
 inspecting the multiplication rules, we see that every non-invertible
 element of $\hJ$ annihilates all elements of $V$ with at most one
 exception.  It follows using Corollary~\ref{cor-incoherent} that
 $\hJ$ is totally incoherent.
\end{proof}

\section{The infinite root algebra}
\label{sec-root}

In this section we fix a field $K$ and study the infinite root algebra
$P$ over $K$, which was introduced in Definition~\ref{defn-root}.  We
first recall the details.  

\begin{definition}\label{defn-well-ordered}
 We say that a subset $U\sse[0,1]$ is \emph{well-ordered} if the
 usual order inherited from $\R$ is a well-ordering, so every nonempty
 subset of $U$ has a smallest element.  It is equivalent to say that
 every infinite nonincreasing sequence in $U$ is eventually constant, or
 that there are no infinite, strictly decreasing sequences.

 An \emph{infinite root series} is a function
 $a\:[0,1]\to K$ such that the set $\supp(a)=\{q\st a(q)\neq 0\}$
 is well-ordered.  The infinite root algebra is the set $P$ of all
 infinite root series.  We regard this as an ungraded object, or
 equivalently as a graded object concentrated in degree zero.
\end{definition}

\begin{remark}\label{rem-root-addition}
 It is clear that any subset of a well-ordered set is well-ordered,
 and that the union of any two well-ordered sets is well-ordered.  Now
 if $a,b\in P$ we have $\supp(a+b)\sse\supp(a)\cup\supp(b)$, so
 $P$ is closed under addition.  It is clearly also closed under
 multiplication by elements of $K$.
\end{remark}

\begin{lemma}\label{lem-root-order-type}
 Any well-ordered subset of $[0,1]$ is countable.  Moreover, for any
 countable ordinal $\al$, there is a well-ordered subset $U\sse[0,1]$
 that is order-isomorphic to $\al$.
\end{lemma}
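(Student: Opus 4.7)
The plan is to handle the two claims separately.

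For the countability claim, let $U \sse [0,1]$ be well-ordered. For each $u \in U$ that is not the maximum of $U$ (if a maximum exists), the set $\{v \in U \st v > u\}$ is nonempty and well-ordered, hence has a least element $s(u)$. The open intervals $(u, s(u))$ for distinct such $u$ are pairwise disjoint, and each contains a rational; selecting one rational from each interval gives an injection from $U$ minus at most one point into $\Q$, proving that $U$ is countable.

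For the realisation claim, the strategy is transfinite induction on $\al$. The base case $\al = 0$ is trivial ($U = \emptyset$), and the successor case $\al = \bt + 1$ is handled by taking an embedding of $\bt$ into $[0,1/2]$ (available by induction, after rescaling if necessary) and adjoining the point $1$. The main work is the limit case. For a countable limit ordinal $\al$, I would choose a strictly increasing sequence $\al_0 < \al_1 < \al_2 < \dotsb$ of ordinals with $\sup_n \al_n = \al$, which is possible because $\al$ is countable. Setting $\gm_0 = \al_0$ and letting $\gm_n$ denote the order type of $[\al_{n-1}, \al_n)$ inside $\al$ for $n \geq 1$, each $\gm_n$ is an ordinal strictly less than $\al$, and $\al$ is order-isomorphic to the ordered sum $\sum_n \gm_n$. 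By the inductive hypothesis, embed each $\gm_n$ as a well-ordered $V_n \sse [0,1]$ and transport via an order-preserving affine map into a closed subinterval $I_n$, where the $I_n$ are chosen pairwise disjoint with $\sup I_n < \inf I_{n+1}$ and $\sup_n \inf I_n = 1$ (for instance, $I_n = [1 - 2^{-n},\, 1 - 3\cdot 2^{-n-2}]$ works). Let $U$ be the union of the resulting images $U_n \sse I_n$.

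The main obstacle is verifying that the limit case construction does what is required. Any infinite strictly decreasing sequence in $U$ must, because the $I_n$ are ordered by position, eventually remain in a single $U_n$, contradicting the well-ordering of $V_n$; hence $U$ is well-ordered. The order type of $U$ equals $\al$ because juxtaposing the $V_n$ in increasing positional order realises precisely the ordinal sum $\sum_n \gm_n$, which in turn is order-isomorphic to $\al$ by construction. No separate argument is needed to see that the resulting $U$ lies in $[0,1]$, since this is built into the choice of the subintervals $I_n$.
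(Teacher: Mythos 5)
Your proof of the countability claim is essentially the same as the paper's: both arguments hinge on the fact that a well-ordered subset of $[0,1]$ has a successor function, and the resulting gaps $(u, s(u))$ are pairwise disjoint intervals, each of which meets $\Q$. (The paper phrases this as defining an injection $U\to\Q$ directly; you phrase it as choosing rationals in disjoint intervals.)

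For the realisation claim your route is genuinely different. The paper gives a closed-form order-embedding: fix an injection $p\:\al\to\N$ and set $g(\bt)=\sum_{\gm<\bt}2^{-p(\gm)-1}$; the sum of a subfamily of $\{2^{-n-1}\}_{n\in\N}$ is strictly increasing in $\bt$ and stays in $[0,1]$, so the image of $g$ is the required set, with no induction at all. You instead run a transfinite induction on $\al$: trivial base and successor cases, and in the limit case you write $\al$ as the $\om$-indexed ordinal sum $\sum_n\gm_n$ coming from a cofinal sequence, embed each $\gm_n$ into one of countably many disjoint subintervals $I_n$ accumulating at $1$, and take the union. This is correct: each $\gm_n$ is a countable ordinal strictly below $\al$, so the inductive hypothesis applies; a decreasing sequence in the union must eventually remain in a single block $U_n$ because the block index is nonincreasing; and the partial sums $\gm_0+\dotsb+\gm_{n-1}=\al_{n-1}$ recover $\al=\sup_n\al_n$. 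The tradeoff is that your argument requires the transfinite-induction scaffolding, the existence of a cofinal $\om$-sequence in every countable limit ordinal, and a countable family of simultaneous choices of embeddings (countable choice), whereas the paper's single formula sidesteps all of this. On the other hand, your approach exposes more directly how the order type is assembled from pieces, which some readers may find more transparent than verifying the properties of $g$.
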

\begin{proof}
 Firstly, we can regard rational numbers in $[0,1]$ as coprime pairs
 of integers and this gives a lexicographic ordering on $\Q\cap[0,1]$,
 which is a well-ordering.

 Next, let $U$ be a well-ordered subset of $[0,1]$.  We define
 $f\:U\to\Q$ as follows.  If $u$ is maximal in $U$, we put $f(u)=1$.
 Otherwise, the set $\{v\in U\st v>u\}$ has a smallest element $v_0$,
 and we define $f(u)$ to be the lexicographically smallest element of
 $\Q\cap[u,v_0)$.  It is clear that $f$ is injective, so $U$ is
 countable.  

 Let $\al$ be any countable ordinal; we claim that there is an
 order-embedding $g\:\al\to [0,1]$.  To see this, choose an
 injective map $p\:\al\to\N$ and then put 
 \[ g(\bt)=\sum_{\gm<\bt} 2^{-p(\gm)-1}. \]
 It is clear that this has the required properties.
\end{proof}

\begin{lemma}\label{lem-root-finite}
 If $U,V\sse[0,1]$ are well-ordered and $w\in[0,1]$ then
 $\{(u,v)\in U\tm V\st u+v=w\}$ is finite. 
\end{lemma}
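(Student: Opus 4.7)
The plan is to argue by contradiction, using the two equivalent characterisations of well-orderedness already noted in Definition~\ref{defn-well-ordered}: a subset of $[0,1]$ is well-ordered iff every nonempty subset has a least element, iff it contains no strictly decreasing infinite sequence.

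First I would observe that the projection $(u,v)\mapsto u$ from the set $S=\{(u,v)\in U\tm V\st u+v=w\}$ to $U$ is injective, because $v$ is forced to equal $w-u$.  Hence $S$ is in order-preserving bijection (for the first coordinate) with a subset $U'\sse U$, which is automatically well-ordered.  Dually the second projection identifies $S$ with a subset $V'\sse V$, and the map $u\mapsto w-u$ reverses order between $U'$ and $V'$.

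Now suppose for contradiction that $S$ is infinite, so $U'$ is infinite.  Since $U'$ is well-ordered, it has a least element $u_0$; removing $u_0$, the remainder is nonempty and well-ordered and has a least element $u_1>u_0$; iterating this construction gives an infinite strictly increasing sequence $u_0<u_1<u_2<\dotsb$ in $U'$.  Setting $v_n=w-u_n\in V'\sse V$ produces an infinite strictly decreasing sequence $v_0>v_1>v_2>\dotsb$ in $V$, contradicting the hypothesis that $V$ is well-ordered.  Therefore $S$ is finite.

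There is no real obstacle here; the only subtle point is noticing that well-orderedness of $U$ alone is not enough (it allows $U'$ to be infinite and ascending), so one must use well-orderedness of both $U$ and $V$ simultaneously by passing from an increasing sequence on one side to a decreasing sequence on the other.
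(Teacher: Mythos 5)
Your proof is correct and follows essentially the same route as the paper: identify $U'=\{u\in U : w-u\in V\}$, use well-orderedness of $U$ to extract an infinite strictly increasing sequence $(u_n)$ if $U'$ were infinite, and then observe that $(w-u_n)$ would be an infinite strictly decreasing sequence in $V$, contradicting well-orderedness of $V$.
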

\begin{proof}
 Put $U'=\{u\in U\st w-u\in V\}$.  This is well-ordered (because it is
 a subset of $U$) and it will suffice to show that it is finite.  If
 not, we can define an infinite sequence $u_0<u_1<u_2<\dotsb$ in $U'$
 as follows: we take $u_0$ to be the smallest element in $U'$, then
 take $u_1$ to be the smallest element in $U'\sm\{u_0\}$, and so on.
 We then note that $w-u_0,w-u_1,w-u_2,\dotsc$ is an infinite strictly
 decreasing sequence in $V$, contradicting the assumption that $V$ is
 well-ordered.
\end{proof}

\begin{lemma}\label{lem-subseq}
 Let $U$ be a well-ordered subset of $[0,1]$, and let $(u_n)$ be a
 sequence in $U$.  Then there exists an infinite nondecreasing
 subsequence. 
\end{lemma}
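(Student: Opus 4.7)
The plan is to use the standard ``peak'' argument from real analysis, adapted to the well-ordered setting. Call an index $n$ a \emph{peak} if $u_n>u_m$ for all $m>n$. I will split into two cases according to whether there are finitely or infinitely many peaks.

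If there were infinitely many peaks $n_1<n_2<n_3<\dotsb$, then by definition of peak we would have $u_{n_1}>u_{n_2}>u_{n_3}>\dotsb$, giving an infinite strictly decreasing sequence in $U$. This contradicts the assumption that $U$ is well-ordered (equivalently, the reformulation in Definition~\ref{defn-well-ordered} that $U$ contains no infinite strictly decreasing sequence). So there can only be finitely many peaks.

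Choose $N$ strictly greater than every peak. Then for each $n\geq N$, the index $n$ is not a peak, so there exists some $m>n$ with $u_m\geq u_n$. I will build the desired subsequence inductively: set $n_1=N$, and having chosen $n_k\geq N$, pick $n_{k+1}>n_k$ with $u_{n_{k+1}}\geq u_{n_k}$ (which exists because $n_k$ is not a peak). This yields an infinite nondecreasing subsequence, as required.

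There is no real obstacle here; the only subtlety is making sure that ``peak'' is set up with the strict inequality $u_n>u_m$ (so that finitely many peaks forces the existence of weakly increasing continuations) and recognising that the well-ordering hypothesis is used precisely once, to rule out the infinite-peaks case via the no-infinite-descending-sequence characterisation already recorded in Definition~\ref{defn-well-ordered}.
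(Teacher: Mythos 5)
Your proposal is correct, but it takes a genuinely different route from the paper.  The paper's proof is a direct greedy construction: at each stage it takes the minimum of the remaining tail $\{u_j \st j > n_{i-1}\}$ (which exists because $U$ is well-ordered) and records the first index achieving it; the resulting sequence of minima is automatically nondecreasing and the indices strictly increase.  You instead adapt the classical ``peak'' argument from the proof of the monotone subsequence lemma for real sequences, and you invoke well-ordering only once, to rule out an infinite family of peaks via the no-infinite-strictly-decreasing-sequence characterisation in Definition~\ref{defn-well-ordered}.  Both arguments are sound.  The paper's version is slightly more canonical, since each term is a definite minimum and no choices are made along the way, whereas your recursion picks some $m>n_k$ with $u_m\geq u_{n_k}$ at each step (a use of dependent choice, though in this setting one could make a canonical choice by taking the least such $m$).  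Your version has the virtue of reducing cleanly to a familiar argument and isolating exactly where the well-ordering hypothesis enters; the paper's version has the virtue of being entirely self-contained and choice-free once well-ordering is granted.
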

\begin{proof}
 Put $v_0=\min\{u_j\st j\geq 0\}$ (which is meaningful because $U$ is
 well-ordered) and then $n_0=\min\{j\st u_j=v_0\}$.  For $i>0$ we
 define recursively $v_i=\min\{u_j\st j>n_{i-1}\}$ and
 $n_i=\min\{j>n_{i-1}\st u_j=v_i\}$.  We find that $n_0<n_1<n_2<\dotsb$ and
 $v_0\leq v_1\leq v_2\leq\dotsb$, or equivalently 
 $u_{n_0}\leq u_{n_1}\leq u_{n_2}\leq\dotsb$ as required.
\end{proof}

\begin{lemma}\label{lem-convolution}
 Let $U$ and $V$ be well-ordered subsets of $[0,1]$, and put
 $U*V=\{u+v\st u\in U \text{ and } v\in V\}$.  Then $U*V$ is also
 well-ordered.   
\end{lemma}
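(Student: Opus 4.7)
The plan is to prove the lemma by contradiction, showing that any infinite strictly decreasing sequence in $U*V$ would force an infinite strictly decreasing sequence in $V$, contradicting the well-ordering of $V$. Recall from Definition~\ref{defn-well-ordered} that being well-ordered is equivalent to the nonexistence of an infinite strictly decreasing sequence, so this is the convenient characterisation to work with.

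Concretely, I would suppose for contradiction that $(w_n)_{n\geq 0}$ is an infinite strictly decreasing sequence in $U*V$, and for each $n$ choose a presentation $w_n = u_n + v_n$ with $u_n \in U$ and $v_n \in V$. I would then apply Lemma~\ref{lem-subseq} to the sequence $(u_n)$ in $U$ to extract an infinite nondecreasing subsequence $u_{n_0} \leq u_{n_1} \leq u_{n_2} \leq \dotsb$. Since the parent sequence $(w_n)$ is strictly decreasing, the subsequence $(w_{n_i})$ is still strictly decreasing.

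The key observation is then that
\[ v_{n_{i+1}} = w_{n_{i+1}} - u_{n_{i+1}} < w_{n_i} - u_{n_i} = v_{n_i}, \]
where the inequality uses $w_{n_{i+1}} < w_{n_i}$ together with $u_{n_{i+1}} \geq u_{n_i}$. This produces an infinite strictly decreasing sequence $(v_{n_i})$ inside $V$, contradicting the assumption that $V$ is well-ordered.

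I do not expect any serious obstacle here; the only slightly subtle point is that the presentation $w_n = u_n + v_n$ need not be unique (Lemma~\ref{lem-root-finite} shows that there are finitely many choices), but we only need to make \emph{some} choice for each $n$, so the axiom of choice, or just countable choice, suffices. The output of the argument is a subset of $[0,2]$ rather than $[0,1]$, but well-orderedness is a purely order-theoretic property, so this causes no trouble; when the lemma is applied to the convolution product one simply intersects with $[0,1]$, and any subset of a well-ordered set is well-ordered.
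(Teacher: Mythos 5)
Your proof is correct and follows essentially the same approach as the paper's: decompose a putative infinite strictly decreasing sequence in $U*V$ into components, apply Lemma~\ref{lem-subseq} to extract a nondecreasing $u$-subsequence, and conclude. The only cosmetic difference is that you observe directly that the corresponding $v$-subsequence must be strictly decreasing, whereas the paper invokes Lemma~\ref{lem-subseq} a second time on the $v$'s before noting the contradiction.
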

\begin{proof}
 Suppose not.  We can then find an infinite strictly descending chain
 in $U*V$, so we can choose a sequence $(u_n,v_n)$ in $U\tm V$ with
 $u_i+v_i>u_{i+1}+v_{i+1}$ for all $i$.  Lemma~\ref{lem-subseq} tells
 us that after passing to a subsequence, we may assume that
 $u_j\leq u_{j+1}$ for all $j$.  After passing again to a sparser
 subsequence, we may also assume that $v_k\leq v_{k+1}$ for all $k$.
 This is clearly impossible.
\end{proof}

\begin{proposition}\label{prop-root-ring}
 We can make $P$ into a commutative ring by the rule
 \[ ab(w) = \sum_{w=u+v} a(u)b(v). \]
\end{proposition}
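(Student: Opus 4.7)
The plan is to check three things in sequence: first that the defining sum for $ab(w)$ is always finite (so the product is well defined), second that $ab$ actually lies in $P$ (its support is well-ordered), and third that the usual ring axioms hold.

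First, for any $w\in[0,1]$ the only pairs $(u,v)$ contributing to $ab(w)$ are those with $u\in\supp(a)$, $v\in\supp(b)$ and $u+v=w$. Lemma~\ref{lem-root-finite}, applied to the well-ordered sets $\supp(a)$ and $\supp(b)$, shows this set of pairs is finite, so the sum defining $ab(w)$ has only finitely many nonzero terms and makes sense in $K$.

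Next, I claim $\supp(ab)\sse\supp(a)*\supp(b)$: if $ab(w)\neq 0$ then at least one term $a(u)b(v)$ with $u+v=w$ is nonzero, which forces $u\in\supp(a)$ and $v\in\supp(b)$, so $w=u+v\in\supp(a)*\supp(b)$. By Lemma~\ref{lem-convolution}, $\supp(a)*\supp(b)$ is well-ordered, and a subset of a well-ordered set is well-ordered, so $\supp(ab)$ is well-ordered and $ab\in P$.

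For the ring axioms, distributivity over the addition from Remark~\ref{rem-root-addition} is immediate because the sum defining $ab(w)$ is $K$-bilinear in $(a,b)$. The element $e\in P$ with $e(0)=1$ and $e(q)=0$ for $q>0$ is a two-sided identity, since $(ea)(w)=\sum_{u+v=w}e(u)a(v)=a(w)$ and similarly on the other side. Commutativity follows by the change of variables $(u,v)\mapsto(v,u)$ in the sum. For associativity, I would expand
\[
 ((ab)c)(w)=\sum_{s+t=w}(ab)(s)\,c(t)
          =\sum_{u+v+t=w}a(u)\,b(v)\,c(t),
\]
and similarly $(a(bc))(w)=\sum_{u+v+t=w}a(u)\,b(v)\,c(t)$. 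The one point needing care is that these triple sums really are finite: the set of triples $(u,v,t)\in\supp(a)\tm\supp(b)\tm\supp(c)$ with $u+v+t=w$ is a subset of $(\supp(a)*\supp(b))\tm\supp(c)$ intersected with the equation $s+t=w$ after setting $s=u+v$, and by Lemma~\ref{lem-convolution} the first factor is well-ordered, so Lemma~\ref{lem-root-finite} applies to show only finitely many $(s,t)$ contribute, and each such $s$ splits as $u+v$ in only finitely many ways by the same lemma. Having verified finiteness, both iterated sums collapse to the same triple sum, giving associativity.

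The only nonroutine step is confirming that $\supp(ab)$ is well-ordered and that the triple sum in associativity is legitimately finite; both reduce directly to Lemmas~\ref{lem-root-finite} and~\ref{lem-convolution}, which have already been established, so there is no real obstacle left.
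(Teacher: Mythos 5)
Your proof follows the paper's approach exactly: Lemma~\ref{lem-root-finite} for finiteness of the defining sum, the inclusion $\supp(ab)\sse\supp(a)*\supp(b)$ together with Lemma~\ref{lem-convolution} to get $ab\in P$, and then routine verification of the ring axioms with $e=x^0$ as identity. The only difference is that you spell out the associativity finiteness check that the paper dismisses as ``straightforward,'' and that added detail is correct.
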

\begin{proof}
 Lemma~\ref{lem-root-finite} shows that the sum is essentially finite,
 so there is no problem with convergence.  It is clear that
 $\supp(ab)\sse\supp(a)*\supp(b)$, and Lemma~\ref{lem-convolution}
 shows that $\supp(a)*\supp(b)$ is well-ordered, so $ab\in P$.  It is
 straightforward to check that the multiplication operation is
 commutative, associative and bilinear.  Moreover, if we define
 $e(0)=1$ and $e(q)=0$ for $q\neq 0$, then $e$ is a multiplicative
 identity element for $P$. 
\end{proof}

\begin{definition}\label{defn-root-dl}
 For $a\in P\sm\{0\}$, we put $\dl(a)=\min(\supp(a))$.  We also put
 $\dl(0)=\infty$.  
\end{definition}
\begin{remark}\label{rem-root-dl}
 Note that if $\dl(a)+\dl(b)\leq 1$ we have 
 \[ (ab)(\dl(a)+\dl(b)) =
     a(\dl(a))\;b(\dl(b)) \neq 0, 
 \]
 so $ab\neq 0$ and $\dl(ab)=\dl(a)+\dl(b)$.  On the other hand, if
 $\dl(a)+\dl(b)>1$ then $ab=0$.
\end{remark}

\begin{definition}\label{defn-x-powers}
 For $q\in\R\cup\{\infty\}$ with $q\geq 0$, we
 define $x^q\in P$ by 
 \[ x^q(u) =
     \begin{cases}
      1 & \text{ if } u=q \\
      0 & \text{ otherwise. }
     \end{cases}
 \]
\end{definition}

\begin{remark}\label{rem-x-powers}
 We note that
 \begin{itemize}
  \item[(a)] $x^0$ is the multiplicative identity element $e$.
  \item[(b)] If $q>1$ then $x^q=0$.
  \item[(c)] If $0\leq q \leq 1$ then $\dl(x^q)=q$.
  \item[(d)] For all $q,r\geq 0$ we have $x^qx^r=x^{q+r}$.
 \end{itemize}
\end{remark}

\begin{lemma}\label{lem-root-units}
 Consider an element $a\in P\sm\{0\}$.  If $a(0)=0$ (or equivalently,
 $\dl(a)>0$) then $a$ is nilpotent, but if $\dl(a)=0$ then $a$ is
 invertible. 
\end{lemma}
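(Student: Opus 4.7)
The plan is to split on whether $\dl(a)>0$ or $\dl(a)=0$; these cases are exhaustive because $\supp(a)$ is a nonempty well-ordered subset of $[0,1]$ and so has a smallest element, which equals zero precisely when $a(0)\neq 0$. I would handle the nilpotent case first, then bootstrap it to prove invertibility in the other case.

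For the first case, assume $\dl(a)=q>0$. I would prove by induction on $n\geq 1$ that either $a^n=0$ or $\dl(a^n)=nq$. The step assumes $a^n\neq 0$ with $\dl(a^n)=nq$, and then Remark~\ref{rem-root-dl} applied to the pair $(a^n,a)$ gives $\dl(a^{n+1})=(n{+}1)q$ when $(n{+}1)q\leq 1$, and $a^{n+1}=0$ when $(n{+}1)q>1$. Since every nonzero element of $P$ has $\dl\leq 1$, any integer $n>1/q$ forces $a^n=0$, so $a$ is nilpotent.

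For the second case, $\dl(a)=0$ means $c:=a(0)\in K^\times$. I would decompose $a=c\,e+b$, where $b(0)=0$ and $b(q)=a(q)$ for $q>0$; note $\supp(b)\sse\supp(a)$, so $b\in P$. Either $b=0$, giving $a^{-1}=c^{-1}e$, or $\dl(b)>0$, in which case the first case supplies an integer $N$ with $b^N=0$. Setting $d=c^{-1}b$ and using the standard finite geometric series identity $(e+d)\sum_{k=0}^{N-1}(-d)^k=e$, one obtains $a^{-1}=c^{-1}\sum_{k=0}^{N-1}(-d)^k\in P$. There is essentially no obstacle here: both halves reduce to Remark~\ref{rem-root-dl} together with the general fact that a unit plus a nilpotent is invertible. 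The only structural observation needed is that $\dl\leq 1$ on nonzero elements of $P$, which is built into the definition of $P$ as functions on $[0,1]$.
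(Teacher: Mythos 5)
Your proposal is correct and follows essentially the same route as the paper: both use Remark~\ref{rem-root-dl} to show that a sufficiently high power of $a$ vanishes when $\dl(a)>0$, and both write $a=ce+b$ with $b$ nilpotent and invert via the geometric series when $\dl(a)=0$. The paper is just slightly more terse, observing directly that $a^n=0$ once $n>1/\dl(a)$ rather than spelling out the induction.
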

\begin{proof}
 If $\dl(a)>0$ then we can find a positive integer $n$ with
 $\dl(a)>1/n$, and using Remark~\ref{rem-root-dl} we see that
 $a^n=0$.  Suppose instead that $\dl(a)=0$.  We can then write
 $a=ue+b=u(e+b/u)$ where $u\in K\sm 0$ and $e=x^0$ is the
 multiplicative identity of $P$ and $\dl(b)>0$, so $b^n=0$ for some
 $n$.  Now $a$ has inverse $\sum_{i=0}^{n-1} u^{-1}(-b/u)^i$.
\end{proof}

\begin{corollary}\label{cor-root-reduced}
 The map $a\mapsto a(0)$ induces an isomorphism $P/\sqrt{0}\to K$.
\end{corollary}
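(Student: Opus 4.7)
The plan is to identify the map $\epsilon\:P\to K$ given by $\epsilon(a)=a(0)$ as a surjective ring homomorphism whose kernel is exactly the nilradical. The corollary then follows from the first isomorphism theorem.

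First I would check that $\epsilon$ is a ring homomorphism. It is additive and $K$-linear by the definition of addition on $P$. For multiplicativity, observe that
\[ \epsilon(ab) = (ab)(0) = \sum_{u+v=0} a(u)b(v), \]
and since the sum runs over $u,v\in[0,1]$ with $u+v=0$, the only contributing pair is $(0,0)$, giving $\epsilon(ab)=a(0)b(0)=\epsilon(a)\epsilon(b)$. Also $\epsilon(e)=e(0)=1$. Surjectivity is immediate: for any $u\in K$, the element $u\cdot e\in P$ satisfies $\epsilon(u\cdot e)=u$.

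The core of the argument is the identification $\ker(\epsilon)=\sqrt{0}$, which I would read straight off Lemma~\ref{lem-root-units}. If $a\in\ker(\epsilon)$ then either $a=0$, or $a\neq 0$ and $\dl(a)>0$; in the latter case Lemma~\ref{lem-root-units} supplies a positive integer $n$ with $a^n=0$, so $a\in\sqrt{0}$. Conversely, if $a\in\sqrt{0}$ then $a$ is not invertible, and Lemma~\ref{lem-root-units} says any element with $\dl(a)=0$ (equivalently $a(0)\neq 0$) is invertible, so we must have $a(0)=0$, i.e.\ $a\in\ker(\epsilon)$. Combining these steps, $\epsilon$ induces an isomorphism $P/\sqrt{0}\xra{\simeq}K$.

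There is no real obstacle here; the only point that benefits from a moment's care is the multiplicativity check, because one has to notice that the constraint $u,v\geq 0$ together with $u+v=0$ collapses the convolution sum to a single term.
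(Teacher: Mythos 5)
Your proposal is correct and takes exactly the approach the paper intends: the paper's proof is simply ``Clear,'' meaning it is to be read off immediately from Lemma~\ref{lem-root-units}, and your write-up is just the spelled-out version of that reasoning (evaluation at $0$ is a surjective ring homomorphism whose kernel is, by the lemma, precisely the nilradical).
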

\begin{proof}
 Clear.
\end{proof}

\begin{definition}\label{defn-root-lambda}
 For $a\in P$ with $\dl(a)\geq t$, we define $\lm_t(a)\in P$ by 
 \[ \lm_t(a)(r) = 
     \begin{cases}
      a(r+t) & \text{ if } 0 \leq r \leq 1-t \\
      0 & \text{ if } 1-t < r \leq 1. 
     \end{cases}
 \]
\end{definition}

\begin{corollary}\label{cor-root-units}
 If $\dl(a)\geq t$ then $a=x^t\,\lm_t(a)$ and
 $\dl(\lm_t(a))=\dl(a)-t$.  Moreover, if $\dl(a)=t$ then $\lm_t(a)$ is
 invertible, so $Pa=Px^t$.
\end{corollary}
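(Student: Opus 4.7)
The plan is to unwind the definition of $\lm_t(a)$ and verify each claim by direct computation, using the convolution formula for multiplication and the prior results about $\dl$ and units.

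First I would check that $\lm_t(a)$ genuinely lies in $P$. From the definition, $\supp(\lm_t(a)) = \{s - t : s \in \supp(a),\ s \leq 1\}$, which is an order-preserving translate of a subset of the well-ordered set $\supp(a)$, hence well-ordered. This also immediately gives $\dl(\lm_t(a)) = \dl(a) - t$, since the minimum of the translated set is $\dl(a) - t$ (and if $a = 0$ there is nothing to check, with both sides $\infty$).

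Next I would verify the factorisation $a = x^t \lm_t(a)$ by expanding the product using Proposition~\ref{prop-root-ring}. For any $w \in [0,1]$,
\[
(x^t \lm_t(a))(w) = \sum_{u+v=w} x^t(u)\,\lm_t(a)(v),
\]
and since $x^t(u) = 0$ unless $u = t$, the sum collapses to $\lm_t(a)(w - t)$ when $w \geq t$ and to $0$ when $w < t$. In the case $w \geq t$, we have $0 \leq w - t \leq 1 - t$, so this equals $a(w)$ by definition of $\lm_t$. In the case $w < t$, the hypothesis $\dl(a) \geq t$ forces $a(w) = 0$, matching the product. Hence $x^t \lm_t(a) = a$.

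For the final clause, suppose $\dl(a) = t$. Then by what we just proved, $\dl(\lm_t(a)) = 0$, so by Lemma~\ref{lem-root-units} the element $\lm_t(a)$ is invertible. The equality $a = x^t \lm_t(a)$ then yields $Pa = P x^t \lm_t(a) = P x^t$, since multiplication by a unit preserves the principal ideal. No step here is particularly difficult; the only mild subtlety is remembering to truncate at $1$ when defining $\lm_t$, which is exactly what the case analysis on whether $w \geq t$ handles.
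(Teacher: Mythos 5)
Your proof is correct and follows exactly the same route as the paper, which simply asserts that the first two claims are ``clear from the definitions'' and the third follows from Lemma~\ref{lem-root-units}; you have merely spelled out the convolution computation and the translation of supports that the paper leaves implicit.
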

\begin{proof}
 The first two claims are clear from the definitions, and the third
 then follows using Lemma~\ref{lem-root-units}.
\end{proof}

\begin{definition}\label{defn-root-Jt}
 For $t\in[0,1]$ we put 
 \begin{align*}
  J_t &= \{a\in P \st\dl(a)>t\} \\
  \ov{J}_t &= \{a\in P\st\dl(a)\geq t\} = Px^t.
 \end{align*}
\end{definition}

\begin{proposition}\label{prop-root-ideals}
 Every ideal in $P$ has the form $J_t$ or $\ov{J}_t$.
\end{proposition}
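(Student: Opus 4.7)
The plan is to classify an ideal $I \le P$ by the number
\[ t = \inf\{\dl(a) \st a\in I,\; a\neq 0\} \in [0,1], \]
with the convention that $I=0$ corresponds to $I=J_1$ (since $J_1=\{0\}$). The key input is Corollary~\ref{cor-root-units}, which tells us that any nonzero $a\in P$ with $\dl(a)=s$ satisfies $Pa=Px^s=\ov{J}_s$. So the principal ideals are already known to have the desired form, and the task is to leverage this to control arbitrary ideals.

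Assuming $I\neq 0$, I would split into two cases based on whether the infimum $t$ is attained. If there exists $a\in I$ with $\dl(a)=t$, then Corollary~\ref{cor-root-units} gives $\ov{J}_t=Pa\sse I$, while the reverse inclusion $I\sse\ov{J}_t$ is immediate from the definition of $t$: every nonzero $b\in I$ has $\dl(b)\geq t$. Hence $I=\ov{J}_t$.

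If the infimum is not attained, then every nonzero $b\in I$ satisfies $\dl(b)>t$, so $I\sse J_t$. For the reverse inclusion, take any $b\in J_t$, so $\dl(b)>t$. Since $t$ is the infimum of the $\dl$-values of nonzero elements of $I$ but $\dl(b)>t$, there must exist some nonzero $a\in I$ with $\dl(a)<\dl(b)$ (otherwise $\dl(b)$ would be a lower bound strictly greater than $t$). Setting $s=\dl(a)$, Corollary~\ref{cor-root-units} gives $\ov{J}_s=Pa\sse I$, and since $\dl(b)>s$ we have $b\in\ov{J}_s\sse I$. This forces $I=J_t$.

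I do not anticipate any real obstacle; this is the standard observation that a ring with a valuation-like function has totally ordered principal ideals, and every ideal is a union of principal ones. The boundary cases fit into the framework without extra work: the zero ideal is $J_1$, and the unit ideal is $\ov{J}_0=P$ (attained by $a=1$, giving $t=0$).
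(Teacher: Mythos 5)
Your proof is correct and follows the same route as the paper: set $t=\inf\{\dl(a)\st a\in I,\,a\neq 0\}$, handle $I=0$ separately, and split into cases according to whether the infimum is attained, using Corollary~\ref{cor-root-units} to identify principal ideals with the $\ov{J}_s$ and then absorb arbitrary elements of the relevant $J_t$ or $\ov{J}_t$. There is no substantive difference from the paper's argument.
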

\begin{proof}
 Let $I$ be an ideal in $P$.  If $I=0$ then $I=J_1$.  Otherwise, we
 put $t=\inf\{\dl(a)\st a\in I\}$.  If $t=\dl(a)$ for some $a\in I$
 then Corollary~\ref{cor-root-units} shows
 that $x^t\in I$, and it follows easily that $I=\ov{J}_t$.  Suppose
 instead that there is no element $a\in I$ with $\dl(a)=t$.  It is
 then clear that $I\leq J_t$.  Moreover, if $b\in J_t$ then $\dl(b)>t$
 so (by the infimum condition) there exists $a\in I$ with
 $\dl(b)>\dl(a)>t$.  After applying Corollary~\ref{cor-root-units} to
 $a$ and $b$, we see that $b$ is a multiple of $a$, and so $b\in I$.
 We now see that $I=J_t$, as required.
\end{proof}

\begin{proposition}\label{prop-root-ann}
 For all $t\in [0,1]$ we have $\ann_P(J_t)=\ov{J}_{1-t}$ and
 $\ann_P(\ov{J}_t)=J_{1-t}$. 
\end{proposition}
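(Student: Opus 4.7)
The plan is to prove both equalities by a pair of inclusions in each case, using Remark~\ref{rem-root-dl} as the main computational tool (which says $ab=0$ iff $\delta(a)+\delta(b)>1$, provided $a,b\neq 0$) together with the elements $x^q$ from Definition~\ref{defn-x-powers} as witnesses for the nontrivial direction.

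For the inclusion $\overline{J}_{1-t}\subseteq \ann_P(J_t)$, I would take $a\in \overline{J}_{1-t}$ and $b\in J_t$. If either is zero the product is zero; otherwise $\delta(a)\geq 1-t$ and $\delta(b)>t$, so $\delta(a)+\delta(b)>1$, and Remark~\ref{rem-root-dl} gives $ab=0$. The same argument handles $J_{1-t}\subseteq \ann_P(\overline{J}_t)$: if $\delta(a)>1-t$ and $\delta(b)\geq t$, then again $\delta(a)+\delta(b)>1$.

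For the reverse inclusions, I would argue contrapositively by exhibiting a specific nonzero product. Suppose $a\in\ann_P(J_t)$ with $a\neq 0$ and $\delta(a)<1-t$; then the interval $(t,\,1-\delta(a)]$ is nonempty, so I pick any rational (or real) $q$ in it. Then $q>t$ gives $x^q\in J_t$, while $\delta(a)+q\leq 1$ gives $ax^q\neq 0$ by Remark~\ref{rem-root-dl}, a contradiction. Hence $\delta(a)\geq 1-t$, i.e.\ $a\in\overline{J}_{1-t}$. For the other ideal, suppose $a\in\ann_P(\overline{J}_t)$ with $a\neq 0$ and $\delta(a)\leq 1-t$; then $x^t\in \overline{J}_t$ and $\delta(a)+t\leq 1$, so $ax^t\neq 0$, contradiction. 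Therefore $\delta(a)>1-t$, i.e.\ $a\in J_{1-t}$. The degenerate case $a=0$ lies in every ideal, so it causes no trouble.

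There is no real obstacle here; the only thing to watch is the boundary behavior (for example $t=0$, giving $J_1=0$ and $\overline{J}_0=P$, or $t=1$, giving $\overline{J}_1=Px$ and $J_0$ equal to the set of elements of positive $\delta$), but in every case the choice of exponent $q$ for the witness $x^q$ either exists or is unnecessary because the ideal on one side is zero. So the proof is essentially one paragraph using Remark~\ref{rem-root-dl} twice.
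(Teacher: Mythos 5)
Your proof is correct and takes the same approach as the paper: the paper proves this in a single sentence by appealing to the fact from Remark~\ref{rem-root-dl} that $ab=0$ iff $\dl(a)+\dl(b)>1$, and you have simply written out in full the two pairs of inclusions that this fact implies, using $x^q$ as the needed witness in the contrapositive direction. (Minor typographical slip: you write $\ov{J}_1=Px$ where you mean $Px^1$.)
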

\begin{proof}
 This follows easily from the fact that $ab=0$ iff $\dl(a)+\dl(b)>1$. 
\end{proof}

\begin{corollary}
 For any ideal $I\leq P$ we have $\ann^2_P(I)=I$.
\end{corollary}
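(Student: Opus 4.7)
The plan is very short: combine the two preceding propositions. By Proposition~\ref{prop-root-ideals}, every ideal $I\leq P$ is either of the form $J_t$ or $\ov{J}_t$ for some $t\in[0,1]$, so it suffices to verify $\ann_P^2(I)=I$ in each of these two cases.

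First I would dispose of the case $I=J_t$. Proposition~\ref{prop-root-ann} gives $\ann_P(J_t)=\ov{J}_{1-t}$, and then a second application of the same proposition gives
\[
 \ann_P^2(J_t) = \ann_P(\ov{J}_{1-t}) = J_{1-(1-t)} = J_t.
\]
The case $I=\ov{J}_t$ is entirely symmetric: one has $\ann_P(\ov{J}_t)=J_{1-t}$ and hence
\[
 \ann_P^2(\ov{J}_t) = \ann_P(J_{1-t}) = \ov{J}_{1-(1-t)} = \ov{J}_t.
\]

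There is essentially no obstacle; the only thing to keep an eye on is the behaviour at the endpoints $t=0$ and $t=1$, where $J_1=0$, $\ov{J}_0=P$, $J_0$ is the maximal ideal of elements with $a(0)=0$, and $\ov{J}_1=Kx^1$. One should check that Proposition~\ref{prop-root-ann} is being applied in a range where it is stated (i.e.\ $t\in[0,1]$) and that the computations $\ann_P(0)=P$ and $\ann_P(P)=0$ are consistent with the formulas above, which they are. No further argument is needed.
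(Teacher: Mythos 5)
Your proof is correct and matches the paper's argument exactly: the paper's proof of this corollary is simply ``Immediate from the last two propositions,'' meaning precisely the combination of Proposition~\ref{prop-root-ideals} (classification of ideals as $J_t$ or $\ov{J}_t$) and Proposition~\ref{prop-root-ann} (the annihilator formulas) that you spelled out.
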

\begin{proof}
 Immediate from the last two propositions.
\end{proof}

\begin{proposition}\label{prop-root-selfinj}
 $P$ is self-injective.
\end{proposition}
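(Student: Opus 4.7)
The plan is to invoke the Baer criterion of Proposition~\ref{prop-baer-graded}. Since $P$ is concentrated in a single degree, the criterion reduces to showing that for every ideal $I\leq P$ and every $P$-linear map $\phi\:I\to P$, there exists $m\in P$ with $\phi(a)=am$ for all $a\in I$. By Proposition~\ref{prop-root-ideals}, every ideal has the form $\ov{J}_t$ or $J_t$ for some $t\in[0,1]$, so it suffices to handle these two families separately.

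The principal case $I=\ov{J}_t=Px^t$ is immediate. The map $\phi$ is determined by $y=\phi(x^t)$, and $P$-linearity forces $y\cdot\ann_P(x^t)=0$. By Proposition~\ref{prop-root-ann} we have $\ann_P(x^t)=\ann_P(\ov{J}_t)=J_{1-t}$ and $\ann_P(J_{1-t})=\ov{J}_t=Px^t$, so $y=mx^t$ for some $m\in P$. Then $\phi(bx^t)=b\phi(x^t)=m\cdot bx^t$ for all $b$, and $m$ is the required transporter.

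For the non-principal case $I=J_t$ (with $t<1$; the case $t=1$ gives $I=0$), the plan is to glue transporters coming from the sub-ideals $\ov{J}_s\sse J_t$ for $s\in(t,1]$. Applying the previous paragraph to each restriction $\phi|_{\ov{J}_s}$ produces $m_s\in P$ with $\phi(a)=am_s$ for $a\in\ov{J}_s$. If $t<s<s'\leq 1$, both $m_s$ and $m_{s'}$ transport $\ov{J}_{s'}$, so their difference lies in $\ann_P(\ov{J}_{s'})=J_{1-s'}$; equivalently, $m_s(u)=m_{s'}(u)$ whenever $u\leq 1-s'$. This consistency allows me to define $m\:[0,1]\to K$ by $m(u)=m_s(u)$ for any $s\in(t,1-u]$ when $u<1-t$, and $m(u)=0$ otherwise; the value $m(u)$ is independent of the chosen $s$. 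For any $a\in J_t$ with $\dl(a)=s>t$ we have $m-m_s\in J_{1-s}$ by construction and $a\in\ov{J}_s$, so $(m-m_s)a=0$ and therefore $am=am_s=\phi(a)$.

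The main obstacle I expect is checking that $m$ actually lies in $P$, that is, that $\supp(m)$ is well-ordered. The key point is that $\supp(m)\cap[0,1-s]\sse\supp(m_s)$ for every $s\in(t,1)$: given a strictly decreasing sequence $u_0>u_1>u_2>\dotsb$ in $\supp(m)$, all its terms stay below the first term $u_0<1-t$, so choosing any $s\in(t,1-u_0]$ confines the whole sequence to $\supp(m_s)$, which is well-ordered by hypothesis, forcing termination. Once $m\in P$ is established, the transport condition verified above completes the Baer criterion and hence self-injectivity.
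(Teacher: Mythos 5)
Your proof is correct and follows essentially the same route as the paper: the Baer criterion applied to the classification of ideals, an immediate transporter for the principal ideals $\ov{J}_t$, and a gluing of local transporters $m_s$ over $\ov{J}_s$ (for $s\in(t,1]$) to handle the non-principal ideal $J_t$. Your explicit argument that $\supp(m)$ is well-ordered --- confining any decreasing sequence to some $\supp(m_s)$ by choosing $s$ according to the first term --- usefully fills in a step the paper compresses into ``it follows easily.''
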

\begin{proof}
 As we have classified all ideals in $P$, we can use Baer's criterion.
 Consider a number $t\in [0,1]$ and a $P$-module map
 $f\:\ov{J}_t=(x^t)\to P$.  If $f(x^t)=a$ then we must have
 $J_{1-t}a=f(J_{1-t}x^t)=f(0)=0$, so $a\in\ann(J_{1-t})=\ov{J}_t$, so
 $a=x^t\lm_t(a)$.  We can now define $f'\:P\to P$ extending $f$ by
 $f'(p)=p\,\lm_t(a)$, so Baer's criterion is satisfied in this case.

 Now consider instead a $P$-module map $f\:J_t\to P$.  If $t=1$ then
 $J_t=0$ and the zero map $P\to P$ extends $f$.  We suppose instead
 that $t<1$.  For $s\in(t,1]$ we put $a_s=\lm_s(f(x^s))$, so the first
 case shows that $f(p)=p a_s$ for all $p\in\ov{J}_s<J_t$.  Now suppose
 that $t<r\leq s\leq 1$.  As $x^s\in\ov{J}_s\leq\ov{J}_r$ we have
 $x^s(a_r-a_s)=f(x^s)-f(x^s)=0$, so $a_r(q)=a_s(q)$ for all
 $q\leq 1-s$.  Moreover, from the definition of the $\lm$ operation we
 have $a_s(q)=0$ for $q>1-s$, and thus certainly for $q\geq 1-t$.  We
 now see that there is a unique map $a\:[0,1]\to K$ with $a=a_s$ on
 $[0,1-s]$ (for all $s\in(t,1]$) and $a=0$ on $[1-t,1]$.  It
 follows easily from these properties that $\supp(a)$ is well-ordered,
 so $a\in P$.  We also see from the first property that $f$ agrees
 with multiplication by $a$ on $\ov{J}_s$ for all $s\in(t,1]$.  It
 follows that the same is true on $\bigcup_{s\in(t,1]}\ov{J}_s=J_t$,
 as required.
\end{proof}

\begin{proposition}\label{prop-root-incoherent}
 $P$ is totally incoherent.
\end{proposition}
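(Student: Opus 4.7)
The plan is to exploit the full classification of ideals from Proposition~\ref{prop-root-ideals}: every ideal of $P$ has the form $\ov{J}_t$ or $J_t$. Since any finitely presented ideal is in particular finitely generated, the strategy is to first determine which of these ideals are finitely generated, and then, among those, which are finitely presented.

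For the ``open'' ideals I would show directly that $J_t$ fails to be finitely generated for every $0\leq t<1$. Given a finite list $a_1,\dotsc,a_n\in J_t$, each satisfies $\dl(a_i)>t$, so (finiteness!) $s=\min_i\dl(a_i)$ exists and satisfies $s>t$. Every $P$-linear combination of the $a_i$ then lies in $\ov{J}_s$, whereas for any $r\in(t,s)$ the element $x^r\in J_t\setminus\ov{J}_s$. Hence the $a_i$ cannot generate $J_t$, and consequently every finitely generated ideal is of the ``closed'' form $\ov{J}_t=Px^t$.

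For the closed ideals I would analyze a presentation directly. The surjection $g\:P\to\ov{J}_t$ given by $g(p)=px^t$ has kernel
\[ \ann_P(x^t)=\ann_P(\ov{J}_t)=J_{1-t}, \]
by Proposition~\ref{prop-root-ann}. If $0<t\leq 1$ then $1-t<1$, and the previous step tells us that $J_{1-t}$ is not finitely generated; by the ``every epimorphism'' form of finite presentation in Definition~\ref{defn-fp}(b), $\ov{J}_t$ itself fails to be finitely presented. The only surviving candidates are $\ov{J}_0=P$ and $J_1=0$, both trivially finitely presented, so the only finitely presented ideals are $0$ and $P$, which is exactly total incoherence.

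The main obstacle is conceptual rather than technical: one cannot invoke Corollary~\ref{cor-incoherent} here, because every element $a\in\mxi=J_0$ with $\dl(a)=t>0$ factors as $x^{t/2}\cdot(x^{t/2}\lm_t(a))$ with both factors in $\mxi$, forcing $\mxi^2=\mxi$ and hence $\mxi/\mxi^2=0$. The obstruction to finite presentation must therefore be found elsewhere, and it is precisely that the annihilator of each nontrivial principal ideal is an ``open'' ideal of the form $J_{1-t}$, which is too large to be finitely generated.
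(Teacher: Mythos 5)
Your proof is correct and follows essentially the same path as the paper: identify every finitely generated ideal as $\ov{J}_t$, present it by $g\:P\to\ov{J}_t$, compute $\ker(g)=J_{1-t}$ via Proposition~\ref{prop-root-ann}, and observe that this is too large to be finitely generated unless $t=0$. You reach the identification of finitely generated ideals via the classification in Proposition~\ref{prop-root-ideals} rather than directly from Corollary~\ref{cor-root-units} as the paper does, and you spell out the argument that $J_s$ is not finitely generated for $s<1$ (which the paper merely asserts); your closing observation that $\mxi^2=\mxi$ blocks any use of Corollary~\ref{cor-incoherent} is also correct and a nice aside.
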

\begin{proof}
 Let $I$ be a finitely generated ideal, say $I=(a_1,\dotsc,a_r)$,
 where we can assume that the generators $a_i$ are nonzero.  If $r=0$
 then $I=0$, and this is finitely presented.  If $r>0$ we can use
 Corollary~\ref{cor-root-units} to see that $I=\ov{J}_t$, where
 $t=\min(\dl(a_1),\dotsc,\dl(a_r))$.  

 Now suppose that $I$ is nonzero and finitely presented.  We must
 have $I=\ov{J}_t$ for some $t$, so we have an epimorphism $g\:P\to I$
 given by $g(a)=ax^t$.  Definition~\ref{defn-fp} tells us that
 $\ker(g)$ must also be finitely generated, but
 $\ker(g)=\ann_P(x^t)=J_{1-t}$, and this is only finitely generated
 when $t=0$ and so $\ker(g)=J_1=0$ and $I=\ov{J}_0=P$.
\end{proof}

\begin{remark}\label{rem-P-prime}
 Put $P'=\{a\in P\st\supp(a)\sse\Q\}$.  This is a subring of $P$, and
 one can adapt the above arguments to show that it is again
 self-injective and totally incoherent.  Every ideal in $P'$ has the
 form $J_t\cap P'$ or $\ov{J}_t\cap P'$ for some $t\in[0,1]$, and
 these are all distinct except for the fact that
 $J_t\cap P'=\ov{J}_t\cap P'$ when $t$ is irrational.
\end{remark}

\section{The Rado algebra}
\label{sec-rado}

In this section we study the Rado algebra $Q$, which was defined in
Definition~\ref{defn-rado}.  We will write $\Gm$ for the Rado graph.

We first clarify the kinds of graphs that we will consider.
\begin{definition}\label{defn-graph}
 A \emph{graph} is a pair $(V,E)$, where $V$ is a set and $E$ is a
 subset of $V\tm V$ such that
 \begin{itemize}
  \item[(a)] For all $v\in V$ we have $(v,v)\not\in E$.
  \item[(b)] For all $v,w\in V$ we have ($(v,w)\in E$ iff
   $(w,v)\in E$). 
 \end{itemize}
\end{definition}
\begin{definition}\label{defn-graph-embedding}
 Let $G=(V,E)$ and $G'=(V',E')$ be graphs.  A \emph{full embedding} of
 $G$ in $G'$ is an injective map $f\:V\to V'$ such $E=(f\tm
 f)^{-1}(E')$ (so vertices $v_0,v_1\in V$ are linked by an edge in $G$
 iff the images $f(v_0)$ and $f(v_1)$ are linked by an edge in $G'$).
 Similarly, a \emph{full subgraph} of $G'$ is a graph of the form
 $G=G'|_V=(V,E'\cap V^2)$ for some subset $V\sse V'$, so the inclusion
 map gives a full embedding $G\to G'$.
\end{definition}

\begin{lemma}\label{lem-graph-extension}
 Suppose we have a finite graph $G'$, a full subgraph $G$, and a full
 embedding $f\:G\to\Gm$.  Then there is a full embedding
 $f'\:G'\to\Gm$ extending $f$.
\end{lemma}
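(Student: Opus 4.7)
The plan is to reduce the lemma to the classical extension (or one-point-extension) property of the Rado graph $\Gm$, namely: for any disjoint finite subsets $A,B\sse\N$ and any finite set $C\sse\N$, there exists $v\in\N\sm C$ that is joined to every vertex of $A$ and to no vertex of $B$. Once this is in hand, the lemma follows by a routine induction on the number of new vertices.

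First I would prove the extension property directly from the binary-expansion definition. Given $A$, $B$, $C$ as above, choose any integer $N$ strictly greater than every element of $A\cup B\cup C\cup\{0\}$, and set
\[ v \;=\; 2^N + \sum_{a\in A} 2^a. \]
Then $B(v)=A\cup\{N\}$, so for every $a\in A$ we have $a\in B(v)$ and hence an edge from $a$ to $v$ in $\Gm$. For $b\in B$ we have $b\notin B(v)$ (because $A\cap B=\emptyset$ and $b<N$), and conversely $v\notin B(b)$ (because every element of $B(b)$ is at most $\log_2 b<N<v$), so there is no edge from $b$ to $v$. Finally $v>N>\max(C)$, so $v\notin C$. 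This is the crux of the whole argument; everything else is formal.

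Next, write the vertex sets of $G$ and $G'$ as $V\sse V'$, and enumerate the new vertices as $V'\sm V=\{w_1,\dotsc,w_r\}$. I would build the extension one step at a time, constructing full embeddings
\[ f=f_0\sse f_1\sse\dotsb\sse f_r=f' \]
with $f_i$ defined on $V_i:=V\cup\{w_1,\dotsc,w_i\}$. Given $f_i$, partition the already-placed vertices of $G'$ according to whether they are adjacent to $w_{i+1}$ in $G'$: set $A_i=f_i(\{u\in V_i\st (u,w_{i+1})\in E'\})$ and $B_i=f_i(\{u\in V_i\st (u,w_{i+1})\notin E'\})$. Because $f_i$ is injective, $A_i$ and $B_i$ are disjoint finite subsets of $\N$. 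Apply the extension property with $C=f_i(V_i)$ to obtain a vertex $v_{i+1}\in\N\sm f_i(V_i)$ adjacent to all of $A_i$ and none of $B_i$, and set $f_{i+1}(w_{i+1})=v_{i+1}$.

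It remains to check that the resulting $f_{i+1}\:V_{i+1}\to\N$ is a full embedding. Injectivity holds because $v_{i+1}\notin f_i(V_i)$ and $f_i$ is already injective. For the edge condition, restricting to $V_i$ it is inherited from $f_i$, while the edges involving $w_{i+1}$ are correct by the defining property of $v_{i+1}$. After $r$ steps we obtain the required $f'=f_r$. The only potentially subtle point is the extension property itself; the inductive extension is then a mechanical one-vertex-at-a-time construction.
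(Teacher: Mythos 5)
Your proof is correct and follows essentially the same route as the paper's: reduce to adding one vertex at a time and realise the new vertex as $2^N+\sum_{a\in A}2^a$ with $N$ large. The only cosmetic difference is that you explicitly isolate and verify the one-point extension property of $\Gamma$ as an intermediate step, whereas the paper folds that verification into the phrase ``straightforward to check.''
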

\begin{proof}
 It is easy to reduce to the case where $G'$ has only one more vertex
 than $G$, say $V'=V\amalg\{x\}$.  Put $A=\{v\in V\st (v,x)\in E'\}$
 and $N=\max\{f(v)\st v\in V\}+1$, then let $f'\:V'\to\N$ be the map
 extending $f$ with $f'(x)=2^N+\sum_{v\in A}2^{f(v)}$.  It is
 straightforward to check that this has the required properties.
\end{proof}

\begin{remark}\label{rem-rado-thin}
 As we mentioned in Example~\ref{eg-exterior}, each group $E_k$ (for
 $k\geq 0$) is isomorphic to $\F$.  The generator is the element
 $y_k=x_{B(k)}=\prod_{i\in B(k)}x_i$.  We say that a finite subset
 $I\sse\N$ is \emph{$\Gm$-complete} if the full subgraph $\Gm|_I$ is a
 complete graph (so every two distinct points are linked by an edge).
 We say that a natural number $n$ is \emph{$B\Gm$-complete} if $B(n)$
 is $\Gm$-complete.  It is clear that the set 
 \[ \{y_n\st n \text{ is not $B\Gm$-complete } \} \]
 is a basis for the Rado ideal, and thus that the set 
 \[ \{y_n\st n \text{ is $B\Gm$-complete } \} \]
 gives a basis for $Q$.
\end{remark}

\begin{proposition}\label{prop-rado-dac}
 For any finitely generated ideal $I\leq Q$, we have $\ann^2(I)=I$.
 (In other words, $Q$ satisfies the double annihilator condition.)
\end{proposition}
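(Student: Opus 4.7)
The plan is to reduce the question to a statement about monomials and then exploit the Rado graph's finite-extension property. By Remark~\ref{rem-rado-thin}, $Q$ has an $\F$-basis of monomials $y_n$ indexed by the $B\Gm$-complete natural numbers $n$, and each homogeneous component $Q_d$ has dimension at most one. Hence a finitely generated graded ideal $I$ is generated by basis monomials $y_{k_1},\dotsc,y_{k_r}$, and a basis monomial $y_j$ lies in $I$ iff $B(k_i)\sse B(j)$ for some $i$. I would then verify that $\ann(I)$ is likewise spanned by basis monomials. For $a=\sum_m c_m y_m$, the nonzero products $y_m y_{k_i}$ occupy pairwise distinct degrees (since $|y_m|=m$), so $ay_{k_i}=0$ forces $c_m y_m y_{k_i}=0$ for every $m$; thus $y_l\in\ann(I)$ iff for every $i$ one has $B(l)\cap B(k_i)\neq\emptyset$ or $B(l)\cup B(k_i)$ is not $\Gm$-complete. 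The same degree-separation argument, now applied to products with monomials in $\ann(I)$, reduces the goal $\ann^2(I)\sse I$ to the monomial statement: if a basis monomial $y_j$ is not in $I$, there is a basis monomial $y_l\in\ann(I)$ with $y_j y_l\neq 0$.

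So assume $y_j\not\in I$, i.e.\ $B(k_i)\not\sse B(j)$ for all $i$, and choose $p_i\in B(k_i)\sm B(j)$. The target is a finite set $L\sse\N\sm B(j)$, say $L=\{v_1,\dotsc,v_r\}$, with the $v_i$ mutually adjacent in $\Gm$, each adjacent to every element of $B(j)$, and with $v_i$ not adjacent to $p_i$. Granted this, $B(j)\cup L$ is $\Gm$-complete (so taking $B(l)=L$ gives $y_j y_l\neq 0$), while for each $i$ the pair $\{v_i,p_i\}\sse L\cup B(k_i)$ is a non-edge in $\Gm$, so $L\cup B(k_i)$ is not $\Gm$-complete and $y_l y_{k_i}=0$; hence $y_l\in\ann(I)$.

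The existence of the $v_i$ is supplied by Lemma~\ref{lem-graph-extension}. Form an abstract finite graph $G'$ on vertex set $B(j)\cup\{p_1,\dotsc,p_r\}\cup\{v_1,\dotsc,v_r\}$ whose full subgraph on $B(j)\cup\{p_1,\dotsc,p_r\}$ is inherited from $\Gm$; declare the $v_i$ mutually adjacent, each adjacent to every element of $B(j)$, and $v_i$ not adjacent to $p_i$, with the remaining edges $(v_i,p_{i'})$ for $i'\neq i$ specified arbitrarily. The lemma extends the inclusion $G:=\Gm|_{B(j)\cup\{p_1,\dotsc,p_r\}}\hookrightarrow\Gm$ to a full embedding $G'\to\Gm$, and the images of the $v_i$ furnish the required $L$. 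The main obstacle is setting up the two-stage reduction (of both $I$ and $\ann(I)$) to monomial ideals cleanly; once that is done the geometric step is a direct application of Lemma~\ref{lem-graph-extension}, exactly the universality feature that motivated the Rado graph's appearance here.
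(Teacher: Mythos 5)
Your proof is correct, but it takes a genuinely different route from the paper's. Both arguments begin with the same reduction: since $\dim_{\F}Q_d\leq 1$ for all $d$ (Remark~\ref{rem-rado-thin}), every ideal in $Q$ is spanned by the basis monomials it contains, so $I$, $\ann(I)$, and $\ann^2(I)$ are all monomial ideals, and the problem becomes: given a $\Gm$-complete $T$ with $T\not\supseteq A_i$ for all $i$, produce a monomial in $\ann(I)$ that does not annihilate $x_T$. The difference is in the witness. The paper constructs a \emph{single} new vertex explicitly: it takes $N$ larger than everything in sight and $n=2^N+\sum_{t\in T}2^t$, so $B(n)=\{N\}\cup T$; then $n$ is automatically adjacent to all of $T$ and, because $N$ is so large, to nothing in $\bigl(\bigcup_iA_i\bigr)\sm T$, so the one element $x_n$ does the whole job. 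You instead invoke Lemma~\ref{lem-graph-extension} to adjoin $r$ fresh vertices $v_1,\dotsc,v_r$, one per generator, each adjacent to all of $B(j)$ and non-adjacent to a chosen $p_i\in B(k_i)\sm B(j)$, and take the product $x_L$. Your construction is correct (the images $f'(v_i)$ land outside $B(j)\cup\{p_1,\dotsc,p_r\}$ by injectivity, so $L\cap B(j)=\emptyset$ and $f'(v_i)\neq p_i$, and the declared edges are consistent even when the $p_i$ repeat), but it is more elaborate than needed: a single new vertex adjacent to $B(j)$ and non-adjacent to every $p_i$ simultaneously would suffice, and is exactly what the paper's explicit $n$ provides. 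On the other hand, your approach has the conceptual merit of directly using the extension property that Remark~\ref{rem-rado-generic} flags as the reason the Rado graph appears here at all, and it is the only place Lemma~\ref{lem-graph-extension} actually gets used --- the paper states that lemma but its own proof of this proposition quietly bypasses it.
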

\begin{proof}
 Let $I\leq Q$ be a finitely generated ideal.  Because of
 Remark~\ref{rem-rado-thin}, the ideal $I$ must be generated by a
 finite list of monomials, say $I=(x_{A_1},\dotsc,x_{A_r})$, where
 each $A_i$ is a finite $\Gm$-complete subset of $\N$.  Simiilarly,
 $\ann^2(I)$ is generated by the monomials that it contains.

 Let $T$ be another $\Gm$-complete subset of $\N$.  If $T$ contains
 $A_i$ for some $i$, it is clear that $x_T\in I$.  Suppose instead
 that $T$ does not contain any of the $A_i$.  Let $N$ be strictly
 larger than any of the elements of $T\cup\bigcup_iA_i$, and put
 $n=2^N+\sum_{t\in T}2^t$, so $B(n)=\{N\}\cup T$.  It is clear that
 $n\not\in T$ and $T\cup\{n\}$ is $\Gm$-complete so $x_nx_T\neq 0$.
 However, we claim that $x_nx_{A_i}=0$ for all $i$.  Indeed, as
 $T\not\supseteq A_i$ we can choose $k\in A_i\sm T$.  As $N$ is so
 large we cannot have $n\in B(k)$, and also
 $k\not\in\{N\}\cup T=B(n)$, so $x_nx_k=0$, so $x_nx_{A_i}=0$ as
 claimed.  We now see that $x_n\in\ann(I)$, but $x_nx_T\neq 0$, so
 $x_T\not\in\ann^2(I)$.  It follows that $\ann^2(I)=I$ as claimed.
\end{proof}

\begin{proposition}\label{prop-rado-not-selfinj}
 $Q$ is not self-injective.  
\end{proposition}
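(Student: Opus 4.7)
The plan is to disprove self-injectivity by producing a concrete finitely generated ideal $I\le Q$ and a $Q$-module map $\phi\colon I\to Q$ that does not extend to $Q$; by Baer's criterion (Proposition~\ref{prop-baer-graded}) this is enough. In the language of Definition~\ref{defn-block}, this amounts to exhibiting a test pair with no block and no transporter.

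The crux is an observation about the structure of $Q$: if $i,j\in\N$ are non-adjacent vertices of the Rado graph $\Gm$, then $(x_i)\cap(x_j)=0$ in $Q$. Indeed, Remark~\ref{rem-rado-thin} provides the basis $\{x_T:T\text{ finite and $\Gm$-complete}\}$, and any basis vector lying in both principal ideals would be an $x_T$ with $\{i,j\}\sse T$, forcing $i$ and $j$ to be joined by an edge. For concreteness I would take $i=2$ and $j=3$: then $B(2)=\{1\}$ and $B(3)=\{0,1\}$, so neither $2\in B(3)$ nor $3\in B(2)$.

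With such a pair fixed, I would set $I=(x_i,x_j)$ and define $\phi\colon I\to Q$ of degree zero by $\phi(ax_i+bx_j):=ax_i$. This is well-defined precisely because $(x_i)\cap(x_j)=0$: any equality $ax_i+bx_j=a'x_i+b'x_j$ forces $(a-a')x_i=(b'-b)x_j$ to lie in the intersection and hence to vanish, so $ax_i=a'x_i$. Equivalently, the test pair $u=(x_i,x_j)$, $v=(x_i,0)$ has no block, because any relation $c_1x_i+c_2x_j=0$ already forces $c_1x_i=0$, whence $c_1v_1+c_2v_2=c_1x_i=0$.

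To finish I would check that no transporter exists, which is the step that leans on the smallness of $Q_0$. A transporter would be an element $m\in Q_0$, but $Q_0=\F$ by Remark~\ref{rem-rado-thin}, so $m\in\{0,1\}$: the value $m=0$ fails $\phi(x_i)=x_i$, while $m=1$ fails $\phi(x_j)=0$. I do not anticipate any real obstacle; the entire argument rests on the basis already recorded in Remark~\ref{rem-rado-thin} and the fact that $Q_0=\F$, and non-self-injectivity then drops out immediately via Proposition~\ref{prop-block} combined with Propositions~\ref{prop-finite-baer} and~\ref{prop-baer-graded}.
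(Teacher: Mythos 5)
Your proof is correct and follows essentially the same plan as the paper: choose two non-adjacent vertices $i,j$ of $\Gm$ and use the basis of $\Gm$-complete monomials together with $Q_0=\F$ to show that the test pair built from $(x_i,x_j)$ has neither a block nor a transporter. Your isolation of the identity $(x_i)\cap(x_j)=0$ is a slightly tidier way of handling the no-block step than the paper's explicit monomial computation, but it is the same argument underneath.
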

\begin{proof}
 Take any pair $p,q\in\N$ with $p\neq q$ and $x_px_q=0$ (say $p=0$ and
 $q=2$).  Put $u=(x_p,x_q)$ and $v=(0,x_q)$, and consider the test
 pair $(u,v)$.  Any transporter would have to be an element
 $t\in Q_0=\{0,1\}$ with $tx_p=0$ and $tx_q=x_q$.  It is clear from
 this that there is no transporter.  A block would be a pair $(a,b)$
 with $bx_q\neq 0$ but $ax_p+bx_q=0$ (so $ax_p=bx_q\neq 0$).  This
 means that $a$ and $b$ are nonzero homogeneous elements, say $a=x_A$
 and $b=x_B$ for some $\Gm$-complete sets $A$ and $B$.  As
 $ax_p\neq 0$ we see that $p\not\in A$, and that $A\cup\{p\}$ is again
 $\Gm$-complete.  Similarly, we have $q\not\in B$ and $B\cup\{q\}$ is
 $\Gm$-complete.  The equation $ax_p=bx_q$ means that
 $A\cup\{p\}=B\cup\{q\}$, so we have $A=C\cup\{q\}$ and $B=C\cup\{p\}$
 for some set $C$.  This now gives $bx_q=x_Cx_px_q$ but $x_px_q=0$ so
 $bx_q=0$, contrary to assumption.  This shows that we have neither a
 block nor a transporter, so $Q$ is not self-injective.
\end{proof}

\begin{remark}\label{rem-rado-regrading}
 We could give $Q$ a different grading with such that there are some
 pairs $(i,j)$ with $i\neq j$ but $|x_i|=|x_j|$, so $x_i+x_j$ becomes
 homogeneous.  One can check that if $x_ix_j=0$ then
 $\ann^2(x_i+x_j)=(x_i,x_j)\neq(x_i+x_j)$, so the double annihilator
 condition no longer holds.  We will discuss a similar situation with
 more details in Lemma~\ref{lem-epsilon-not-dac}.  We believe that the
 self-injectivity condition is similarly sensitive to the choice of
 grading, but we do not have an example to prove this.
\end{remark}

\begin{proposition}\label{prop-rado-incoherent}
 $Q$ is totally incoherent.
\end{proposition}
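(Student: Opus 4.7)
The plan is to apply Corollary~\ref{cor-incoherent}, so I need to check that $Q$ is local (as a graded ring) and that every nonzero, non-invertible homogeneous element has an annihilator whose image in $\mxi/\mxi^2$ is infinite-dimensional.

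First I would check that $Q$ is local. Every homogeneous element of positive degree is a scalar multiple of some $x_A$ (with $A$ nonempty $\Gm$-complete) by Remark~\ref{rem-rado-thin}, and in characteristic two every such element squares to zero, so the graded ideal $\mxi=\bigoplus_{k>0}Q_k$ consists entirely of nilpotents and the quotient $Q/\mxi=\F$ is a field; hence $\mxi$ is the unique maximal graded ideal. Next I would note that $\mxi^2$ is spanned by the products $x_ix_j$ with $i\neq j$, so the images $\ov{x}_i$ form a basis for $\mxi/\mxi^2$; in particular, since distinct $x_i$ lie in distinct degrees, any infinite subfamily remains $\F$-linearly independent modulo $\mxi^2$.

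The main step is then the following: given a nonzero non-unit homogeneous $u\in Q$, write $u=x_A$ for a nonempty $\Gm$-complete finite set $A\sse\N$, and pick any fixed $a\in A$. I would produce infinitely many $j\in\N$ for which there is no edge between $j$ and $a$ in $\Gm$, i.e.\ for which $j\notin B(a)$ and $a\notin B(j)$. The first condition excludes only finitely many $j$ (since $B(a)$ is finite), and the second — that the $a$-th binary digit of $j$ vanishes — is satisfied by infinitely many $j$ (for example all $j<2^a$), so infinitely many $j$ meet both constraints simultaneously. For each such $j$ the monomial $x_jx_a$ is one of the defining relations of $Q$ and therefore vanishes, which forces $x_j\cdot x_A=0$, i.e.\ $x_j\in\ann_Q(u)$. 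These $x_j$ are pairwise distinct and lie in distinct degrees, so their images in $\mxi/\mxi^2$ form an infinite $\F$-linearly independent family.

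Thus possibility (b) of Corollary~\ref{cor-incoherent} holds for every non-invertible homogeneous $u\neq 0$, and the corollary delivers total incoherence. There is no serious obstacle here: the only thing to be careful about is the combinatorial claim producing infinitely many $j$ not $\Gm$-adjacent to $a$, and this follows immediately from the definition of $\Gm$ via the binary expansion.
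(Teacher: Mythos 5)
Your proof is correct and takes essentially the same route as the paper: show $Q$ is local with $\{x_i\}$ a basis for $\mxi/\mxi^2$, observe that any nonzero non-unit monomial $x_A$ is annihilated by infinitely many variables $x_j$, and conclude via Corollary~\ref{cor-incoherent}. One cosmetic slip: your parenthetical ``for example all $j<2^a$'' names a \emph{finite} set, so it doesn't itself exhibit infinitely many $j$ with $a\notin B(j)$; replacing it with, say, $j=2^k$ for $k\neq a$ would make the point cleanly, but the surrounding reasoning (cofinite set meets infinite set) is sound as stated.
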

\begin{proof}
 First, it is clear that $Q$ is local, with maximal ideal
 $\mxi=(x_i\st i\in\N)=\bigoplus_{k>0}Q_k$.  The generators $x_i$ form
 a basis for $\mxi/\mxi^2$.  Note that if $A\subset\N$ is nonempty and
 $\Gm$-complete, then infinitely many of the variables $x_i$ will
 satisfy $x_ix_A=0$, so the image of $\ann(x_A)$ in $\mxi/\mxi^2$ will
 have infinite dimension.  The claim therefore follows by
 Corollary~\ref{cor-incoherent}. 
\end{proof}

\section{The $\ep_0$-algebra}
\label{sec-epsilon}

The $\ep_0$ algebra $A$ was introduced in
Definition~\ref{defn-epsilon-intro}.  We now explain the definition in
more detail, and prove some properties.

\begin{definition}
 Suppose we have a sequence $\un{\bt}=(\bt_1>\bt_2>\dotsb>\bt_r)$ of
 ordinals, and a sequence $\un{n}=(n_1,\dotsc,n_r)$ of positive
 integers.   We write 
 \[ C(\un{\bt},\un{n}) =
     \om^{\bt_1}n_1 + \dotsc + \om^{\bt_r}n_r. 
 \]
 Note that this uses ordinal exponentiation, defined in the usual
 recursive way by $\al^{\bt+1}=\al\al^\bt$ and
 $\al^\lm=\bigcup_{\bt<\lm}\al^\bt$ when $\lm$ is a limit ordinal.
\end{definition}

The following fact is standard (and not hard to prove by transfinite
induction). 
\begin{proposition}
 For any ordinal $\al$ there is a unique pair $(\un{\bt},\un{n})$ such
 that $\al=C(\un{\bt},\un{n})$.  (This is the \emph{Cantor normal
  form} for $\al$.)
\end{proposition}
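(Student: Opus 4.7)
The plan is to prove both existence and uniqueness by transfinite induction on $\al$, using a kind of division algorithm for ordinals.

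For existence, the base case $\al=0$ is handled by the empty tuple (with $r=0$). For $\al>0$, I would first establish two standard auxiliary facts by separate transfinite inductions: that $\gm\mapsto\om^\gm$ is strictly increasing and continuous (in particular $\om^\gm\geq\gm$, with strict inequality unless $\gm$ is an epsilon number); and that for any ordinal $\bt$ and any $\rho<\om^\bt$ and any positive integer $n$, one has $\om^\bt n+\rho<\om^{\bt+1}$ (which follows from $\om^{\bt+1}=\om^\bt\cdot\om$ and the fact that $\om^\bt n+\om^\bt=\om^\bt(n+1)$). Granted these, the set $S=\{\gm\st\om^\gm\leq\al\}$ is nonempty (since $\om^0=1\leq\al$) and bounded (since $\om^{\al+1}>\al$), so by continuity it has a maximum element $\bt_1$. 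Then $\om^{\bt_1}\leq\al<\om^{\bt_1+1}=\om^{\bt_1}\cdot\om$ forces the existence of a largest positive integer $n_1$ with $\om^{\bt_1}n_1\leq\al$, and by left-subtraction there is a unique $\rho<\om^{\bt_1}$ with $\al=\om^{\bt_1}n_1+\rho$. Since $\rho<\al$, the inductive hypothesis supplies a Cantor normal form $\rho=C(\un{\bt}',\un{n}')$ with $\bt'_1<\bt_1$ (because $\rho<\om^{\bt_1}$ forces, by the second auxiliary fact applied contrapositively, every exponent appearing in $\rho$ to be strictly less than $\bt_1$). Concatenating yields a Cantor normal form for $\al$.

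For uniqueness, suppose $\al=C(\un{\bt},\un{n})$ is any Cantor normal form expression. The key observation is that by the second auxiliary fact above and induction on the length $r$, one has $\om^{\bt_1}\leq C(\un{\bt},\un{n})<\om^{\bt_1+1}$ and $\om^{\bt_1}n_1\leq C(\un{\bt},\un{n})<\om^{\bt_1}(n_1+1)$. These inequalities determine $\bt_1$ and $n_1$ uniquely in terms of $\al$, so the tail $C((\bt_2,\dotsc,\bt_r),(n_2,\dotsc,n_r))$ is also uniquely determined as the ordinal $\rho<\om^{\bt_1}$ with $\al=\om^{\bt_1}n_1+\rho$. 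Applying the inductive hypothesis to $\rho$ (which is smaller than $\al$) finishes the proof.

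The main obstacle is not the induction itself but setting up the ordinal-arithmetic infrastructure cleanly, especially the inequality $\om^\bt n+\rho<\om^{\bt+1}$ for $\rho<\om^\bt$ and the strict monotonicity/continuity of $\gm\mapsto\om^\gm$. Once those are in hand, the rest is a straightforward transfinite recursion, which is why the paper classifies this as standard and relegates it to a reference such as~\cite{jo:nls}.
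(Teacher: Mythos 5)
Your proof is correct and is the standard argument: transfinite induction together with a division algorithm for ordinals, extracting the leading term by taking $\bt_1=\max\{\gm\st\om^\gm\le\al\}$ (which exists because $\gm\mapsto\om^\gm$ is strictly increasing and continuous, so the bounded nonempty set $\{\gm\st\om^\gm\le\al\}$ attains its supremum) and then $n_1$ maximal with $\om^{\bt_1}n_1\le\al$, and recursing on the remainder $\rho<\om^{\bt_1}\le\al$. The paper does not supply a proof at all --- it simply cites Johnstone's textbook --- so you have in effect written out the argument the citation points to; the only small imprecision is that the claim $\bt'_1<\bt_1$ for the remainder's leading exponent follows most directly from $\om^{\bt'_1}\le\rho<\om^{\bt_1}$ and strict monotonicity, rather than from a contrapositive of your second auxiliary inequality.
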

\begin{proof}
 See~\cite{jo:nls}*{Exercise 6.10}, for example.
\end{proof}

\begin{definition}
 We put $\pi_0=\om$ and define $\pi_n$ recursively by
 $\pi_{n+1}=\om^{\pi_n}$, and then put $\ep_0=\bigcup_n\pi_n$.  
\end{definition}

One can check that $\ep_0=\om^{\ep_0}$, and that $\ep_0$ is the
smallest ordinal with this property.  Note that the expression
$\ep_0=\om^{\ep_0}$ is the Cantor normal form of $\ep_0$.  For
$\al<\ep_0$ we find that the exponents $\bt_t$ in the Cantor normal
form of $\al$ are strictly less than $\al$, so in this case one can
do induction or recursion based on the Cantor normal form.

\begin{definition}
 We define $\dl\:\ep_0\to\N$ recursively by $\dl(0)=1$ and 
 $\dl(\al)=(\sum_t(\dl(\bt_t)+2)n_t)-1$ if
 $\al=\om^{\bt_1}n_1+\dotsb+\om^{\bt_r}n_r$. 
\end{definition}

We will give enough examples to show that $\dl$ is not injective,
which will be needed later.
\begin{example}\label{eg-delta}
 \begin{align*}
  \dl(1)     &= \dl(\om^0) = (\dl(0)+2) - 1 = 2 \\
  \dl(2)     &= \dl(\om^0\,2) = (\dl(0)+2)2 - 1 = 5 \\
  \dl(\om)   &= \dl(\om^1) = (\dl(1)+2) - 1 = 3 \\
  \dl(\om+1) &= \dl(\om^1+\om^0) = (\dl(1)+2)+(\dl(0)+2)-1 = 6 \\
  \dl(\om^2) &= (\dl(2)+2) - 1 = 6.
 \end{align*}
\end{example}

In order to analyse $\dl$, it is helpful to modify the Cantor normal
form slightly.
\begin{lemma}
 If $\al<\ep_0$ then there is a unique way to write
 \[ \al = \om^{\bt_1} + \om^{\bt_2} + \dotsb + \om^{\bt_m} \]
 with $\al>\bt_1\geq\bt_2\geq\dotsb\geq\bt_m$.  (This is the
 \emph{expanded Cantor normal form}.)
\end{lemma}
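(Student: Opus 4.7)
The plan is to derive the expanded Cantor normal form directly from the ordinary one, using the fact that ordinal multiplication by a positive integer is just iterated ordinal addition. More precisely, for an ordinal $\gm$ and positive integer $n$, the definition gives $\om^\gm n = \om^\gm + \om^\gm + \dotsb + \om^\gm$ with $n$ summands.

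For existence, I would start by writing $\al$ in ordinary Cantor normal form as
\[ \al = \om^{\gm_1} n_1 + \om^{\gm_2} n_2 + \dotsb + \om^{\gm_r} n_r \]
with $\al > \gm_1 > \gm_2 > \dotsb > \gm_r$ and each $n_t$ a positive integer (this is the form already cited in the excerpt, which is legitimate for $\al<\ep_0$). Expanding each $\om^{\gm_t} n_t$ as $n_t$ copies of $\om^{\gm_t}$ yields an expression $\al = \om^{\bt_1} + \om^{\bt_2} + \dotsb + \om^{\bt_m}$ where $m = n_1 + \dotsb + n_r$ and the sequence $(\bt_i)$ consists of $n_1$ copies of $\gm_1$ followed by $n_2$ copies of $\gm_2$, and so on. This sequence is weakly decreasing by construction, and $\bt_1 = \gm_1 < \al$.

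For uniqueness, I would run the argument in reverse. Given any representation $\al = \om^{\bt_1} + \dotsb + \om^{\bt_m}$ with $\al > \bt_1 \geq \dotsb \geq \bt_m$, group the consecutive runs of equal exponents: let $\gm_1 > \gm_2 > \dotsb > \gm_r$ be the distinct values occurring in the sequence $(\bt_i)$, and let $n_t$ be the number of times $\gm_t$ appears. Reassembling gives
\[ \al = \om^{\gm_1} n_1 + \om^{\gm_2} n_2 + \dotsb + \om^{\gm_r} n_r \]
with strictly decreasing $\gm_t$ and positive integer coefficients. This is the ordinary Cantor normal form of $\al$, and by its uniqueness the sequences $(\gm_t)$ and $(n_t)$ are determined by $\al$ alone. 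The expanded sequence $(\bt_i)$ is then recovered by the same expansion as above, so it too is uniquely determined.

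The only point that requires some care is the strict inequality $\al > \bt_1$, which is equivalent to the corresponding inequality $\al > \gm_1$ in the ordinary Cantor normal form. This uses the hypothesis $\al < \ep_0$: otherwise one could have $\gm_1 = \al$, which forces $\al = \om^\al$ and hence makes $\al$ an epsilon number, contradicting $\al < \ep_0$ by the defining property of $\ep_0$ recalled just before the lemma. Beyond this, the proof is purely a bookkeeping translation between two notations for the same data, so I do not expect any substantial obstacle.
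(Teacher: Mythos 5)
Your proof is correct and takes essentially the same approach as the paper, which simply says to take the ordinary Cantor normal form and replace each $\om^{\bt_t}n_t$ by $n_t$ copies of $\om^{\bt_t}$. You spell out the uniqueness direction (regrouping repeated exponents and invoking uniqueness of the ordinary form) which the paper leaves implicit, but the underlying idea is identical.
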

\begin{proof}
 Just take the ordinary Cantor normal form and replace
 $\om^{\bt_t}n_t$ by $n_t$ copies of $\om^{\bt_t}$.
\end{proof}

\begin{lemma}\label{lem-dl-finite}
 For any $d\in\N$ there are only finitely many ordinals $\al\in\ep_0$
 with $\dl(\al)=d$.  
\end{lemma}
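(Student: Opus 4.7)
The plan is to prove this by induction on $d$. For the base case, I would observe that $\dl(0) = 1$ and that a direct inspection of the recursion shows $\dl(\al) \geq 2$ whenever $\al \geq 1$ (the smallest possible value of the sum $\sum_t(\dl(\bt_t)+2)n_t - 1$ is $(1+2)\cdot 1 - 1 = 2$). Hence $\dl^{-1}\{0\} = \emptyset$ and $\dl^{-1}\{1\} = \{0\}$, both finite; in particular, $\dl(\gm) \geq 1$ for every ordinal $\gm < \ep_0$, a fact I will use repeatedly.

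For the inductive step, I would assume the claim holds for all integers strictly less than $d$ and consider an arbitrary $\al < \ep_0$ with $\dl(\al) = d$. The key tool is the expanded Cantor normal form established in the previous lemma: writing $\al = \om^{\bt_1} + \om^{\bt_2} + \dotsb + \om^{\bt_m}$ with $\bt_1 \geq \bt_2 \geq \dotsb \geq \bt_m$ and each $\bt_s < \al$, the defining recursion for $\dl$ collapses neatly into the identity
\[ \dl(\al) + 1 = \sum_{s=1}^{m} (\dl(\bt_s) + 2). \]
Since every summand on the right is at least $1 + 2 = 3$, the assumption $\dl(\al) = d$ immediately forces $m \leq (d+1)/3$; and by dropping all but one summand from the sum, we see that $\dl(\bt_s) + 2 \leq d + 1$, so $\dl(\bt_s) \leq d - 1 < d$ for each $s$.

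Applying the inductive hypothesis, each $\bt_s$ must lie in the (finite) set of ordinals whose $\dl$-value is among $\{1, 2, \dotsc, d-1\}$. Since $m$ is bounded and the tuple $(\bt_1, \dotsc, \bt_m)$ is weakly decreasing, there are only finitely many admissible tuples, and by uniqueness of the expanded Cantor normal form this yields only finitely many ordinals $\al$. The only real subtlety is the initial verification that $\dl(\gm) \geq 1$ universally (so that the bound $m \leq (d+1)/3$ is genuinely available and the strict inequality $\dl(\bt_s) < d$ is automatic); once that is in place, the induction runs without further obstruction.
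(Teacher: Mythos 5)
Your proof is correct, and it takes a genuinely different route from the one in the paper. You proceed by strong induction on $d$: after noting $\dl\geq 1$ everywhere (so $\dl(0)=1$ and $\dl(\al)\geq 2$ for $\al\geq 1$), you rewrite the recursion in terms of the expanded Cantor normal form as $\dl(\al)+1=\sum_{s=1}^{m}(\dl(\bt_s)+2)$, deduce that $m\leq(d+1)/3$ and that each $\dl(\bt_s)\leq d-1$, and then invoke the inductive hypothesis together with uniqueness of the expanded Cantor normal form to bound the number of admissible weakly decreasing tuples and hence the number of $\al$. The paper's proof instead encodes each $\al<\ep_0$ as a word $\phi(\al)$ over the three-letter alphabet $\{0,\pi,+\}$, observes that the length of $\phi(\al)$ is exactly $\dl(\al)$, and shows $\phi$ is injective by interpreting the word as a reverse Polish expression evaluating to $\al$; finiteness follows because there are only $3^d$ words of length $d$. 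The encoding argument is non-inductive and yields the explicit bound $|\dl^{-1}\{d\}|\leq 3^d$, whereas your argument is more elementary and needs only the recursion and the normal form, at the cost of an explicit bound. Both are sound; the one detail worth making explicit in your write-up is that the lower bound $\dl\geq 1$ is itself established by transfinite induction on $\al$ (using that the exponents in the Cantor normal form of $\al<\ep_0$ are strictly less than $\al$), though as you say the argument is routine.
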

\begin{proof}
 Let $A$ denote the alphabet $\{0,\pi,+\}$.  For each $\al<\ep_0$ we
 define a word $\phi(\al)$ in $A$ as follows.  We start with
 $\phi(0)=0$.  If $\tht>0$ has expanded Cantor normal form 
 $\tht=\om^{\bt_1}+\dotsb+\om^{\bt_m}$ we put 
 \[ \phi(\tht) =
     \phi(\bt_1)\pi\phi(\bt_2)\pi\dotsb\phi(\bt_m)\pi+\dotsb+
 \]
 (with $m-1$ plusses at the end).  For example we have
 \begin{align*}
  \phi(3) &= \phi(\om^0+\om^0+\om^0) = 0\pi 0\pi 0\pi ++ \\
  \phi(\om^\om+\om) &= 0\pi\pi\pi 0\pi\pi+.
 \end{align*}
 It is clear from the definitions that $\dl(\tht)$ is the length of
 $\phi(\tht)$, and there are only $3^d$ words in $A$ of length $d$, so
 it will suffice to show that $\phi$ is injective.  If we interpret
 $\pi$ as the operator $x\mapsto\om^x$ then $\phi(\tht)$ is a reverse
 polish expression that evaluates to $\tht$, and this implies
 injectivity.
\end{proof}

\begin{corollary}
 $\ep_0$ is countable. \qed
\end{corollary}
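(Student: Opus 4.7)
The plan is to deduce countability of $\ep_0$ directly from Lemma~\ref{lem-dl-finite}. That lemma gives a function $\dl\:\ep_0\to\N$ with finite fibers, so I would write
\[ \ep_0 = \bigcup_{d\in\N} \dl^{-1}\{d\}, \]
exhibiting $\ep_0$ as a countable union of finite sets. Since a countable union of finite sets is countable, this finishes the argument.

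The only thing to verify is that $\dl$ is defined on all of $\ep_0$, which is immediate from the recursive definition using Cantor normal forms of ordinals $\al<\ep_0$ (whose exponents are themselves less than $\al$, so the recursion terminates). There is no real obstacle here; the lemma does all the work, and the corollary is essentially a one-line consequence.
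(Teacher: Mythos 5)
Your argument is exactly the one the paper intends: the corollary is placed immediately after Lemma~\ref{lem-dl-finite} with a bare \qed, and decomposing $\ep_0$ as the countable union $\bigcup_{d\in\N}\dl^{-1}\{d\}$ of finite fibers is the evident one-line deduction. Your remark that $\dl$ is well-defined on all of $\ep_0$ by recursion on Cantor normal forms is also correct and matches the paper's setup.
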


\begin{definition}
 Let $\tA$ be the graded polynomial algebra over $\F$ generated by
 elements $x_\al$ for each ordinal $\al<\ep_0$, with
 $|x_\al|=\dl(\al)$.  
\end{definition}

Using Lemma~\ref{lem-dl-finite} we see that $\tA_d$ is finite for all
$d$. 

\begin{definition}\label{defn-mu}
 For ordinals $\al,\bt<\ep_0$ with $\al\neq\bt$ we define
 $\mu_0(\al,\bt)$ to be the coefficient of $\om^\bt$ in $\al$.  More
 explicitly, if the Cantor normal form of $\al$ involves a term
 $\om^\bt n$, then $\mu_0(\al,\bt)=n$; if there is no such term then
 $\mu_0(\al,\bt)=0$.  One can check that if $\mu_0(\al,\bt)>0$ then
 $\mu_0(\bt,\al)=0$.  We put
 $\mu(\al,\bt)=\max(\mu_0(\al,\bt),\mu_0(\bt,\al))$. 
\end{definition}

\begin{proposition}\label{prop-mu-generic}
 For any finite set $J\subset\ep_0$ and map $\nu\:J\to\N$ there exists
 $\al\in\ep_0\sm J$ such that $\mu(\al,\bt)=\nu(\bt)$ for all
 $\bt\in J$.   (We will call this the \emph{extension property}.)
\end{proposition}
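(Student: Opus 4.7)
The plan is to construct a suitable $\al$ by directly writing down its Cantor normal form. The key observation is that whenever $\al > \bt$, every exponent appearing in the Cantor normal form of $\bt$ is strictly less than $\bt$, hence strictly less than $\al$, and so $\mu_0(\bt,\al) = 0$. Consequently, if $\al$ dominates every element of $J$, the maximum defining $\mu(\al,\bt)$ collapses to $\mu(\al,\bt) = \mu_0(\al,\bt)$ for each $\bt \in J$. This reduces the problem to arranging the coefficients of $\om^\bt$ in the Cantor normal form of $\al$ to equal $\nu(\bt)$.

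Concretely, enumerate $J = \{\bt_1 > \bt_2 > \dotsb > \bt_r\}$ (the case $J = \emptyset$ being trivial, since any $\al < \ep_0$ works). Then I would set
\[
 \al \;=\; \om^{\bt_1 + 1} \;+\; \sum_{\substack{1 \leq i \leq r \\ \nu(\bt_i) > 0}} \om^{\bt_i}\,\nu(\bt_i),
\]
with terms listed in order of decreasing exponents. The exponent sequence $\bt_1+1 > \bt_1 > \bt_2 > \dotsb > \bt_r$ is strictly decreasing, so this expression is a bona fide Cantor normal form; since every exponent lies below $\ep_0$, we get $\al < \ep_0$; and since $\al \geq \om^{\bt_1+1} > \bt_1 = \max J$, we also get $\al \notin J$.

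To finish, I would verify $\mu(\al,\bt) = \nu(\bt)$ for each $\bt \in J$. The inequality $\al > \bt$ forces $\mu_0(\bt,\al) = 0$, so $\mu(\al,\bt) = \mu_0(\al,\bt)$, and reading off the Cantor normal form of $\al$ gives $\mu_0(\al,\bt_i) = \nu(\bt_i)$ when $\nu(\bt_i) > 0$ and $\mu_0(\al,\bt_i) = 0 = \nu(\bt_i)$ otherwise. No step here presents a real obstacle: the proof is a direct exploitation of the flexibility and asymmetry of the Cantor normal form, with the leading term $\om^{\bt_1+1}$ serving only to guarantee $\al$ dominates $J$ so that the $\max$ in the definition of $\mu$ is resolved in our favour.
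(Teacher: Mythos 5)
Your proposal is correct and takes essentially the same approach as the paper: both construct $\al=\om^{\bt_1+1}+\sum_t\om^{\bt_t}\nu(\bt_t)$ and then observe that the leading term forces $\al>\bt_t$, hence $\mu_0(\bt_t,\al)=0$, so the max collapses to $\mu_0(\al,\bt_t)=\nu(\bt_t)$. Your formulation is if anything slightly more careful, since you explicitly drop the terms with $\nu(\bt_i)=0$ so that the displayed expression is a genuine Cantor normal form.
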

\begin{proof}
 Write $J$ in order as $J=\{\bt_1>\bt_2>\dotsb>\bt_r\}$ and then take
 \[ \al =
     \om^{\bt_1+1} +
       \om^{\bt_1}.\nu(\bt_1) + \dotsb + \om^{\bt_r}.\nu(\bt_r).
 \]
 It is visible that $\mu_0(\al,\bt_t)=\nu(\bt_t)$ for all $t$.  Also,
 because of the initial term $\om^{\bt_1+1}$ we have
 $\om^\al>\al>\bt_t$ for all $t$ and so $\mu_0(\bt_t,\al)=0$.  It
 follows that $\mu(\al,\bt_t)=\nu(\bt_t)$ for all $t$, as required.
\end{proof}

From now on we will only need the fact that our index set $\ep_0$ is
countable and that the extension property holds.  It will therefore be
notationally convenient to write $I=\ep_0$ and ignore the fact that
the elements of $I$ are ordinals, and to write $i$ instead of $\al$
for a typical element of $I$.  We also put 
$I_2=\{(i,j)\in I^2\st i\neq j\}$.

\begin{definition}\label{defn-epsilon}
 For each $(i,j)\in I_2$ we put $\rho(i,j)=x_ix_j^{\mu(i,j)+1}$.  We
 then let $A$ be the quotient of $\tA$ by all such elements
 $\rho(i,j)$.  We call this the \emph{$\ep_0$-algebra}.
\end{definition}

\begin{definition}\label{defn-epsilon-bases}
 Given a map $\al\:I\to\N$, we write $\supp(\al)=\{i\st\al(i)>0\}$.
 Let $M\tA$ be the set of all such maps $\al$ for which $\supp(\al)$ is
 finite.  For $\al\in M\tA$ we put $x^\al=\prod_ix_i^{\al(i)}\in\tA$.
 We write $B\tA$ for the set of all such monomials $x^\al$, so $B\tA$ is
 a basis for $\tA$.  Next, put 
 \[ MA = \{\al\in M\tA \st \forall i\neq j \;
     \al(i)>0 \Rightarrow \al(j) \leq \mu(i,j)\}
 \]
 and $BA=\{x^\al\st\al\in MA\}$.  One can check that $BA$ gives a basis
 for $A$. 
\end{definition}

\begin{definition}
 A \emph{monomial ideal} is just an ideal in $A$ that is generated by
 some subset of $BA$.
\end{definition}

\begin{remark}\label{rem-monomial-ideal-basis}
 Let $P$ be a monomial ideal, generated by $\{x^\al\st\al\in U\}$ for
 some subset $U\sse MA$.  Put 
 \[ U^+=\{\al\in MA\st \al\geq\bt \text{ for some } \bt\in U\}. \]
 It is easy to see that $\{x^\al\st\al\in U^+\}$ is then a basis for
 $P$ over $\F$.  It follows easily that sums, products, intersections
 and annihilators of monomial ideals are again monomial ideals.
\end{remark}

\begin{lemma}\label{lem-monomial-generators}
 If $P$ is a monomial ideal then it is finitely generated if and only
 if there is a finite list of monomials that generate it.
\end{lemma}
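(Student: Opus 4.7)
The plan is straightforward: one direction is tautological, so the real content is showing that a monomial ideal which happens to be finitely generated (by arbitrary elements, not necessarily monomials) can in fact be generated by a finite set of monomials. I will use the $\F$-basis from Remark~\ref{rem-monomial-ideal-basis} to strip each generator down to its monomial components.

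Concretely, assume $P$ is a monomial ideal, so there is some subset $U\sse MA$ with $P$ generated (as an ideal) by $\{x^\al\st\al\in U\}$, and by Remark~\ref{rem-monomial-ideal-basis} the set $\{x^\al\st\al\in U^+\}$ is an $\F$-basis for $P$. Suppose further that $P$ is finitely generated as an $A$-module by elements $f_1,\dotsc,f_r\in P$. Each $f_k$ is a finite $\F$-linear combination of basis elements, so there is a finite set $V\sse U^+$ with
\[ f_k = \sum_{\al\in V} c_{k,\al}\,x^\al, \qquad c_{k,\al}\in\F. \]
Let $P'$ be the ideal of $A$ generated by the finite set $\{x^\al\st\al\in V\}$. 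Then each $f_k$ lies in $P'$, hence $P=(f_1,\dotsc,f_r)\sse P'$. Conversely, since $V\sse U^+$, every $x^\al$ with $\al\in V$ is one of the $\F$-basis elements of $P$ and therefore lies in $P$, so $P'\sse P$. Thus $P=P'$ is generated by the finite monomial set $\{x^\al\st\al\in V\}$.

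The reverse implication is immediate: any ideal generated by a finite list of monomials is by definition finitely generated. This completes the proof. There is no real obstacle here; the only thing to be careful about is that the basis in Remark~\ref{rem-monomial-ideal-basis} is an $\F$-basis, not merely a generating set as an ideal, which is precisely what lets us extract finitely many monomials from any finite collection of ideal generators.
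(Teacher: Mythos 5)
Your proof is correct and takes essentially the same approach as the paper: express each of the finitely many generators as a finite $\F$-linear combination of the monomial basis elements from Remark~\ref{rem-monomial-ideal-basis}, collect all the monomials that appear, and show these finitely many monomials generate $P$ by a two-sided containment. The only cosmetic difference is that you write the generators directly in the $\F$-basis $\{x^\al\st\al\in U^+\}$ of $P$, whereas the paper writes them in the basis $BA$ of $A$ and then invokes the remark to deduce that each term lands in $P$; these are the same observation phrased two ways.
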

\begin{proof}
 Suppose that $P$ is generated by $a_1,\dotsc,a_m$, where the elements
 $a_t$ need not be monomials.  We can write
 $a_t=\sum_{\al\in U_t}a_{t,\al}x^\al$, for some finite set
 $U_t\subset MA$ and some nonzero coefficients $a_{t,\al}$.  Using
 Remark~\ref{rem-monomial-ideal-basis} we see that the terms $x^{\al}$
 (for $\al\in U_t$) lie in $P$.  Put $U=\bigcup_tU_t$ (which is
 finite) and put $P'=(x^{\al}\st\al\in U)\leq P$.  Clearly
 $a_t\in(x^{\al}\st\al\in U_t)\leq P'$ and the elements $a_t$ generate
 $P$ so $P\leq P'$ so $P=P'$.  Thus, $P$ is generated by a finite list
 of monomials.
\end{proof}

\begin{proposition}\label{prop-epsilon-dac}
 Let $P\leq A$ be a finitely generated monomial ideal.  Then
 $\ann^2(P)=P$.
\end{proposition}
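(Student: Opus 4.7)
The inclusion $P \sse \ann^2(P)$ is immediate (Remark~\ref{rem-ideal}), so the real work is in the reverse inclusion. By Lemma~\ref{lem-monomial-generators} I can write $P = (x^\al \st \al \in U)$ for some finite $U \sse MA$, and by Remark~\ref{rem-monomial-ideal-basis} the ideal $\ann^2(P)$ is itself a monomial ideal. Hence it suffices to show that every monomial $x^\bt$ with $x^\bt \notin P$ lies outside $\ann^2(P)$. Remark~\ref{rem-monomial-ideal-basis} also tells us that $x^\bt \notin P$ iff $\bt \not\geq \al$ for every $\al \in U$, i.e., for each $\al \in U$ there is an index $i_\al \in \supp(\al)$ with $\al(i_\al) > \bt(i_\al)$; fix such a choice of $i_\al$ once and for all.

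The heart of the argument is to produce a single fresh variable $x_k$ that annihilates every $x^\al$ but does not annihilate $x^\bt$. Let $J = \supp(\bt) \cup \bigcup_{\al \in U} \supp(\al)$, which is finite, and use Proposition~\ref{prop-mu-generic} to choose $k \in I \sm J$ with $\mu(k,j) = \bt(j)$ for every $j \in J$. A direct unwinding of the defining relations of $A$ shows that, for any $\gm \in MA$ with $k \notin \supp(\gm)$, one has $x_k x^\gm \neq 0$ in $A$ iff $\gm(j) \leq \mu(k,j)$ for every $j \in \supp(\gm)$ (the auxiliary requirement $\mu(k,j) \geq 1$ that also falls out of the relations is automatic, since $\gm(j) \geq 1$ on $\supp(\gm)$).

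Applying this criterion: for $\gm = \bt$ the inequality $\bt(j) \leq \bt(j) = \mu(k,j)$ holds trivially on $\supp(\bt) \sse J$, so $x_k x^\bt \neq 0$. For $\gm = \al \in U$, the chosen index $i_\al \in \supp(\al) \sse J$ satisfies $\al(i_\al) > \bt(i_\al) = \mu(k, i_\al)$, so $x_k x^\al = 0$. Since the $x^\al$ generate $P$, we conclude $x_k \in \ann(P)$ while $x_k x^\bt \neq 0$, which is exactly the statement $x^\bt \notin \ann^2(P)$.

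The main subtlety is the case where $i_\al$ happens to lie in $\supp(\bt)$: there $\mu(k, i_\al)$ must simultaneously be at least $\bt(i_\al)$ (to keep $x_k x^\bt$ alive) and strictly less than $\al(i_\al)$ (to kill $x_k x^\al$). The strict inequality $\al(i_\al) > \bt(i_\al)$ supplied by $x^\bt \notin P$ is precisely the wiggle room needed to set $\mu(k, i_\al) = \bt(i_\al)$, and the extension property of Proposition~\ref{prop-mu-generic} is what lets us impose this one consistent assignment of $\mu(k, \cdot)$ over all of $J$ at once.
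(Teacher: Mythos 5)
Your proof is correct, and it is actually a genuine simplification of the paper's argument. The paper proves the same claim by introducing \emph{one fresh variable $x_{k_t}$ per generator} $x^{\al_t}$ of $P$, with $\mu(k_t,i_t)=\al_t(i_t)-1$, $\mu(k_t,j)=N$ for other $j\in J$, and $\mu(k_t,k_s)=1$ to keep the product $y=\prod_t x_{k_t}$ alive; one then checks that $y\in\ann(P)$ and $yx^\bt\neq 0$. Your observation is that by aiming at the \emph{bottom} of the admissible window for $\mu(k,i_\al)$ --- namely $\bt(i_\al)$ rather than the paper's $\al_t(i_\al)-1$ --- the prescribed values $\mu(k,j)=\bt(j)$ depend only on $\bt$, not on which generator $\al$ is being annihilated, so a single application of the extension property (Proposition~\ref{prop-mu-generic}) produces one index $k$ that works uniformly. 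Your nonvanishing criterion for $x_kx^\gm$ is exactly what falls out of the definition of $MA$ once one notes (as you do) that the constraint $\mu(k,j)\geq 1$ on $\supp(\gm)$ is subsumed by $\gm(j)\leq\mu(k,j)$, and the two applications of it are correct. As a small bonus, your version sidesteps the bookkeeping the paper needs to ensure the product of several new variables is itself nonzero.
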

\begin{proof}
 It is automatic that $P\leq\ann^2(P)$, so it will suffice to
 prove the opposite inclusion.  Note that both $P$ and $\ann^2(P)$ are
 monomial ideals, so it will suffice to show that they contain the
 same monomials.  Suppose that $x^\bt$ is a nonzero monomial that does
 not lie in $P$; we must find $y\in\ann(P)$ such that $x^\bt y\neq 0$.

 We can choose a finite list $\al_1,\dotsc,\al_r\in M$ such that
 $P=(x^{\al_1},\dotsc,x^{\al_r})$.  Put
 $J=\supp(\bt)\cup\bigcup_i\supp(\al_i)$, which is a finite subset of
 $I$.  Put $N=\max\{\bt(j)\st j\in J\}$.

 Next, for each $t$ we note that $x^\bt$ cannot be divisible by
 $x^{\al_t}$, so we can choose $i_t\in J$ such that
 $\al_t(i_t)>\bt(i_t)$.  Using the extension property we can
 recursively define distinct elements $k_1,\dotsc,k_r\in I\sm J$ such
 that 
 \begin{itemize}
  \item[(a)] $\mu(k_t,i_t)=\al_t(i_t)-1$
  \item[(b)] $\mu(k_t,j)=N$ for $j\in J\sm\{i_t\}$
  \item[(c)] $\mu(k_t,k_s)=1$ for $s<t$.
 \end{itemize}
 Put $y=\prod_tx_{k_t}$.  This is nonzero by property~(c).
 Property~(a) tells us that $x_{j_t}x^{\al_t}=0$ for all $t$, which
 implies that $y\in\ann(A)$.  On the other hand, we note that
 \begin{itemize}
  \item Clause~(a) above tells us that $y x^\bt$ is not divisible by
   any relator $\rho(k_t,i_t)$.
  \item Clause~(b) tells us that $y x^\bt$ is not divisible by
   any relator $\rho(k_t,j)$ with $j\in J\sm\{i_t\}$.
  \item Clause~(c) tells us that $y x^\bt$ is not divisible by
   any relator $\rho(k_t,k_s)$.
  \item Our original assumption $x^\bt\neq 0$ implies that $y x^\bt$
   is not divisible by any relator $\rho(j,j')$ with $j,j'\in J$.
 \end{itemize}
 This shows that $y x^\bt\neq 0$, but $y\in\ann(P)$, so
 $x^\bt\not\in\ann^2_(P)$, as claimed.
\end{proof}

\begin{lemma}\label{lem-epsilon-not-dac}
 Let $i$ and $j$ be any two distinct indices in $I$ with
 $|x_i|=|x_j|$ and $\mu(i,j)=0$.  Then
 $\ann^2(x_i+x_j)=(x_i,x_j)>(x_i+x_j)$. 
\end{lemma}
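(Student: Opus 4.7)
The plan is to compute $\ann(x_i+x_j)$ directly using the basis $BA$ and then reduce to the finitely generated monomial case already handled by Proposition~\ref{prop-epsilon-dac}. Specifically, I aim to establish
\[ \ann(x_i+x_j) \;=\; \ann(x_i)\cap\ann(x_j) \;=\; \ann((x_i,x_j)). \]
Once this is in hand, taking annihilators a second time and applying Proposition~\ref{prop-epsilon-dac} to the finitely generated monomial ideal $(x_i,x_j)$ yields $\ann^2(x_i+x_j)=\ann^2((x_i,x_j))=(x_i,x_j)$.

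The nontrivial containment is $\ann(x_i+x_j)\leq\ann(x_i)\cap\ann(x_j)$; the reverse is automatic. This is a genuinely nonformal step because $x_i$ on its own does not annihilate $x_i+x_j$: we have $x_i(x_i+x_j)=x_i^2$ (since $x_ix_j=0$), which is generally nonzero. To prove the containment, write $y=\sum_\al c_\al x^\al$ in the basis $BA$. For each $\al\in MA$ and each variable $x_k$, the product $x^\al x_k$ is either $0$ in $A$ or equals the basis monomial $x^{\al+e_k}$ (the latter precisely when $\al+e_k\in MA$, using that $x^\bt=0$ in $A$ whenever $\bt\notin MA$). Consequently $yx_i$ and $yx_j$ are each immediately expressed in the basis, with every $\gamma$ in the support of $yx_i$ satisfying $\gamma(i)\geq 1$ and every $\gamma$ in the support of $yx_j$ satisfying $\gamma(j)\geq 1$.

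The key observation, and the place where the hypothesis $\mu(i,j)=0$ is used, is that these two supports are disjoint. Indeed, any common $\gamma\in MA$ would have $\gamma(i)\geq 1$ and $\gamma(j)\geq 1$, but the defining inequalities for membership in $MA$ would then force $\gamma(j)\leq\mu(i,j)=0$, a contradiction. Since $BA$ is a basis, the equation $yx_i+yx_j=y(x_i+x_j)=0$ with disjoint supports forces $yx_i=0$ and $yx_j=0$, completing the proof of the containment.

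It remains to observe that $(x_i+x_j)<(x_i,x_j)$ strictly. Since $\dl(\al)>0$ for every $\al<\ep_0$, we have $A_0=\F$, so any homogeneous element of $(x_i+x_j)$ in degree $|x_i|=|x_j|$ is a scalar multiple of $x_i+x_j$. If $x_i$ were such a multiple, say $x_i=c(x_i+x_j)$, then $c=1$ and $x_j=0$, contradicting that $x_j$ is a nonzero basis monomial. The main obstacle in the argument is the disjoint-support claim; once that is verified the remaining assembly is formal.
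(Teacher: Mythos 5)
Your proof is correct and takes essentially the same route as the paper: both reduce to showing that $u(x_i+x_j)=0$ forces $ux_i=ux_j=0$, you by checking that $ux_i$ and $ux_j$ occupy disjoint parts of the monomial basis (since no $\gamma\in MA$ can have $\gamma(i)\geq 1$ and $\gamma(j)\geq 1$ when $\mu(i,j)=0$), the paper by packaging the same observation as $(x_i)\cap(x_j)=0$ via monomial bases. The remaining assembly — pass to $\ann^2$ and invoke Proposition~\ref{prop-epsilon-dac} for the monomial ideal $(x_i,x_j)$ — is identical; your explicit justification of the strict inclusion $(x_i+x_j)<(x_i,x_j)$ is a small bonus the paper leaves implicit.
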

\begin{proof}
 As $\mu(i,j)=0$ we have $x_ix_j=0$ and so
 (using monomial bases) $(x_i)\cap(x_j)=0$.  If $u(x_i+x_j)=0$ then we
 have $ux_i=-ux_j$, with the left hand side in $(x_i)$ and the right
 hand side in $(x_j)$.  As $(x_i)\cap(x_j)=0$ this gives
 $ux_i=ux_j=0$.  It now follows that $\ann(x_i+x_j)=\ann(x_i,x_j)$ and
 so $\ann^2(x_i+x_j)=\ann^2(x_i,x_j)$.  As $(x_i,x_j)$ is a monomial
 ideal we also have $\ann^2(x_i,x_j)=(x_i,x_j)$, so
 $\ann^2(x_i+x_j)=(x_i,x_j)>(x_i+x_j)$ as claimed.
\end{proof}

\begin{corollary}\label{cor-epsilon-not-selfinj}
 Example~\ref{eg-delta} shows that the lemma applies to the pair
 $(\om^2,\om+1)$, so $A$ does not satisfy the double annihilator
 condition.  Thus, Remark~\ref{rem-ideal} shows that $A$ cannot be
 self-injective. \qed
\end{corollary}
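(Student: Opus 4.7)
The plan is essentially a direct verification, since all the heavy lifting has been done in Lemma~\ref{lem-epsilon-not-dac} and Remark~\ref{rem-ideal}. I would first check that the specific pair $(\om^2,\om+1)$ satisfies the hypotheses of Lemma~\ref{lem-epsilon-not-dac}, then apply that lemma to produce a finitely generated (in fact, principal) ideal violating the double annihilator condition, and finally invoke Remark~\ref{rem-ideal} to conclude failure of self-injectivity.

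For the degree check, Example~\ref{eg-delta} directly records $\dl(\om+1)=6$ and $\dl(\om^2)=6$, so $|x_{\om^2}|=|x_{\om+1}|$. For the $\mu$-check, I would inspect the Cantor normal forms: the Cantor normal form of $\om^2$ is $\om^2\cdot 1$, which contains no term of the shape $\om^{\om+1}n$, so $\mu_0(\om^2,\om+1)=0$; and the Cantor normal form $\om+1=\om^1+\om^0$ contains no term of the shape $\om^{\om^2}n$, so $\mu_0(\om+1,\om^2)=0$. Hence $\mu(\om^2,\om+1)=\max(0,0)=0$, as required.

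With the hypotheses of Lemma~\ref{lem-epsilon-not-dac} verified for $i=\om^2$, $j=\om+1$, the lemma produces
\[
  \ann^2_A(x_{\om^2}+x_{\om+1}) \;=\; (x_{\om^2},x_{\om+1}) \;>\; (x_{\om^2}+x_{\om+1}).
\]
Since the left-hand ideal is principal and hence finitely generated, this shows that the double annihilator condition fails on a finitely generated ideal of $A$. By Remark~\ref{rem-ideal}, every finitely generated ideal in a self-injective ring is equal to its double annihilator, so $A$ cannot be self-injective.

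There is essentially no obstacle here beyond bookkeeping: the only points requiring any thought are picking two distinct ordinals below $\ep_0$ whose $\dl$-values coincide and whose pairwise $\mu$ vanishes, and Example~\ref{eg-delta} already exhibits the pair $(\om^2,\om+1)$ with this property. Everything else is a citation of previously proved results.
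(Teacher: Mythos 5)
Your proof is correct and follows the same route the paper takes, which is simply to cite Lemma~\ref{lem-epsilon-not-dac} and Remark~\ref{rem-ideal}. You have been slightly more careful than the paper's one-line justification, which attributes everything to Example~\ref{eg-delta} (which only records $\dl(\om^2)=\dl(\om+1)=6$) and leaves the verification that $\mu(\om^2,\om+1)=0$ implicit; your explicit Cantor normal form check of that second hypothesis is a worthwhile addition.
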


\begin{remark}\label{rem-epsilon-regrading}
 We could choose a different grading such that all the generators had
 different degrees, which would eliminate any examples as in
 Lemma~\ref{lem-epsilon-not-dac}.  However, we cannot ensure that
 $A_d$ has dimension at most one for all $d$, because when $i\neq j$
 the elements $x_i^{|x_j|}$ and $x_j^{|x_i|}$ have the same degree and
 are linearly independent.  Thus, there will always be ideals that are
 not monomial ideals.  We suspect that there is no grading for which
 $A$ satisfies the full double annihilator condition, but we have not
 proved this.
\end{remark}

\begin{proposition}\label{prop-epsilon-incoherent}
 $A$ is totally incoherent.
\end{proposition}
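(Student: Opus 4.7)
The plan is to invoke Corollary~\ref{cor-incoherent}. First I would observe that $A$ is local in the graded sense: every generator $x_\al$ has positive degree $\dl(\al)>0$, so $A_0=\F$ and the unique maximal graded ideal is $\mxi=(x_\al\st \al\in I)$. Since each relator $\rho(i,j)$ lies in $\mxi^2$, the monomial basis $BA$ shows that the classes $\{\bar x_\al\st \al\in I\}$ form an $\F$-basis of $\mxi/\mxi^2$. Moreover the only invertible homogeneous elements are the nonzero elements of $A_0$, so a nonzero, non-invertible homogeneous element is precisely a nonzero element of $\mxi$.

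The main step is to show that for any such $u$, the image of $\ann_A(u)$ in $\mxi/\mxi^2$ is infinite-dimensional. I would expand $u=\sum_{\al\in U}c_\al x^\al$ in the monomial basis, let $J=\bigcup_{\al\in U}\supp(\al)$ (a finite subset of $I$), and apply the extension property (Proposition~\ref{prop-mu-generic}) inductively to produce an infinite sequence of distinct indices $k_1,k_2,\dotsc\in I\sm J$ with $\mu(k_n,j)=0$ for all $j\in J$; at each stage the extension property is applied to the finite set $J\cup\{k_1,\dotsc,k_{n-1}\}$ with the prescribed value $0$ on $J$ (and any values on the other points).

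For each such $k_n$ and each $\al\in U$, I would pick any $j\in\supp(\al)\sse J$ and note that $\al(j)\geq 1>\mu(k_n,j)$, so $x_{k_n}x^\al$ is divisible by the relator $\rho(k_n,j)=x_{k_n}x_j$ and hence vanishes in $A$. Summing over $\al$ gives $x_{k_n}u=0$, so $x_{k_n}\in\ann_A(u)$; the classes $\bar x_{k_n}$ are distinct basis elements of $\mxi/\mxi^2$, so the image of $\ann_A(u)$ has infinite dimension and Corollary~\ref{cor-incoherent} applies. The only subtlety to watch is that multiplying $x^\al$ by $x_{k_n}$ could in principle interact with relators other than $\rho(k_n,j)$, but the choice $k_n\notin J$ sidesteps this; everything else is routine bookkeeping with the extension property.
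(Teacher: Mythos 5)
Your proof is correct and reaches the conclusion by the same overall plan as the paper (show $A$ is local, show $\{x_i\}$ is a basis of $\mxi/\mxi^2$, show every nonzero non-unit $u$ has $\ann_A(u)$ with infinite image in $\mxi/\mxi^2$, then invoke the finitely-presented-ideal criterion from Lemma~\ref{lem-ann-fg}/Corollary~\ref{cor-incoherent}). The one genuine difference is the construction of the infinite family of annihilating variables: you run the extension property (Proposition~\ref{prop-mu-generic}) inductively to manufacture indices $k_1,k_2,\dotsc$ with $\mu(k_n,j)=0$ on the relevant finite set $J$, whereas the paper sidesteps the extension property entirely with a degree count, observing that if $|u|=d$ then every monomial in $u$ is supported on indices $i$ of degree $\leq d$, and for any $k$ with $\dl(k)>d$ the element $\om^k$ automatically has $\mu(i,\om^k)=0$ for all such $i$, so the infinite set $\{x_{\om^k}\st\dl(k)>d\}$ does the job directly. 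Both arguments are sound; the paper's is a bit more elementary (avoids re-using the extension property, uses only the degree formula for $\dl$), while yours is arguably more uniform with the double annihilator argument in Proposition~\ref{prop-epsilon-dac}, which also relies on the extension property. You correctly handle the detail that $\supp(\al)\neq\emptyset$ for monomials appearing in $u$ (since $u\in\mxi$), and the distinctness of the $k_n$ is ensured by your growing set in the induction, so there is no gap.
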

\begin{proof}
 Put $\mxi_0=0$ and $\mxi_k=A_k$ for all $k>0$, so $A/\mxi=\F$.  It is
 clear that $\mxi$ is an ideal, and that the (homogeneous) elements of
 $\mxi$ are precisely the elements of $A$ that are not invertible.
 Given this, it follows that $\mxi$ is the unique maximal ideal in
 $A$, so $A$ is local.  From the form of the relations in $A$ we see
 that $\{x_i\st i\in I\}$ is a basis for $\mxi/\mxi^2$.  

 Now consider an element $a\in A_d$ for some $d>0$.  Put
 \begin{align*}
  U &= \{i\in I\st\dl(i)\leq d\} \\
  V &= \{\om^i \st i\in I\sm U\}.
 \end{align*}
 We find that $x_ix_j=0$ for all $i\in U$ and $j\in V$.  Moreover, we
 have $a\in(x_i\st i\in U)$, so $ax_j=0$ for all $j\in V$, so the
 image of $\ann(a)$ in $\mxi/\mxi^2$ has infinite dimension.

 Now let $P$ be a finitely presented ideal in $A$.  If
 $P=\mxi P$ then $P=0$ by Nakayama's Lemma.  Otherwise, we can
 choose $a\in P\sm \mxi P$, and Lemma~\ref{lem-ann-fg} tells us that
 $\ann(a)$ has finite image in $\mxi/\mxi^2$.  The above remarks show
 that we must have $|a|=0$, and $a\not\in \mxi P$ so $a\neq 0$, so $a$
 is invertible, so $P=A$.
\end{proof}

\begin{proposition}\label{prop-epsilon-reduced}
 The reduced quotient is  
 \[ A/\sqrt{0} = \F[x_i\st i\in I]/(x_ix_j\st i\neq j). \]
\end{proposition}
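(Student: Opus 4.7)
My plan is to mirror the structure of the proof of Proposition~\ref{prop-cube-reduced}: show that all ``cross terms'' $x_ix_j$ (with $i\neq j$) already lie in the nilradical, and then verify that the quotient by these cross terms is itself reduced.

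First I would prove that for all $i,j\in I$ with $i\neq j$, the product $x_ix_j$ is nilpotent in $A$. The defining relation is $x_ix_j^{1+\mu(i,j)}=0$, and setting $n=1+\mu(i,j)$ we get
\[ (x_ix_j)^n = x_i^n x_j^n
   = x_i^{n-1}\bigl(x_ix_j^{1+\mu(i,j)}\bigr) x_j^{n-1-\mu(i,j)} = 0. \]
Hence $x_ix_j\in\sqrt{0}$, and so writing
\[ A'' = A/(x_ix_j\st i,j\in I,\;i\neq j), \]
the kernel of the projection $A\to A''$ is contained in $\sqrt{0}$, giving $A/\sqrt{0}=A''/\sqrt{0}$. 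It remains to show that $A''$ is already reduced, for then $A/\sqrt{0}=A''$, which is manifestly the ring on the right hand side of the claimed identity.

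To prove $A''$ is reduced, I would first identify a convenient basis. Since every monomial $x_{i_1}^{a_1}\cdots x_{i_r}^{a_r}$ with $r\geq 2$ distinct indices vanishes in $A''$, the surviving monomials are $1$ and $x_i^n$ for $i\in I$ and $n\geq 1$. Consequently there is an $\F$-linear direct sum decomposition
\[ A'' = \F \oplus \bigoplus_{i\in I} x_i\F[x_i], \]
and the subring of $A''$ generated by a single $x_i$ is a genuine polynomial ring over $\F$. Any element of $A''$ can therefore be written uniquely as $u=a+\sum_{k=1}^r p_k(x_{i_k})$, with $a\in\F$, distinct indices $i_1,\dotsc,i_r$, and polynomials $p_k$ having zero constant term.

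Finally I would compute: because $x_{i_k}x_{i_l}=0$ in $A''$ for $k\neq l$, expanding $u^n$ yields
\[ u^n = a^n + \sum_{k=1}^r\bigl((a+p_k(x_{i_k}))^n - a^n\bigr), \]
and the summands lie in the distinct $\F$-summands of the decomposition above. If $u^n=0$ then the constant term forces $a^n=0$ and hence $a=0$, after which the $k$th summand reduces to $p_k(x_{i_k})^n$; since $\F[x_{i_k}]\hookrightarrow A''$ is a polynomial ring, this forces $p_k=0$. Thus $u=0$, proving $A''$ is reduced. The only step that requires any thought is confirming the direct-sum decomposition, and I do not anticipate a real obstacle, since the monomial combinatorics collapses completely once the cross products are killed.
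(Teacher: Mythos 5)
Your proof is correct and follows essentially the same route as the paper: kill the nilpotent cross terms $x_ix_j$ to reduce to the monomial quotient $\F[x_i]/(x_ix_j\st i\neq j)$, then verify reducedness of that quotient. The paper leaves the final reducedness step as ``easy to see'' whereas you spell it out via the direct-sum decomposition $\F\oplus\bigoplus_i x_i\F[x_i]$, which is exactly the intended argument.
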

\begin{proof}
 In $A$ we have $x_ix_j^{\mu(i,j)+1}=0$, so $(x_ix_j)^{\mu(i,j)+1}=0$,
 so $x_ix_j$ is nilpotent.  If we put 
 \[ A' = A/(x_ix_j\st i\neq j) = 
     \F[x_i\st i\in I]/(x_ix_j\st i\neq j),
 \]
 we deduce that $A/\sqrt{0}=A'/\sqrt{0}$.  However, it is easy to see
 that $A'$ is already reduced, so $A/\sqrt{0}=A'$ as claimed.
\end{proof}

\section{Triangulation}
\label{sec-triangulation}

Recall that a \emph{triangulated category} is a triple
$(\CC,\Sg,\Dl)$, where $\CC$ is an additive category, and
$\Sg\:\CC\to\CC$ is an equivalence, and $\Dl$ is a class of diagrams
of shape 
\[ X \to Y \to Z \to \Sg X \]
(called \emph{distinguished triangles}), subject to certain axioms
that we will not list here.

\begin{definition}\label{defn-triangulation}
 Let $R$ be a self-injective graded ring, let $\Mod_R$ be the category
 of $R$-modules, and let $\Sg\:\Mod_R\to\Mod_R$ be the usual suspension
 functor so that $(\Sg M)_i=M_{i-1}$.  Let $\InjMod_R$ be the full
 subcategory of injective modules.  A \emph{triangulation structure}
 for $R$ is a pair $(\CN,\Dl)$, where
 \begin{itemize}
  \item[(a)] $\CN$ is a full subcategory of $\InjMod_R$ containing $R$.
  \item[(b)] $\CN$ is closed under finite direct sums, retracts,
   suspensions and desuspensions.
  \item[(c)] $\Dl$ is a class of distinguished triangles
   making $(\CN,\Sg,\Dl)$ into a triangulated category.
 \end{itemize}
\end{definition}

We can also make a similar definition for ungraded rings.
\begin{definition}\label{defn-triangulation-ug}
 Let $R$ be a self-injective ungraded ring.  An \emph{ungraded
  triangulation structure} for $R$ is a pair $(\CN,\Dl)$, where
 \begin{itemize}
  \item[(a)] $\CN$ is a full subcategory of $\InjMod_R$ containing $R$.
  \item[(b)] $\CN$ is closed under finite direct sums, retracts,
   suspensions and desuspensions.
  \item[(c)] $\Dl$ is a class of distinguished triangles
   making $(\CN,1,\Dl)$ into a triangulated category.
 \end{itemize}
\end{definition}

In~\cite{muscst:tcw} we constructed ungraded triangulation structures
for $\Z/4$ and for $K[\ep]/\ep^2$ (where $K$ is any field of
characteristic two).  If Freyd's Generating Hypothesis is true, then
the image of the functor $\pi_*$ gives a graded triangulation
structure for the ring $\pi_*(S)^\wedge_p$.  We have not succeeded in
constructing any examples of graded triangulation structures by pure
algebra.  Here we offer only some rather limited and negative results.

\begin{lemma}\label{lem-triangles-exact}
 If $(\CN,\Dl)$ is a triangulation structure (in the graded or
 ungraded context) then all distinguished triangles
 in $\Dl$ are exact sequences.
\end{lemma}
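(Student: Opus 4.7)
The plan is to exploit the standard fact that in any triangulated category, for any object $W$, the functor $\Hom(W,-)$ is cohomological: it converts each distinguished triangle to a long exact sequence of abelian groups. This follows from the triangulated category axioms (chiefly rotation combined with the factorisation axiom for morphisms vanishing on the cone), and so holds in $(\CN,\Sg,\Dl)$.

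Given a distinguished triangle $X \xra{f} Y \xra{g} Z \xra{h} \Sg X$ in $\CN$, I would apply $\Hom_\CN(W,-)$ with $W=\Sg^d R$ for each $d\in\Z$. Since $R\in\CN$ by part~(a) of Definition~\ref{defn-triangulation} and $\CN$ is closed under suspensions and desuspensions by part~(b), the object $\Sg^d R$ lies in $\CN$ for every $d\in\Z$. Because $\CN$ is a full subcategory of $\Mod_R$, its morphism sets coincide with the usual $R$-module morphism sets, and there is a natural identification $\Hom_R(\Sg^d R,M)\cong M_d$. Thus the cohomological property yields, for each $d$, an exact sequence
\[ X_d \xra{f_d} Y_d \xra{g_d} Z_d, \]
which is exactly the statement that $X\xra{f}Y\xra{g}Z$ is exact at $Y$ as a sequence of graded $R$-modules. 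Applying the same reasoning to each rotation of the original triangle (which is again distinguished by the axioms) recovers exactness at every term of the doubly infinite sequence
\[ \dotsb \to \Sg^{-1}Z \to X \to Y \to Z \to \Sg X \to \dotsb, \]
so the distinguished triangle is an exact sequence in this sense.

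There is no serious obstacle here; the argument is entirely formal, resting on the standard cohomological-functor property of triangulated categories together with the trivial identifications above. The only minor point worth flagging is that the cohomological property uses $\CN$-morphisms rather than arbitrary $\Mod_R$-morphisms, but since $\CN$ is full in $\Mod_R$ this distinction is vacuous. Note also that we make essential use of the hypothesis $R\in\CN$: without a family of objects whose corepresentable functors detect elements in each degree, no such exactness conclusion would follow from purely categorical data.
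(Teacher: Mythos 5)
Your proof is correct and takes essentially the same approach as the paper: it applies the cohomological functor $\CN(W,-)$ to the distinguished triangle and uses the hypotheses $R\in\CN$ (with closure under suspension) to detect exactness. Your version is slightly more careful in running over $W=\Sg^d R$ for every $d$ to track all graded degrees, a detail the paper leaves implicit.
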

\begin{proof}
 The general theory of triangulated categories tells us that all
 functors of the form $\CN(X,-)$ send distinguished triangles to long
 exact sequences.  By assumption we have $R\in\CN$, and we can take
 $X=R$ to prove the claim.
\end{proof}

\begin{lemma}\label{lem-surjective-split}
 If $(\CN,\Dl)$ is a triangulation structure then all surjective maps
 in $\CN$ are split.  
\end{lemma}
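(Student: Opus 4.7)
The plan is to complete $f\:Y\to Z$ to a distinguished triangle inside $\CN$ and use the surjectivity of $f$ (as a map of $R$-modules) to force the connecting map to vanish, after which a section of $f$ can be extracted from the long exact Hom sequence.

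In detail, since $(\CN,\Sg,\Dl)$ is itself a triangulated category, the axioms guarantee a distinguished triangle
\[ X \xra{e} Y \xra{f} Z \xra{g} \Sg X \]
in $\Dl$.  Lemma~\ref{lem-triangles-exact} tells us that this triangle is an exact sequence of $R$-modules; in particular the portion $Y\xra{f}Z\xra{g}\Sg X$ is exact at $Z$.  Because $f$ is surjective we have $\img(f)=Z$, which forces $\ker(g)=Z$ and hence $g=0$.

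Next I would deduce that $f$ has a section.  Applying the covariant functor $\CN(Z,-)$ to the distinguished triangle gives (by the standard homological property of Hom functors on a triangulated category, which is available because $(\CN,\Sg,\Dl)$ is triangulated) a long exact sequence of abelian groups
\[ \dotsb \to \CN(Z,Y) \xra{f_*} \CN(Z,Z) \xra{g_*} \CN(Z,\Sg X) \to \dotsb. \]
Since $g=0$, the map $g_*$ is zero, so $f_*$ is surjective.  Choose any $s\in\CN(Z,Y)$ with $f_*(s)=1_Z$; then $fs=1_Z$, so $s$ is the required splitting.

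There is no serious obstacle here: the only thing one has to be careful about is that the distinguished triangle and the long exact Hom sequence live entirely inside $\CN$, but this is immediate from the hypothesis that $(\CN,\Sg,\Dl)$ is itself a triangulated category.  The real content of the lemma is simply the passage from the categorical hypothesis ``$f$ is surjective in $\CN$'' to the module-theoretic consequence ``the connecting map vanishes'', and this is supplied by Lemma~\ref{lem-triangles-exact}.
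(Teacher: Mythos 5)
Your argument is correct and follows essentially the same route as the paper: complete $f$ to a distinguished triangle, observe that the connecting map vanishes because $f$ is surjective, then use the long exact $\Hom$ sequence to produce a section. The only minor difference is that you invoke Lemma~\ref{lem-triangles-exact} (exactness of the triangle) to conclude $g=0$, whereas the paper argues more directly from the axiom $gf=0$ together with surjectivity of $f$; this bypasses the exactness lemma entirely, but the conclusion is the same.
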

\begin{proof}
 Let $M\xra{f}N$ be a surjective map in $\CN$.  This must fit into a
 distinguished triangle $L\xra{e}M\xra{f}N\xra{g}\Sg L$.  Here $gf=0$
 but $f$ is surjective so $g=0$.  It is standard that the functor
 $\CN(N,-)$ converts our distinguished triangle to an exact sequence,
 so $f_*\:\CN(N,M)\to\CN(N,N)$ is surjective.  We can thus find
 $h\:N\to M$ with $fh=1$, so $h$ splits $f$.
\end{proof}

\begin{corollary}
 If $(\CN,\Dl)$ is a triangulation structure then all finitely
 generated modules in $\CN$ are projective.  Thus, if $R$ is local
 then all such modules are free.
\end{corollary}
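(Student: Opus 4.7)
The plan is to combine Lemma~\ref{lem-surjective-split} with the standard fact that retracts of free modules are projective, and then appeal to a Nakayama-type argument in the local case.

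First I would fix a finitely generated $M \in \CN$ and choose homogeneous generators $m_1,\dotsc,m_n$ of degrees $d_1,\dotsc,d_n$.  These assemble into a surjection $\phi\: F\to M$, where $F=\bigoplus_{i=1}^n\Sg^{d_i}R$.  The key point is that $F$ already lies in $\CN$: we have $R\in\CN$ by assumption, and $\CN$ is closed under suspensions, desuspensions, and finite direct sums (Definition~\ref{defn-triangulation}).  Hence $\phi$ is a surjective map between objects of $\CN$, so Lemma~\ref{lem-surjective-split} applies and produces a splitting $\psi\:M\to F$ with $\phi\psi=1_M$.  Thus $M$ is a retract of the free module $F$, and so is projective in $\Mod_R$ in the usual sense.

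For the second assertion, assume $R$ is local with maximal ideal $\mxi$, and let $K=R/\mxi$.  Then I would shrink the generating set so that the images $\ov{m}_1,\dotsc,\ov{m}_n$ form a $K$-basis of $M/\mxi M$.  Nakayama's Lemma (applied in the graded setting) still guarantees that $\phi$ is surjective.  It remains to see that $\phi$ is injective.  Because $M$ is projective, $\phi$ splits, so $F\simeq M\oplus\ker(\phi)$; reducing modulo $\mxi$ gives $F/\mxi F\simeq M/\mxi M\oplus\ker(\phi)/\mxi\ker(\phi)$.  By the choice of generators $F/\mxi F\to M/\mxi M$ is an isomorphism, forcing $\ker(\phi)/\mxi\ker(\phi)=0$, and one more application of Nakayama gives $\ker(\phi)=0$.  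Hence $M\cong F$ is free.

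The only step that is not entirely routine is the first one, namely checking that the hypotheses of Lemma~\ref{lem-surjective-split} really apply — specifically the observation that the finitely generated free cover $F$ lies in $\CN$, which is where the closure properties in Definition~\ref{defn-triangulation}(b) together with the requirement $R\in\CN$ do all the work.  Once that is in place, everything else is formal.
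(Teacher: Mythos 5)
Your proof is correct and follows essentially the same route as the paper: produce a finitely generated free cover $F$, observe that $F\in\CN$ by the closure properties in Definition~\ref{defn-triangulation}, apply Lemma~\ref{lem-surjective-split} to split the cover and conclude projectivity, then use the standard local-ring fact (which the paper simply cites as well known, whereas you unwind the Nakayama argument explicitly).
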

\begin{proof}
 Let $N$ be a finitely generated module in $\CN$.  This means that
 there is a surjective homomorphism $f\:F\to N$ for some finitely
 generated free module $F$.  As $\CN$ is standard we see that
 $F\in\CN$, so the lemma tells us that $N$ is a retract of $F$, so it
 is projective.  It is well-known that finitely generated projective
 modules over local rings are free.
\end{proof}

\begin{proposition}\label{prop-incoherent}
 Suppose that $R$ is a local graded ring with $R_i=0$ for $i<0$, and
 suppose that $R$ admits a triangulation structure.  Then $R$ is
 totally incoherent.
\end{proposition}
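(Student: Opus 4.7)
The plan is to show directly that every nonzero finitely presented ideal $I\leq R$ equals $R$. The strategy is to realise $R/I$ inside a finitely generated free object of $\CN$, then use the non-negative grading on $R$ together with the cyclicity of $R/I$ to collapse its rank.

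First I would pick a finite generating set $a_1,\dotsc,a_n$ for $I$ and form $g\:Q_0=\bigoplus_i\Sg^{|a_i|}R\to R$ sending $e_i\mapsto a_i$, so $\img(g)=I$ and $K:=\ker(g)$ is finitely generated (by finite presentation of $I$).  Since $Q_0,R\in\CN$, the morphism $g$ fits in a distinguished triangle
\[ Q_0 \to R \to T \xra{h} \Sg Q_0 \]
with $T\in\CN$.  Lemma~\ref{lem-triangles-exact} turns this into an exact sequence of graded modules, from which I extract
\[ 0 \to R/I \to T \to \Sg K \to 0. \]
Both $R/I$ and $\Sg K$ are finitely generated, so $T$ is too.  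The corollary immediately preceding the proposition then makes $T$ free over the local ring $R$, so $T=\bigoplus_j\Sg^{d_j}R$ for some finite family of integers $d_j$.

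The heart of the proof is a grading pinch.  Since $T$ is concentrated in non-negative degrees, every $d_j\geq 0$.  Split $T=R^a\oplus T^+$, where $R^a$ gathers the $d_j=0$ summands and $T^+$ the $d_j\geq 1$ ones.  The map $R\to T$ lands in $R^a$, because $(\Sg^{d_j}R)_0=R_{-d_j}=0$ for $d_j\geq 1$; hence $\img(R\to T)\sse R^a$.  Symmetrically, $h$ vanishes on $R^a$, because each component $R\to\Sg^{|a_i|+1}R$ is multiplication by an element of $R_{-|a_i|-1}=0$ (using $|a_i|\geq 0$).  Therefore $R^a\sse\ker(h)=\img(R\to T)\sse R^a$, so $R^a=\img(R\to T)\cong R/I$ as $R$-modules.

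Finally, $R/I$ is cyclic while $R^a$ is graded free of rank $a$ over the local ring $R$; graded Nakayama then forces $a\leq 1$.  The case $a=1$ would give $R/I\simeq R$, i.e.\ $I=0$, contradicting $I\neq 0$; hence $a=0$ and $I=R$.  The step I expect to require the most care is the bookkeeping around the distinguished triangle: one must use Lemma~\ref{lem-triangles-exact} to identify $\img(R\to T)$ and $\ker(h)$ as the same submodule $R/I\sse T$, with cokernel genuinely $\Sg K$ rather than an unsuspended version.  Once those identifications are secured, the hypothesis $R_i=0$ for $i<0$ does all the remaining work.
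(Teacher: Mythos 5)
Your proof is correct and closely parallels the paper's, but the execution is genuinely cleaner. The paper takes minimal free covers $P\to Q\to R$, forms the \emph{fibre} triangle $\Sg^{-1}R\to K\to Q\to R$, constructs a split epimorphism $P\oplus\Sg^{-1}R\to K$ to force $K$ free, and then does a case analysis on whether the degree $-1$ piece $K_{-1}$ is $0$ or $R_0$. You instead form the \emph{cofibre} triangle $Q_0\to R\to T\to\Sg Q_0$, read off the short exact sequence $0\to R/I\to T\to\Sg\ker(g)\to 0$ from Lemma~\ref{lem-triangles-exact}, observe that $T$ is finitely generated and hence free by the preceding corollary, and finish with the rank count $R/I\cong R^a$ with $a\le 1$. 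This avoids the minimal cover entirely --- you only need $Q_0$ concentrated in non-negative degrees, which holds for any choice of homogeneous generators --- and the final identification $R^a=\ker(h)=\img(j)$ is a slicker way to extract the dichotomy $I\in\{0,R\}$ than the paper's bifurcation on $K_{-1}$. The two proofs rest on the same ingredients (exactness of distinguished triangles, split surjections and freeness in $\CN$, graded Nakayama, the vanishing $R_i=0$ for $i<0$); the difference lies in which vertex of the presentation triangle one analyses and how one establishes its freeness.
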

\begin{proof}
 Let $\mxi$ be the unique maximal ideal, and let $(\CN,\Dl)$ be a
 triangulation structure.  It is not hard to see that $\mxi_0$ is the
 unique maximal ideal in $R_0$, so $R_0$ is a local ring in the
 ungraded sense.

 Let $J$ be any finitely generated ideal.  We can then find a finitely
 generated free module $Q$ and an epimorphism $Q\to J$ such that
 $Q/\mxi Q\to J/\mxi J$ is an isomorphism.  We will write $g$ for the
 composite map $Q\to J\to R$, so that $J=\img(g)$.  If $J$ is finitely
 presented then $\ker(g)$ is again finitely generated, so we can find
 a finitely generated free module $P$ and a map $f\:P\to Q$ with
 $\img(f)=\ker(g)$ and $P/\mxi P\xra{\simeq}\ker(g)/\mxi\ker(g)$.
 With these minimal choices for $P$ and $Q$, it is clear that
 $P_i=Q_i=0$ when $i<0$.  Next, we can fit $g$ into a
 distinguished triangle $\Sg^{-1}R\xra{d}K\xra{i}Q\xra{g}R$.  As
 $gf=0$, we can find a lift $\tilde{f}\:P\to K$ with $i\tilde{f}=f$.
 We can combine this with $d$ to give a map $P\oplus\Sg^{-1}R\to K$, and
 a diagram chase shows that this is surjective.  Using
 Lemma~\ref{lem-surjective-split} we deduce that this map is split epi
 and that $K$ is a finitely generated free module.  It follows that
 $K_i=0$ for $i<-1$ and that $K_{-1}$ is a retract of $R_0$.  As $R_0$
 is local we must have either $K_{-1}=0$ or $K_{-1}=R_0$.  If
 $K_{-1}=0$ then $d\:\Sg^{-1}R\to K$ must be zero, which implies
 that $g\:Q\to R$ is split epi, which means that $J=R$.  If
 $K_{-1}\neq 0$ then we find that $d$ must induce a monomorphism
 $\Sg^{-1}R/\mxi\to K$, and as $R$ is local this implies that $d$
 is a split monomorphism, and thus that $g=0$ and so $J=0$.
\end{proof}
\begin{remark}
 As mentioned previously, there is an ungraded triangulation structure
 for the ring $\Z/4$.  The ideal $(2)<\Z/4$ is finitely presented and
 is neither $0$ nor $\Z/4$.  It follows that our grading assumptions
 are playing an essential role in the proof of the above proposition.
\end{remark}

\begin{corollary}
 Neither the infinite exterior algebra (as in
 Example~\ref{eg-exterior}) nor the cube algebra (as in
 Section~\ref{sec-cube}) admits a triangulation structure.
\end{corollary}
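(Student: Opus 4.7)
The plan is to apply Proposition~\ref{prop-incoherent} in contrapositive form: it suffices to check that both the infinite exterior algebra $E$ and the cube algebra $C$ are local graded rings concentrated in non-negative degrees, and then exhibit a proper non-zero finitely presented ideal in each. The existence of such an ideal contradicts total incoherence, so by Proposition~\ref{prop-incoherent} neither ring can admit a triangulation structure.

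First I would verify the hypotheses of Proposition~\ref{prop-incoherent}. By the explicit descriptions, $E_k=0$ and $C_k=0$ for $k<0$, and both rings have $E_0=C_0=\F$. For locality (in the graded sense), in each ring the ideal $\mxi=\bigoplus_{k>0}R_k$ is the unique maximal graded ideal, since every homogeneous element of positive degree lies in $\mxi$ and every homogeneous element of degree $0$ is either $0$ or the unit $1\in\F$.

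Next I would produce the required finitely presented ideals. By Proposition~\ref{prop-exterior-coherent}, $E$ is coherent, so every finitely generated ideal is finitely presented; in particular the principal ideal $(x_0)\leq E$ is finitely presented, and it is clearly neither $0$ nor all of $E$. Likewise, by Proposition~\ref{prop-cube-coherent}, $C$ is coherent, so the principal ideal $(y_0)\leq C$ is finitely presented, and it is neither $0$ nor $C$ (since $y_0\neq 0$ and $y_0\in\mxi$). Therefore neither ring is totally incoherent.

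Combining these observations, if either $E$ or $C$ admitted a triangulation structure, then Proposition~\ref{prop-incoherent} would force it to be totally incoherent, contradicting the previous paragraph. Hence neither admits a triangulation structure. There is no real obstacle here; the work has already been done in establishing coherence (Propositions~\ref{prop-exterior-coherent} and~\ref{prop-cube-coherent}) and in the incoherence criterion (Proposition~\ref{prop-incoherent}).
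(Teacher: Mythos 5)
Your proposal is correct and takes essentially the same approach as the paper, which simply cites coherence (Propositions~\ref{prop-exterior-coherent} and~\ref{prop-cube-coherent}) together with Proposition~\ref{prop-incoherent}. You helpfully make explicit what the paper leaves implicit: that coherence is incompatible with total incoherence here because each ring has a finitely generated (hence finitely presented) ideal, such as $(x_0)$ or $(y_0)$, that is neither $0$ nor the whole ring, and that the locality and non-negative grading hypotheses of Proposition~\ref{prop-incoherent} are satisfied.
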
 
\begin{proof}
 Both rings are coherent, by Propositions~\ref{prop-exterior-coherent}
 and~\ref{prop-cube-coherent}.  
\end{proof}

\begin{bibdiv}
\begin{biblist}

\bib{ba:agd}{article}{
  author={Baer, Reinhold},
  title={Abelian groups that are direct summands of every containing abelian group},
  journal={Bull. Amer. Math. Soc.},
  volume={46},
  date={1940},
  pages={800--806},
  issn={0002-9904},
  review={\MR {0002886 (2,126i)}},
}

\bib{bccm:ghs}{article}{
  author={Benson, David J.},
  author={Chebolu, Sunil K.},
  author={Christensen, J. Daniel},
  author={Min{\'a}{\v {c}}, J{\'a}n},
  title={The generating hypothesis for the stable module category of a $p$-group},
  journal={J. Algebra},
  volume={310},
  date={2007},
  number={1},
  pages={428--433},
  issn={0021-8693},
  review={\MR {2307802 (2007k:16011)}},
  doi={10.1016/j.jalgebra.2006.12.013},
}

\bib{brhe:cmr}{book}{
    author={Bruns, Winfried},
    author={Herzog, J{\"u}rgen},
     title={Cohen-macaulay rings},
    series={Cambridge Studies in Advanced Mathematics},
 publisher={Cambridge University Press},
      date={1993},
    volume={39},
}

\bib{ca:rgr}{article}{
  author={Cameron, Peter J.},
  title={The random graph revisited},
  conference={ title={European Congress of Mathematics, Vol. I}, address={Barcelona}, date={2000}, },
  book={ series={Progr. Math.}, volume={201}, publisher={Birkh\"auser}, place={Basel}, },
  date={2001},
  pages={267--274},
  review={\MR {1905324 (2003j:05110)}},
}

\bib{cach:fgh}{article}{
  author={Carlson, Jon F.},
  author={Chebolu, Sunil K.},
  author={Min{\'a}{\v {c}}, J{\'a}n},
  title={Freyd's generating hypothesis with almost split sequences},
  journal={Proc. Amer. Math. Soc.},
  volume={137},
  date={2009},
  number={8},
  pages={2575--2580},
  issn={0002-9939},
  review={\MR {2497468 (2010c:20007)}},
  doi={10.1090/S0002-9939-09-09826-8},
}

\bib{de:kgh}{article}{
   author={Devinatz, Ethan S.},
   title={$K$-theory and the generating hypothesis},
   journal={Amer. J. Math.},
   volume={112},
   date={1990},
   number={5},
   pages={787--804},
   issn={0002-9327},
   review={\MR{1073009 (91i:55011)}},
   doi={10.2307/2374807},
}

\bib{fr:sh}{incollection}{
  author={Freyd, Peter},
  title={Stable homotopy},
  date={1966},
  booktitle={Proc. conf. categorical algebra (la jolla, calif., 1965)},
  publisher={Springer},
  address={New York},
  pages={121\ndash 172},
  review={\MR {35 \#2280}},
}

\bib{ho:fgh}{article}{
  author={Hovey, Mark},
  title={On Freyd's generating hypothesis},
  journal={Q. J. Math.},
  volume={58},
  date={2007},
  number={1},
  pages={31--45},
  issn={0033-5606},
  review={\MR {2305048 (2008b:55014)}},
  doi={10.1093/qmath/hal013},
}

\bib{holopu:ghd}{article}{
  author={Hovey, Mark},
  author={Lockridge, Keir},
  author={Puninski, Gena},
  title={The generating hypothesis in the derived category of a ring},
  journal={Math. Z.},
  volume={256},
  date={2007},
  number={4},
  pages={789--800},
  issn={0025-5874},
  review={\MR {2308891 (2008b:18017)}},
  doi={10.1007/s00209-007-0103-x},
}

\bib{jo:nls}{book}{
  author={Johnstone, P. T.},
  title={Notes on logic and set theory},
  series={Cambridge Mathematical Textbooks},
  publisher={Cambridge University Press},
  place={Cambridge},
  date={1987},
  pages={x+111},
  isbn={0-521-33502-7},
  isbn={0-521-33692-9},
  review={\MR {923115 (89e:03003)}},
}

\bib{la:lmr}{book}{
  author={Lam, T. Y.},
  title={Lectures on modules and rings},
  series={Graduate Texts in Mathematics},
  volume={189},
  publisher={Springer-Verlag},
  place={New York},
  date={1999},
  pages={xxiv+557},
  isbn={0-387-98428-3},
  review={\MR {1653294 (99i:16001)}},
}

\bib{lo:ghd}{article}{
  author={Lockridge, Keir H.},
  title={The generating hypothesis in the derived category of $R$-modules},
  journal={J. Pure Appl. Algebra},
  volume={208},
  date={2007},
  number={2},
  pages={485--495},
  issn={0022-4049},
  review={\MR {2277690 (2007i:55009)}},
  doi={10.1016/j.jpaa.2006.01.018},
}

\bib{ma:crt}{book}{
  author={Matsumura, Hideyuki},
  title={Commutative ring theory},
  series={Cambridge Studies in Advanced Mathematics},
  publisher={Cambridge University Press},
  date={1986},
  volume={8},
}

\bib{muscst:tcw}{article}{
  author={Muro, Fernando},
  author={Schwede, Stefan},
  author={Strickland, Neil},
  title={Triangulated categories without models},
  journal={Invent. Math.},
  volume={170},
  date={2007},
  number={2},
  pages={231--241},
  issn={0020-9910},
  review={\MR {2342636 (2008g:18016)}},
  doi={10.1007/s00222-007-0061-2},
}

\bib{ra:ugu}{article}{
  author={Rado, R.},
  title={Universal graphs and universal functions},
  journal={Acta Arith.},
  volume={9},
  date={1964},
  pages={331--340},
  issn={0065-1036},
  review={\MR {0172268 (30 \#2488)}},
}

\bib{ra:lrc}{article}{
  author={Ravenel, Douglas~C.},
  title={Localization with respect to certain periodic homology theories},
  date={1984},
  journal={American Journal of Mathematics},
  volume={106},
  pages={351\ndash 414},
}

\bib{ra:nps}{book}{
    author={Ravenel, Douglas~C.},
     title={Nilpotence and periodicity in stable homotopy theory},
    series={Annals of Mathematics Studies},
 publisher={Princeton University Press},
      date={1992},
    volume={128},
}

\end{biblist}
\end{bibdiv}

\end{document}